\author{M. McKee}
\address{Department of Mathematics, University of Iowa, 
Iowa City, IA 52242, USA}
\email{mark-mckee@uiowa.edu}
\author{A. Pasquale}
\address{Institut Elie Cartan de Lorraine (IECL UMR CNRS 7502),
Universit\'e de Lorraine, F-57045 Metz, France}
\email{angela.pasquale@univ-lorraine.fr}
\author{T. Przebinda}
\address{Department of Mathematics, University of Oklahoma, Norman, OK 73019, USA}
\email{przebinda@gmail.com}
\title[Weyl calculus and dual pairs]{Weyl calculus and dual pairs}
\def\ch{\mathop{\hbox{\rm ch}}\nolimits}
\def\n{\mathfrak n}
\def\g{\mathfrak g}
\def\z{\mathfrak z}
\def\h{\mathfrak h}
\def\sp{\mathfrak {sp}}
\def\o{\mathfrak o}
\def\u{\mathfrak u}
\def\R{\mathbb{R}}
\def\C{\mathbb{C}}
\def\Ze{\mathbb{Z}}
\def\Ha{\mathbb{H}}
\def\z{\mathfrak z}
\def\c{\mathfrak c}
\def\so{\mathfrak s_{\overline 0}}
\def\ss1{\mathfrak s_{\overline 1}}
\def\hs1{\mathfrak h_{\overline 1}}
\def\Pg{\mathrm{P}}
\def\supp{\mathrm{supp}}
\def\Op{\mathrm{Op}}
\def\Ker{\mathrm{Ker}}
\def\G{\mathrm{G}}
\def\N{\mathrm{N}}
\def\Zg{\mathrm{Z}}
\def\K{\mathrm{K}}
\def\H{\mathrm{H}}
\def\M{\mathrm{M}}
\def\Zg{\mathrm{Z}}
\def\Sg{\mathrm{S}}
\def\L{\mathrm{L}}
\def\Bbb{\mathbb}
\def\N{\mathrm{N}}
\def\H{\mathrm{H}}
\def\T{\mathrm{T}}
\def\GL{\mathrm{GL}}
\def\SO{\mathrm{SO}}
\def\Sp{\mathrm{Sp}}
\def\Og{\mathrm{O}}
\def\Ug{\mathrm{U}}
\def\Mg{\mathrm{M}}
\newcommand{\anticomm}[2]{\null^{#1}#2}
\newcommand{\danticomm}[2]{\null^{\anticomm{#1}{#2}}#2}
\def \t{\tilde}
\def \wt{\widetilde}
\newcommand{\reg}[1]{ {#1}^{reg}}
\newcommand{\OP}{\mathop{\rm{OP}}}
\def\W{\mathsf{W}}
\def\Wv{\mathrm{W}}
\def\W+{\mathrm{W}_{\BB C}}
\def\Vv{\mathrm{V}}
\def\Uv{\mathrm{U}}
\def\V{\mathsf{V}}
\def\X{\mathsf{X}}
\def\Xv{\mathrm{X}}
\def\Yv{\mathrm{Y}}
\def\Sy{\mathsf{S}}
\def\Dc{\mathbb {D}}
\def\Zb{\mathbb {Z}}
\def\End{\mathop{\hbox{\rm End}}\nolimits}
\def\diag{\mathop{\hbox{\rm diag}}\nolimits}
\def\det{\mathop{\hbox{\rm det}}\nolimits}
\def\ad{\mathop{\hbox{\rm ad}}\nolimits}
\def\Ad{\mathop{\hbox{\rm Ad}}\nolimits}
\def\Hom{\mathop{\hbox{\rm Hom}}\nolimits}
\def\Re{\mathop{\hbox{\rm Re}}\nolimits}
\def\tr{\mathop{\hbox{\rm tr}}\nolimits}
\def\sgn{\mathop{\hbox{\rm sgn}}\nolimits}
\def\vol{\mathop{\hbox{\rm vol}}\nolimits}
\newcommand\diesis[1]{#1^\sharp}
\def\lim{\mathop{\hbox{\rm lim}}\nolimits}
\newcommand\inner[2]{\langle #1,#2\rangle}
\def\supp{\mathop{\hbox{\rm supp}}\nolimits}
\def\ker{\mathop{\hbox{\rm ker}}\nolimits}
\def\Oo{\mathcal{O}}
\def\Ss{\mathcal{S}}
\def\Hs{\mathcal{H}}
\def\SHs{\mathcal{SH}}
\def\fontindex{\arabic}
\def\fonttitre{\textsf}
\newcounter{thh}
\newtheorem{thm}[thh]{\fonttitre{Theorem}}
\newtheorem{pro}[thh]{\fonttitre{Proposition}}
\newtheorem*{pro*}{\fonttitre{Proposition}}
\newtheorem{cor}[thh]{\fonttitre{Corollary}}
\newtheorem*{coro*}{\fonttitre{Corollary}}
\newtheorem{lem}[thh]{\fonttitre{Lemma}}
\newtheorem{defi}[thh]{\fonttitre{Definition}}
\newtheorem*{defi*}{\fonttitre{Définition}}
\newtheorem*{nota*}{\fonttitre{Notation}}
\newenvironment{prf}{\begin{proof}}{\end{proof}}
\def\muet{ \ifthenelse{\equal{a}{b}}}
\def\nn{\nonumber}
\def\biblio{\sloppy
\bibliographystyle{alpha}
\bibliography{article}}
\begin{document}
\thanks{The second author is grateful to the University of Oklahoma for hospitality and financial support. The third author gratefully acknowledges hospitality and financial support from the Universit\'e de Lorraine and partial support from the NSA grant H98230-13-1-0205. }

\date{}
\subjclass[2010]{Primary: 22E45; secondary: 22E46, 22E30} 
\keywords{Reductive dual pairs, Howe duality, Weyl calculus, Lie superalgebras}

\maketitle
\begin{abstract}
We consider a dual pair $(\G,\G')$, in the sense of Howe, with $\G$ compact acting on $\L^2(\R^n)$ for an appropriate $n$ via the Weil Representation. Let $\wt\G$ be the preimage of $\G$ in the metaplectic group. Given a genuine irreducible unitary representation $\Pi$ of $\wt\G$ we compute the Weyl symbol of orthogonal projection onto $\L^2(\R^n)_\Pi$, the $\Pi$-isotypic component. We apply the result to obtain an explicit formula for the character of the corresponding irreducible unitary representation $\Pi'$ of $\wt\G'$ and to compute of the wave front set of $\Pi'$ by elementary means. 
\end{abstract}
\tableofcontents
\section{\bf Introduction. \rm}\label{Introduction}
Let $\Wv$ be a vector space of finite dimension $2n$ over $\Bbb R$ with a non-degenerate symplectic form $\langle\cdot ,\cdot \rangle$. 
Denote by $\Sp\subseteq \GL(\Wv)$ the symplectic group and 
let $\G, \G'\subseteq \Sp=\Sp(\Wv)$ be an irreducible dual pair. Denote by $\wt\G$, $\wt\G'$ the preimages of $\G$, $\G'$ in the metaplectic group $\wt\Sp=\wt\Sp(\Wv)$. Consider irreducible admissible representations $\Pi$, $\Pi'$ of  $\wt\G$, $\wt\G'$ respectively which are in Howe's correspondence.
By definition $\Pi\otimes \Pi'$ is realized as a quotient of the space of the smooth vectors of the Weil representation $\omega$ of $\wt\Sp(\Wv)$. Hence, as explained in \cite{PrzebindaUnitary}, there is a unique, up to a non-zero constant multiple, $\G\G'$-invariant tempered distribution $f_{\Pi\otimes\Pi'}$ on $\Wv$ such that $\Pi\otimes\Pi'$ is realized on the range of the operator 
\begin{equation}\label{theoperator}
\Op\circ\mathcal K(f_{\Pi\otimes\Pi'}).
\end{equation}
(See \eqref{Op} and \eqref{K} below for the precise definitions of $\Op$ and $\mathcal K$.)
Thus, in principle, all the information about the representation $\Pi\otimes\Pi'$ is encoded in the distribution $f_{\Pi\otimes\Pi'}$. For example the group action is given by
\[
\left(\Pi(\t g)\otimes\Pi'(\t g')\right)\circ \left(\Op\circ\mathcal K(f_{\Pi\otimes\Pi'})\right)=\left(\Op\circ\mathcal K(f_{\Pi\otimes\Pi'})\right)\circ \omega(\t g \t g') \qquad (\t g\in \wt\G, \t g'\in \wt\G').
\]
This is why $f_{\Pi\otimes\Pi'}$ is called an intertwining distribution \cite{PrzebindaUnitary}. In fact $f_{\Pi\otimes\Pi'}$ happens to be the Weyl symbol, \cite{Hormander}, of the operator \eqref{theoperator}, see \eqref{theweylsymbol1} below. 

Often $f_{\Pi\otimes\Pi'}$ may be computed in terms of the distribution character $\Theta_\Pi$ of $\Pi$, \cite{PrzebindaUnitary}. If the group $\G$ is compact then the 
 distribution character $\Theta_{\Pi'}$ may also be recovered from $f_{\Pi\otimes\Pi'}$ via an explicit formula, \eqref{pullbackThetaPiprime}, \cite{PrzebindaUnipotent}. Thus we have a diagram
\begin{equation}\label{first diagram}
\Theta_\Pi\longrightarrow f_{\Pi\otimes\Pi'} \longrightarrow \Theta_{\Pi'}.
\end{equation}
The asymptotic properties of $f_{\Pi\otimes\Pi'}$ determine the associated varieties of the primitive ideals of $\Pi$ and $\Pi'$ and, under some more assumptions, the wave front sets of these representations, see \cite{PrzebindaUnitary} and \cite{PrzebindaUnipotent}. 

We believe that in general one should be able to have a diagram like \eqref{first diagram} with the arrows in arbitrary direction. In particular deciding whether two representations are in Howe's correspondence should be done by comparing the intertwining distributions obtained from their characters. 

The usual, often very successful, approach to Howe's correspondence avoids any work with the distributions on the symplectic space. Instead, one finds Langlands parameters (see \cite{Moeglinarchi}, \cite{AdamsBarbaschcomplex}, \cite{AnnegretunitaryI}, \cite{AnnegretunitaryII}, 
\cite{Annegretorthosymplectic}, \cite{Jian-ShuLi-Cheng-boZhu}), character formulas (see \cite{Adamslift}, \cite{RenardLift}, \cite{DaszPrzebindaInv}), or candidates for character formulas (as in \cite{BerPrzeCHC_inv_eig}), or one establishes preservation of unitarity (as in \cite{Jian-ShuLiSingular}, \cite{HeHongyu}, \cite{PrzebindaUnitary} or \cite{AdamsBarbaschPaulTrapaVogan}). However, in the background (explicit or not), there is the orbit correspondence induced by the moment maps
\[
\g^*\longleftarrow \Wv \longrightarrow \g'{}^*,
\]
see \eqref{tau'}. This correspondence of orbits has been studied in \cite{DaszKrasPrzebindaComplex}, \cite{DaszKrasPrzebindaK-S2} and \cite{PanReal}. 
Furthermore, in their recent work, \cite{LockMaassocvar}, H. Y. Lock and J.J. Ma computed the associated variety of the representations for the dual pairs in the stable range in terms of the orbit correspondence. The $p$-adic case was studied in detail in \cite{Moeglinarchiwave}.
Moreover, still in the stable range, R. Gomez and  C. Zhu computed their generalized Whittaker models, \cite{gomezchengbozhu}. 

Needless to say, working with the $\G\G'$-invariant distributions on $\Wv$ is a more direct approach than relying on the orbit correspondence.
In this paper we consider the dual pairs with $\G$ compact.  Then the representations $\Pi$ and  $\Pi'$ are the irreducible unitary highest weight representations. They have been defined by Harish-Chandra in \cite{HC1955c} and were classified in \cite{EnrightHoweWallach}.  They have been studied in terms of Zuckerman functors in \cite{Wallachholomorphic}, \cite{Adamsdiscrete} and \cite{Adamshighestweight}.

As a complementary contribution to all this work, we compute the intertwining distributions $f_{\Pi\otimes\Pi'}$ explicitly. Our formula for the intertwining distribution $f_{\Pi\otimes\Pi'}$ is explicit enough to find its asymptotics, see Theorem \ref{the dilation limit of intertwining distribution}. These allow us to compute the wave front set of the representation $\Pi'$ within the Classical Invariant Theory, without using \cite{VoganGelfand}. See Corollary \ref{WF of Pi'} below. Also, in the case when both groups are compact, we have the diagram \eqref{first diagram} with the arrows in arbitrary direction. Therefore we see which representations $\Pi$ and $\Pi'$ occur in Howe's correspondence, which corresponds to which and we recover the fact that $\Pi\otimes\Pi'$ occurs with multiplicity one without using \cite{HoweRemarks} or \cite{WeylBook}. This is a stepping stone for understanding the more general situation.

In order to describe more precisely the content of this paper we need to introduce some notation.

Denote by $\sp$ the Lie algebra of $\Sp$. 
Fix a compatible positive complex structure $J$ on $\Wv$. Hence $J\in \sp$ is such that $J^2=-1$ (minus the identity)  and the symmetric bilinear form $\langle J\cdot ,\cdot \rangle$ is positive definite on $\Wv$.
Let $dw$ be the Lebesgue measure on $\Wv$ such that the volume of the unit cube with respect to this form is $1$. (Since all positive complex structures are conjugate by elements of $\Sp$, this normalization does not depend on the particular choice of $J$.) Let $\Wv=\Xv\oplus \Yv$ be a complete polarization. We normalize the Lebesgue measures on $\Xv$ and on $\Yv$ similarly.

Each element $K\in \Ss^*(\Xv\times \Xv)$ defines an operator
$\Op(K)\in \Hom(\Ss(\Xv),\Ss^*(\Xv))$ by
\begin{equation}\label{Op}
\Op(K)v(x)=\int_\Xv K(x,x')v(x')\,dx'.
\end{equation}
Here $\Ss(\Vv)$ and $\Ss^*(\Vv)$ denote the Schwartz space on the vector space $\Vv$ and the space of tempered distributions on $\Vv$,  respectively. 

The map 
$
\Op: \Ss^*(\Xv\times \Xv)\to \Hom(\Ss(\Xv),\Ss^*(\Xv))
$
is an isomorphism of linear topological spaces. This is known as the Schwartz Kernel Theorem, \cite[Theorem 5.2.1]{Hormander}. 

Fix the unitary character $\chi(r)=e^{2\pi i r}$, $r\in \mathbb R$, and
recall the Weyl transform $\mathcal K:\Ss^*(\Wv)\to \Ss^*(\Xv\times \Xv)$ given for $f \in \Ss(\Wv)$ by 
\begin{equation}\label{K}
\mathcal K(f)(x,x')=\int_\Yv f(x-x'+y)\chi\big(\frac{1}{2}\langle y, x+x'\rangle\big)\,dy.
\end{equation}
The Weyl symbol of the operator $\Op\circ \mathcal K (f)$  is the symplectic Fourier transform $\widehat f$ of $f$, defined by
\begin{equation}\label{theweylsymbol1}
\widehat f(w')=2^{-n}\int_\Wv f(w)\chi(\tfrac{1}{2}\langle w, w'\rangle)\,dw \qquad (w'\in \Wv).
\end{equation}
A theorem of Calderon and Vaillancourt asserts that the operator $\Op\circ\mathcal K(f)$ is bounded on $\L^2(\Xv)$ if its Weyl symbol and all its derivatives are bounded functions on $\Wv$, \cite[Theorem 3.1.3]{HoweQuantum}. The intertwining distributions we compute are
 Weyl symbols of some bounded operators which naturally come from the Representation Theory of Real Reductive Groups. Many of these  symbols turn out to be singular distributions. In order to introduce them we recall the Weil representation. 

For an element $g\in \Sp$, let $J_g=J^{-1}(g-1)$. Then its adjoint with respect to the form $\langle J \cdot,\cdot\rangle$ is $J_g^*=Jg^{-1}(1-g)$. In particular  $J_g$ and $J_g^*$ have the same kernel. Hence the image of $J_g$ is
\[
J_g\Wv=(\Ker J_g^*)^\perp=(\Ker J_g)^\perp
\]
where $\perp$ denotes the orthogonal with respect to $\langle J \cdot,\cdot\rangle$.
Therefore, the restriction of $J_g$ to $J_g\Wv$ defines an invertible element. Thus it makes sense to talk about $\det(J_g)_{J_g\Wv}^{-1}$, the reciprocal of the determinant of the restriction of $J_g$ to $J_g\Wv$. 
Let
\begin{equation}\label{metaplectic group}
\wt{\Sp}=\{\t g=(g,\xi)\in \Sp\times \C,\ \ \xi^2=i^{\dim (g-1)\Wv}\det(J_g)_{J_g\Wv}^{-1}\}\,.
\end{equation}
Then there exists a $2$-cocycle $C:\Sp\times \Sp \to \C$, so that $\wt{\Sp}$ is a group, the metaplectic group, with respect to the multiplication 
\begin{equation}\label{multiplicationSp}
(g_1,\xi_1)(g_2,\xi_2)=(g_1g_2,\xi_1\xi_2 C(g_1,g_2))\,.
\end{equation} 
In fact, by \cite[Lemma 52]{AubertPrzebinda_omega},
\begin{equation}\label{|cocycle|}
|C(g_1,g_2)|=\sqrt{\left|\frac{\det(J_{g_1})_{J_{g_1}\Wv}\det(J_{g_2})_{J_{g_2}\Wv}}{\det(J_{g_1g_2})_{J_{g_1g_2}\Wv}}\right|}
\end{equation} 
and by \cite[Proposition 46 and formula (109)]{AubertPrzebinda_omega},
\begin{equation}\label{cocycle/|cocycle|}
\frac{C(g_1,g_2)}{|C(g_1,g_2)|}=\chi(\frac{1}{8}\sgn(q_{g_1,g_2})),
\end{equation} 
where $\sgn(q_{g_1,g_2})$ is the signature of the symmetric form 
\begin{eqnarray}\label{gamag1g2}
q_{g_1,g_2}(u',u'')&=&\frac{1}{2}\langle (g_1+1)(g_1-1)^{-1}u',u''\rangle+\frac{1}{2}\langle (g_2+1)(g_2-1)^{-1}u',u''\rangle\\ 
&&(u',u''\in (g_1-1)\Wv\cap (g_2-1)\Wv.\nn
\end{eqnarray} 
By the signature of a (possibly degenerate) symmetric form we understand the difference between the maximal dimension of a subspace where the form is positive definite and the maximal dimension of a subspace where the form is negative definite.

Let
\begin{equation} \label{eq:chicg}
\chi_{c(g)}(u)=\chi\big(\frac{1}{4}\langle (g+1)(g-1)^{-1}u, u\rangle\big) \qquad (u=(g-1)w,\ w\in\Wv).
\end{equation}
(In particular, if $g-1$ is invertible on $\Wv$, then 
$
\chi_{c(g)}(u)=\chi(\frac{1}{4}\langle c(g)u, u\rangle 
$ where $c(g)=(g+1)(g-1)^{-1}$ is the usual Cayley transform.)
For $\t g=(g,\xi)\in\wt\Sp$ define
\begin{equation}\label{the omega}
\Theta(\t g)=\xi,\qquad T(\t g)=\Theta(\t g)\chi_{c(g)}\mu_{(g-1)\Wv},\qquad \omega(\t g)=\Op\circ \mathcal K \circ T(\t g),
\end{equation}
where $\mu_{(g-1)\Wv}$ is the Lebesgue measure on the subspace $(g-1)\Wv$ normalized so that the volume of the unit cube with respect to the form $\langle J \cdot,\cdot\rangle$ is $1$. In these terms, $(\omega, \L^2(\Xv))$ is the Weil representation of $\wt\Sp$ attached to the character $\chi$. A proof of this fact based on previous work of Ranga Rao \cite{RangaRaoWeil} may be found in \cite{TerujiWeyl}. Conversely, one may take the above definition of $\omega$ and check directly that it is a representation with all the required properties. This was done in \cite[Theorem 60]{AubertPrzebinda_omega}.

We consider a dual pair $(\G,\G')$, in the symplectic group $\Sp$, with $\G$ compact.
Let $\wt\G$ be the preimage of $\G$ in the metaplectic group, equipped with the Haar measure of total mass $1$.
Fix an irreducible unitary representation $\Pi$ of $\wt\G$ which occurs in the restriction of $\omega$ to $\wt\G$ and let $\Theta_{\Pi}$ be its (distribution) character. 
Then the operator
\[
\omega(\check\Theta_{\Pi})=\int_{\wt\G}\Theta_{\Pi}(\t g^{-1})\omega(\t g)\,d\t g
\]
is $(\dim \Pi)^{-1}$ times the orthogonal projection $\L^2(\Xv)\to \L^2(\Xv)_\Pi$ onto the $\Pi$-isotypic component of $\L^2(\Xv)$. The Weyl symbol of
this projection is equal to a constant multiple of
\begin{equation}\label{0.1}
T(\check\Theta_{\Pi})=\int_{\wt\G}\Theta_{\Pi}(\t g^{-1})T(\t g)\,d\t g.
\end{equation}
This is precisely the intertwining distribution we introduced before: 
\[
f_{\Pi\otimes\Pi'}=T(\check\Theta_{\Pi}).
\]
Here we are using that the symplectic transform of $T(\check\Theta_{\Pi})$ is $\pm T(\check\Theta_{\Pi})$, \cite[(5.2) and (5.4.2)]{PrzebindaUnipotent}.

For example, if $\G=\Og_1=\{\pm 1\}$ and $\G'=\Sp$, then 
\[
\wt\G=\{(1,1),\ (1,-1),\ (-1, i^n2^{-n}),\ (-1, -i^n2^{-n})\}
\]
with the multiplication given by $(g_1,\xi_1)(g_2,\xi_2)=(g_1g_2,\xi_1\xi_2C(g_1,g_2))$, where 
\[
C(1,\pm1)=C(\pm 1,1)=1\ \ \text{and}\ \ C(-1,-1)=2^{2n}. 
\]
In these terms, the following two one-dimensional representations of $\wt\G$ occur in $\omega$.
\begin{equation}\label{PipmO1Sp2N}
\Pi_+(g,\eta)=|\eta|^{-1}\eta,\ \ \Pi_-(g,\eta)=g|\eta|^{-1}\eta
\end{equation}
A straightforward computation shows that
\begin{equation}\label{1}
T(\check\Theta_{\Pi_\pm})=\frac{1}{2}\left(\delta_0\pm 2^{-n}dw\right),
\end{equation}
where $\delta_0$ is the Dirac delta at the origin in $\Wv$.

In general, Classical Invariant Theory says that the space $\L^2(\Xv)_\Pi$ is irreducible under the joint action of $\wt\G$ and $\wt\G'$, \cite{HoweRemarks}. Hence
$\L^2(\Xv)_\Pi=\L^2(\X)_{\Pi\otimes\Pi'}$ for an irreducible unitary representation $\Pi'$ of $\wt\G'$. We are interested in the character $\Theta_{\Pi'}$ of $\Pi'$.

The unnormalized moment map
\begin{equation}\label{tau'}
\tau':\Wv\to{\g'}^*,\ \tau'(w)(y)=\langle yw,w\rangle \qquad (w\in\Wv,\ y\in \g')
\end{equation}
is a quadratic polynomial map with compact fibers. Hence the pull-back
\[
\Ss(\g')\ni\psi\to \psi\circ\tau'\in\Ss(\Wv)
\]
is well defined and continuous, \cite[Lemma 6.1]{PrzebindaUnipotent}. Therefore, by dualizing, we get a push-forward of distributions
\begin{equation}\label{pushforwardbytau'}
\tau'_*: \Ss^*(\Wv)\to\Ss^*(\g').
\end{equation}
Recall the Cayley transform $c(x)=(x+1)(x-1)^{-1}$, which we view as a rational map from the Lie algebra $\sp$ into the group $\Sp$.
In particular $c(0)=-1$.
Let 
\begin{equation}\label{eq:tildec}
\t c:\sp\to\wt\Sp
\end{equation}
be a real analytic lift  of $c$. Set $\t c_-(x)=\t c(x) \t c(0)^{-1}$. Then $\t c_-(0)$ is the identity in the group $\wt\Sp$.

Let $\t c_-^*\Theta_{\Pi'}$ be the pullback of $\Theta_{\Pi'}$ by $\t c_-$, 
\cite[Theorem 6.1.2]{Hormander}.
Then, as shown in \cite[Theorem 6.7]{PrzebindaUnipotent}, for an appropriately defined Fourier transform $\mathcal{F}$ on $\g'$, 
\begin{equation}\label{pullbackThetaPiprime}
\frac{1}{\Theta\circ \t c} \; \t c_-^*\Theta_{\Pi'}=\frac{(\text{central character of $\Pi$})(\t c(0))}{\dim \Pi} \;\mathcal F(\tau'_*(T(\check\Theta_{\Pi})))\,.
\end{equation}
Formula (\ref{pullbackThetaPiprime}) allows us to determine $\Theta_{\Pi'}(\Psi)$ for every $\Psi\in C_c^\infty(\wt G')$ supported in the image of $\t c_-$. 
Indeed $\Theta_{\Pi'}(\Psi)=\big(\t c_-^*\Theta_{\Pi'}\big)(\psi)$ where $\psi(x)=\Psi(\t c_-(x))\ch^{-2r}(x)$ for $x \in \g'$. Here $\ch^{-2r}$ is the Jacobian of $\t c_-$; see (\ref{ch}) and (\ref{second step}) in section \ref{Intertwining distributions}.

For example, if $\G=\Og_1$ then for $\psi \in \Ss(\g')$ we have
\begin{equation*}
\tau'_*(T(\check\Theta_{\Pi}))(\psi)=T(\check\Theta_{\Pi})(\psi\circ \tau')
=\frac{1}{2} \Big( \delta(\psi) +2^{-n} \int_\Wv \psi(\tau'(w)) \,dw\Big)\,.
\end{equation*}
Recall that $\tau'(\Wv\setminus\{0\})\subseteq \g'$ is one of the two non-zero minimal  nilpotent orbits in $\g'=\sp$, which we denote $\Oo_{\rm min}$. 
Hence
\begin{equation}\label{pullbackThetaPi'pm}
\frac{1}{\Theta\circ \t c} \; \t c_-^*\Theta_{\Pi_\pm'}=\Pi_\pm(\t c(0)) \mathcal F\Big(\frac{1}{2}(\delta\pm \mu_{\Oo_{\rm min}})\Big),
\end{equation}
where $\mu_{\Oo_{\rm min}}=\tau'_*(2^{-n}dw)$ is an invariant measure on ${\Oo_{\rm min}}$ and $\Pi_\pm'$ are the corresponding two irreducible pieces of the Weil representation $\omega$. Notice that, since $c(0)=-1$, the definition (\ref{PipmO1Sp2N}) gives 
$\Pi_-(\t c(0))=-\Pi_-(\t c(0))$. It follows that 
\begin{equation}\label{pullbackThetaomega}
\frac{1}{\Theta\circ \t c} \; \t c_-^*\Theta_\omega=\Pi_+(\t c(0)) \mathcal F(\mu_{\Oo_{\rm min}})\,.
\end{equation}
This formula gives the character of $\omega$ as the Fourier transform of an invariant measure of a nilpotent orbits in $\sp$. This is what Kirillov's orbit method would aim at (if it worked on semisimple Lie groups).

\smallskip

In this paper we compute explicitly the distribution $f_{\Pi\otimes\Pi'}=T(\check\Theta_{\Pi})$ in terms of the $\G\G'$-orbital integrals on $\Wv$, see Theorems  \ref{main thm for l<l'} and \ref{main thm for l>=l'}. In particular we see that $T(\check\Theta_{\Pi})$ is a smooth function if and only if $(\G,\G')$ is a pair of compact unitary groups, see Proposition \ref{ul, ul' 3}. Also, modulo a few exceptions, $T(\check\Theta_{\Pi})$ is a locally integrable function if and only if the rank of $\G$ is greater or equal to the rank of $\G'$. Our results on the orbital integrals are based on the corresponding results of Harish-Chandra, which are transferred from the Lie algebra $\g'$ to the symplectic space $\Wv$ via a theorem of G. Schwarz, \cite{Schwarz74}. We hope to circumvent it in the future in order to treat the case of a non-compact group $\G$.

Let $\tau:\Wv\to\g^*$ be the unnormalized moment map for $\g$ given, as in (\ref{tau'}),  by $\tau(w)(x)=\langle x w,w\rangle$. 
The variety $\tau^{-1}(0)\subseteq \Wv$ is the closure of a single orbit $\Oo$; see e.g. \cite[Lemma 2.16]{PrzebindaUnipotent}. There is a positive $\G\G'$-invariant measure $\mu_{\Oo}$ on this orbit which defines a homogeneous tempered distribution. We denote its degree by $\deg \mu_\Oo$. For $t>0$ let 
$
M_t(w)=tw 
$, $w\in \Wv$.
Denote by $M_t^*$ the corresponding pullback of distributions. In particular
$
M_t^*\mu_\Oo=t^{\deg \mu_\Oo}\mu_\Oo
$.
We show that
\[
t^{\deg \mu_\Oo}M_{t^{-1}}^*T(\check\Theta_{\Pi}) \underset{t\to 0}\to const\,\mu_{\Oo},
\]
as tempered distributions, where $const$ is a non-zero constant, see Proposition \ref{the dilation limit of intertwining distribution}. This last statement leads to an elementary proof of the equality
\begin{equation}\label{WF1andtau'tau}
WF_1(\Theta_{\Pi'})=\tau'\tau^{-1}(0)\,,
\end{equation}
see Corollary \ref{WF of Pi'}. Here $WF_1(\Theta_{\Pi'})$ denotes the fiber of the wave front set $WF(\Theta_{\Pi'})$ over the identity $1 \in \wt{G'}$. As proven by Rossmann in \cite{Ross95}, $WF_1(\Theta_{\Pi'})$ agrees with $WF(\Pi')$, the wave front set of the representation $\Pi'$ in the sense of Howe \cite{HoweWave}.

The equality (\ref{WF1andtau'tau}) was already verified  in \cite[Theorem 6.11]{PrzebindaUnipotent}, but the proof used a theorem of Vogan concerning the restriction of a representation to a maximal compact subgroup, \cite{VoganGelfand}, which is not needed in our present approach. In order to stay within the theory of the almost semisimple orbital integrals on the symplectic space, see section \ref{An almost semisimple orbital integral on the symplectic space}, we consider only representations $\Pi$ such that the distribution character $\Theta_\Pi$ is supported in the preimage $\wt{\G_1}$ of the Zariski identity component $\G_1$ of $\G$. This eliminates some representations of the groups $\Og_{2l}$.
For reader's convenience one should mention here that there is a notion of an associated variety of a presentation introduced by Vogan \cite{Vogan89} and that associated variety determines the wave front set of a representation \cite{SchmidVilonen2000}. In this context a recent work of Lock and Ma \cite{LockMaassocvar} provides a vast generalization of our formula for the wave front set of $\Pi'$, for dual pairs in the stable range with $\G$-the smaller member. Needless to say this approach is much more sophisticated and much less direct than ours. 
We point out that the Gelfand-Kirillov dimension of $\Pi'$ (=$\frac{1}{2}\dim WF(\Pi')$) was determined in \cite[Theorem C.1]{PrzebindaUnipotent} and rediscovered later independently in    \cite{notyk} and in \cite[Theorem 6]{EnrightWillenbring2004}. Also, \cite{notyk} gives a formula for the associated variety of $\Pi'$ in the case $\G$-compact and without the stable range assumption.

In section \ref{two unitary groups} we demonstrate that our computation are precise enough to recover Weyl's theorem, saying that if both $\G$ and $\G'$ are compact then the restriction of the Weil representation to $\wt{\G}\times\wt{\G'}$ decomposes with multiplicity one.

In section \ref{Limits of orbital integrals in the stable range.} we compute limits of the almost semisimple orbital integrals using van der Corput lemma rather than techniques based on unpublished work of Ranga Rao and obtain  in that case a stronger version of Rossmann's limit formula, \cite{RossmannNilpotent}.
\section{\bf The group\rm\ $\wt\G$\bf\ and the Weyl denominator.\rm}\label{The group G and the Weyl denominator}
We keep the notation from the introduction. In particular, $(\G,\G')$ is a dual pair with $\G$ compact and $J$ is a fixed compatible positive complex structure on $\Wv$.

In this section we describe the restriction of the covering
\begin{equation}\label{The group G0}
\wt\Sp\ni (g,\xi)\to g\in \Sp
\end{equation}
to the group $\G$.  This is then applied to study the analytic lift of the Weyl denominator.  

Let $\Sp^J\subseteq \Sp$ denote the centralizer of $J$. Since $\Sp^J$ 
is a maximal compact subgroup of $\Sp$, we may assume that $\G\subseteq \Sp^J$ and begin by studying the restriction of the map (\ref{The group G0}) to $\wt{\Sp^J}$.

Let $\Wv_\C$ be the complexification of $\Wv$. Denote by the same symbol $\langle\cdot ,\cdot \rangle$ the complex bilinear extension of the symplectic form from $\Wv$ to $\Wv_\C$.
Let $\Wv_\C^+\subseteq \Wv_\C$ be the $i$-eigenspace for $J$. Denote by $\Wv_\C\ni w\to\overline w\in \Wv_\C$ the conjugation with respect to the real form $\Wv\subseteq \Wv_\C$. Then $\Wv_\C^-=\overline{\Wv_\C^+}$ is the $(-i)$-eigenspace for $J$ and 
\begin{eqnarray}\label{The group G1}
\Wv_\C=\Wv_\C^+\oplus \Wv_\C^-
\end{eqnarray}
is a complete polarization. The formula
\begin{eqnarray}\label{The form H}
H(w,w')=i\langle w, \overline{w'}\rangle \qquad (w, w'\in \Wv_\C)
\end{eqnarray}
defines a non-degenerate hermitian form on $\Wv_\C$ and the map
\begin{eqnarray}\label{half1minusiJ}
\frac{1}{2}(1-iJ):\Wv\to \Wv_\C^+
\end{eqnarray}
is an $\R$-linear isomorphism. Moreover, if $w=\frac{1}{2}(1-iJ)w_0$ and $w'=\frac{1}{2}(1-iJ)w_0'$ for some $w_0, w_0'\in\Wv$, then 
\begin{eqnarray}\label{half1minusiJ 1}
H(w,w')=\frac{1}{2}(\langle J w_0,w_0'\rangle + i\langle w_0, w_0'\rangle).
\end{eqnarray}
In particular, the restriction $H|_{\Wv_\C^+}$ of $H$ to $\Wv_\C^+$ is positive definite. 
Let $\Ug\subseteq \End(\Wv_\C^+)$ denote the isometry group of the form $H|_{\Wv_\C^+}$.

Let $g \in \Sp^J$. Then $g$ can be extended to a complex linear endomorphism, still denoted by $g$, which belongs to $\Sp(\Wv_\C)^J$. Clearly $g(\Wv_\C^+) = \Wv_\C^+$. Set 
$u=g|_{\Wv_\C^+}$. Then for every $w \in \Wv_\C^+$ with $w=\frac{1}{2}(1-iJ)w_0$ for $w_0 \in \Wv$, we have
$uw=g\big[  \frac{1}{2}(1-iJ)w_0\big]=\frac{1}{2}(1-iJ)gw_0$, i.e. 
$$u=\frac{1}{2}(1-iJ) \circ g \circ [\frac{1}{2}(1-iJ)]^{-1}\,.$$
It follows that $u \in U$.

Let
\begin{eqnarray}\label{wtU}
\wt{\Ug}=\{(u,\zeta);\ \det\,u=\zeta^2,\ u\in \Ug\}\subseteq \GL(\Wv_\C^+)\times \C^\times.
\end{eqnarray}
endowed with the coordinate-wise multiplication. 
This is a connected two-fold covering group of $\Ug$. 

\begin{pro} \label{lemma:isoSpJU}
The group isomorphism 
\begin{eqnarray}\label{isoSpJU}
\Sp^J \in g \to u=g|_{\Wv_\C^+}=\frac{1}{2}(1-iJ) \circ g \circ [\frac{1}{2}(1-iJ)]^{-1} \in U
\end{eqnarray}
lifts to a group isomorphism 
\begin{eqnarray}\label{liftisoSpJU}
\wt{\Sp^J}\ni (g,\xi)\to(u,\xi \det(g-1)_{(g-1)\Wv_\C^+})\in \t\Ug\,.
\end{eqnarray}
Therefore the restriction of the covering (\ref{The group G0}) to $\wt{\Sp}^J$ is isomorphic to the covering
\begin{eqnarray}\label{wtUcoverningU}
\wt\Ug\ni (u,\zeta)\to u\in \Ug.
\end{eqnarray}
\end{pro}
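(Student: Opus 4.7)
My plan is to split the proof into three parts: (a) the map $\Sp^J\to\Ug$, $g\mapsto u=g|_{\Wv_\C^+}$, is a group isomorphism; (b) the formula $(g,\xi)\mapsto(u,\xi\det(g-1)_{(g-1)\Wv_\C^+})$ sends $\wt{\Sp^J}$ into $\wt\Ug$; and (c) this formula is a group homomorphism. The final assertion about the two coverings then follows formally from (a)--(c) together with commutativity over the base.

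For (a), I would use that $g\in\Sp^J$ commutes with $J$ and is real, hence its complexification preserves the decomposition $\Wv_\C=\Wv_\C^+\oplus\Wv_\C^-$ with $g|_{\Wv_\C^-}=\overline{g|_{\Wv_\C^+}}$; preservation of the symplectic form together with \eqref{The form H} translates into $u$ being an $H|_{\Wv_\C^+}$-isometry, so $u\in\Ug$, and the inverse is given by gluing $u$ with its conjugate across $\Wv_\C^\pm$ and restricting to $\Wv$. For (b), diagonalize $u$ with eigenvalues $\lambda_1,\ldots,\lambda_n\in S^1$ and let $d=\#\{i:\lambda_i\neq 1\}$. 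The decomposition $(g-1)\Wv_\C=(u-1)\Wv_\C^+\oplus(\bar u-1)\Wv_\C^-$ gives $\dim(g-1)\Wv=2d$, and on this space $J_g=-J(g-1)$ acts block-diagonally as $-i(u-1)$ and $+i(\bar u-1)$. Hence $\det(J_g)_{J_g\Wv}=|\delta|^2$ where $\delta:=\det(g-1)_{(g-1)\Wv_\C^+}=\prod_{\lambda_i\neq 1}(\lambda_i-1)$. Combining with \eqref{metaplectic group} yields $\xi^2=(-1)^d/|\delta|^2$, and using $\bar\lambda_i=\lambda_i^{-1}$ a short calculation gives $\delta/\bar\delta=(-1)^d\det u$, so $\zeta^2:=(\xi\delta)^2=\det u$ and $(u,\zeta)\in\wt\Ug$.

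The main obstacle is (c), which is equivalent to the cocycle identity $C(g_1,g_2)\,\delta_{g_1g_2}=\delta_{g_1}\delta_{g_2}$ for $g_1,g_2\in\Sp^J$. The modulus of this identity matches by \eqref{|cocycle|} and the formula $\det(J_g)_{J_g\Wv}=|\delta_g|^2$ obtained in (b). For the phase one must compute $\sgn(q_{g_1,g_2})$ from \eqref{gamag1g2} using that for $g\in\Sp^J$ the Cayley transform $(g+1)(g-1)^{-1}$ has purely imaginary eigenvalues on $\Wv_\C^+$, so $\langle(g+1)(g-1)^{-1}\cdot,\cdot\rangle$ is controlled by the angles $\theta_i$ with $\lambda_i=e^{i\theta_i}\neq 1$. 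The cleanest route is to verify the identity on the Zariski dense open subset of $\Sp^J\times\Sp^J$ where $g_1-1$, $g_2-1$, and $g_1g_2-1$ are all invertible on $\Wv$; there $\delta_g=\det_\C((g-1)|_{\Wv_\C^+})$ depends analytically on $g$, both sides become explicit products in the eigenvalues, and the identity extends by continuity using that $\wt{\Sp^J}$ is connected (as the unique connected double cover of $\Sp^J\cong\Ug(n)$).
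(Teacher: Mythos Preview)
Your plan for (a) and (b) is sound and close in spirit to the paper's argument; the eigenvalue computation in (b) is a clean alternative to the paper's more algebraic derivation of \eqref{detgdetu1}, and both lead to $(\xi\delta_g)^2=\det u$.

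Part (c), however, has a genuine gap. The assertion that on the Zariski dense set ``both sides become explicit products in the eigenvalues'' is not justified: while $\delta_{g_j}$ is a product over the eigenvalues of $g_j$, the phase of the cocycle is $\chi(\tfrac18\sgn q_{g_1,g_2})$, and the form $q_{g_1,g_2}$ depends jointly on $g_1$ and $g_2$. Since generic elements of $\Sp^J$ do not commute, they cannot be simultaneously diagonalized, and $\sgn q_{g_1,g_2}$ is not a product over independent eigenvalue data. The paper confronts exactly this point: it shows \eqref{two signatures}, namely $\sgn q_{g_1,g_2}=2\,\sgn h_{g_1,g_2}$ for an auxiliary hermitian form $h_{g_1,g_2}$ on $(g_1-1)\Wv_\C^+\cap(g_2-1)\Wv_\C^+$, and then devotes all of Appendix~E to the identity \eqref{phases}, which is precisely the phase part of \eqref{formula for cocycle 1}. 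That computation is the substance of the proof and cannot be replaced by a one-line appeal to eigenvalues.

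The covering-space idea you gesture at (``extends by continuity using that $\wt{\Sp^J}$ is connected'') could in principle replace Appendix~E, but not in the form stated. What would be needed is: first, that $\zeta=\xi\delta_g$ is a \emph{continuous} function on $\wt{\Sp^J}$ (not obvious, since $\xi$ and $\delta_g$ are separately unbounded/discontinuous across the locus where $g-1$ drops rank; knowing only $\zeta^2=\det u$ is continuous does not suffice); second, that a continuous bijection between connected double covers of $\Ug(n)$, covering the base isomorphism and sending identity to identity, must be \emph{the} homomorphic lift (this is true by the lifting criterion, but needs to be said). As written, your proposal carries out neither the direct phase computation nor this alternative argument, so (c) remains incomplete.
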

\begin{proof}
Notice that any element $g\in \Sp(\Wv_\C)^J$ preserves the decomposition (\ref{The group G1}) and satisfies the following formula
\begin{eqnarray}\label{The group G4.1}
\langle g^{-1}w,w'\rangle = \langle w,g w'\rangle
\qquad (w\in \Wv_\C^+,\ w'\in\Wv_\C^-).
\end{eqnarray}
Since the symplectic form identifies $\Wv_\C^+$ with the dual of $\Wv_\C^-$ and since the determinant of the adjoint linear map is equal to the determinant of the original linear map, (\ref{The group G4.1}) shows that 
$$
\det(g-1)_{\Wv_\C^-}=\det(g^{-1}-1)_{\Wv_\C^+},
$$
where  we take the determinant of the linear map restricted to the indicated subspace.
Hence,
\begin{eqnarray}\label{det-gminusid-onWandWC+}
\det(g-1)=(-1)^n \det(g-1)_{\Wv_\C^+}^2 \det(g)_{\Wv_\C^+}^{-1}.
\end{eqnarray}
Let  $g \in \Sp^J$. 
The restriction $\inner{\cdot}{\cdot}_0$ of $\inner{\cdot}{\cdot}$ to $(g-1)\Wv$ is nondegenerate. 
Indeed, since $g$ and $J$ commute, the orthogonal complement $((g-1)\Wv)^{\perp}$ of $(g-1)\Wv$ with respect to  $\inner{\cdot}{\cdot}$ coincides with the orthogonal complement of $(g-1)\Wv$ with respect 
to the positive definite form $\inner{J\cdot}{\cdot}$. Hence $((g-1)\Wv)^{\perp} \cap (g-1)\Wv=0$.

Consider $(g-1)\Wv$ as a symplectic space with $\inner{\cdot}{\cdot}_0$ as a symplectic form, and let $\Sp_0$ be the corresponding symplectic group.
 Since $J$ preserves $(g-1)\Wv$, we can consider its restriction $J_0$ to $(g-1)\Wv$. It satisfies 
$J_0^2=-1$ and the bilinear form $\inner{J_0\cdot}{\cdot}_0$ is symmetric and positive definite. 
So $J_0$ is a positive compatible complex structure on  $(g-1)\Wv$. In particular, $J_0 \in \Sp_0$. So $\det(J)_{(g-1)\Wv}=\det(J_0)=1$.

Formula  (\ref{det-gminusid-onWandWC+}) applied to $g\in \Sp((g-1)\Wv_\C)^{J_0}$ 
shows that
\begin{equation*}
\det(J_g)_{(g-1)\Wv}=\det(J^{-1}(g-1))_{(g-1)\Wv}=i^{\dim (g-1)\Wv} \det(g-1)_{(g-1)\Wv_\C^+}^2 \det(g)_{(g-1)\Wv_\C^+}^{-1}.
\end{equation*}
Recall the notation $u=g|_{\Wv_\C^+}$.
Since $\Wv_\C^+$ is the direct sum of $(g-1)\Wv_\C^+$ and the subspace where $g=1$, 
\begin{equation} \label{detgdetu}
\det(g)_{(g-1)\Wv_\C^+}=\det(g)_{\Wv_\C^+}=\det(u).
\end{equation}
Thus, in terms of (\ref{metaplectic group})
\begin{eqnarray}\label{detgdetu1}
\det(J_g)_{(g-1)\Wv}^{-1}=i^{\dim (g-1)\Wv} \det(g-1)_{(g-1)\Wv_\C^+}^{-2} \det(u).
\end{eqnarray}
Hence, the map (\ref{liftisoSpJU}) is a well defined bijection. We now prove that it is a group homomorphism. 

We see from \eqref{multiplicationSp} that the map (\ref{liftisoSpJU}) is a group homomorphism if and only if
\begin{eqnarray}\label{formula for cocycle 1}
C(g_1, g_2)=\frac{\det(g_1-1)_{(g_1-1)\Wv_\C^+}{\det(g_2-1)_{(g_2-1)\Wv_\C^+}}}{\det(g_1g_2-1)_{(g_1g_2-1)\Wv_\C^+}}.
\end{eqnarray}
The equations \eqref{detgdetu1} and \eqref{|cocycle|} show that the absolute values of the two sides of \eqref{formula for cocycle 1} are equal.
 
In order to shorten the notation, let us write $c(g)u=(g+1)(g-1)^{-1}u$ for $u$ in the image of $g-1$. Set
\begin{eqnarray*}
h_{g_1,g_2}(w',w'')=H(-ic(g_1)w',w'')&+&H(-ic(g_2)w',w'') \\ 
&&(w',w''\in (g_1-1)\Wv_\C^+\cap (g_2-1)\Wv_\C^+.\nn
\end{eqnarray*}
Then, since $g_1$ and $g_2$ commute with $J$,
\begin{eqnarray*}
h_{g_1,g_2}(w',w'')&=&\frac{1}{2}(\langle c(g_1)w_0',w_0''\rangle -i \langle J c(g_1)w_0',w_0''\rangle)\\
&+&\frac{1}{2}(\langle c(g_2)w_0',w_0''\rangle -i \langle J c(g_2)w_0',w_0''\rangle),
\end{eqnarray*}
where $w'$ and $w''$ are the images of $w_0'$ and $w_0''$ under the map \eqref{half1minusiJ} respectively. Moreover, $w_0', w_0''\in
(g_1-1)\Wv\cap(g_2-1)\Wv$. In particular we see that the form $h_{g_1,g_2}$ is hermitian and 
\[
\Re(h_{g_1,g_2}(w',w''))=q_{g_1,g_2}(w_0',w_0'').
\]
Let $w^1$, $w^2$, ..., $w^n$ be an $h_{g_1,g_2}$-orthogonal basis of the complex vector space $\Wv_\C^+$ with $h_{g_1,g_2}(w^k,w^k)=\pm 1$ or $0$. Then  $w^1_0$, $Jw^1_0$, $w^2_0$, $Jw^2_0$, ..., $w^n_0$, $Jw^n_0$ is an $q_{g_1,g_2}$-orthogonal basis of the real vector space $\Wv$ with $h_{g_1,g_2}(w^k,w^k)=q_{g_1,g_2}(w^k_0,w^k_0)=q_{g_1,g_2}(Jw^k_0,Jw^k_0)$. 
The signature of $h_{g_1,g_2}$ is the difference between the number of the positive $h_{g_1,g_2}(w^k,w^k)$ and the number of the negative $h_{g_1,g_2}(w^k,w^k)$, and similarly for the symmetric form $q_{g_1,g_2}$. Hence
\[
\sgn h_{g_1,g_2}=\frac{1}{2} \sgn q_{g_1,g_2}.
\]
Therefore
\begin{equation}\label{two signatures}
i^{\sgn h_{g_1,g_2}}=e^{\frac{\pi i}{2}\sgn h_{g_1,g_2}}=e^{\frac{\pi i}{4}\sgn q_{g_1,g_2}}=\chi(\frac{1}{8}\sgn q_{g_1,g_2}).
\end{equation}
We see from \eqref{cocycle/|cocycle|} and \eqref{two signatures} that it will suffice to prove that
\begin{eqnarray}\label{phases}
&&\frac{\det(g_1-1)_{(g_1-1)\Wv_\C^+}{\det(g_2-1)_{(g_2-1)\Wv_\C^+}}}{\det(g_1g_2-1)_{(g_1g_2-1)\Wv_\C^+}}
\left|\frac{\det(g_1g_2-1)_{(g_1g_2-1)\Wv_\C^+}}{\det(g_1-1)_{(g_1-1)\Wv_\C^+}{\det(g_2-1)_{(g_2-1)\Wv_\C^+}}}\right|\nn\\
&=&i^{\sgn h_{g_1,g_2}}.
\end{eqnarray}
This requires a significant amount of additional notation and therefore will be done in Appendix E. In fact, since both sides of \eqref{formula for cocycle 1} are cocycles we may (and shall) assume that $\Ker(g_1-1)=\{0\}$, see \cite[proof of Theorem 31]{AubertPrzebinda_omega}.
\end{proof}

Proposition \ref{lemma:isoSpJU} allows us to study the covering $\wt \G\to \G$ by means of the 
(explicitly given) covering $\wt \Ug \to \Ug$. 

In the following we restrict ourselves to  dual pairs $(\G,\G')$ which are irreducible, that is no nontrivial direct sum decomposition 
of the symplectic space $\Wv$ is simultaneously preserved by $\G$ and $\G'$. Irreducible dual pairs have been classified by Howe \cite{howetheta}. Those for which $\G$ is compact are all of type I. They are
\begin{equation} \label{classificationGG'}
(\Og_d, \Sp_{2m}(\R))\,,  \qquad  (\Ug_{d}, \Ug_{p,q})\,, \qquad        
(\Sp_d, \Og^*_{2m}).
\end{equation}
More precisely, given the dual pair $(\G, \G')$, there is a division algebra $\Dc=\R$, $\C$ or $\Ha$, an involution $\Bbb D\ni a \to \overline a\in\Bbb D$ over $\R$, a right $\Dc$-vector space $\Vv$ with a positive definite hermitian form $(\cdot,\cdot )$ and a left $\Dc$-vector space $\Vv'$ with a non-degenerate skew-hermitian form $(\cdot,\cdot )'$ so that $\G$ is the isometry group of the form $(\cdot,\cdot )$, $\G'$ is the isometry group of the form $(\cdot,\cdot )'$ and  the symplectic space $\Wv=\Vv\otimes_\Dc\Vv'$ with $\langle\cdot,\cdot\rangle=\tr_{\Dc/\R}((\cdot,\cdot )\otimes(\cdot,\cdot )')$, see \cite{howetheta}. The group $\G$ is viewed as a subgroup of the symplectic group via the identification $\G\ni g=g\otimes 1\in \Sp$. Similarly, the group $\G'$ is viewed as a subgroup of the symplectic group via the identification $\G'\ni g'=1\otimes g'\in \Sp$.

Since $\G$ commutes with $J$, there is $J'\in\G'$ such that $J=1\otimes J'$. Let $\Vv'_\C=\Vv'\otimes_\R\C$ and let $\Vv'_\C{}^+\subseteq \Vv'_\C$ be the $i$-eigenspace for $J'$. Then
$\Wv_\C^+=\Vv\otimes_\Dc\Vv'_\C{}^+$ and 
\[
\det(g)_{\Wv_\C^+}=\det(g\otimes 1)_{\Vv\otimes_\Dc\Vv'_\C{}^+} \qquad (g\in\G).
\]

The description of $\G$ as isometry group of the form $(\cdot,\cdot )$ on $\Vv$ provides an irreducible complex representation of $\G$ on $\Vv_\C=\Vv\otimes_\R \C$, of dimension $d=\dim \Vv$. 

Recall that the group $\Og_d$ has, up to an equivalence, one irreducible complex representation of dimension $d$. Call it $\delta$. Then  $\det(\delta(g))=\pm 1$, $g\in\G$.

The group $\Ug_d$ has two  irreducible complex representations of dimension $d$, $\delta$ and the contragredient $\delta^c$. Then 
$\det(\delta^c(g))=\det(\delta(g))^{-1}$, $g\in\G$.

The group $\Sp_d$ has, up to an equivalence, one irreducible complex representation $\delta$ of dimension $2d$. In this case $\det(\delta(g))=1$, $g\in\G$.

In these terms
\begin{equation} \label{detg-delta}
\det(g\otimes 1)_{\V\otimes_\Dc\V'_\C{}^+}=\left\{
\begin{array}{lll}
\det(\delta(g))^m\ \ &\text{if $\G'$ is isomorphic to $\Sp_{2m}(\R)$},\\
\det(\delta(g))^{p-q}\ \ &\text{if $\G'$ is isomorphic to $\Ug_{p,q}$},\\
1\ \ &\text{if $\G'$ is isomorphic to $\Og_{2m}^*$.}
\end{array}\right.
\end{equation}
We set 
\begin{equation} \label{sqrtG}
\sqrt{\G}=\{(g,\zeta);\ g \in \G, \zeta^2=\det(\delta(g))\}\,.
\end{equation}

\begin{pro}
The covering
\begin{equation}\label{The group G7}
\wt\G\to\G
\end{equation}
splits if and only if $\det(g\otimes 1)_{\V\otimes_\Dc\V'_\C{}^+}$ is a square. 
This does NOT happen if and only if
either $\G'$ is isomorphic to $\Sp_{2m}(\R)$ with $m$ odd or $\G'$ is isomorphic to $\Ug_{p,q}$ with $p+q$ odd. In these cases $\wt\G$ is isomorphic to $\sqrt{\G}$.
\end{pro}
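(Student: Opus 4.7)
The plan is to combine Proposition \ref{lemma:isoSpJU} with a character-theoretic analysis. By that proposition, restricting the metaplectic cover to $\G\subseteq \Sp^J$ gives
\[
\wt\G\cong\{(g,\zeta)\in\G\times\C^\times : \zeta^2=\det(g)_{\Wv_\C^+}\}\to\G,\qquad (g,\zeta)\mapsto g.
\]
This $2$-to-$1$ cover splits if and only if there is a continuous group homomorphism $\eta:\G\to\C^\times$ (necessarily valued in $U(1)$, since $\G$ is compact) with $\eta(g)^2=\det(g)_{\Wv_\C^+}$; the section is then $g\mapsto(g,\eta(g))$. Hence splitting is equivalent to $g\mapsto\det(g)_{\Wv_\C^+}$ being a square in the character group $\Hom(\G,U(1))$.

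Next I would invoke \eqref{detgdetu} together with \eqref{detg-delta} to identify this character as $\det\delta^m$, $\det\delta^{p-q}$, or the trivial character, depending on which of the three families in \eqref{classificationGG'} we are in. The splitting problem then reduces to computing the character group of $\G$ in each case and checking whether the relevant power of $\det\delta$ is a square there. For $\G=\Og_d$, every continuous character equals $1$ or $\det\delta$ (for $d\geq 3$ because $\SO_d$ is semisimple, and by direct inspection for $d=1,2$), and only the trivial character is a square, so $\det\delta^m$ is a square exactly when $m$ is even. For $\G=\Ug_d$ the derived group is $\mathrm{SU}_d$, so every character has the form $\det\delta^n$ for some $n\in\Ze$; squaring doubles the exponent, so $\det\delta^{p-q}$ is a square exactly when $p-q$ (equivalently $p+q$) is even. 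For $\G=\Sp_d$ the character is trivial and splitting is automatic. Together these observations give the first two assertions of the proposition.

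Finally, in each non-splitting case I would identify $\wt\G$ with $\sqrt\G$ explicitly. When $\G=\Og_d$ and $m$ is odd, the identity $\det\delta(g)^m=\det\delta(g)$ (which holds since $\det\delta(g)\in\{\pm 1\}$) shows that the defining equations of $\wt\G$ and $\sqrt\G$ coincide, so they are literally equal as covers over $\G$. When $\G=\Ug_d$ and $p-q=2k+1$, the factorization $\det\delta(g)^{p-q}=\det\delta(g)\cdot\det\delta(g)^{2k}$ shows that $(g,\zeta)\mapsto (g,\zeta\det\delta(g)^{-k})$ is a group isomorphism from $\wt\G$ onto $\sqrt\G$ intertwining the two projections to $\G$. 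I do not foresee a serious obstacle; the only mildly delicate point is the computation of the character group of $\Og_d$ for small $d$, where $\SO_d$ is not semisimple, but this is routine.
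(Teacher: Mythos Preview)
Your proposal is correct and follows essentially the same route as the paper: both reduce the splitting question via Proposition~\ref{lemma:isoSpJU} to whether $g\mapsto\det(g)_{\Wv_\C^+}$ admits a continuous square root as a character of $\G$, then invoke \eqref{detg-delta} and settle each case, and both exhibit the same twisting isomorphism $(g,\zeta)\mapsto(g,\zeta\det\delta(g)^{-k})$ for the final identification with $\sqrt\G$. The only cosmetic difference is that you organize the case analysis by first describing the full character group of $\G$ and checking divisibility by $2$ there, whereas the paper argues more directly by tracking the image of a hypothetical square root $\zeta$ on specific elements (e.g.\ on $\Og_d\setminus\SO_d$, or on the embedded $\Ug_1\subseteq\Ug_d$); the content is the same.
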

\begin{proof}
Because of Proposition \ref{lemma:isoSpJU} we can identify the cover $\wt{\Sp^J} \to \Sp^J$ with 
$\wt\Ug\to \Ug$. Hence the cover $\wt\G\to \G$ splits if and only if there is a group homomorphism $\G\in g \to \zeta(g)\in \Ug_1 \subseteq \C^\times$ so that $\zeta(g)^2=\det(g)_{\Wv_\C^+}$. Here we are using (\ref{detgdetu}) and that $\G$ is compact. By (\ref{detg-delta}), this happens except at most in the two cases listed in the statement of the Proposition.  

Suppose that $\G'=\Sp_{2m}(\R)$, and let $\zeta:\Og_d\to \Ug_1$ be a continuous group homomorphism so that $\zeta(g)^2=\det(\delta(g))^m=(\pm 1)^m$. Then    $\zeta(\Og_d)\subseteq \{\pm 1,\pm i\}$ and it is a subgroup with at most two elements. So $\zeta(\Og_d)\subseteq \{\pm 1\}$. On the other hand, if $g \in \Og_d \setminus \SO_d$, 
then $\det(\delta(g))=-1$. Thus $\zeta(g)^2\neq \det(\delta(g))^m$ if $m$ is odd.

Suppose now that $\G'=\Ug_{p,q}$, and let $\zeta:\Ug_d\to \Ug_1$ be a continuous group homomorphism so that $\zeta(g)^2=\det(\delta(g))^{p-q}$. Restriction to $\Ug_1 \equiv \{\diag(h, 1\dots, 1): h \in \Ug_1\} \subseteq \Ug_d$ yields a continuous group homomorphism $h \in \Ug_1 \to \zeta(h) \in  \Ug_1$. Thus, there is  $k\in \Zb$ so that $\zeta(h)=h^k$ for all $h \in \Ug_1$. So $h^{2k}=\zeta(h)^2=\det(\diag(h, 1\dots, 1))^{p-q}$ implies that $p+q$ must be even. 

For the last statement, consider for $k \in\Zb$ the covering $\Mg_k=\{(g,\zeta):\zeta^2=\det(\delta(g))^{2k+1}\}$ of $\G$. Then $(g,\zeta)\to (g,\zeta (\det(\delta(g)))^{-k})$ is a covering isomorphism between $\Mg_k$ and $\Mg_0$.   
\end{proof}

Let $\diesis{\G}=\sqrt{\G}$ if the covering (\ref{The group G7}) does not split and let $\diesis{\G}=(\Zb/2\Zb)\times\sqrt{\G}$ if it does. Then we have the obvious (possibly trivial) covering
\begin{equation}\label{acceptable covering}
\diesis{\G}\to \wt\G.
\end{equation}
Let $\H\subseteq \G$ be the diagonal Cartan subgroup. 
Denote by $\diesis{\H}\subseteq \diesis{\G}$ the preimage of $\wt \H$ and let $\diesis{\H}_o\subseteq\diesis{\H}$ be the connected identity component. 
Fix a system of positive roots of $\h$ in $\g_\C$ and let $\rho\in i\h^*$ denote one half times the sum of all the positive roots. 
Then there is a group homomorphism $\xi_\rho:\diesis{\H}_o\to \C^\times$ whose derivative at the identity coincides with $\rho$. 
More generally, for any $\mu\in i\h^*$,  let $\xi_\mu:\diesis{\H}_o\to \C^\times$  denote the unique group homomorphism which has derivative at the identity equal to $\mu$, if it exists.
In particular the Weyl denominator
\begin{equation} \label{eq:Weyl-den}
\Delta(h)=\xi_\rho(h)\prod_{\alpha>0}(1-\xi_{-\alpha}(h)) \qquad (h\in \diesis{\H}_o),
\end{equation}
where the product is taken over all the positive roots $\alpha$, is well defined and analytic. 
\section{{\bf A Theorem of G. W. Schwartz.}}\label{A Theorem of G. W. Schwartz}
Since the group $\G$ is compact, the involution $\Bbb D\ni a \to \overline a\in\Bbb D$ is trivial if and only if $\Bbb D=\R$. Let $M_{d,d'}=M_{d,d'}(\Bbb D)$ denote the real vector space of the $d$ by $d'$ matrices with the entries in $\Bbb D$ and let $\Hs_{d'}=\Hs_{d'}(\mathbb D)=\{X\in M_{d',d'};\ \overline X^t=X\}$  denote the real vector space of the $\mathbb D$-hermitian matrices of size $d'$. In this section we shall be concerned with the map 
\begin{equation}\label{map beta}
\beta: M_{d,d'}\ni w\to \overline w^t w\in\Hs_{d'}.
\end{equation}
The group $\Ug_d=\Ug_d(\Bbb D)=\{g\in\GL_d(\Bbb D);\ \overline g^t=g^{-1}\}$ acts on $M_{d,d'}$ via the left multiplication and preserves the fibers of $\beta$. (In the standard notation $\Ug_d(\Bbb D)$ is equal to $\Og_d$ if $\Dc=\R$, $\Ug_d$ if $\Dc=\C$ and $\Sp_md$ if $\Dc=\Ha$.)
The title of this section refers to the following proposition, which is based on a theorem by G. Schwartz \cite{Schwarz74} .
\begin{pro}\label{SchwarzEquality}
With the above notation,
\begin{equation}\label{SchwarzEquality1}
C_c^\infty(\Hs_{d'})\circ\beta=C_c^\infty(M_{d,d'})^{\Ug_d},
\end{equation}
where $X^Y$ means the $Y$-invariants in $X$.
\end{pro}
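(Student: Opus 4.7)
The plan is to verify the two inclusions separately. The direction $C_c^\infty(\Hs_{d'})\circ\beta\subseteq C_c^\infty(M_{d,d'})^{\Ug_d}$ is immediate: the pullback $\varphi\circ\beta$ is smooth and $\Ug_d$-invariant because $\beta(gw)=\overline{gw}^t(gw)=\overline w^t\overline g^tgw=\overline w^tw$ for $g\in\Ug_d$, and the identity $\tr\beta(w)=\|w\|^2$ makes $\beta$ proper, so $\supp(\varphi\circ\beta)\subseteq\beta^{-1}(\supp\varphi)$ is compact.

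For the reverse inclusion I would combine the First Fundamental Theorem of classical invariant theory with G.\ Schwarz's smooth-invariant theorem \cite{Schwarz74}. The First Fundamental Theorem, applied to $\Og_d$, $\Ug_d$, or $\Sp_d$ acting by left multiplication on $M_{d,d'}(\Dc)$, asserts uniformly in $\Dc=\R,\C,\Ha$ that the real algebra of $\Ug_d(\Dc)$-invariant polynomials on $M_{d,d'}$ is generated by the real-valued coordinate functions of $\beta(w)=\overline w^tw$ with respect to any basis of $\Hs_{d'}$ as a real vector space. Schwarz's theorem then lifts this polynomial statement to the smooth level, yielding a surjective pullback $\beta^*\colon C^\infty(\Hs_{d'})\to C^\infty(M_{d,d'})^{\Ug_d}$. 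Applying it to a given $f\in C_c^\infty(M_{d,d'})^{\Ug_d}$ produces some $\varphi\in C^\infty(\Hs_{d'})$, not a priori compactly supported, with $f=\varphi\circ\beta$.

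To finish, I would promote $\varphi$ to a compactly supported function by a cutoff. The set $K=\beta(\supp f)$ is compact in $\Hs_{d'}$, so there exists $\chi\in C_c^\infty(\Hs_{d'})$ equal to $1$ on an open neighborhood of $K$. Setting $\widetilde\varphi=\chi\varphi\in C_c^\infty(\Hs_{d'})$, on $\supp f$ one has $\chi\circ\beta\equiv 1$, so $\widetilde\varphi\circ\beta=\varphi\circ\beta=f$; off $\supp f$ both $f$ and $\varphi\circ\beta$ vanish, and hence $\widetilde\varphi\circ\beta=0$ there as well. Thus $f=\widetilde\varphi\circ\beta$ as required.

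The step that will require the most care is identifying the correct generating set so that Schwarz's theorem produces a smooth function directly on $\Hs_{d'}$, rather than on an abstract coordinate space $\R^k$. This is classical in each of the cases $\Dc=\R,\C,\Ha$, but must be phrased uniformly in terms of the real structure on $\Hs_{d'}$ and in terms of the specific map $\beta$; once that is in place the reduction of smooth invariants to polynomial invariants is the crux of the argument and the remainder is formal.
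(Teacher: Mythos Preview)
Your proposal is correct and follows essentially the same approach as the paper: both prove properness of $\beta$ via the trace/norm relation, invoke Schwarz's theorem to produce a smooth (not necessarily compactly supported) $\varphi$ on $\Hs_{d'}$, and then multiply by a smooth cutoff equal to $1$ near $\beta(\supp f)$. You are slightly more explicit than the paper in flagging that applying Schwarz's theorem here presupposes the First Fundamental Theorem for $\Ug_d(\Dc)$ (so that the real coordinates of $\beta$ generate the invariant polynomials), but this is a point of exposition rather than a difference in strategy.
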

\begin{prf}
Since each element of $\Bbb D$ may be viewed as a matrix with real entries of size equal to the dimension of $\Bbb D$ over $\R$, we have an inclusion $M_{d,d'}(\Bbb D)\subseteq M_{d\,\dim\Bbb D,d'\,\dim\Bbb D}(\R)$. 
Cauchy-Schwarz inequality shows that
\begin{equation}\label{CSinequality}
\tr_{\Bbb D/\R}\beta(w)=\tr_{\Bbb D/\R}(I\beta(w))\leq \sqrt{\tr_{\Bbb D/\R} I}\,\sqrt{\tr_{\Bbb D/\R}(\beta(w)^2)}\qquad (w\in M_{d,d'}).
\end{equation}
In particular we see that  $\beta$ is a proper map.
Since the composition with $\beta$ maps smooth functions to smooth functions, the left hand side of (\ref{SchwarzEquality1}) is contained in the right hand side.

Consider a function $\phi\in C_c^\infty(M_{d,d'})^{\Ug_d}$. Then $\beta(\supp(\phi))$ is a compact subset of $\Hs_{d'}$. Hence there exists a function 
$f\in C_c^\infty(\Hs_{d'})$ equal to $1$ in a neighborhood of $\beta(\supp(\phi))$. Also, we know from \cite{Schwarz74} that there is a function 
$\Psi\in C^\infty(\Hs_{d'})$ such that $\Psi\circ \beta=\phi$. Let $\psi=\Psi f$. Then $\psi\in C_c^\infty(\Hs_{d'})$ and $\psi\circ \beta=\phi$. Thus the right hand side of (\ref{SchwarzEquality1}) is contained in the left hand side.
\end{prf}
\begin{cor}\label{pullback via taug}
The following equality holds,  $C_c^\infty(\g')\circ\tau'=C_c^\infty(\Wv)^{\G}$.
\end{cor}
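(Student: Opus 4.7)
The plan is to deduce the corollary from Proposition \ref{SchwarzEquality} by identifying the moment map $\tau'$ with the invariant-theoretic map $\beta$ of \eqref{map beta} up to a linear isomorphism of targets. Using an orthonormal basis of $\Vv\cong\Dc^d$ for $(\cdot,\cdot)$ together with any basis of $\Vv'\cong\Dc^{d'}$, the tensor decomposition $\Wv=\Vv\otimes_\Dc\Vv'$ identifies $\Wv$ with $M_{d,d'}(\Dc)$ so that $\G=\Ug_d(\Dc)$ acts by left multiplication and $\beta(W)=\overline{W}^t W$.

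A direct unwinding of $\tau'(W)(y)=\langle yW,W\rangle$ using $\langle\cdot,\cdot\rangle=\tr_{\Dc/\R}((\cdot,\cdot)\otimes(\cdot,\cdot)')$ expresses $\tau'(W)(y)$ as $\tr_{\Dc/\R}$ of a product involving $\beta(W)=\overline{W}^t W$ and a matrix representative of $y$; alternatively, $\tau'$ is a $\G$-invariant quadratic polynomial map, and the First Fundamental Theorem of Invariant Theory says that the ring of $\G$-invariant polynomials on $M_{d,d'}(\Dc)$ is generated by the entries of $\beta$, so the factorization must be linear. Either way, $\tau'=L\circ\beta$ for some linear $L:\Hs_{d'}(\Dc)\to\g'{}^*$. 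A case-by-case comparison against \eqref{classificationGG'} gives the dimension match
\[
\dim_\R\Hs_{d'}(\Dc)=\dim_\R\g',
\]
namely $m(2m+1)$ for $(\Og_d,\Sp_{2m}(\R))$ with $d'=2m$, $(p+q)^2$ for $(\Ug_d,\Ug_{p,q})$ with $d'=p+q$, and $m(2m-1)$ for $(\Sp_d,\Og^*_{2m})$ with $d'=m$. Injectivity of $L$ can be established either by a direct case-by-case computation or by combining properness of $\beta$ (via \eqref{CSinequality}) with the compact fibers of $\tau'$ noted in the introduction; so $L$ is a linear isomorphism.

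With $L$ a linear isomorphism, fix a $\G'$-invariant identification $\g'{}^*\cong\g'$. Given $\phi\in C_c^\infty(\Wv)^\G$, Proposition \ref{SchwarzEquality} yields $\tilde\psi\in C_c^\infty(\Hs_{d'}(\Dc))$ with $\tilde\psi\circ\beta=\phi$; setting $\psi=\tilde\psi\circ L^{-1}\in C_c^\infty(\g')$ then gives $\psi\circ\tau'=\phi$, establishing the inclusion $C_c^\infty(\Wv)^\G\subseteq C_c^\infty(\g')\circ\tau'$. The reverse inclusion is immediate since $\tau'$ is smooth, $\G$-invariant, and has compact fibers. The main technical hurdle is the step establishing $\tau'=L\circ\beta$ with $L$ an isomorphism: the existence of the factorization is essentially formal from invariant theory, but a clean verification that $L$ is injective requires careful bookkeeping across the three cases of \eqref{classificationGG'}, particularly in the quaternionic case where left and right $\Ha$-module structures must be tracked.
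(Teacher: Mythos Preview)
Your approach is the same as the paper's: reduce to Proposition~\ref{SchwarzEquality} by exhibiting $\tau'=L\circ\beta$ with $L$ a linear isomorphism. The paper carries this out much more directly: it fixes an explicit matrix model $\Wv=M_{d,d'}(\Dc)$ with symplectic form $\langle w',w\rangle=\tr_{\Dc/\R}(-F\overline{w}^tw')$ for some $F=-\overline{F}^t\in\GL_{d'}(\Dc)$, so that $\g'=\{FX:X\in\Hs_{d'}\}$ and (identifying $\g'\cong\g'^*$ via the trace) one simply computes $\tau'(w)=F\beta(w)$. Since $F$ is invertible, $L=F\cdot$ is visibly an isomorphism, with no appeal to the First Fundamental Theorem, no dimension count, and no separate injectivity argument needed.

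One caution about your alternative injectivity argument: combining properness of $\beta$ with compactness of the fibers of $\tau'$ does \emph{not} directly force $L$ injective. The image of $\beta$ is the cone of positive semidefinite elements in $\Hs_{d'}$, so a nonzero $X_0\in\ker L$ only contradicts compactness of $\tau'^{-1}(0)$ if $X_0$ (or some element of $\ker L$) is positive semidefinite. You would need an extra step to arrange this. Your case-by-case option is fine, but the paper's explicit $L=F\cdot$ makes all of this unnecessary.
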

\begin{prf}
Fix a matrix $F=-\overline F^t\in\GL_{d'}(\Bbb D)$. The classification of the dual pairs, \cite{HoweTrans}, implies that we may realize the symplectic space $\Wv$ as $M_{d,d'}$ with the symplectic form
\[
\langle w',w\rangle = \tr_{\Bbb D/\R}(-F\overline w^tw') \qquad (w\in\Wv).
\]
Then $\G=\Ug_d$ acts on $\Wv$ by the left multiplication and $\G'=\{g\in\GL_{d'}(\Bbb D);\ \overline g^t F g=F\}$ via the right multiplication by the inverse. In particular,
\[
\langle y(w),w\rangle =\langle -wy,w\rangle =\tr_{\Bbb D/\R}(F\overline w^t w y) \qquad (y\in\g',\ w\in\Wv).
\]
Notice that the Lie algebra $\g'=\{FX;\ X\in\Hs_{d'}\}$. Also, if we identify $\g'=\g'{}^*$ via the trace, then
\[
\tau'(w)=F\overline w^t w=F\beta(w) \qquad (w\in\Wv).
\]
Therefore
\[
C_c^\infty(\g')\circ\tau'=C_c^\infty(\Hs_{d'})\circ\beta.
\]
Hence the corollary follows from Proposition \ref{SchwarzEquality}.
\end{prf}
\begin{cor}\label{injectivity of pushforward via tau'}
The map $\tau'_*:\Ss^*(\Wv)^\G \to \Ss^*(\g')$, (\ref{pushforwardbytau'}), is injective.
\end{cor}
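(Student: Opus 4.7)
The plan is to reduce the problem to Corollary \ref{pullback via taug} by a standard averaging argument.

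First I would observe that because $\tau'$ is $\G$-invariant, the pullback map $\psi \mapsto \psi\circ\tau'$ takes $\Ss(\g')$ into $\Ss(\Wv)^\G$. Hence the dual map $\tau'_*$ already factors through $\G$-averaging: for any $T \in \Ss^*(\Wv)^\G$ we have $(\tau'_* T)(\psi) = T(\psi\circ\tau')$. Suppose $\tau'_* T = 0$. Then $T$ annihilates every function of the form $\psi\circ\tau'$ with $\psi \in \Ss(\g')$, and in particular every function of the form $\psi\circ\tau'$ with $\psi \in C_c^\infty(\g')$.

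Next I would invoke Corollary \ref{pullback via taug}, which asserts that $C_c^\infty(\Wv)^\G = C_c^\infty(\g')\circ \tau'$. Combined with the previous paragraph, this yields $T(\phi) = 0$ for every $\phi \in C_c^\infty(\Wv)^\G$.

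To finish, I would show that $T$ must then annihilate all of $\Ss(\Wv)$. Since $\G$ is compact and acts continuously on $\Ss(\Wv)$ by the left regular representation, the averaging operator
\[
\phi \mapsto \phi^\G,\qquad \phi^\G(w) = \int_\G \phi(g^{-1}w)\,dg,
\]
is a continuous projection $\Ss(\Wv)\to \Ss(\Wv)^\G$ which restricts to a map $C_c^\infty(\Wv)\to C_c^\infty(\Wv)^\G$ (since $\G$ is compact, $g^{-1}\supp(\phi)$ stays in a fixed compact set as $g$ varies). Approximating any $\phi \in \Ss(\Wv)^\G$ by a sequence $\phi_n \in C_c^\infty(\Wv)$ in the Schwartz topology and then averaging gives $\phi_n^\G \to \phi$ in $\Ss(\Wv)^\G$ with $\phi_n^\G \in C_c^\infty(\Wv)^\G$, so $C_c^\infty(\Wv)^\G$ is dense in $\Ss(\Wv)^\G$. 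By continuity of $T$, it vanishes on all of $\Ss(\Wv)^\G$. Finally, for arbitrary $\phi \in \Ss(\Wv)$, the $\G$-invariance of $T$ gives
\[
T(\phi) = \int_\G T(g\cdot \phi)\,dg = T(\phi^\G) = 0,
\]
where we have used the continuity of $T$ to pull the integral inside. Hence $T = 0$, proving injectivity.

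The only nonroutine step is the interchange of $T$ with the integral in the last displayed equation, which I would justify by the continuity of the $\G$-action on $\Ss(\Wv)$ and compactness of $\G$; everything else is formal once Corollary \ref{pullback via taug} is in hand.
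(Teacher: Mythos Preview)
Your proof is correct and follows exactly the approach the paper has in mind: the paper states this corollary without proof, treating it as immediate from Corollary~\ref{pullback via taug}, and you have spelled out the standard density and averaging argument that fills in the details. There is nothing to add or change.
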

\section{\bf An almost semisimple orbital integral on the symplectic space.\rm}\label{An almost semisimple orbital integral on the symplectic space}
In this section we describe the orbital integrals needed to express the distribution (\ref{0.1}). For that purpose it is convenient to view our dual pair as a supergroup as follows.

Let $\V_{\overline 0}=\Vv$ and let $\V_{\overline 1}=\Vv'$. From now on we assume that both are left vector spaces over $\Dc$. Set $\V=\V_{\overline 0}\oplus \V_{\overline 1}$ and define an element $\Sy \in \End(\V)$ by
\[
\Sy (v_0+v_1)=v_0-v_1 \qquad (v_0\in \V_{\overline 0}, v_1\in \V_{\overline 1}).
\]
Let
\begin{eqnarray*}
&&\End(\V)_{\overline 0}=\{x\in \End(\V);\ \Sy x=x\Sy \},\\
&&\End(\V)_{\overline 1}=\{x\in \End(\V);\ \Sy x=-x\Sy \},\\
&&\GL(\V)_{\overline 0}=\End(\V)_{\overline 0}\cap \GL(\V).
\end{eqnarray*}
Denote by $(\cdot ,\cdot )''$ the direct sum of the two forms $(\cdot ,\cdot )$ and $(\cdot ,\cdot )'$. Let
\begin{eqnarray}\label{super liealgebra}
&&\so=\{x\in \End(\V)_{\overline 0};\ (xu,v)''=-(u,xv)'',\ u,v\in\V\},\\
&&\ss1=\{x\in \End(\V)_{\overline 1};\ (xu,v)''=(u,\Sy xv)'',\ u,v\in\V\},\nn\\
&&\mathfrak s=\so\oplus \ss1,\nn\\
&&\Sg=\{s\in \GL(\V)_{\overline 0};\  (su,sv)''=(u,v)'',\ u,v\in\V\},\nn\\
&&\langle x, y\rangle=\tr_{\Dc/\R}(\Sy xy)\nn.
\end{eqnarray}
Then $(\Sg, \mathfrak s)$ is a real Lie supergroup, i.e. a real Lie group $\Sg$ together with a real Lie superalgebra $\mathfrak s=\so\oplus \ss1$, whose even component $\so$ is the Lie algebra of $\Sg$.
We shall write $\mathfrak s(\V)$ instead of $\mathfrak s$ whenever we shall want to specify the Lie superalgebra $\mathfrak s$ constructed as above from $\V$ and $(\cdot ,\cdot)''$. By restriction, we have the identification
\begin{eqnarray} \label{eq:WasHom}
\ss1=\Hom_\Dc(\V_{\overline 0}, \V_{\overline 1})\,.
\end{eqnarray}

The group $\Sg$ acts on $\mathfrak s$ by conjugation
and $\langle \cdot ,\cdot \rangle$ is a non-degenerate $\Sg$-invariant form on the real vector space $\mathfrak s$, whose restriction to $\so$ is symmetric and restriction to $\ss1$ is skew-symmetric. We shall employ the notation $s.x=sxs^{-1}$ for the action of $s \in \Sg$ on $x \in 
\mathfrak s$.
In terms of our previous notation,
\[
\g=\so|_{\V_{\overline 0}},\quad \g'=\so|_{\V_{\overline 1}},\quad \Wv=\ss1,\quad \G=\Sg|_{\V_{\overline 0}},\quad \G'=\Sg|_{\V_{\overline 1}},
\]
so that
\[
\so=\g\oplus \g'\quad \text{and}\quad \Sg=\G\times \G'.
\]
Notice that the action of $\Sg=\G\times \G'$ on $\ss1=\Wv$ by conjugation corresponds to the action of $\G$ on $\Wv$ by the left multiplication and of $\G'$ on $\Wv$ via the right multiplication by the inverse.
Also, we have the unnormalized moment maps
\begin{equation}\label{unnormalized moment maps}
\tau:\Wv\ni w\to w^2|_{\V_{\overline 0}}\in \g,\qquad \tau':\Wv\ni w\to w^2|_{\V_{\overline 1}}\in \g'.
\end{equation}

An element $x \in \mathfrak s$ is called semisimple (resp., nilpotent) if $x$ is semisimple (resp., nilpotent) as an endomorphism of $\V$. We say that a semisimple element $x \in \ss1$ is regular if it is nonzero and $\dim(S.x) \geq \dim(S.y)$ for all semisimple $y \in \ss1$. Let $x \in \ss1$ be fixed. The anticommutant and the double anticommutant of $x$ in $\ss1$ are  
\begin{eqnarray*}
\anticomm{x}{\ss1}&=&\{y \in \ss1:\{x,y\}=0\}\,,\\
\danticomm{x}{\ss1}&=&\bigcap_{y \in \anticomm{x}{\ss1}} \anticomm{y}{\ss1}\,,
\end{eqnarray*}
respectively. A Cartan subspace $\hs1$ of $\ss1$ as the double anticommutant of a regular semisimple element $x \in \ss1$.
We denote by $\reg{\hs1}$ the set of regular elements in $\hs1$.

Next we describe the Cartan subspaces $\hs1\subseteq\ss1$ for the supergroups associated with the irreducible dual pairs $(\G,\G')$ with $\G$ compact. We refer to \cite[\S 6]{PrzebindaLocal} and \cite[\S 4]{McKeePasqualePrzebindaSuper} for the proofs omitted here. Given a Cartan subspace $\hs1$, there are $\Ze/2\Ze$-graded subspaces $\V^j\subseteq\V$ such that the restriction of the form $(\cdot ,\cdot )''$ to each $\V^j$ is non-degenerate, $\V^j$ is orthogonal to $\V^k$ for $j\ne k$ and
\begin{equation}\label{decomposition of space for a cartan subspace}
\V=\V^0\oplus \V^1\oplus\V^2\oplus\dots \oplus \V^{l''}.
\end{equation}
The subspace $\V^0$ coincides with the intersection of the kernels of the elements of $\hs1$ (equivalently, $\V^0=\Ker(x)$ if $\hs1=\danticomm{x}{\ss1}$).
For $1\leq j\leq l''$, the subspaces $\V^j$ are described as follows. Suppose $\Dc=\R$. Then there is a basis $v_0$, $v_0'$ of $\V_{\overline 0}^j$ and  basis  $v_1$, $v_1'$ of $\V_{\overline 1}^j$ such that
\begin{eqnarray*}
&&(v_0,v_0)''=(v_0',v_0')''=1,\qquad (v_0,v_0')''=0,\\
&&(v_1,v_1)''=(v_1',v_1')''=0,\qquad (v_1,v_1')''=1.
\end{eqnarray*}
The following formulas define an element $u_j\in \ss1(\V^j)$,
\begin{eqnarray*}
&&u_j(v_0)=\frac{1}{\sqrt{2}}(v_1-v_1'),\qquad u_j(v_1)=\frac{1}{\sqrt{2}}(v_0-v_0'),\\
&&u_j(v_0')=\frac{1}{\sqrt{2}}(v_1+v_1'),\qquad u_j(v_1')=\frac{1}{\sqrt{2}}(v_0+v_0').
\end{eqnarray*}
Suppose $\Dc=\C$. Then $\V_{\overline 0}^j=\C v_0$, $\V_{\overline 1}^j=\C v_1$, where $(v_0,v_0)''=1$ and $(v_1,v_1)''=\delta_j i$, with $\delta_j=\pm 1$. The following formulas define an element $u_j\in \ss1(\V^j)$,
\begin{equation}\label{deltaj}
u_j(v_0)=e^{-i\delta_j \frac{\pi}{4}} v_1,\qquad \ u_j(v_1)=e^{-i\delta_j \frac{\pi}{4}} v_0.
\end{equation}
Suppose $\Dc=\Ha$. Then $\V_{\overline 0}^j=\Ha v_0$, $\V_{\overline 1}^j=\Ha v_1$, where $(v_0,v_0)''=1$ and $(v_1,v_1)''= i$. 
The following formulas define an element $u_j\in \ss1(\V^j)$,
\[
u_j(v_0)=e^{-i \frac{\pi}{4}} v_1, \qquad  u_j(v_1)=e^{-i\frac{\pi}{4}} v_0.
\]
In any case, by extending each $u_j$ by zero outside $\V^j$, we have
\begin{equation}\label{a cartan subspace}
\hs1=\sum_{j=1}^{l''} \R u_j\,.
\end{equation}

The formula (\ref{a cartan subspace}) describes all Cartan subspaces in $\ss1$, up to conjugation by $\Sg$. In other words it describes a maximal family of mutually non-conjugate Cartan subspaces. Notice that there is only one such subspace unless the dual pair $(\G, \G')$ is isomorphic to $(\Ug_l, \Ug_{p,q})$ with $l''=l< p+q$. In the last case there are $\min(l,p) - \max(l-q,0) +1$ such subspaces, assuming $p\leq q$. For each $m$ such that $\max(l-q,0)\leq m\leq \min(p,l)$ there is a Cartan subspace $\hs1{}_{,m}$ determined by the condition that $m$ is the number of the positive $\delta_j$ in (\ref{deltaj}). We may assume that $\delta_1=\dots=\delta_m=1$ and $\delta_{m+1}=\dots=\delta_l=-1$.
The choice of the spaces $\V_{\overline 0}^j$ may be done independently of $m$. The spaces $\V_{\overline 1}^j$ depend on $m$.

The Weyl group $W(\Sg,\hs1)$ is the quotient of the stabilizer of $\hs1$ in $\Sg$ by the subgroup $\Sg^{\hs1}$ fixing each element of $\hs1$. If $\Dc\ne \C$, then the group $W(\Sg,\hs1)$ acts by all the sign changes and all permutations of the $u_j$'s. If $\Dc=\C$, then the group $W(\Sg,\hs1)$ acts by all the sign changes of the $u_j$'s and all permutations which preserve $(\delta_1,\dots,\delta_{l''})$, see \cite[(6.3)]{PrzebindaLocal}. 

Set $\delta_j=1$ for all $1\leq j\leq l''$, if $\Dc\ne \C$. In general, let
\begin{equation}\label{complex structures for l leq l' 2}
J_j=\delta_j\tau(u_j),\qquad  J_j'=\delta_j\tau'(u_j) \qquad (1\leq j\leq l'').
\end{equation}
Then $J_j$,  $J_j'$  are complex structures on $\V_{\overline 0}^j$ and $\V_{\overline 1}^j$ respectively. Explicitly,
\begin{eqnarray}\label{explicit J_j}
&&J_j(v_0)=-v_0',\quad J_j(v_0')=v_0,\quad  J_j'(v_1)=-v_1',\quad J_j'(v_1')=v_1,\quad  \text{if}\ \ \Dc=\R,\\
&&J_j(v_0)=- i v_0,\quad J_j'(v_1)=- i v_1,\quad  \text{if}\ \ \Dc=\C\ \text{or}\ \Dc=\Ha.\nn
\end{eqnarray}
(The point of the multiplication by the $\delta_j$ in (\ref{complex structures for l leq l' 2}) is that the complex structures $J_j$, $J_j'$ do not depend on the Cartan subspace $\hs1$.) 
In particular, if $w=\sum_{j=1}^{l''} w_j u_j\in \hs1$, then
\begin{eqnarray}\label{explicit tau and tau' on cartan subspace}
\tau(w)=\sum_{j=1}^{l''} w_j^2\delta_j J_j\ \ \text{and}\ \ \tau'(w)=\sum_{j=1}^{l''} w_j^2\delta_j J_j'.
\end{eqnarray}
Let  $\hs1^2\subseteq \so$ be the subspace spanned by all the squares $w^2$, $w\in\hs1$.
Then
\[
\hs1^2=\sum_{j=1}^{l''} \R(J_j+J_j').
\]
We shall use the following identification
\begin{equation}\label{the identification}
\hs1^2|_{\V_{\overline 0}}\ni \sum_{j=1}^{l''} y_jJ_j = \sum_{j=1}^{l''}y_jJ_j'\in \hs1^2|_{\V_{\overline 1}}
\end{equation}
and denote both spaces by $\h$. 
Denote by $l$ the rank of $\g$ and by $l'$ the rank of $\g'$. Then $\h$ is an elliptic Cartan subalgebra of $\g$, if $l''=l$, and an elliptic Cartan subalgebra of $\g'$, if $l''=l'$. Let $d=\dim_\Dc\V_{\overline 0}$ and let $d'=\dim_\Dc\V_{\overline 1}$.
The proofs of the following two lemmas is straightforward and left to the reader.
\begin{lem}\label{roots for l<=l'}
Suppose $l\leq l'$. Then $l''=l$ and
one may choose the system of the positive roots of $\h$ in $\g_\C$ so that the product of all of them is given by the formula
\begin{equation}\label{product of positive roots for g}
\pi_{\g/\h}(\sum_{j=1}^l y_jJ_j)=\left\{
\begin{array}{lll}
\prod_{1\leq j<k\leq l}i(- y_j+ y_k) & \text{if} & \Dc=\C,\\
\prod_{1\leq j<k\leq l}(-y_j^2+y_k^2)\cdot \prod_{j=1}^l 2iy_j & \text{if} & \Dc=\Ha,\\
\prod_{1\leq j<k\leq l}(-y_j^2+y_k^2) & \text{if} & \Dc=\R\ \text{and}\ \g=\mathfrak s\mathfrak o_{2l},\\
\prod_{1\leq j<k\leq l}(-y_j^2+y_k^2)\cdot \prod_{j=1}^l iy_j & \text{if} & \Dc=\R\ \text{and}\ \g=\mathfrak s\mathfrak o_{2l+1}.\\
\end{array}
\right.
\end{equation}
Let $\z'\subseteq \g'$ be the centralizer of $\h$. We may choose the order of roots of $\h$ in $\g'_\C/\z'_\C$ so that the product of all of them is equal to
\begin{eqnarray}\label{product of positive roots for g'/z'}
&&\pi_{\g'/\z'}(y)=\\
&&\left\{
\begin{array}{lll}
\prod_{1\leq j<k\leq l}i(- y_j+ y_k)\cdot \prod_{j=1}^l (-iy_j)^{d'-d} & \text{if} & \Dc=\C,\\
\prod_{1\leq j<k\leq l}(-y_j^2+y_k^2)\cdot \prod_{j=1}^l (-y_j^2)^{d'-d} & \text{if} & \Dc=\Ha,\\
\prod_{1\leq j<k\leq l}(-y_j^2+y_k^2)\cdot \prod_{j=1}^l 2iy_j \cdot \prod_{j=1}^l (iy_j)^{d'-d}& \text{if} & \Dc=\R\ \text{and}\ \g=\mathfrak s\mathfrak o_{2l},\\
\prod_{1\leq j<k\leq l}(-y_j^2+y_k^2)\cdot \prod_{j=1}^l 2iy_j \cdot \prod_{j=1}^l (iy_j)^{d'-d+1}& \text{if} & \Dc=\R\ \text{and}\ \g=\mathfrak s\mathfrak o_{2l+1}.\\
\end{array}
\right.\nn
\end{eqnarray}
The product of the positive roots of $\hs1^2$ in the complexification of $\so$ evaluated at $w^2$, where $w=\sum_{j=1}^{l''}w_ju_j\in \hs1$, is equal to
\begin{eqnarray*}\label{}
&&\pi_{\so/\hs1^2}(w^2)=\pi_{\g/\h}(\tau(w))\pi_{\g'/\z'}(\tau'(w))=\\
&&\left\{
\begin{array}{lll}
\left(\prod_{1\leq j<k\leq l}i(- \delta_j w_j^2+\delta_j w_k^2)\right)^2\cdot \prod_{j=1}^l (- i\delta_j w_j^2)^{d'-d} & \text{if} & \Dc=\C,\\
\left(\prod_{1\leq j<k\leq l}(-w_j^4+w_k^4)\right)^2\cdot \prod_{j=1}^l 2iw_j^2\cdot \prod_{j=1}^l (-w_j^4)^{d'-d} & \text{if} & \Dc=\Ha,\\
\left(\prod_{1\leq j<k\leq l}(-w_j^4+w_k^4)\right)^2\cdot \prod_{j=1}^l 2iw_j^2 \cdot \prod_{j=1}^l (iw_j^2)^{d'-d}& \text{if} & \Dc=\R\ \text{and}\ \g=\mathfrak s\mathfrak o_{2l},\\
\left(\prod_{1\leq j<k\leq l}(-w_j^4+w_k^4)\right)^2\cdot  \prod_{j=1}^l iw_j^2\cdot \prod_{j=1}^l 2iw_j^2 \cdot \prod_{j=1}^l (iw_j^2)^{d'-d+1}& \text{if} & \Dc=\R\ \text{and}\ \g=\mathfrak s\mathfrak o_{2l+1}.\\
\end{array}
\right.\nn
\end{eqnarray*}
\end{lem}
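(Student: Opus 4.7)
Each of the three displayed formulas is a Weyl--denominator computation $\prod_{\alpha>0}\alpha$ for an explicit classical root datum, so the proof is an identification followed by a direct, case-by-case substitution. The hypothesis $l\leq l'$ forces $l''=l$ (there is only one Cartan subspace up to conjugation), so one works with a single $\hs1=\sum_j \R u_j$.

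First I compute $\pi_{\g/\h}$. The $J_j$ of (\ref{complex structures for l leq l' 2}) span a maximally compact Cartan of $\g$: a diagonal torus of $\mathfrak u_l$ if $\Dc=\C$, a diagonal torus of $\mathfrak{sp}_l$ if $\Dc=\Ha$, and a block-diagonal rotation torus of $\mathfrak o_d$ if $\Dc=\R$ (giving type $D_l$ or $B_l$ according to the parity of $d$). Choosing the standard positive system and evaluating $\prod_{\alpha>0}\alpha$ on $\sum_j y_j J_j$ via $\epsilon_j\mapsto \pm i y_j$ reproduces the four listed rows of (\ref{product of positive roots for g}).

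Next I compute $\pi_{\g'/\z'}$. The centralizer $\z'$ of $\hs1^2|_{\V_{\overline 1}}$ in $\g'$ is the direct sum of the full isometry algebra of $\V^0_{\overline 1}$ and the $J'_j$-commutant in each $\V^j_{\overline 1}$-block. The nonzero weights of $\h$ on $\g'_\C/\z'_\C$ thus split into (i) weights on $\Hom_\Dc(\V^j_{\overline 1},\V^k_{\overline 1})$ with $1\leq j\neq k\leq l$, which by the matching $\dim_\Dc\V^j_{\overline 1}=\dim_\Dc\V^j_{\overline 0}$ yield the same set of roots as $\pi_{\g/\h}$; and (ii) weights on $\Hom_\Dc(\V^0_{\overline 1},\V^j_{\overline 1})$ and its dual, giving $\pm\epsilon_j$ with multiplicity $\dim_\Dc\V^0_{\overline 1}=d'-d$. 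The extra short-root factor $\prod_j iy_j$ in the $B$-type row of (\ref{product of positive roots for g'/z'}) reflects the fact that when $\V^0_{\overline 1}$ is odd-dimensional (over $\R$) an additional short $\epsilon_j$-root appears in $\g'$ that is absent in the $D$-type $\g$. Collecting the factors gives (\ref{product of positive roots for g'/z'}).

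Finally, since $\so=\g\oplus\g'$ and (\ref{the identification}) realizes $\hs1^2$ simultaneously as $\h$ in both summands, the weights of $\hs1^2$ on $\so_\C$ are the disjoint union of those on $\g_\C$ and on $\g'_\C$, so
\[
\pi_{\so/\hs1^2}(w^2) = \pi_{\g/\h}(\tau(w))\cdot \pi_{\g'/\z'}(\tau'(w)).
\]
Substituting (\ref{explicit tau and tau' on cartan subspace}), i.e.\ $y_j\mapsto \delta_j w_j^2$, in both factors produces the last displayed formula; the squared bracket on each line is just the common factor appearing once in $\pi_{\g/\h}(\tau(w))$ and once in the family-(i) part of $\pi_{\g'/\z'}(\tau'(w))$. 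The main technical point is a consistent choice of positive systems so that the four rows of (\ref{product of positive roots for g}) and (\ref{product of positive roots for g'/z'}) match up with the same overall sign; once this bookkeeping is fixed each row is a short verification.
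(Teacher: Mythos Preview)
Your approach is correct and is precisely what the paper intends: the paper declares the proof ``straightforward and left to the reader,'' and your plan --- identify the classical root datum in each row, read off $\prod_{\alpha>0}\alpha$, analyze the centralizer $\z'$ blockwise, and then multiply and substitute $y_j\mapsto\delta_j w_j^2$ --- is exactly that routine verification.

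One small correction in your commentary: the remark that the extra factor in the $\mathfrak{so}_{2l+1}$ row of $\pi_{\g'/\z'}$ comes from ``$\V^0_{\overline 1}$ odd-dimensional'' is not right, since $\V^0_{\overline 1}$ is a symplectic subspace and hence always even-dimensional. The shift from exponent $d'-d$ to $d'-d+1$ is purely numerological: with $d=2l+1$ instead of $d=2l$, both exponents equal $\dim_\R\V^0_{\overline 1}=2l'-2l$, so the geometric content of the two rows is identical. This does not affect your argument, only the explanatory aside.
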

\begin{lem}\label{roots for l>=l'}
Suppose $l\geq l'$. Then $l'=l''$. Set $\h'=\h$. Then
one may choose the system of the positive roots of $\h'$ in $\g'_\C$ so that the product of all of them is given by the formula
\begin{equation}\label{product of positive roots for g - bis}
\pi_{\g'/\h'}(\sum_{j=1}^{l'} y_jJ_j)=\left\{
\begin{array}{lll}
\prod_{1\leq j<k\leq l'}i(- y_j+ y_k) & \text{if} & \Dc=\C,\\
\prod_{1\leq j<k\leq l'}(-y_j^2+y_k^2) & \text{if} & \Dc=\Ha,\\
\prod_{1\leq j<k\leq l'}(-y_j^2+y_k^2)\cdot \prod_{j=1}^{l'} 2iy_j & \text{if} & \Dc=\R.
\end{array}
\right.
\end{equation}
Let $\z\subseteq \g$ be the centralizer of $\h$. We may choose the order of roots of $\h$ in $\g_\C/\z_\C$ so that the product of all of them is equal to
\begin{eqnarray}\label{product of positive roots for g'/z'}
&&\pi_{\g/\z}(y)=\\
&&\left\{
\begin{array}{lll}
\prod_{1\leq j<k\leq l'}i(- y_j+ y_k)\cdot \prod_{j=1}^{l'} (-iy_j)^{d-d'} & \text{if} & \Dc=\C,\\
\prod_{1\leq j<k\leq l'}(-y_j^2+y_k^2)\cdot \prod_{j=1}^{l'} 2iy_j \cdot \prod_{j=1}^{l'} (-y_j^2)^{d-d'} & \text{if} & \Dc=\Ha,\\
\prod_{1\leq j<k\leq l'}(-y_j^2+y_k^2)\cdot \prod_{j=1}^{l'}(iy_j)^{d-d'}& \text{if} & \Dc=\R\ \text{and}\ \g=\mathfrak s\mathfrak o_{2l},\\
\prod_{1\leq j<k\leq l}(-y_j^2+y_k^2)\cdot \prod_{j=1}^{l'}iy_j \cdot \prod_{j=1}^{l'}(iy_j)^{d-d'}& \text{if} & \Dc=\R\ \text{and}\ \g=\mathfrak s\mathfrak o_{2l+1}.\\
\end{array}
\right.\nn
\end{eqnarray}
The product of the positive roots of $\hs1^2$ in the complexification of $\so$ evaluated at $w^2$, where $w=\sum_{j=1}^{l''}w_ju_j\in \hs1$, is equal to
\begin{eqnarray*}\label{}
&&\pi_{\so/\hs1^2}(w^2)=\pi_{\g'/\h'}(\tau'(w))\pi_{\g/\z}(\tau(w))=\\
&&\left\{
\begin{array}{lll}
\left(\prod_{1\leq j<k\leq l'}i(- \delta_j w_j^2+\delta_j w_k^2)\right)^2\cdot \prod_{j=1}^{l'} (- i\delta_j w_j^2)^{d'-d} & \text{if} & \Dc=\C,\\
\left(\prod_{1\leq j<k\leq l'}(-w_j^4+w_k^4)\right)^2\cdot \prod_{j=1}^{l'} 2iw_j^2\cdot \prod_{j=1}^{l'} (-w_j^4)^{d'-d} & \text{if} & \Dc=\Ha,\\
\left(\prod_{1\leq j<k\leq l'}(-w_j^4+w_k^4)\right)^2\cdot \prod_{j=1}^{l'} 2iw_j^2 \cdot \prod_{j=1}^{l'} (iw_j^2)^{d'-d}& \text{if} & \Dc=\R\ \text{and}\ \g=\mathfrak s\mathfrak o_{2l},\\
\left(\prod_{1\leq j<k\leq l'}(-w_j^4+w_k^4)\right)^2\cdot  \prod_{j=1}^{l'} iw_j^2\cdot \prod_{j=1}^{l'} 2iw_j^2 \cdot \prod_{j=1}^{l'} (iw_j^2)^{d'-d}& \text{if} & \Dc=\R\ \text{and}\ \g=\mathfrak s\mathfrak o_{2l+1}.\\
\end{array}
\right.\nn
\end{eqnarray*}
\end{lem}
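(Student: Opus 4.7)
The plan is to mirror the (implicit) argument for Lemma \ref{roots for l<=l'}, interchanging the roles of $\g$ and $\g'$, and to proceed case-by-case on $\Dc\in\{\R,\C,\Ha\}$. Under $l\geq l'$, one has $l''=l'$, so $\hs1=\sum_{j=1}^{l'}\R u_j$ and the image $\h'=\hs1^2|_{\V_{\overline 1}}$ is an elliptic Cartan subalgebra of $\g'$ of full rank, while $\h=\hs1^2|_{\V_{\overline 0}}$ embeds into $\g$ as a strictly subregular toral subalgebra whose centralizer $\z$ is strictly larger than a Cartan of $\g$.

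For formula \eqref{product of positive roots for g - bis}, I would realize $\h'$ inside the standard matrix model of $\g'$ via the complex structures $J_j'$ from \eqref{complex structures for l leq l' 2} and \eqref{explicit J_j}, and compute the root space decomposition. Depending on $\Dc$, $\g'$ has classical root system of type $A_{l'-1}$ ($\Dc=\C$), $D_{l'}$ ($\Dc=\Ha$), or $C_{l'}$ ($\Dc=\R$, with the extra long roots $\pm 2iy_j$). A compatible choice of positive system then reproduces the stated product. For the second formula, observe that $\h$ annihilates the kernel piece $\V_{\overline 0}^0$ (of $\Dc$-dimension $d-d'$, with one additional real direction in the $\mathfrak{so}_{2l+1}$ case) and acts on each $\Dc$-line $\V_{\overline 0}^j$ via $J_j$. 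The centralizer $\z$ is thus block-diagonal, and the $\h$-weights on $\g/\z$ come from off-diagonal blocks: pairings among the $\V_{\overline 0}^j$ with $j\geq 1$ produce the factors $\prod_{1\leq j<k\leq l'}(-y_j^2+y_k^2)$ (or $\prod i(-y_j+y_k)$ when $\Dc=\C$), pairings of $\V_{\overline 0}^j$ with $\V_{\overline 0}^0$ produce the exponent $d-d'$ on the $\pm y_j$-type weights, and the unpaired real direction present for $\g=\mathfrak{so}_{2l+1}$ produces the extra $iy_j$-factor per index.

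Finally, combine the above using $\so=\g\oplus\g'$ together with the inclusion $\hs1^2\subseteq\z\oplus\h'$. The $\hs1^2$-weights on $\so/\hs1^2$ are then the disjoint union of those on $\g/\z$, on $\z/(\hs1^2|_{\V_{\overline 0}})$, and on $\g'/\h'$; the middle piece contributes the squared factor $\bigl(\prod_{1\leq j<k\leq l'}(-w_j^4+w_k^4)\bigr)^2$. Substituting $y_j=\delta_j w_j^2$ via \eqref{explicit tau and tau' on cartan subspace} then yields the closing display. The main obstacle is pure bookkeeping: tracking signs through the identification \eqref{the identification}, choosing mutually compatible positive systems in $\g$ and $\g'$ so that the signs in \eqref{product of positive roots for g - bis} and in the closing display agree, and carefully separating the subcases $\g=\mathfrak{so}_{2l}$ versus $\g=\mathfrak{so}_{2l+1}$, where the odd-dimensional fixed direction creates one additional short-root factor.
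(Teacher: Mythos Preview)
Your overall plan is the right one and is exactly what the paper intends: the authors state that both this lemma and its companion are ``straightforward and left to the reader,'' so a direct case-by-case verification using the standard matrix models of the classical root systems is what is expected.

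There is, however, a genuine error in your description of the final step. You write that the $\hs1^2$-weights on $\so/\hs1^2$ split into those on $\g/\z$, those on $\z/(\hs1^2|_{\V_{\overline 0}})$, and those on $\g'/\h'$, and that the middle piece supplies the squared factor $\bigl(\prod_{1\leq j<k\leq l'}(-w_j^4+w_k^4)\bigr)^2$. This is not correct: $\z$ is by definition the centralizer of $\h=\hs1^2|_{\V_{\overline 0}}$ in $\g$, so every $\hs1^2$-weight on $\z/\h$ is zero and that piece contributes nothing to the product of positive roots. The factorization stated in the lemma is simply
\[
\pi_{\so/\hs1^2}(w^2)=\pi_{\g'/\h'}(\tau'(w))\cdot\pi_{\g/\z}(\tau(w)),
\]
and the squared Vandermonde-type factor arises because \emph{each} of $\pi_{\g'/\h'}$ and $\pi_{\g/\z}$ already carries one copy of $\prod_{1\leq j<k\leq l'}(-y_j^2+y_k^2)$ (respectively $\prod i(-y_j+y_k)$ in the complex case); multiplying them and substituting $y_j=\delta_j w_j^2$ via \eqref{explicit tau and tau' on cartan subspace} gives the square. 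Once you correct this, the closing display is a direct multiplication with no further ideas needed.
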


The following lemma is an immediate consequence of Lemmas \ref{roots for l<=l'} and \ref{roots for l>=l'}. 
\begin{lem}\label{|| and constant}
There is a constant $C(\hs1)$, which depends on $\hs1$, such that $|C(\hs1)|=1$ and
\begin{equation*}\label{|| and constant}
|\pi_{\so/\hs1^2}(w^2)|=C(\hs1)\,\pi_{\so/\hs1^2}(w^2) \qquad (w\in\hs1).
\end{equation*}
\end{lem}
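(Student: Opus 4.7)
The plan is to read the constant $C(\hs1)$ directly from the explicit product formulas in Lemmas \ref{roots for l<=l'} and \ref{roots for l>=l'}. The key observation is that when $w=\sum_{j=1}^{l''}w_j u_j\in\hs1$ with $w_j\in\R$, every quantity $w_j^2$ is a nonnegative real; hence the same is true of $w_j^{2k}$ and of the squared differences $(w_k^2-w_j^2)^2$ and $(w_k^4-w_j^4)^2$ appearing in the formulas.

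First I would rewrite each formula as a product of three kinds of factors: (i) a global power of $i$ and $-1$ determined by $l''$, $d$, $d'$ and the chosen positive system; (ii) a product involving the $\delta_j$'s, for example $\prod_j(-i\delta_j)^{d'-d}$ in the complex case; and (iii) a nonnegative real quantity in the $w_j$'s. For instance, in the case $\Dc=\C$ with $l\leq l'$, the factor
\[
\left(\prod_{1\leq j<k\leq l}i(-\delta_j w_j^2+\delta_j w_k^2)\right)^2
= (-1)^{\binom{l}{2}}\prod_{1\leq j<k\leq l}(w_k^2-w_j^2)^2
\]
is $(-1)^{\binom{l}{2}}$ times a nonnegative real, while $\prod_j(-i\delta_j w_j^2)^{d'-d}$ splits as $\prod_j(-i\delta_j)^{d'-d}\cdot\prod_j w_j^{2(d'-d)}$. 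The cases $\Dc=\R$ and $\Dc=\Ha$ work the same way: squared products of differences are nonnegative, factors $\prod_j 2iw_j^2$ or $\prod_j iw_j^2$ extract a power of $i$ times a nonnegative quantity, and factors $(iy_j)^{d'-d}$ or $(-y_j^2)^{d'-d}$ similarly separate into a unimodular piece and a nonnegative piece.

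Once grouped this way, the combined factor of type (i)--(ii) is a unimodular complex number that is independent of $w$ and depends only on the discrete data of $\hs1$ (namely $l''$, the integers $d,d'$, the flavor of the real form of $\g$, and the sign sequence $(\delta_1,\dots,\delta_{l''})$, which is part of the data defining $\hs1$). Defining $C(\hs1)^{-1}$ to be this unimodular number, each formula from Lemmas \ref{roots for l<=l'} and \ref{roots for l>=l'} takes the shape $\pi_{\so/\hs1^2}(w^2)=C(\hs1)^{-1}\cdot R(w)$ with $R(w)\geq 0$, so that $|\pi_{\so/\hs1^2}(w^2)|=R(w)=C(\hs1)\,\pi_{\so/\hs1^2}(w^2)$, as claimed, with $|C(\hs1)|=1$.

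The only real obstacle is bookkeeping: one must verify the grouping case by case, as there are four division-algebra options (with the extra split $\g=\mathfrak s\mathfrak o_{2l}$ vs.\ $\g=\mathfrak s\mathfrak o_{2l+1}$ when $\Dc=\R$) for each of the two regimes $l\leq l'$ and $l\geq l'$. Since the computation is identical in spirit in each case, I would present the argument in detail for $\Dc=\C$ and indicate the obvious modifications for $\Dc=\R$ and $\Dc=\Ha$.
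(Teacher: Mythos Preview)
Your proposal is correct and is exactly the approach the paper intends: the paper gives no detailed proof at all, stating only that the lemma ``is an immediate consequence of Lemmas \ref{roots for l<=l'} and \ref{roots for l>=l'}.'' Your case-by-case extraction of a unimodular factor from the explicit product formulas, using that $w_j^2\geq 0$ and hence all even powers and squared differences are nonnegative, is precisely the intended reading.
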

If $\hs1$ is a Cartan subspace of $\Wv$, then 
\begin{equation} \label{eq:h1reg}
\reg{\hs1}=\{w \in \hs1: \pi_{\so/\hs1^2}(w^2)\neq 0\}\,.
\end{equation}
Fix a Cartan subspace $\hs1\subseteq\Wv$, an element $w\in\reg{\hs1}$ and a function $\phi\in \Ss(\Wv)$.
Suppose $\G=\Og_{2l+1}$ with $l<l'$. Let $w_0\in \ss1(\V^0)$ be a non-zero element. Then $w+w_0$ is a regular almost semisimple element whose centralizer in $\Sg$ is denoted by $\Sg^{\hs1+w_0}$. Set $\Oo(w)=\Sg.(w+w_0)$ and define
\begin{equation}\label{muwforoodd}
\mu_{\Oo(w),\hs1}(\phi)
=\int_{\Sg/\Sg^{\hs1+w_0}}\phi(s.(w+w_0))\,d(s\Sg^{\hs1+w_0}).
\end{equation}
Then, up to a constant multiple,
\begin{eqnarray}\label{muwforsome groups}
\mu_{\Oo(w),\hs1}(\phi)=
\int_{\Sg/\Sg^{\hs1}}\int_{\ss1(\V^0)}\phi(s.(w+w^0))\,dw^0\,d(s\Sg^{\hs1}).
\end{eqnarray}
In all the remaining cases let $\Oo(w)=\Sg.w$ and let
\begin{eqnarray}\label{muwforall othergroups}
\mu_{\Oo(w),\hs1}(\phi)
=\int_{\Sg/\Sg^{\hs1}}\phi(s.w)\,d(s\Sg^{\hs1}).
\end{eqnarray}
Let $\H\subseteq \G$ be the Cartan subgroup with the Lie algebra $\h$. Denote by $\Delta(\H)\subseteq \G\times \G'$ be the diagonal embedding. Then, explicitly, 
\begin{equation}\label{eq:Shone}
\Sg^{\hs1}=\Delta(\H)(\{1\}\times \Zg'), 
\end{equation}
where $\Zg'\subseteq \G'$ is the centralizer of $\h\subseteq \g'$.

To simplify the notation, when the Cartan subspace $\hs1$ is fixed, we shall simply write $\mu_{\Oo(w)}$ instead of $\mu_{\Oo(w),\hs1}$.
These are well defined, tempered distribution on $\Wv$, see \cite{McKeePasqualePrzebindaSuper}, which depend only on $\tau(w)$, or equivalently $\tau'(w)$ via the identification (\ref{the identification}).
Let $\mu_\Wv$ be the Lebesgue measure on $\Wv$ normalized as in the Introduction. Choose a positive Weyl chamber $\hs1^+\subseteq \reg{\hs1}$.
We shall normalize the above orbital integrals so that the Weyl integration formula reads
\begin{equation}\label{weyl int on w 1}
\mu_\Wv=\sum_{\hs1}\int_{\tau(\hs1^+)}|\pi_{\so/\hs1^2}(w^2)|\mu_{\Oo(w)}(\phi)\,d\tau(w)
\end{equation}
if $l\leq l'$, and
\begin{equation}\label{weyl int on w 2}
\mu_\Wv=\int_{\tau'(\hs1^+)}|\pi_{\so/\hs1^2}(w^2)|\mu_{\Oo(w)}(\phi)\,d\tau'(w)
\end{equation}
if $l\geq l'$.
\begin{lem}\label{relation between positive weyl chambers}
Suppose $l\leq l'$ and $\Dc=\C$. Then for $\max(l-q,0)\leq m<m'\leq \min(p,l)$,
\begin{equation*}
\tau(\h_{\overline 1, m}^{reg})\cap\tau(\h_{\overline 1, m'}^{reg})=\emptyset.
\end{equation*}
\end{lem}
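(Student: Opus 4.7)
The plan is to pin down a conjugation invariant of $\tau(w) \in \g = \u(\V_{\overline 0})$ that detects the index $m$ of the Cartan subspace $\h_{\overline 1,m}$ containing $w$. The natural candidate is the signature of the Hermitian operator $i\tau(w)$ on the complex Hilbert space $(\V_{\overline 0}, (\cdot,\cdot))$; once I know this signature records $m$, the lemma is immediate, since the equality $\tau(w)=\tau(w')$ forces the two signatures to coincide.

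First I would recall from \eqref{complex structures for l leq l' 2}, \eqref{explicit J_j} and \eqref{explicit tau and tau' on cartan subspace} that for $w = \sum_{j=1}^l w_j u_j \in \h_{\overline 1, m}$ one has $\tau(w) = \sum_{j=1}^l w_j^2 \delta_j J_j$, where $J_j$ acts on $\V_{\overline 0}^j$ as multiplication by $-i$ and is zero on $\V_{\overline 0}^k$ for $k\neq j$. Since $l\leq l'$ forces $l''=l$, we have $\V_{\overline 0}^0 = 0$ and an orthogonal decomposition $\V_{\overline 0} = \bigoplus_{j=1}^l \V_{\overline 0}^j$ into one-dimensional complex lines with respect to the positive definite form $(\cdot,\cdot)$. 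Moreover, as noted in the paragraph following \eqref{deltaj}, the subspaces $\V_{\overline 0}^j$ can be chosen independently of $m$, which will be convenient but is not strictly needed for the argument.

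Second, I would read off the spectrum of the Hermitian operator $i\tau(w)$ from the above decomposition: it acts on $\V_{\overline 0}^j$ as multiplication by $\delta_j w_j^2$. For $w \in \h_{\overline 1, m}^{reg}$ (so all $w_j \neq 0$) and using the convention $\delta_1=\dots=\delta_m=1$, $\delta_{m+1}=\dots=\delta_l=-1$, the positive index of inertia of $i\tau(w)$ is exactly $m$ and the negative one is $l-m$.

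Finally, since the signature of a Hermitian operator on a finite-dimensional complex inner product space is determined by the operator itself, an equality $\tau(w)=\tau(w')$ with $w\in\h_{\overline 1,m}^{reg}$ and $w'\in\h_{\overline 1,m'}^{reg}$ forces $m=m'$, contradicting $m<m'$. I do not foresee a real obstacle; the only mild subtlety is the consistent bookkeeping of the signs $\delta_j$ against the pieces $\V_{\overline 0}^j$, which the construction in the previous section arranges for us.
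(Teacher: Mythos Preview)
Your proof is correct and is essentially the same argument as the paper's. The paper computes directly that
\[
\tau(\h_{\overline 1,m}^{reg})=\Big\{\sum_{j=1}^l y_jJ_j:\ y_1,\dots,y_m>0>y_{m+1},\dots,y_l,\ y_j\ne y_k\ \text{for}\ j\ne k\Big\},
\]
and observes that for $m<m'$ no element can satisfy both sign patterns; your signature invariant of $i\tau(w)$ is exactly a coordinate-free repackaging of this same sign count.
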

\begin{prf}
We see from (\ref{explicit tau and tau' on cartan subspace}) and (\ref{eq:h1reg}) that
\begin{eqnarray}\label{the image of hs1 under tau 1}
\tau(\hs1{}_{,m}^{reg})&=&\{\sum_{j=1}^ly_jJ_j;\ y_1,\dots,y_m > 0> y_{m+1},\dots,y_l,\ \ \text{$y_j\ne y_k$ for $j\ne k$}\}.
\end{eqnarray}
Hence
\begin{eqnarray*}
&&\tau(\h_{\overline 1, m}^{reg})\cap\tau(\h_{\overline 1, m'}^{reg})\\
&=&\{\sum_{j=1}^ly_jJ_j;\ y_1,\dots,y_m> 0=y_{m+1}=\dots =y_{m'}> y_{m'+1},\dots,y_l,\ \ \text{$y_j\ne y_k$ for $j\ne k$}\}\\
&=&\emptyset.
\end{eqnarray*}
\end{prf}
\begin{defi}\label{def of HC integral on W}
Let $C_{\hs1}=C(\hs1)\cdot i^{\dim\,\g/\h}$, where $C(\hs1)$ is as in Lemma \ref{|| and constant}. Define the Harish-Chandra regular almost semisimple  orbital integral on $\Wv$ by the following formula
\[
f(y)=
\sum_{\hs1} C_{\hs1}\pi_{\g'/\z'}(y)\mu_{\Oo(w)} \qquad (y\in \bigcup_{\hs1}\tau(\reg{\hs1}),\ y=\tau(w)=\tau'(w)).
\]
(Since, by Lemma \ref{relation between positive weyl chambers}, the union is disjoint, the definition makes sense. If $l>l'$, then there is only one Cartan subspace $\hs1$ and $\z'=\h'$.)
\end{defi}

In the remainder of this section we shall extend $f$ and its partial derivatives continuously to a larger domain inside $\mathfrak h$. This domain will depend on $l$ and $l'$. If $l \leq l'$, then we will provide a continuous extension of $f$ (and its partial derivatives up to a specific order) to a distribution valued function $f:\mathfrak h\cap \tau(\Wv) \to S^*(\Wv)^S$. If $l>l'$, then $f$ and all of its partial derivatives  extend continuously to the closure of every connected component of $\mathfrak h'^{In-reg}\cap \tau'(\hs1)$. See Theorem \ref{pullback of muy 3} for the precise statement.

Let $\mu_\g$ be the Lebesgue measure on $\g$. Let us normalize the orbital integrals $\mu_{\G.y}\in\Ss^*(\g)$, $y\in\reg{\h}$, so that
\begin{equation}\label{normalization of orbital integrals in g}
\mu_\g =\int_{\h^+}|\pi_{\g/\h}(y)|^2\mu_{\G.y}\,dy,
\end{equation}
where $\h^+\subseteq \reg{\h}$ is a Weyl chamber.

Let $\Wv_\g\subseteq \Wv$ be the maximal subset such that $\tau|_{\Wv_\g}:\Wv_\g\to \g$, the restriction of $\tau$ to $\Wv_\g$, is a submersion.
Then  $\Wv_\g\ne \emptyset$ if and only if $l\leq l'$, see Appendix A.
In this case we shall assume that
\begin{equation}\label{normalization of lebesgue measures on w and g}
\tau|_{\Wv_\g}^*(\mu_\g)=\mu_\Wv|_{\Wv_\g}.
\end{equation}
\begin{lem}\label{pullback and orbital integral}
Suppose $l\leq l'$. 
Then
\[
\pi_{\g/\h}(\tau(w))\tau|_{\Wv_\g}^*(\mu_{\G.\tau(w)})=
f(y)|_{\Wv_\g} \qquad (w\in \reg{\hs1}).
\]
\end{lem}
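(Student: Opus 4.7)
The strategy is to compute $\mu_\Wv|_{\Wv_\g}$ in two different ways — by pulling back the Weyl integration formula on $\g$ and by directly applying the Weyl integration formula on $\Wv$ — and then equate the two resulting distribution-valued integrands at $y = \tau(w)$. Pulling back (\ref{normalization of orbital integrals in g}) by the submersion $\tau|_{\Wv_\g}$ and using the normalization (\ref{normalization of lebesgue measures on w and g}) gives
\[
\mu_\Wv|_{\Wv_\g}=\int_{\h^+}|\pi_{\g/\h}(y)|^2\,\tau|_{\Wv_\g}^*(\mu_{\G.y})\,dy.
\]
Restricting (\ref{weyl int on w 1}) to $\Wv_\g$, changing variables from $w\in\hs1^+$ to $y=\tau(w)$, and using the factorization $\pi_{\so/\hs1^2}(w^2)=\pi_{\g/\h}(y)\pi_{\g'/\z'}(y)$ from Lemma \ref{roots for l<=l'} together with Lemma \ref{|| and constant}, one obtains
\[
\mu_\Wv|_{\Wv_\g}=\sum_{\hs1}\int_{\tau(\hs1^+)}C(\hs1)\,\pi_{\g/\h}(y)\,\pi_{\g'/\z'}(y)\,\mu_{\Oo(w(y))}|_{\Wv_\g}\,dy,
\]
where $w(y)\in\hs1^+$ is the unique preimage of $y$; by Lemma \ref{relation between positive weyl chambers} the images $\tau(\hs1^+)$ are pairwise disjoint and together exhaust $\h^+\cap\tau(\Wv)$ up to a set of measure zero.

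To equate the two integrands pointwise in $y$, I would test both sides against products $\phi\cdot(\tilde\Psi\circ\tau)$ with $\phi\in C_c^\infty(\Wv_\g)$ and $\tilde\Psi\in C_c^\infty(\g)^\G$. Since $\tau$ is $\G$-equivariant and $\G'$-invariant, both $\tau|_{\Wv_\g}^*(\mu_{\G.y})$ and $\mu_{\Oo(w(y))}|_{\Wv_\g}$ are supported on $\Sg.w(y)=\tau^{-1}(\G.y)\cap\Wv_\g$ (a Witt-type transitivity of $\G'$ on the generic fiber), and pairing with $\phi\cdot(\tilde\Psi\circ\tau)$ picks up the scalar $\tilde\Psi(y)$ times the pairing with $\phi$. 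Letting $\tilde\Psi|_{\h^+}$ range over $C_c^\infty(\h^+)$ and using continuity in $y$ of both families of orbital integrals yields the pointwise identity
\[
|\pi_{\g/\h}(y)|^2\,\tau|_{\Wv_\g}^*(\mu_{\G.y})=C(\hs1)\,\pi_{\g/\h}(y)\,\pi_{\g'/\z'}(y)\,\mu_{\Oo(w)}|_{\Wv_\g}.
\]

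A case-by-case inspection of the four formulas in Lemma \ref{roots for l<=l'} yields the auxiliary identity $\pi_{\g/\h}(y)^2=i^{\dim\g/\h}\,|\pi_{\g/\h}(y)|^2$: in each case $\pi_{\g/\h}(y)$ factors as $i^{k}\cdot P(y)$ with $P$ real, and a direct parity count gives $i^{2k}=i^{\dim\g/\h}$. Dividing the previous identity by the nonvanishing factor $\pi_{\g/\h}(y)$ (recall $y\in\reg\h$) and substituting this relation converts $|\pi_{\g/\h}(y)|^2/\pi_{\g/\h}(y)$ into $i^{-\dim\g/\h}\,\pi_{\g/\h}(y)$, yielding
\[
\pi_{\g/\h}(y)\,\tau|_{\Wv_\g}^*(\mu_{\G.y})=i^{\dim\g/\h}\,C(\hs1)\,\pi_{\g'/\z'}(y)\,\mu_{\Oo(w)}|_{\Wv_\g}=C_{\hs1}\,\pi_{\g'/\z'}(y)\,\mu_{\Oo(w)}|_{\Wv_\g},
\]
which is the asserted formula. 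The main obstacle is the pointwise localization step: one must verify that both sides depend continuously on $y$ as distributions on $\Wv_\g$, and that multiplication by $\tilde\Psi\circ\tau$ genuinely separates fibers of $\tau$. Apart from the analytic control, this relies on the Witt-type transitivity of $\G'$ on the generic fibers of $\tau$ and on the description of $\G$-invariant test functions via pullback from $\g$ (the analogue of Corollary \ref{pullback via taug} for the map $\tau$).
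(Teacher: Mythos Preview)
Your approach is essentially the paper's: compare the pulled-back Weyl integration formula on $\g$ with the Weyl integration formula on $\Wv$, equate integrands, and then convert using $\pi_{\g/\h}(y)^2/|\pi_{\g/\h}(y)|^2=i^{\dim\g/\h}$; the paper simply asserts the integrand equality from the normalizations, while you spell out the localization via $\phi\cdot(\tilde\Psi\circ\tau)$. One simplification worth noting: the Witt-type transitivity you flag as an obstacle is not actually needed for the localization, since you only require $\tilde\Psi\circ\tau$ to be constant on $\supp\mu_{\Oo(w)}=\overline{\Sg.w}$ and on $\supp\tau|_{\Wv_\g}^*(\mu_{\G.y})\subseteq\tau^{-1}(\G.y)$, and for both the easy inclusion $\tau(\Sg.w)\subseteq\G.y$ (immediate from $\G$-equivariance and $\G'$-invariance of $\tau$) already gives $\tilde\Psi\circ\tau\equiv\tilde\Psi(y)$ there.
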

\begin{prf}
We see from (\ref{normalization of lebesgue measures on w and g}) that
\[
|\pi_{\g/\h}(\tau(w))|^2\tau|_{\Wv_\g}^*(\mu_{\G.\tau(w)})=|\pi_{\g/\h}(\tau(w))\pi_{\g'/\z'}(\tau'(w))|\mu_{\Oo(w)}|_{\Wv_\g}\qquad (w\in \hs1^+).
\]
Hence,
\[
|\pi_{\g/\h}(\tau(w))|\tau|_{\Wv_\g}^*(\mu_{\G.\tau(w)})=|\pi_{\g'/\z'}(\tau'(w))|\mu_\Oo(w)|_{\Wv_\g}\qquad (w\in \reg{\hs1}),
\]
because both sides are $W(\Sg, \hs1)$-invariant. 
Thus,
\begin{eqnarray*}
&&\pi_{\g/\h}(\tau(w))\tau|_{\Wv_\g}^*(\mu_{\G.\tau(w)})\\
&=&\left(\frac{|\pi_{\g'/\z'}(\tau'(w))|}{\pi_{\g'/\z'}(\tau'(w))}\frac{\pi_{\g/\h}(\tau(w))}{|\pi_{\g/\h}(\tau(w))|}\right)\pi_{\g'/\z'}(\tau'(w))\mu_\Oo(w)|_{\Wv_\g}\qquad (w\in \reg{\hs1}).
\end{eqnarray*}
Let $C(\hs1)$ be the constant in Lemma \ref{|| and constant}. Then
\begin{equation*}
\frac{|\pi_{\g'/\z'}(\tau'(w))|}{\pi_{\g'/\z'}(\tau'(w))}\frac{\pi_{\g/\h}(\tau(w))}{|\pi_{\g/\h}(\tau(w))|}
=C(\hs1)\frac{\pi_{\g/\h}(\tau(w))^2}{|\pi_{\g/\h}(\tau(w))|^2}=C(\hs1)i^{\dim \g/\h}.
\end{equation*}
Hence, the lemma follows.
\end{prf}
Let
\begin{equation}\label{classical weyl group 1.1.1}
W(\G,\h)=\begin{cases}
\Sigma_l &  \text{if $\Dc=\C$},\\
\Sigma_l\ltimes \{\pm 1\}^l & \text{otherwise.}
\end{cases}
\end{equation}
Denote the elements of $\Sigma_l$ by $\sigma$ and the elements of 
$\{\pm 1\}^l$ by $\epsilon=(\epsilon_1,\epsilon_2,\dots, \epsilon_l)$, so that an arbitrary element of the group (\ref{classical weyl group 1.1.1}) looks like $\epsilon\sigma$, with $\epsilon=(1,1,\dots, 1)$, if $\Dc=\C$. This group acts on $\h$ as follows:
\begin{equation}\label{classical weyl group action}
(\epsilon\sigma)\sum_{j=1}^ly_jJ_j=\sum_{j=1}^l \epsilon_j y_{\sigma^{-1}(j)}J_j
\end{equation}
and coincides with the Weyl group, equal to the normalizer of $\h$ in $\G$ divided by the centralizer of $\h$ in $\G$, as the indicated by the notation.

Since the moment map $\tau$ intertwines the action of the Weyl group $W(\Sg,\hs1)$ with the subgroup $W(\Sg,\hs1,\h)\subseteq\Sigma_l\subseteq W(\G,\h)$ leaving the sequence $\delta_1,\ \delta_2,\ \dots,\ \delta_l$ fixed. The function $f(y)$ is invariant under that subgroup.  Furthermore,
\begin{eqnarray}\label{disjoint union  for the extension by the symmetry condition}
W(\G,\h) \bigcup_{\hs1}\tau(\reg{\hs1})=\bigcup_{\hs1}  (W(\G,\h)/W(\Sg,\hs1,\h)) \tau(\reg{\hs1}),
\end{eqnarray}
where the union on the right hand side is disjoint. 
Hence, in any case ($l\leq l'$ or $l> l'$) we may extend the function $f$ uniquely to
$W(\G,\h) \bigcup_{\hs1}\tau(\reg{\hs1})$ so that the extension satisfies the following symmetry condition
\begin{eqnarray}\label{extension by the symmetry condition}
&&f(sy)=\sgn_{\g/\h}(s) f(y) \qquad (s\in W(\G,\h),\ y\in W(\G,\h) \bigcup_{\hs1}\tau(\reg{\hs1})),
\end{eqnarray}
where $\sgn_{\g/\h}$ is defined by
\begin{equation}\label{sgn_g/h}
\pi_{\g/\h}(sy)=\sgn_{\g/\h}(s)\,\pi_{\g/\h}(y)  \qquad (y \in \h)\,.
\end{equation}
One motivation for such a definition of the extension is that left hand side of the equality in Lemma \ref{pullback and orbital integral} extends to all $y\in\h$ and satisfies the symmetry condition (\ref{extension by the symmetry condition}).
We would like to extend the function $f$ from the set $W(\G,\h) \bigcup_{\hs1}\tau(\reg{\hs1})$ to $\h\cap \tau(\Wv)$. This will require some more work.

Suppose $l\leq l'$. 
Fix an elliptic Cartan subalgebra $\h'\subseteq \g'$ containing $\tau'(\hs1)$. We may assume that $\h'$ does not depend on $\hs1$.
Let $\h'{}^{In-reg}\subseteq \h'$ be the subset where no non-compact roots vanish.  Set $\hs1^{In-reg}=\tau'{}^{-1}(\h'{}^{In-reg})\cap\hs1$. Then $\tau(\hs1^{In-reg})$ is the set of the elements $y\in \tau(\hs1)$ such that, under the identification (\ref{the identification}), no non-compact imaginary root of $\h'$ in $\g'_\C$ vanishes on $y$.
\begin{lem}\label{hinreg}
Suppose $l\leq l'$. 
For our specific Cartan subspace (\ref{a cartan subspace}), the set $\tau(\hs1^{In-reg})$ consists of elements $y=\sum_{j=1}^l y_jJ_j$, such that
\begin{eqnarray}\label{hin-reg}
&&y_j>0\ \text{for all $j$},\ \text{if}\ \G=\Og_{2l}\ \text{or}\ \G=\Og_{2l+1}\  \text{or}\ \G=\Sp_l\ \text{with}\ l<l'\ \text{or}\ 1=l=l',\nn\\
&&y_j\geq 0\ \text{for all $j$}\,, \ y_j+y_k>0\ \text{for all $j\ne k$},\ \text{if}\ \G=\Sp_l\ \text{and}\ 1<l=l',
\end{eqnarray}
and
\begin{eqnarray}\label{hin-reg-C}
&&y_j>0\ \text{if}\ j\leq m\ \text{and}\ l-m<q; \quad y_j\geq 0\ \text{if}\ j\leq m  \ \text{and}\ l-m=q \ \text{when $l \geq q$}\,;\nn\\
&&y_j<0\ \text{if}\ m<j\ \text{and}\ m<p; \quad y_j\leq0\ \text{if}\ m<j\ \text{and}\ m=p \ \text{when  $l \geq p$}\,; \nn\\
&&y_j-y_k>0\ \text{if}\ \ j\leq m<k\,,
\end{eqnarray}
if $\G'=\Ug_{p,q}$ and $\hs1=\h_{\overline{1},m}$. In particular, in the last case,
\begin{eqnarray}\label{hin-reg-D}
&&\tau(\h_{\overline 1,m}^{In-reg})\cap \tau(\h_{\overline 1,m'}^{In-reg})\ne \emptyset\  \ \text{implies}\ \  |m-m'|\leq 1,\\
&&\tau(\h_{\overline 1,m}^{In-reg})\cap \tau(\h_{\overline 1,m+1}^{In-reg})
\subseteq \{\sum_{j=1}^ly_jJ_j;\ y_1,\dots,y_m\geq 0=y_{m+1}\geq y_{m+2},\dots,y_l\}.\nn
\end{eqnarray}
\end{lem}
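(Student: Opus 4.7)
My plan is to analyze (\ref{hin-reg}) and (\ref{hin-reg-C}) case by case on the type of $\G'$. For each case I would fix an elliptic Cartan subalgebra $\h' \supseteq \h = \hs1^2|_{\V_{\overline 1}}$ in $\g'$, enumerate the non-compact imaginary roots of $\h'$ in $\g'_\C$, and compute their restriction to $\h$. The In-reg condition is that every such restriction that is not identically zero on $\h$ is nonzero at $y$, and combined with the sign constraint $y_j = w_j^2 \delta_j$ coming from (\ref{explicit tau and tau' on cartan subspace}) this produces the stated inequalities.

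For $\G' = \Sp_{2l'}(\R)$ (root system $C_{l'}$, non-compact roots $\pm 2e_i$ and $\pm(e_i + e_j)$) and for $\G' = \Og^*_{2l'}$ (root system $D_{l'}$, non-compact roots $\pm(e_i + e_j)$ only), I would embed $\h$ into $\h'$ as the subspace where the last $l' - l$ coordinates vanish. In the $C$ case the root $2e_i$ for $i \leq l$ restricts to $2y_i$, which combined with $y_j = w_j^2 \geq 0$ forces $y_j > 0$. In the $D$ case the restrictions $y_i + y_j$, $y_i$, or $0$ of $e_i + e_j$ (depending on whether both, one, or neither index lies in $\{1, \ldots, l\}$) produce $y_j > 0$ when $l < l'$, and only $y_j + y_k > 0$ for $j \neq k$ when $l = l'$. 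The degenerate sub-case $l = l' = 1$ has no non-compact roots at all, so In-reg reduces to ordinary regularity and gives $y_1 > 0$.

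For $\Dc = \C$ and $\G' = \Ug_{p,q}$ with $\hs1 = \h_{\overline 1, m}$, I would place the $m$ basis vectors with $\delta_j = +1$ into the first $m$ slots of the $\mathfrak{u}(p)$-block of $\h'$ (yielding $y_j \geq 0$) and the $l - m$ basis vectors with $\delta_j = -1$ into the first $l - m$ slots of the $\mathfrak{u}(q)$-block (yielding $y_j \leq 0$); the remaining $p - m$ positive and $q - (l - m)$ negative slots of $\h'$ come from $\V^0$. The non-compact roots $\pm(f_i - g_k)$ of $\mathfrak{gl}_{p+q}(\C)$ then restrict to $y_i - y_{k+m}$ (internal pair, giving $y_j - y_k > 0$ after using the sign constraint), to $\pm y_j$ (exactly one end in $\V^0$, giving strict inequalities $y_j > 0$ or $y_k < 0$ precisely when the complementary extra multiplicity is positive, i.e., $l - m < q$ respectively $m < p$), or to $0$ (both ends in $\V^0$, discarded); this matches (\ref{hin-reg-C}) with its strict-versus-non-strict dichotomy.

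Finally, for (\ref{hin-reg-D}), suppose $m < m'$ and $y$ lies in both $\tau(\h_{\overline 1, m}^{In-reg})$ and $\tau(\h_{\overline 1, m'}^{In-reg})$. The sign constraints from the two decompositions immediately force $y_j = 0$ for every $m < j \leq m'$. Applying the internal non-compact root condition from decomposition $m$ with $k \in \{m+1, \ldots, m'\}$ forces $y_j > 0$ strictly for each $j \leq m$, which via the dichotomy in (\ref{hin-reg-C}) forces $m = p$; symmetrically, the $m'$-decomposition forces $m' = l - q$. Combining with $l \leq p + q$ gives $m' - m = l - p - q \leq 0$, contradicting $m' > m$ unless $m' = m + 1$. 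The inclusion claimed in (\ref{hin-reg-D}) for $m' = m + 1$ then follows by intersecting the sign conditions from both decompositions. The main obstacle in this whole argument is the bookkeeping required to track which non-compact imaginary roots of $\h'$ restrict non-trivially to $\h$ and how the strict-versus-non-strict dichotomy arises from the extra roots supported on $\V^0$.
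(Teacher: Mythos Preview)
Your strategy for (\ref{hin-reg}) and (\ref{hin-reg-C}) is exactly the paper's: identify the non-compact imaginary roots of $\h'$, restrict them to $\h$, and intersect the non-vanishing conditions with the sign constraint $\delta_j y_j\ge 0$ coming from (\ref{explicit tau and tau' on cartan subspace}).  That part is fine, and in fact more detailed than the paper's one-line treatment.  One small slip: in the sub-case $\G=\Sp_1$, $l=l'=1$, the group $\G'=\Og^*_2$ has \emph{no} roots whatsoever, so $\h'^{In\text{-}reg}=\h'$; the In-reg condition does not ``reduce to ordinary regularity'' but is vacuous.

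The genuine gap is in your argument for (\ref{hin-reg-D}).  You correctly obtain $y_j=0$ for $m<j\le m'$ from the two sign constraints, and correctly deduce $y_j>0$ for $j\le m$ from the internal root $y_j-y_{m+1}>0$.  But the next step is backwards: knowing $y_j>0$ for $j\le m$ does \emph{not} force $m=p$ via the dichotomy in (\ref{hin-reg-C}); a strict inequality is compatible with both branches of that dichotomy.  The correct use of (\ref{hin-reg-C}) is on the other side: since $m'\le\min(p,l)$ and $m<m'$, we have $m<p$, and then the second line of (\ref{hin-reg-C}) for the $m$-decomposition gives $y_{m+1}<0$, which already contradicts $y_{m+1}=0$.  (In particular the intersection is empty for \emph{every} $m\ne m'$, so (\ref{hin-reg-D}) holds, the first part vacuously.)  Your final sentence ``contradicting $m'>m$ unless $m'=m+1$'' is also off: if one had $m=p$ and $m'=l-q$ then $m'-m=l-p-q\le 0$ contradicts $m'>m$ with no exception.

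The paper's own proof of (\ref{hin-reg-D}) is terser still: it simply invokes the intersection formula (\ref{intersection of the image of the hs1 under tau 10}) for $\tau(\hs1_{,m})\cap\tau(\hs1_{,m+k})$, which immediately gives the inclusion in the second line of (\ref{hin-reg-D}); the first line then follows by combining this with (\ref{hin-reg-C}) as above.
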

\begin{prf}
We see from (\ref{explicit tau and tau' on cartan subspace}) that the set $\tau(\hs1)$ consists of elements $y=\sum_{j=1}^l y_jJ_j$, such that $\delta_jy_j\geq 0$ for all $1\leq j\leq l$. Hence $\sum_{j=1}^l y_jJ_j'\in\h'$ not annihilated by any imaginary non-compact root of $\h'$ in $\g'_\C$ implies (\ref{hin-reg}) when $\Bbb D\neq \C$.

If $\G'=\Ug_{p,q}$, then the non-compact roots of  of $\h'$ in $\g'_\C$ acting on elements of $\h\subseteq \h'$ are given by
\begin{eqnarray*}
&&\h\ni \sum_{j=1}^l y_jJ_j'\to \pm i (y_j-y_k)\in i\R,\ \  \text{if}\ \ j\leq m<k\ \text{or}\ k\leq m<j,\\
&&\h\ni \sum_{j=1}^l y_jJ_j'\to \pm i y_j\in i\R, \ \text{if}\ j\leq m\ \text{and}\ l-m<q\ \ \text{or}\ \ m<j\ \text{and}\ m<p.
\end{eqnarray*}
Hence,  (\ref{hin-reg-C}) follows. The last statement follows from the equality
\begin{eqnarray}\label{intersection of the image of the hs1 under tau 10}
&&\tau(\hs1{}_{,m})\cap \tau(\hs1{}_{,m+k})\\
&=&\{\sum_{j=1}^ly_jJ_j;\ y_1,\dots,y_m\geq 0=y_{m+1}=\dots=y_{m+k}\geq y_{m+k+1},\dots,y_l\},\nn
\end{eqnarray}
which is a consequence of (\ref{the image of hs1 under tau 1}).
\end{prf}
\begin{lem}\label{extension to non-compact reg}
Suppose $l\leq l'$. 
For a fixed Cartan subspace $\hs1$, the function
\begin{equation}\label{extension to non-compact reg1}
\tau'_*\circ f:\tau(\reg{\hs1})\to \Ss(\g')^{\G'}
\end{equation}
extends to a smooth function 
\begin{equation}\label{extension to non-compact reg2}
\tau'_*\circ f:\tau(\hs1^{In-reg})\to \Ss(\g')^{\G'}
\end{equation}
whose all derivatives are bounded. Further, any derivative of  (\ref{extension to non-compact reg2}) extends to a continuous function on the closure of any connected component of $\tau(\hs1^{In-reg})$.
\end{lem}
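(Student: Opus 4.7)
The plan is to rewrite $\tau'_*\circ f$ as a Harish-Chandra-type normalized orbital integral on $\g'$ and then invoke Harish-Chandra's regularity theorem for orbital integrals across compact imaginary root walls.

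For the explicit pushforward, fix $w\in \reg{\hs1}$ with $y=\tau'(w)$ and $\phi\in\Ss(\g')$. Combining the description $\Sg^{\hs1}=\Delta(\H)(\{1\}\times \Zg')$ from \eqref{eq:Shone} with the $\G'$-equivariance $\tau'(s.w)=g'.y$ for $s=(g,g')\in\Sg$, one checks that the coset space $\Sg/\Sg^{\hs1}$ fibers over $\G'/\Zg'$ with compact fiber $\G/\H$; Fubini then gives
\[
\tau'_*\mu_{\Oo(w)}(\phi)=\vol(\G/\H)\int_{\G'/\Zg'}\phi(g'.y)\,d(g'\Zg')\,,
\]
and hence $(\tau'_*\circ f)(y)(\phi)=C_{\hs1}\vol(\G/\H)\,\pi_{\g'/\z'}(y)\int_{\G'/\Zg'}\phi(g'.y)\,d(g'\Zg')$. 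This is a Harish-Chandra normalized orbital integral on $\g'$, parametrized by $y\in\h$, with $\pi_{\g'/\z'}$ (the product of $\h'$-roots nonzero on $\h$) as the normalizing factor and $\G'/\Zg'$ in place of the generic quotient $\G'/\H'$.

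Next, I would relate this to the classical Harish-Chandra integral $F_\phi(h):=\pi_{\g'/\h'}(h)\int_{\G'/\H'}\phi(g'.h)\,dg'$ defined for $h\in\h'^{reg}$. The factorization $\pi_{\g'/\h'}=\pi_{\g'/\z'}\cdot\pi_{\z'/\h'}$, together with the fact that $\pi_{\z'/\h'}$ vanishes identically on $\h$ (since $\h$ is central in $\z'$), suggests that our expression is precisely the limit of $F_\phi(h)$ as $h\to y\in\h$ from $\h'^{reg}$: the zero of $\pi_{\z'/\h'}$ cancels against the blow-up of $\int_{\G'/\H'}\phi(g'.h)\,dg'$ as the regular orbit $\G'.h$ degenerates onto $\G'.y$. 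Making this precise is a standard Levi-descent argument, using the Weyl integration formula on $\z'$ and the stabilizer-volume factor $\vol(\G'^y/\Zg')$.

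Finally, Harish-Chandra's theorem (see, e.g., Varadarajan, \emph{Harmonic Analysis on Real Reductive Groups}) asserts that $F_\phi$ is $C^\infty$ on $\h'^{reg}$, extends continuously across compact imaginary root walls of $\h'$, and within each connected component of $\h'^{In-reg}$ is $C^\infty$ with all derivatives bounded and continuously extending to the closure. By Lemma \ref{hinreg}, the condition $y\in\tau(\hs1^{In-reg})$ places $y$ only on compact-root walls among those whose roots are nonzero on $\h$, so the approach of $h$ to $y$ from $\h'^{reg}$ crosses only compact walls. The smoothness, boundedness of derivatives, and continuous extension to closures of connected components then transfer from $F_\phi$ to $\tau'_*\circ f$ on $\tau(\hs1^{In-reg})$. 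The main obstacle is the precise matching in the previous paragraph: the cancellation of $\pi_{\z'/\h'}$-zeros against orbit-degeneration singularities requires a careful computation with the Weyl integration formula on $\z'$, exploiting that $y$ is central in $\z'$ so that the internal orbital integral on $\z'$ reduces to evaluation at $y$.
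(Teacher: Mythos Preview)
Your overall strategy---reduce $\tau'_*\circ f$ to a normalized orbital integral on $\g'$ and invoke Harish-Chandra's regularity results---matches the paper's, and your computation of $\tau'_*\mu_{\Oo(w)}$ as a constant times $\int_{\G'/\Zg'}\phi(g'.y)\,d(g'\Zg')$ is correct. But the bridge you propose to the classical integral $F_\phi(h)=\pi_{\g'/\h'}(h)\int_{\G'/\H'}\phi(g'.h)\,dg'$ is wrong.

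You claim that $\pi_{\g'/\z'}(y)\int_{\G'/\Zg'}\phi(g'.y)\,d(g'\Zg')$ equals $\lim_{h\to y}F_\phi(h)$, with the zero of $\pi_{\z'/\h'}$ cancelling a blow-up of $\int_{\G'/\H'}\phi(g'.h)\,dg'$. This is false on both counts. Across \emph{compact} imaginary walls $F_\phi$ extends smoothly and remains $W(\g',\h')$-skew, hence vanishes on every such wall; equivalently the unnormalized integral stays bounded there and $\lim_{h\to y}F_\phi(h)=0$ whenever $\z'\neq\h'$. (Already for a compact rank-one $\g'$ one has $F_\phi(h)=\alpha(h)\int\phi(g.h)\,dg\to 0$, while the singular integral at $0$ is a nonzero multiple of $\phi(0)$.) The correct link, which the paper quotes from \cite[(23)]{McKeePasqualePrzebindaSuper}, is a \emph{derivative} identity:
\[
\pi_{\g'/\z'}(y)\int_{\G'/\Zg'}\psi(g'.y)\,d(g'\Zg')
=C\,\partial(\pi_{\z'/\h'})\!\left.\left(\pi_{\g'/\h'}(y+y'')\int_{\G'}\psi(g'.(y+y''))\,dg'\right)\right|_{y''=0},
\]
with $y''\in\h'\cap[\z',\z']$. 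Once this is in hand, smoothness in $y$, boundedness of derivatives, and continuous extension to closures all follow from Harish-Chandra's theorems on $F_\phi$, exactly as you intend.

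You also omit the case $\G=\Og_{2l+1}$ with $l<l'$, where $\mu_{\Oo(w)}$ is the integral \eqref{muwforoodd} over the almost-semisimple orbit $\Sg.(w+w_0)$. Here $\tau'_*\mu_{\Oo(w)}$ becomes an integral over $\G'/\Zg'^n$ with $n=\tau'(w_0)$ nilpotent, and the derivative identity must be replaced by one involving $\partial(\pi_{\z'/\h'}^{short})$; the paper handles this separately via \eqref{rosmann}.
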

\begin{prf}
For a moment let us exclude the case $\G=\Og_{2l+1}$ with $l<l'$. Let $\psi\in C_c^\infty(\g')$. Then
\begin{eqnarray}\label{muwphi}
\tau'_*(\mu_{\Oo(w)})(\psi)=\int_{\Sg/\Sg^{\hs1}}\psi(\tau'(s.w))\,d(s\Sg^{\hs1}).
\end{eqnarray}
Let $\Zg'\subseteq \G'$ is the centralizer of $\h\subseteq \g'$ and recall formula (\ref{eq:Shone}) for $\Sg^{\hs1}$. 
Since $\G$ is compact, (\ref{muwphi}) is a constant multiple of
\begin{equation}\label{G'overZ'}
\int_{\G'/\Zg'}\psi(g'.y)\,d(g'\Zg').
\end{equation}
As checked in \cite[(23)]{McKeePasqualePrzebindaSuper}, there is a positive constant $C$ such that
\begin{eqnarray}\label{singular integral as derivative of Harish-Chandra integral}
&&\pi_{\g'/\z'}(y)\int_{\G'/\Zg'}\psi(g'.y)\,d(g'\Zg')\\
&=&C\partial(\pi_{\z'/\h'})\left(\pi_{\g'/\h'}(y+y'')\int_{\G'}\psi(g'.(y+y'')\,dg'\right)|_{y''=0},\nn 
\end{eqnarray}
where $y\in\h$ and $y''\in \h'\cap[\z',\z']$. Hence, the lemma follows from \cite[Theorem 2, page 207 and Lemma 25, page 232]{HC-57Fourier} and the fact the space of the distributions is weakly complete, \cite[Theorem 2.1.8]{Hormander}.

Suppose $\G=\Og_{2l+1}$ with $l<l'$. Let $w_0\in \ss1(\V^0)$ be as in (\ref{muwforoodd}). Then $(w+w_0)^2=w^2+w_0^2$. Hence,
\begin{eqnarray}\label{taumuwforoodd}
&&\tau'_*(\mu_{\Oo(w)})(\psi)=\int_{\Sg/\Sg^{\hs1+w_0}}\psi(\tau'(s.(w+w_0)))\,d(s\Sg^{\hs1+w_0})\\
&=&\int_{\Sg/\Sg^{\hs1+w_0}}\psi(s.(\tau'(w)+\tau'(w_0)))\,d(s\Sg^{\hs1+w_0})\nn\\
&=&C_1\int_{\G'/\Zg'{}^{n}}\psi(g.(y+n))\,d(g\Zg'{}^{n}),\nn
\end{eqnarray}
where  $C_1$ is a positive constants, $y=\tau'(w)$, $n=\tau'(w_0)$ and $\Zg'{}^{n}$ is the centralizer of $n$ in $\Zg'$. 

Let $\pi_{\z'/\h'}^{short}$ denote the product of the positive short roots of $\h'$ in $\z'_\C$. 
As checked in \cite[(35)]{McKeePasqualePrzebindaSuper}, there is a positive constant $C$ such that
\begin{eqnarray}\label{rosmann}
&&\partial(\pi_{\z'/\h'}^{short})\left.\left(\pi_{\g'/\h'}(y+x)\int_{\G'/\H'}\psi(g.(y+y''))\,d(g\H)\right)\right|_{y''=0}\\
&=&C\pi_{\g'/\z'}(y)\int_{\G'/\Zg'{}^{n}}\psi(g.(y+n))\,d(g\Zg'{}^n).\nn
\end{eqnarray}
Hence the lemma follows from theorems of Harish-Chandra, as before.
\end{prf}
For a test function $\phi$ on a vector space $\Uv$ set 
\begin{equation}\label{dilations 1}
\phi_t(u)=t^{-\dim\,\Uv}\phi(t^{-1}u) \qquad (t>0,\ u\in \Uv).
\end{equation}
Then a distribution $\Phi$ on $\Uv$ is homogeneous of degree $a\in \C$ if and only if
\begin{equation*}
\Phi(\phi_t)=t^a\Phi(\phi) \qquad (t>0,\ \phi\in C_c^\infty(\Uv)).
\end{equation*}
\begin{lem}\label{case r=1 homogenity}
Suppose $l=1$. Set 
\[
f^{(k)}=\underset{y\to 0}{\lim}\ \partial(J_1^k) f(y) \qquad (k=0,\ 1,\ ,\dots).
\]
Then $\tau_*'(f^{(k)})$ is homogeneous of degree 
\begin{eqnarray}\label{case r=1 homogenity2}
&&-\dim(\g')+\deg(\pi_{\g'/\z'})+l'-1-k,\ \text{if}\ \G=\Og_{2l+1}\ \text{and}\ l<l',\\
&&-\dim(\g')+\deg(\pi_{\g'/\z'})-k,\ \text{otherwise}.\nn
\end{eqnarray}
(Here $\deg(\pi_{\g'/\z'})$ denotes the degree of the polynomial $\pi_{\g'/\z'}$.)
Furthermore,
\begin{equation}\label{case r=1 homogenity3}
\supp(\tau_*'(f^{(k)}))\subseteq \tau'(\tau^{-1}(0)).
\end{equation}
\end{lem}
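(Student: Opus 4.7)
The plan is to derive the homogeneity by a direct scaling computation on the orbital integral defining $\tau'_*(f(y_1J_1))$, and to derive the support claim from the fact that, as $y_1\to 0$, the orbit on which $f(y_1J_1)$ is supported collapses into $\tau^{-1}(0)$. The main technical point is the scaling argument in the $\G=\Og_{2l+1}$, $l<l'$ case, where the orbit contains a translation $\tau'(w^0)$ that does not scale with $y_1$; the substitution $w^0=t^{1/2}w'{}^0$ in $\ss1(\V^0)$ restores the overall homogeneity.

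Parametrize $y=y_1J_1$ by $w=w_1u_1$ with $y_1=\delta_1 w_1^2$. Lemma \ref{roots for l<=l'} gives $\pi_{\g'/\z'}(y_1J_1)=c\,y_1^{D}$ with $D=\deg\pi_{\g'/\z'}$, hence $f(y_1J_1)=C\,y_1^{D}\,\mu_{\Oo(w_1u_1)}$. Set $F(y_1):=\tau'_*(f(y_1J_1))$. By the formulas established in the proof of Lemma \ref{extension to non-compact reg},
\[
F(y_1)(\phi)=C_1 y_1^{D}\int_{\G'/\Zg'}\phi(g'.(y_1J_1'))\,d(g'\Zg')
\]
outside the $\G=\Og_{2l+1}$, $l<l'$ case, and
\[
F(y_1)(\phi)=C_2 y_1^{D}\int_{\Sg/\Sg^{\hs1}}\int_{\ss1(\V^0)}\phi\bigl(s.(y_1J_1'+\tau'(w^0))\bigr)\,dw^0\,d(s\Sg^{\hs1})
\]
in that case, using (\ref{muwforsome groups}) together with $\tau'(w+w^0)=\tau'(w)+\tau'(w^0)$.

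The homogeneity is obtained by evaluating $F(y_1)(\phi_t)$ with $\phi_t(x)=t^{-\dim\g'}\phi(t^{-1}x)$. Pulling $t^{-\dim\g'}$ out of the integrand and passing the dilation through the $\G'$-action gives, outside the exceptional case, $F(y_1)(\phi_t)=t^{D-\dim\g'}F(t^{-1}y_1)(\phi)$. In the $\Og_{2l+1}$, $l<l'$ case one additionally substitutes $w^0=t^{1/2}w'{}^0$; the Jacobian contributes $t^{\dim\ss1(\V^0)/2}$, while $\tau'(t^{1/2}w'{}^0)=t\,\tau'(w'{}^0)$ scales compatibly with $y_1J_1'$, yielding
\[
F(y_1)(\phi_t)=t^{D-\dim\g'+\dim\ss1(\V^0)/2}F(t^{-1}y_1)(\phi).
\]
A direct dimension count based on $\dim\V^0_{\overline 0}=d-2l=1$ and $\dim\V^0_{\overline 1}=d'-2l=2(l'-1)$ gives $\dim\ss1(\V^0)/2=l'-1$, matching the exponent in the lemma. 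Differentiating $k$ times in $y_1$ introduces a factor $t^{-k}$ by the chain rule, and passing to the limit $y_1\to 0$ produces the claimed homogeneity of $\tau'_*(f^{(k)})$.

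For the support, take $\phi\in C_c^\infty(\g')$ with $\supp\phi\cap \tau'(\tau^{-1}(0))=\emptyset$. Outside the exceptional case, $\tau'(\Oo(w_1u_1))=y_1\,\G'.J_1'$ shrinks to $\{0\}\subset \tau'(\tau^{-1}(0))$ as $y_1\to 0$, so the integrand and all its $y_1$-derivatives vanish for sufficiently small $y_1$. In the $\Og_{2l+1}$, $l<l'$ case, every $w^0\in\ss1(\V^0)=\Hom_\R(\V^0_{\overline 0},\V^0_{\overline 1})$ has image of dimension at most $\dim\V^0_{\overline 0}=1$, automatically isotropic for the real skew-symmetric form on $\V_{\overline 1}$; hence $w^0\in \tau^{-1}(0)$ and $s.\tau'(w^0)\in \tau'(\tau^{-1}(0))$. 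At $y_1=0$ the integrand $\phi(s.\tau'(w^0))$ therefore vanishes identically, and since $\phi$ vanishes to all orders on $\tau'(\tau^{-1}(0))$, all $y_1$-derivatives of the integrand also vanish at $y_1=0$. The Leibniz rule applied to $\partial_{y_1}^k(y_1^{D}g(y_1))$ then produces zero in the limit, whence $\supp\tau'_*(f^{(k)})\subseteq \tau'(\tau^{-1}(0))$.
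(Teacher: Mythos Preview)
Your homogeneity computation is correct and essentially identical to the paper's: both evaluate $\tau'_*(f(y_1J_1))(\psi_t)$ by passing the dilation through the $\G'$-action (and, in the $\Og_3$ case, through the integral over $\ss1(\V^0)$ via the substitution $w^0\mapsto t^{1/2}w^0$), then take $y_1\to 0$ and differentiate.

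The support argument, however, has a genuine gap. Your claim that $y_1\,\G'.J_1'$ ``shrinks to $\{0\}$'' as $y_1\to 0$ is false whenever $\G'$ is noncompact. The semisimple orbit $\G'.J_1'$ is closed but unbounded, and for any small $y_1\ne 0$ one can find $g'\in\G'$ with $y_1\,g'.J_1'$ far from $0$. For instance in $\sp_2$, conjugating $y_1\left(\begin{smallmatrix}0&-1\\1&0\end{smallmatrix}\right)$ by $\diag(y_1^{-1/2},y_1^{1/2})$ gives $\left(\begin{smallmatrix}0&-1\\y_1^2&0\end{smallmatrix}\right)$, which converges to a nonzero nilpotent as $y_1\to 0$. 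So the integrand $\phi(g'.(y_1J_1'))$ need not vanish for all $g'$ when $y_1$ is small, even if $\supp\phi$ avoids $0$. What you actually need is that for every compact $K\subset\g'$ disjoint from $\tau'(\tau^{-1}(0))$, the orbit $\G'.(y_1J_1')$ eventually misses $K$; this is true, but it is not what you argued. The paper sidesteps the issue by working in $\Wv$ rather than $\g'$: on a compact $\overline U\subset\Wv$ disjoint from $\tau^{-1}(0)$, every element has nonzero semisimple part (nilpotent $w\in\ss1$ forces $\tau(w)=0$ since $\g$ has no nonzero nilpotents), so a conjugation-invariant size of the semisimple part is bounded below by some $\epsilon>0$; hence $\Sg.w\cap U=\emptyset$ for $w\in\hs1$ with $|w|<\epsilon$, giving $\supp f^{(k)}\subseteq\tau^{-1}(0)$ directly. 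In the $\Og_3$ case your pointwise vanishing of the integrand and its $y_1$-derivatives \emph{at} $y_1=0$ does not by itself yield vanishing of the limit of $\partial_{y_1}^k F(y_1)(\phi)$, since you have not justified differentiating and passing to the limit under the noncompact integral; the paper's Jordan-decomposition argument covers this case uniformly by noting that $w+w_0$ has semisimple part $w$.
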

\begin{prf}
It suffices to consider the restriction of $f$ to $\tau(\reg{\hs1})$ for one of the Cartan subspaces $\hs1$. 
Let $\psi\in C_c^\infty(\g')$. For a moment let us exclude the case $\G=\Og_{2l+1}$, $l<l'$. 
As we have seen in the proof of Lemma \ref{extension to non-compact reg}, there is a non-zero constant $C$, such that for $t>0$
\begin{eqnarray*}
&&\tau_*'(f^{(0)})(\psi_t)=C\,\underset{y\to 0}{\lim}\ \pi_{\g'/\z'}(y)\int_{\G'/\Zg'}\psi_t(g.y)\,d(g\Zg')\\
&=&C\,\underset{y\to 0}{\lim}\ \pi_{\g'/\z'}(y)\int_{\G'/\Zg'}t^{-\dim(\g')}\psi(g.t^{-1}y)\,d(g\Zg')\\
&=&t^{-\dim(\g')+\deg(\pi_{\g'/\z'})}\tau_*'(f^{(0)})(\psi).
\end{eqnarray*}
Thus, by taking the derivative, (\ref{case r=1 homogenity2}) follows.

Let $U\subseteq \Wv$ be an open subset with the compact closure $\overline U$ such that $\overline U\cap \tau^{-1}(0)=\emptyset$. For $w'\in \overline U$ let $w'=w'_s+w'_n$ be the Jordan decomposition and let $\epsilon$ be the minimum of all the $|w'_s|$ (for some fixed norm $|\cdot |$ on $\Wv$) such that $w'\in \overline U$. Then $\epsilon>0$ because otherwise there would be a non-zero nilpotent element of $\Wv$ outside of $\tau^{-1}(0)$, which is impossible. Hence
\begin{equation}\label{SwU}
\Sg. w\cap U=\emptyset \qquad (|w|<\epsilon, w\in\hs1).
\end{equation}
Since $\supp f(\tau(w))=\Sg w$ this implies (\ref{case r=1 homogenity3}).

Suppose $\G=\Og_{2l+1}$ and $l<l'$. Then for $t>0$, 
\begin{eqnarray*}
&&\tau_*'(f^{(0)})(\psi_t)=C\,\underset{y\to 0}{\lim}\ \pi_{\g'/\z'}(y)\int_{\Sg/\Sg^{\hs1}}\int_{\ss1(\V^0)}\psi_t(\tau'(s.(w+w^0)))\,dw^0\,d(s\Sg^\hs1)
\end{eqnarray*}
But, 
\begin{eqnarray*}
&&\pi_{\g'/\z'}(y)\int_{\Sg/\Sg^{\hs1}}\int_{\ss1(\V^0)}\psi_t(\tau'(s.(w+w^0)))\,dw^0\,d(s\Sg^\hs1)\\
&=&t^{-\dim(\g')}\pi_{\g'/\z'}(y)\int_{\Sg/\Sg^{\hs1}}\int_{\ss1(\V^0)}\psi(\tau'(s.(t^{-1/2}w+t^{-1/2}w^0)))\,dw^0\,d(s\Sg^\hs1)\\
&=&t^{-\dim(\g')+\frac{1}{2}\dim(\ss1(\V^0))}\pi_{\g'/\z'}(y)\int_{\Sg/\Sg^{\hs1}}\int_{\ss1(\V^0)}\psi(\tau'(s.(t^{-1/2}w+w^0)))\,dw^0\,d(s\Sg^\hs1)\\
&=&t^{-\dim(\g')+\deg(\pi_{\g'/\z'})+\frac{1}{2}\dim(\ss1(\V^0))}\pi_{\g'/\z'}(t^{-1}y)\int_{\G'/\Zg'{}^{n}}\psi(g.(t^{-1}y+n))\,d(g\Zg'{}^n)).
\end{eqnarray*}
Hence, by taking the limit if $y\to 0$ we conclude that
\[
\tau_*'(f^{(0)})(\psi_t)=t^{-\dim(\g')+\deg(\pi_{\g'/\z'})+\frac{1}{2}\dim(\ss1(\V^0))}\tau_*'(f^{(0)})(\psi).
\]
Since $\dim(\ss1(\V^0))=2l'-2$, (\ref{case r=1 homogenity2}) follows.

Also, with the above notation, $w+w_0$ is a Jordan sum with $w$, the semisimple part, and $w_0$, the nilpotent part. Hence, as in (\ref{SwU}), we have
\[
\Sg. (w+w_0)\cap U=\emptyset \qquad (|w|<\epsilon, w\in\hs1).
\]
Since  $\supp(f(\tau(w)))=\Sg. (w+w_0)$,  (\ref{case r=1 homogenity3}) follows.
\end{prf}
\begin{lem}\label{vanishing of derivatives}
Let $l=1$. Then $\h=\R J_1$ and 
\begin{eqnarray*}
&&W(\G,\h)\bigcup_{\hs1}\tau(\reg{\hs1})=
\left\{
\begin{array}{lll}
\R^+ J_1\ &\text{if}\ (\G, \G')=(\Ug_1,\Ug_{l'}=\Ug_{l',0}),\\
\R^- J_1\ &\text{if}\ (\G, \G')=(\Ug_1,\Ug_{l'}=\Ug_{0,l'}),\\
\R^\times J_1\ &\text{if}\ (\G, \G')=(\Og_3,\Sp_{2l'}),\ (\Og_2,\Sp_{2l'}),\ (\Sp_1, \Og_{2l'}^*)\\ 
&\text{or}\ (\Ug_1,\Ug_{p,q})\ \text{with}\ 1\leq p\leq q.
\end{array}
\right.
\end{eqnarray*}
Let $f(y)$ denote the function (\ref{extension by the symmetry condition}). 
For an integer $k=0, 1, 2, \dots$ define
\[
\langle f^{(k)}\rangle =\underset{y\to 0\pm}{\lim}\ \partial(J_1^k) f(yJ_1) 
\]
if  $(\G, \G')=(\Ug_1,\Ug_{l'})$ and 
\[
\langle f^{(k)}\rangle =\underset{y\to 0+}{\lim}\ (\partial(J_1^k) f(yJ_1) - \underset{y\to 0-}{\lim}\ (\partial(J_1^k) f(yJ_1)
\]
in the remaining cases. Assume that $1<l'$. Then
\[
\langle f^{(k)}\rangle=0\ \text{if}\  0\leq k<
\begin{cases}
2l'-2 &\text{if $\Dc= \R$ and $\G=\Og_{3}$},\\
2l'-1 &\text{if $\Dc= \R$ and $\G=\Og_{2}$},\\
l'-1 &\text{if $\Dc= \C$},\\
2(l'-1) &\text{if $\Dc=\Ha$}.
\end{cases}
\]
\end{lem}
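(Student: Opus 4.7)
The plan is to reduce the problem from $\Wv$ to $\g'$ via Corollary \ref{injectivity of pushforward via tau'}, to rewrite $\tau'_*\circ f$ in terms of a classical Harish-Chandra orbital-integral function on $\g'$ using formulas (\ref{singular integral as derivative of Harish-Chandra integral}) and (\ref{rosmann}), and finally to apply Harish-Chandra's smoothness/jump theory at the singular point $y=0$.

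First, I would verify that $\langle f^{(k)}\rangle$ is well defined in $\Ss^*(\Wv)^\Sg$. By Lemma \ref{extension to non-compact reg}, the one-sided limits (respectively the jumps) of $\partial(J_1^k)(\tau'_*\circ f)(yJ_1)$ at $y=0$ exist in $\Ss^*(\g')^{\G'}$, since $y=0$ lies in the closure of each of the connected components of $\tau(\hs1^{In-reg})$ described in Lemma \ref{hinreg}. By Corollary \ref{injectivity of pushforward via tau'} these limits have unique $\Sg$-invariant lifts to $\Ss^*(\Wv)$, which are precisely the distributions $\langle f^{(k)}\rangle$. Hence the vanishing of $\langle f^{(k)}\rangle$ is equivalent to the vanishing of $\tau'_*\langle f^{(k)}\rangle$.

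Next, formulas (\ref{singular integral as derivative of Harish-Chandra integral}) and (\ref{rosmann}) rewrite $\tau'_*(f(yJ_1))$, up to a nonzero constant, as a constant-coefficient differential operator in a transverse direction $y''\in\h'\cap[\z',\z']$, namely $\partial(\pi_{\z'/\h'})$ (respectively $\partial(\pi_{\z'/\h'}^{short})$ in the $\G=\Og_3$ case), applied to the Harish-Chandra function $F^{\h'}_\psi(y+y'')=\pi_{\g'/\h'}(y+y'')\int_{\G'}\psi(g.(y+y''))\,dg$ and evaluated at $y''=0$. By Harish-Chandra's classical smoothness theorem, $F^{\h'}_\psi$ extends to a smooth function on all of $\h'$, and its behavior at the singular point $y=0$ is controlled by the non-compact positive roots of $\h'$ in $\g'_\C$ that vanish at $y=0$ but are nonzero on the open chamber $\tau(\reg{\hs1})$; each such root contributes one additional order of differentiability before the jump (or one-sided deviation) picks up a nontrivial contribution.

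Finally, the four threshold values $2l'-2,\ 2l'-1,\ l'-1,\ 2(l'-1)$ in the statement should agree, case by case, with the number of these non-compact roots. Using Lemma \ref{hinreg} together with the explicit root systems of $\Sp_{2l'}(\R)$, $\Ug_{p,q}$ and $\Og^*_{2l'}$, one enumerates the roots which vanish precisely along the $J_1$-axis in the embedding $\h\subseteq\h'$ but nowhere on the open chamber; combined with the jump relations above, this forces $\tau'_*\langle f^{(k)}\rangle=0$ for $k$ strictly below the threshold. I expect the main technical obstacle to be this last step: the careful identification of the non-compact roots of $\h'$ vanishing at $y=0$ but not on the open chamber, and the matching of their count with the stated threshold in each of the four families. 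The $(\Og_3,\Sp_{2l'})$ case is slightly more subtle because the orbital integral involves a nilpotent summand $w_0$ and a short-root product, so the Rossmann-type formula (\ref{rosmann}) must be applied in place of (\ref{singular integral as derivative of Harish-Chandra integral}).
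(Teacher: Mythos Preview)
Your approach differs substantially from the paper's and, in its present form, has a real gap.

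The paper does not invoke Harish-Chandra's wall-crossing jump relations. Instead it combines two earlier lemmas of the section. Lemma \ref{case r=1 homogenity} shows that $\tau'_*(\langle f^{(k)}\rangle)$ is a $\G'$-invariant distribution on $\g'$ which is homogeneous of an explicit degree and supported in $\tau'(\tau^{-1}(0))$. Lemma \ref{pullback and orbital integral} shows that on $\Wv_\g$ the function $y\mapsto f(y)(\phi)$ equals $\pi_{\g/\h}(y)$ times a pull-back of $\mu_{\G.y}$ via the submersion $\tau|_{\Wv_\g}$, hence is smooth in $y$; so the jump $\langle f^{(k)}\rangle$ vanishes on $\Wv_\g$ and therefore $\supp\tau'_*(\langle f^{(k)}\rangle)\subseteq\tau'(\tau^{-1}(0)\setminus\Wv_\g)$. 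For $(\Og_2,\Sp_{2l'})$, $(\Sp_1,\Og^*_{2l'})$ and $(\Ug_1,\Ug_{p,q})$ with $1\le p\le q$, Appendix~A gives $\tau'(\tau^{-1}(0)\setminus\Wv_\g)=\{0\}$, and one matches the homogeneity degree against the degrees available to distributions supported at the origin. For $(\Og_3,\Sp_{2l'})$ the support is the closure of a minimal nilpotent orbit $\mathcal O_{\min}$ of dimension $2l'$, and one applies Wallach's lemma \cite[Lemma 6.2]{WallachSpringer}: a homogeneous $\G'$-invariant distribution supported on $\overline{\mathcal O_{\min}}$ vanishes if its degree exceeds $-\dim\g'+\tfrac12\dim\mathcal O_{\min}$. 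The compact case $(\Ug_1,\Ug_{l'})$ is immediate, since there $\int_{\G'}\psi(g'.y)\,dg'$ is smooth and $\pi_{\g'/\z'}(y)=(iy')^{l'-1}$.

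The gap in your sketch is twofold. First, your assertion that $F^{\h'}_\psi$ ``extends to a smooth function on all of $\h'$'' is false for non-compact $\G'$; Harish-Chandra gives smoothness only away from the non-compact imaginary root walls (this is exactly what Lemma \ref{extension to non-compact reg} uses). Second, and more seriously, when $l=1$ the point $y=0$ you are approaching is the \emph{origin} of $\h'$, where every root vanishes simultaneously. Harish-Chandra's jump relations are formulated at generic points of a single root hyperplane; they do not yield a ``count the non-compact roots and read off the threshold'' statement at such a deep stratum. In fact the thresholds in the lemma are not produced by any root count: they arise from comparing the homogeneity degree in Lemma \ref{case r=1 homogenity} with the dimension of the nilpotent support. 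The difference between $2l'-1$ for $\Og_2$ and $2l'-2$ for $\Og_3$, for instance, comes from the support being $\{0\}$ in the first case and $\overline{\mathcal O_{\min}}$ (of dimension $2l'$) in the second, not from any difference in the root data you propose to enumerate.
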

\begin{prf}
Suppose $(\G, \G')=(\Og_3,\Sp_{2l'})$. We know from Lemma \ref{case r=1 homogenity} that the distribution $\tau'_*(\langle f^{(k)}\rangle)$ is supported in 
$\tau'(\tau^{-1}(0))$. However Lemma \ref{pullback and orbital integral} shows that for any $\phi\in C_c^\infty(\Wv_\g)$, $f(y)(\phi)$ is a smooth function of $y\in \h$. Therefore,
$\langle f^{(k)}\rangle|_{\Wv_\g}=0$. Hence,
\[
\supp(\tau'_*(\langle f^{(k)}\rangle))\subseteq \tau'(\tau^{-1}(0)\setminus \Wv_\g).
\]
A straightforward argument shows that $\tau'(\tau^{-1}(0)\setminus \Wv_\g)$ is the union of one of the two minimal nilpotent orbits in $\g'$, call it $\mathcal O_{min}$, and the zero orbit. Furthermore, $\dim(\mathcal O_{min})=2l'$. (See Appendix A.)
Lemma \ref{case r=1 homogenity} and (\ref{product of positive roots for g}) show that $\tau'_*(\langle f^{(k)}\rangle)$ is a homogeneous distribution of degree 
\[
-\dim\,\g'+\deg(\pi_{\g'/\z'})+l'-1-k=-\dim\,\g'+3l'-2-k
\]
However, as shown in \cite[Lemma 6.2]{WallachSpringer}, $\tau'_*(\langle f^{(k)}\rangle)=0$ if the homogeneity degree is greater than $-\dim\,\g'+\frac{1}{2}\dim\, \mathcal O_{min}$. Hence the claim follows.

Exactly the same argument works if $(\G, \G')=(\Og_2,\Sp_{2l'})$, or $(\Sp_1, \Og_{2l'}^*)$, or $(\Ug_1,\Ug_{p,q})$ with $1\leq p\leq q$, except that 
$\tau'(\tau^{-1}(0)\setminus \Wv_\g)=\{0\}$, see Appendix A. So, instead of relying on  \cite[Lemma 6.2]{WallachSpringer}, we may use the classical description of distributions supported at $\{0\}$, \cite[Theorem 2.3.4.]{Hormander}.

Suppose $ (\G, \G')=(\Ug_1,\Ug_{l'})$. Then (\ref{muwphi}) and (\ref{G'overZ'}) show that for $\psi\in C_c^\infty(\g')$ and $0\ne y=\tau(w)=\tau'(w)$,
\[
\tau'_*(f(y))(\psi)=const\,\pi_{\g'/\z'}(y)\int_{\G'}\psi(g'.y)\,d(g').
\]
Since the group $\G'$ is compact, the last integral defines a smooth function of $y=y'J_1$. Also, in this case, $\pi_{\g'/\z'}(y)=(iy')^{l'-l}$. Hence, the claim follows.
\end{prf}
\begin{lem}\label{vanishing of derivatives in general}
Suppose $l\leq l'$.
Let $f(y)$ denote the function (\ref{extension by the symmetry condition}), with $y=\sum_{j=1}^ly_jJ_j\in W(\G,\h) \bigcup_{\hs1} \tau(\hs1^{reg})$. 
For any multiindex $\alpha=(\alpha_1,\dots,\alpha_l)$ set $\partial(J)^\alpha=\partial(J_1)^{\alpha_1}\dots\partial(J_l)^{\alpha_l}$. For $1\leq j\leq l$ define
\[
\langle \partial(J)^\alpha f\rangle_{y_j=0} =\underset{y_j\to 0\pm}{\lim}\ \partial(J)^\alpha f(y)  
\]
if  $\{y_j\ne 0;\ y\in W(\G,\h) \bigcup_{\hs1} \tau(\hs1^{reg})\}=\R^{\pm}$,
and 
\[
\langle \partial(J)^\alpha f\rangle_{y_j=0} =\underset{y_j\to 0+}{\lim}\ \partial(J)^\alpha f(y) - \underset{y_j\to 0-}{\lim}\ \partial(J)^\alpha f(y)
\]
if   $\{y_j\ne 0;\ y\in W(\G,\h) \bigcup_{\hs1} \tau(\hs1^{reg})\}=\R^{\times}$.
Then for $1\leq j\leq l$
\[
\langle \partial(J)^\alpha f\rangle_{y_j=0}=0\; \text{if}\; \ 0\leq \alpha_j<
\begin{cases}
2(l'-l)+1 &\text{if $\Dc=\R$ and $\G=\Og_{2l}$},\\
2l'-2l &\text{if $\Dc=\R$ and $\G=\Og_{2l+1}$},\\
l'-l &\text{if $\Dc=\C$},\\
2(l'-l) &\text{if  $\Dc=\Ha$}.
\end{cases}
\]
(Here $\langle \partial(J)^\alpha f\rangle_{y_j=0}$ is a function of the $y$ with $y_j=0$.)
\end{lem}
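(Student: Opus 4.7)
The plan is to reduce the general rank-$l$ statement to the rank-one case already established in Lemma \ref{vanishing of derivatives}. Fix an index $j\in\{1,\dots,l\}$. For a point $y=\sum_k y_k J_k\in\h$, split the coordinates into the "varying" one $y_j$ and the "frozen" ones $y^\circ=\sum_{k\neq j}y_kJ_k$. We choose $y^\circ$ in a generic region where none of the factors of $\pi_{\g/\h}$ or $\pi_{\g'/\z'}$ involving only the frozen coordinates vanishes; this is an open dense subset of the relevant slice. The bound to prove, rewritten with $l'$ replaced by $l'-(l-1)=l'-l+1$, is \emph{exactly} the rank-one bound from Lemma \ref{vanishing of derivatives}, which is the sign that a descent argument is what is needed.

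\textbf{Step 1 (descent to a smaller dual pair).} Write $w=w_ju_j+w^\circ$ with $w^\circ=\sum_{k\neq j}w_k u_k$. Viewing $w^\circ$ as fixed and regular in the sum $\bigoplus_{k\neq j}\V^k$, its centralizer in $\Sg$ is a supergroup $\Sg_1=\G_1\times\G'_1$ acting on $\V^j\oplus\V^0$, with the induced forms. The pair $(\G_1,\G'_1)$ is again a compact/non-compact irreducible dual pair of the same type as $(\G,\G')$ with $\G_1$ of rank $1$ and $\G'_1$ of rank $l'-l+1$, namely $(\Og_2,\Sp_{2(l'-l+1)}(\R))$ when $\G=\Og_{2l}$, $(\Og_3,\Sp_{2(l'-l+1)}(\R))$ when $\G=\Og_{2l+1}$, $(\Ug_1,\Ug_{p_1,q_1})$ with $p_1+q_1=l'-l+1$ when $\Dc=\C$, and $(\Sp_1,\Og^*_{2(l'-l+1)})$ when $\Dc=\Ha$. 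Its Cartan subspace $\R u_j\subseteq \ss1(\V^j\oplus\V^0)$ is exactly the rank-one Cartan subspace of Lemma \ref{vanishing of derivatives}, and the formulas (\ref{explicit tau and tau' on cartan subspace}), (\ref{product of positive roots for g}), (\ref{product of positive roots for g - bis}) for $(\G_1,\G'_1)$ produce a rank-one orbital integral function $f_1(y_jJ_j)$.

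\textbf{Step 2 (local factorization of $f$).} Using the Weyl-type formula (\ref{weyl int on w 1}) together with the explicit expression of $f$ from Definition \ref{def of HC integral on W}, the key point is to show that on a neighborhood of the line $y^\circ+\R J_j$ inside $W(\G,\h)\bigcup_{\hs1}\tau(\reg{\hs1})$,
\[
f(y^\circ+y_jJ_j)=C(y^\circ)\,f_1(y_jJ_j),
\]
where $C(y^\circ)$ is the product of the "frozen" factors of $\pi_{\g/\h}$ and $\pi_{\g'/\z'}$ evaluated at $y^\circ$, multiplied by the Jacobian of the obvious fibration $\Sg/\Sg^{\hs1}\to \Sg_1/\Sg_1^{\R u_j}$ obtained from the centralizer decomposition in Step 1. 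By the genericity choice, $C(y^\circ)$ is a nonvanishing smooth function of $y^\circ$ in the chosen region. This factorization is essentially parabolic/Levi descent for orbital integrals transported to the symplectic-super setting of \cite{McKeePasqualePrzebindaSuper}.

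\textbf{Step 3 (apply the rank-one result).} Expanding $\partial(J)^\alpha=\partial(J_j)^{\alpha_j}\prod_{k\neq j}\partial(J_k)^{\alpha_k}$ and applying it to the factorization in Step 2, every derivative in a frozen direction produces a smooth function of $y^\circ$, while $\partial(J_j)^{\alpha_j}$ acts on $f_1(y_jJ_j)$. Taking the boundary value $y_j\to 0$ (one-sided or jump, according to whether the varying side is a half-line or the full line, matching the dichotomy in the statement), we obtain
\[
\langle\partial(J)^\alpha f\rangle_{y_j=0}(y^\circ)=\big(\text{smooth in }y^\circ\big)\cdot\langle f_1^{(\alpha_j)}\rangle .
\]
Lemma \ref{vanishing of derivatives}, applied to $(\G_1,\G'_1)$ whose effective $l'$ is $l'-l+1$, gives vanishing of $\langle f_1^{(\alpha_j)}\rangle$ precisely in the asserted range of $\alpha_j$, and extending off the generic region by continuity (Lemma \ref{extension to non-compact reg}) completes the proof.

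\textbf{Main obstacle.} The substantive step is Step 2: establishing the factorization with the correct smooth prefactor. One must carefully track how the decomposition (\ref{decomposition of space for a cartan subspace}) restricts when one component is isolated, how the normalizations (\ref{weyl int on w 1})--(\ref{weyl int on w 2}) of the measure on $\Sg/\Sg^{\hs1}$ descend to the centralizer quotient, and how both the "$\g/\h$" and "$\g'/\z'$" discriminants split as (frozen)$\times$(varying) pieces matching the discriminants of the smaller pair $(\G_1,\G'_1)$. The case $\G=\Og_{2l+1}$ (with its extra $w_0\in\ss1(\V^0)$) requires separate bookkeeping via (\ref{muwforsome groups}) and (\ref{taumuwforoodd}) but fits the same scheme since $\V^0$ is preserved by the descent.
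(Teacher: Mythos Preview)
Your proposal is correct and follows essentially the same approach as the paper: reduce to the rank-one case (Lemma \ref{vanishing of derivatives}) for the smaller pair acting on $\V^0\oplus\V^j$ by factoring $f$ through the centralizer of the frozen regular element $w^\circ=\sum_{k\neq j}w_ku_k$. The paper makes your Step~2 precise by invoking the slice $U_w=\sum_{k\neq j}(w_k-\epsilon,w_k+\epsilon)u_k+\ss1(\V^0\oplus\V^j)$ through $w^\circ$ from \cite[Theorem 4.5]{PrzebindaLocal}, which yields a tensor-product factorization of $f_\V|_{U_w}$ with a polynomial prefactor $P(y)$ depending on \emph{all} of $y$ (not just $y^\circ$, since the root products have cross-terms $y_j\pm y_k$); this is harmless for your conclusion by Leibniz, since every resulting term still carries a factor $\langle f_1^{(m)}\rangle$ with $m\le\alpha_j$.
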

\begin{prf}
Without any loss of generality we may assume that $j=l$. Let $w=\sum_{j=1}^{l-1}w_ju_j$, where $\delta_jw_j^2=y_j$, $1\leq j\leq l-1$.
Recall the decomposition (\ref{decomposition of space for a cartan subspace}). 
The centralizer of $w$ in $\Wv=\ss1$ is equal to
\begin{equation}\label{centralizer of v}
\ss1^w=\ss1(\V)^w=\ss1(\V^1)^w\oplus\dots \oplus \ss1(\V^{l-1})^w \oplus \ss1(\V^0\oplus\V^l).
\end{equation}
As checked in the proof of \cite[Theorem 4.5]{PrzebindaLocal} \footnote{The statement of that theorem needs to be modified as follows. ``Let $x\in\g_1$ be semisimple. Then $\g_1^x$ has a basis of $\G^x$-invariant neighborhoods  of $x$ consisting of admissible slices $\Ug_x$ through $x$. If $\ker(x)=0$ then one may choose the $\Ug_x$ so that, for $i=\overline 0,\overline 1$,
\[
\Ug_x\ni y\to y^2|_{\V_i}\in \g_0(\V_i)^{x^2}
\]
is an (injective) immersion."}, there is a slice through $w$ equal to
\[
U_w=(w_1-\epsilon, w_1+\epsilon)u_1 + \dots + (w_{l-1}-\epsilon, w_{l-1}+\epsilon)u_{l-1} + \ss1(\V^0\oplus\V^l),
\]
where $\epsilon>0$ is sufficiently small. In order to indicate its dependence on the graded space $\V$, let us denote by $f_\V(y)$ the function (\ref{extension by the symmetry condition}). Recall that $y=\sum_{j=1}^ly_jJ_j\in W(\G,\h) \bigcup_{\hs1} \tau(\hs1^{reg})$ and let $w_y$ be such that $\tau'(w_y)=y$.  The Lebesgue measure on $\ss1(\V)$ is fixed and the orbital integral $\mu_{\Oo(w_y)}$ is normalized as in (\ref{weyl int on w 1}). We normalize the Lebesgue measure on each $\ss1(\V^j)$ and on  $\ss1(\V^0\oplus\V^l)$ so that via the direct sum decomposition
\[
\ss1(\V)=\ss1(\V^1)\oplus\ss1(\V^2)\oplus\dots\oplus\ss1(\V^{l-1})\oplus\ss1(\V^0\oplus\V^l)
\]
we get the same measure on $\Wv=\ss1(\V)$. Then the $\Sg(\V)$-orbital integral $\mu_{\Oo(w_y)}$ restricts to $U_w$ and the result is the tensor product of $\Sg(\V^j)^w$-orbital integrals and the $\Sg(\V^0\oplus\V^l)$-orbital integral, because $U_w$ is a slice. 
Therefore,
\begin{eqnarray}\label{vanishing of derivatives in general 1}
&&f_\V(y)|_{\Ug_w}\\
&=&P(y)(f_{\V^1}(y_1J_1)|_{(w_1-\epsilon, w_1+\epsilon)u_1}\otimes \dots \otimes f_{\V^{l-1}}(y_{l-1}J_{l-1})|_{(w_{l-1}-\epsilon, w_{l-1}+\epsilon)u_{l-1}}\otimes f_{\V^0\oplus\V^l}(y_lJ_l)),\nn
\end{eqnarray}
where $P(y)$ is a polynomial, whose precise expression may be found from (\ref{product of positive roots for g}).
In (\ref{vanishing of derivatives in general 1})
\[
f_{\V^j}(y_jJ_j)|_{(w_j-\epsilon, w_j+\epsilon)u_j}\in \mathcal D'((w_j-\epsilon, w_j+\epsilon)u_j)\qquad (1\leq j\leq l-1)
\]
and 
\begin{equation}\label{vanishing of derivatives in general 2}
 f_{\V^0\oplus\V^l}(y_lJ_l)\in \mathcal D'(\ss1(\V^0\oplus\V^l)).
\end{equation}
Here $\mathcal D'(\Xv)$ denotes the space of distributions on $\Xv$. 
Since the dimension of a Cartan subalgebra of $\Sg(\V^0\oplus\V^l)|_{\V_{\overline 1}}$ is equal to $l'-l+1$, Lemma \ref{vanishing of derivatives in general} follows from (\ref{vanishing of derivatives in general 1}), (\ref{vanishing of derivatives in general 2}) and Lemma \ref{vanishing of derivatives}. This verifies the claim with $\alpha=(0,\dots,0,k,0,\dots,0)$ with the $k$ on the place $j$. In order to complete the proof we repeat the same argument with the $f$ replaced by $\partial(J)^\beta f$, where $\beta_j=0$.
\end{prf}
In the case $l\leq l'$, Lemmas \ref{extension to non-compact reg} and \ref{vanishing of derivatives in general} provide a further extension of the function $f$ to a continuous function
\begin{eqnarray}\label{extension by the symmetry condition and closure}
&&f:W(\G,\h) \bigcup_{\hs1}\tau(\hs1^{In-reg})\to\Ss^*(\Wv)^\Sg
\end{eqnarray}
which satisfies the symmetry condition (\ref{extension by the symmetry condition}).
 
Let
\begin{equation}\label{number r}
r=\frac{2\,\dim(\g)}{\dim(\V_{\overline 0})},
\end{equation}
where we view both $\g$ and $\V_{\overline 0}$ as vector spaces over $\R$. Explicitly,
\begin{equation}\label{number r 1}
r=\left\{
\begin{array}{lll}
2l-1\ &\text{if}\ \G=\Og_{2l},\\
2l\ &\text{if}\ \G=\Og_{2l+1},\\
l\ &\text{if}\ \G=\Ug_{l},\\
l+\frac{1}{2}\ &\text{if}\ \G=\Sp_{l}.
\end{array}\right.
\end{equation}
Let 
\begin{equation}\label{eq:iota}
\iota=
\begin{cases} 
1 &\text{if $\Dc=\R$ or $\C$}\\
\frac{1}{2} &\text{if $\Dc=\Ha$.}
\end{cases}
\end {equation}
The formula (\ref{number r 1}) together with (\ref{product of positive roots for g})  show that
\begin{equation}\label{relation of r with degree}
\max\{\deg_{y_j}\pi_{\g/\h};\ 1\leq j\leq l\}=\frac{1}{\iota} (r-1),
\end {equation}
where $\deg_{y_j}\pi_{\g/\h}$ denote the degree of $\pi_{\g/\h}(\sum_{j=1}^l y_jJ_j)$ with respect to the variable $y_j$.

The following theorem collects the required properties of the Harish-Chandra almost semisimple orbital integral on $\Wv$.

\begin{thm}\label{pullback of muy 3}
Suppose $l\leq l'$.
Then the closure of the subset $W(\G,\h) \bigcup_{\hs1}\tau(\reg{\hs1})\subseteq \h$ is equal to
\begin{equation*}
\h\cap \tau(\Wv)=\left\{
\begin{array}{lll}
\h & \text{if}\ \Dc\ne \C,\\
W(\G,\h)\{\sum_{j=1}^ly_jJ_j;\ y_1,\dots,y_{\max(l-q,0)}\geq 0\geq y_{\min(p,l)+1},\dots,y_l\}\  & \text{if}\ \Dc =\C.
\end{array}\right.
\end{equation*}
The function (\ref{extension by the symmetry condition and closure})
is smooth on the subset where each $y_j\ne 0$ and, for any multi-index $\alpha=(\alpha_1,\dots,\alpha_l)$ with
\[
\max(\alpha_1,\dots,\alpha_l)\leq \left\{
\begin{array}{lll}
d'-r-1\ &\text{if}\ \Dc=\R\ \text{or}\ \C,\\
2(d'-r)\ &\text{if}\ \Dc=\Ha,\\
\end{array}\right.
\]
the function  $\partial(J^\alpha)f(y)$ extends to a continuous function on $\h\cap\tau(\Wv)$. This extension is equal to zero on the boundary of this set. We shall therefore extend it  from $W(\G,\h) \bigcup_{\hs1}\tau(\hs1^{In-reg})$ to $\h\cap \tau(\Wv)$ by zero.

Suppose now $l>l'$. Then $f$ extends to a smooth function 
\begin{eqnarray}\label{HCintegral l>l'}
f:\h'{}^{In-reg}\cap\tau'(\hs1)\to \C
\end{eqnarray}
and any derivative of $f$ extends to a continuous function on the closure of any connected component of $\h'{}^{In-reg}\cap\tau'(\hs1)$.
\end{thm}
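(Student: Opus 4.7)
The plan is to prove the closure description by direct computation using Lemma~\ref{hinreg} and the Weyl group action (\ref{classical weyl group action}), then establish smoothness on the interior via the push-forward $\tau'_*$, and finally obtain the continuous extension to the boundary by combining Lemma~\ref{extension to non-compact reg} with the vanishing of boundary derivatives from Lemma~\ref{vanishing of derivatives in general}.

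First I would pin down the closure. When $\Dc\ne\C$ the Weyl group $W(\G,\h)$ acts by all sign changes, so the $W(\G,\h)$-orbit of the set $\tau(\hs1^{In-reg})$ described in Lemma~\ref{hinreg} is dense in $\h$, matching the claim. When $\Dc=\C$ the Weyl group is only $\Sigma_l$, so the sets $\tau(\hs1^{In-reg})$ for the various Cartan subspaces indexed by $m$ carry coordinate sign conditions; taking the union over admissible $m$ and passing to the closure yields exactly the $\Sigma_l$-orbit of the region $\{\sum_{j=1}^l y_jJ_j : y_1,\dots,y_{\max(l-q,0)}\geq 0\geq y_{\min(p,l)+1},\dots,y_l\}$ stated in the theorem.

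Next, for smoothness of $f$ on the subset where every $y_j\ne 0$, I would fix a Cartan subspace $\hs1$ and observe that for any $\phi\in C_c^\infty(\Wv)$ the $\G$-average $\phi^\G$ has the form $\psi\circ\tau'$ for some $\psi\in C_c^\infty(\g')$ by Corollary~\ref{pullback via taug}. Since $f(y)$ is $\G$-invariant,
\[
f(y)(\phi)=f(y)(\phi^\G)=\tau'_*(f(y))(\psi),
\]
which is smooth in $y$ on $\tau(\hs1^{In-reg})$ by Lemma~\ref{extension to non-compact reg}. Every point of the interior of $\h\cap\tau(\Wv)$ lies in $W(\G,\h)\tau(\hs1^{In-reg})$ for some $\hs1$ by the closure analysis, so smoothness propagates throughout; alternatively one may invoke the injectivity of $\tau'_*$ on $\Ss^*(\Wv)^\G$ (Corollary~\ref{injectivity of pushforward via tau'}) to transfer smoothness from $\tau'_*\circ f$ back to $f$ itself.

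For continuous extension to all of $\h\cap\tau(\Wv)$ with vanishing boundary values, I would combine two ingredients. Lemma~\ref{extension to non-compact reg} guarantees that every derivative $\partial(J^\alpha)f$ extends continuously to the closure of each connected component of $W(\G,\h)\bigcup_{\hs1}\tau(\hs1^{In-reg})$. Matching (\ref{number r 1}) and (\ref{relation of r with degree}) against the thresholds in Lemma~\ref{vanishing of derivatives in general} case by case, one checks that $\max\alpha_j\leq d'-r-1$ (respectively $2(d'-r)$ when $\Dc=\Ha$) is exactly the condition under which $\langle\partial(J)^\alpha f\rangle_{y_j=0}=0$ for every $j$, so the one-sided or two-sided limits at each boundary hyperplane $\{y_j=0\}$ vanish and extension by zero is continuous. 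For $l>l'$ there is a single Cartan subspace $\hs1$ with $\z'=\h'=\h$, so $f$ reduces up to a constant to $\pi_{\g'/\h'}(y)\mu_{\Oo(w)}$; pairing with $\psi\circ\tau'$ gives a Harish-Chandra type function on $\h'{}^{In-reg}$, and the classical theorem \cite{HC-57Fourier} already invoked in Lemma~\ref{extension to non-compact reg} delivers smoothness together with continuous extension of all derivatives to the closure of each connected component. The main obstacle will be the $\Dc=\C$, $l\leq l'$ case: distinct Cartan subspaces share boundary faces (see (\ref{hin-reg-D})), and one must verify that the limits provided by Lemma~\ref{vanishing of derivatives in general} match from both sides of every such face, so that the symmetrically extended $f$ on $\h\cap\tau(\Wv)$ is genuinely continuous across all interfaces.
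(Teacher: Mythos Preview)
Your proposal is correct and follows essentially the same approach as the paper's proof, which is extremely terse: it cites (\ref{explicit tau and tau' on cartan subspace}), (\ref{the image of hs1 under tau 1}) and (\ref{classical weyl group action}) for the closure formula, Lemma~\ref{vanishing of derivatives in general} for the continuous extension of derivatives, and the argument of Lemma~\ref{extension to non-compact reg} for the case $l>l'$. Your version is simply more explicit about the same ingredients. The ``obstacle'' you flag in the $\Dc=\C$ case is in fact already resolved by Lemma~\ref{vanishing of derivatives in general}: since the one-sided limits of $\partial(J)^\alpha f$ at each hyperplane $\{y_j=0\}$ are all zero under the stated bound on $\max\alpha_j$, the values from neighbouring Cartan subspaces automatically agree (both are zero), so no additional matching argument is needed.
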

\begin{prf}
The formula for $\mathfrak h \cap \tau(\Wv)$ follows from (\ref{explicit tau and tau' on cartan subspace}), (\ref{the image of hs1 under tau 1}) and  (\ref{classical weyl group action}),  via a case by case verification. The extension of $\partial(J^\alpha)f(y)$ is a consequence of Lemma \ref{vanishing of derivatives in general}. Finally, the extension in the case $ l\leq l'$ is done as in 
Lemma \ref{extension to non-compact reg}.
\end{prf}
In general  we shall write $f_\phi(y)$ for $f(y)(\phi)$.
\section{\bf Limits of orbital integrals.\rm}\label{Limits of orbital integrals}
In this section we consider weighted dilations of the almost semisimple orbital using a positive variable $t$. It turns out that the limit as $t$ tends to $0$ is a constant multiple of the invariant measure on a nilpotent orbit. 
So we begin by describing the nilpotent orbits. It turns out that there is one maximal orbit. 

Let $m$ denote the minimum of $d=\dim_\Dc\,\V_{\overline 0}$ and  the dimension, over $\Dc$, of a maximal isotropic subspace of $\V_{\overline 1}$ with respect to the form $(\cdot,\cdot)'$ in (\ref{super liealgebra}). Recall that the pair $(\G,\G')$ is said to be in the stable range with $\G$ the smaller member if $m=d$.

\begin{lem}\label{structure of t'-1tau(0)}
Let $m$ be as above and  let $d'=\dim_\Dc\,\V_{\overline 1}$.  Let $\SHs_k(\Dc)$ denote the space of the skew-hermitian matrices of size $k$ with coefficients on $\Dc$, $0\leq k\leq m$. 
There are nilpotent orbits $\mathcal O'_k\subseteq\g$ such that
\begin{eqnarray}\label{structure of t'-1tau(0)1}
&&\tau'\tau^{-1}(0)=\mathcal O'_m\cup\mathcal O'_{m-1}\cup\dots\cup\mathcal O'_0,\\
&&\mathcal O'_k\cup\mathcal O'_{k-1}\cup\dots\cup\mathcal O'_0\  \ \text{is the closure of}\ \ \mathcal O'_k \ \ \text{for}\ \ 0\leq k\leq m,\nn\\
&&\dim\mathcal O'_k=d'k\,\dim_\R(\Dc)-2\dim_\R\SHs_k(\Dc).\nn
\end{eqnarray}
Explicitly
\begin{equation}\label{structure of t'-1tau(0)2}
\dim \mathcal O'_k=\left\{
\begin{array}{lll}
kd'-k(k-1)\ &\ \text{if}\ \ \Dc=\R,\\
2kd'-2k^2\ &\ \text{if}\ \ \Dc=\C,\\
4kd'-2k(2k+1)\ &\ \text{if}\ \ \Dc=\Ha.
\end{array}\right.
\end{equation}
Suppose $\Dc=\R$ or $\C$.
Then the partition of $d'$ corresponding to the complexification $\Oo_k{}_\C'=\G_\C'\Oo_k'$ of the orbit $\Oo_k'$ is $\lambda'=(2^{k},1^{d'-2k})$. In other words, the Young diagram  corresponding to the orbit $\Oo_k{}_\C'$ has $k$ rows of length $2$ and $d'-2k$ rows of length $1$.
If $\Dc=\Ha$, then $\Oo_k{}_\C'$ corresponds to the partition $\lambda'=(2^{2k},1^{2d'-4k})$ of $2d'$.

The equality
\begin{equation}\label{structure of t'-1tau(0)3}
\dim\,\mathcal O'_m=\dim\,\Wv-2\,\dim\,\g,
\end{equation}
holds if and only if either the dual pair $(\G,\G')$ is in the stable range with $\G$ - the smaller member or if 
$(\G,\G')$ is one of the following pairs
\[
(\Og_{m+1}, \Sp_{2m}(\R)), \  \ (\Ug_{d'-m}, \Ug_{m,d'-m})\ \ \text{with}\ \ 2m<d'.
\]
\end{lem}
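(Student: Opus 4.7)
The plan is to exploit the identification $\ss1\simeq\Hom_\Dc(\V_{\overline 0},\V_{\overline 1})$ from (\ref{eq:WasHom}) and reduce the problem to linear algebra on Hom spaces. For $w\in\ss1$, view $w_0:=w|_{\V_{\overline 0}}$ as a map $\V_{\overline 0}\to\V_{\overline 1}$; the super-adjoint relation $(xu,v)''=(u,\Sy xv)''$ forces $w|_{\V_{\overline 1}}=-w_0^{*}$, where $w_0^*$ is the adjoint with respect to the forms $(\cdot,\cdot)$ and $(\cdot,\cdot)'$. Hence $\tau(w)=-w_0^*w_0$ and $\tau'(w)=-w_0w_0^*$. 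Since $(\cdot,\cdot)$ is positive definite, $w_0^*w_0=0$ is equivalent to $\im w_0$ being isotropic in $\V_{\overline 1}$, and the $\Dc$-rank $k$ of $w_0$ is then bounded by $m=\min(d,\mathrm{ind}(\V_{\overline 1}))$.

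First I would stratify $\tau^{-1}(0)=\bigsqcup_{k=0}^{m}\mathcal{W}_k$ with $\mathcal{W}_k=\{w_0:\dim_\Dc\im w_0=k,\ \im w_0\text{ isotropic}\}$, and set $\mathcal{O}'_k:=\tau'(\mathcal{W}_k)$. For $w_0\in\mathcal{W}_k$, the endomorphism $\tau'(w_0)=-w_0w_0^*$ has image $I:=\im w_0$ and kernel $I^\perp$, so it is $2$-step nilpotent of rank $k$; this gives the Jordan partition $(2^k,1^{d'-2k})$ over the algebraic closure. To show each $\mathcal{O}'_k$ is a single $\G'$-orbit, I would apply Witt's theorem to move any $I$ to a fixed standard isotropic $k$-subspace, then analyse the induced map $\bar N:\V_{\overline 1}/I^\perp\to I$. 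Via the perfect pairing $(\cdot,\cdot)':\V_{\overline 1}/I^\perp\times I\to \Dc$, the datum $\bar N$ corresponds to a sesquilinear form $B$ on $\V_{\overline 1}/I^\perp$ whose symmetry type is dictated by $N\in\g'$ (symmetric for $\Sp$, anti-hermitian for $\Ug_{p,q}$, quaternion-hermitian for $\Og^*$). The computation $B(u,v)=(Nu,v)'=-(w_0^*u,w_0^*v)$ shows $B$ is negative definite because $(\cdot,\cdot)$ on $\V_{\overline 0}$ is positive definite, and all negative definite forms of the prescribed symmetry type are conjugate under the compact isometry group $\Ug_k(\Dc)$, which embeds into the Levi of the parabolic stabilizing $I$. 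The closure relations follow from the semicontinuity of rank: $\overline{\mathcal{W}_k}$ contains every $\mathcal{W}_j$ with $j\leq k$.

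For the dimension I would parametrize $\mathcal{O}'_k$ by pairs $(I,B)$, where $I\subseteq \V_{\overline 1}$ is isotropic of dimension $k$ and $B$ is a negative definite form of the appropriate type on $I$ (an open subset of a real vector space of dimension $\dim_\R\Hs_k(\Dc)$); equivalently, one uses orbit-stabilizer with Levi factor $\Ug_k(\Dc)\times\G'(I^\perp/I)$ and an explicit unipotent radical. Either route yields $\dim\mathcal{O}'_k=\dim\mathrm{IsoGr}_k+\dim_\R\Hs_k(\Dc)$, and combining $\dim\mathrm{IsoGr}_k=k(d'-k)\dim_\R\Dc-\dim_\R\SHs_k(\Dc)$ with $\dim_\R\Hs_k(\Dc)+\dim_\R\SHs_k(\Dc)=k^2\dim_\R\Dc$ gives $\dim\mathcal{O}'_k=d'k\dim_\R\Dc-2\dim_\R\SHs_k(\Dc)$. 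The three explicit cases then follow from $\dim_\R\SHs_k(\R)=k(k-1)/2$, $\dim_\R\SHs_k(\C)=k^2$, $\dim_\R\SHs_k(\Ha)=k(2k+1)$.

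The final assertion (\ref{structure of t'-1tau(0)3}) is a numerical comparison with $k=m$. In the stable range $m=d$ the identity $\dim\g=\dim_\R\SHs_d(\Dc)$ holds in each of the three cases $\G\in\{\Og_d,\Ug_d,\Sp_d\}$, so $\dim\mathcal{O}'_m=\dim\Wv-2\dim\g$ is automatic. Outside the stable range one is reduced to $d'(d-m)\dim_\R\Dc=2[\dim_\R\SHs_d(\Dc)-\dim_\R\SHs_m(\Dc)]$, which, inside each dual-pair family in (\ref{classificationGG'}), has only the two listed solutions $(\Og_{m+1},\Sp_{2m}(\R))$ and $(\Ug_{d'-m},\Ug_{m,d'-m})$ with $2m<d'$. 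I expect the main obstacle to be the transitivity step: one must carefully track the sign and definiteness of $B$ through the various dualities, in particular handling the symplectic case (where $(\cdot,\cdot)'$ is alternating, so that the induced form $B$ lives on $\V_{\overline 1}/I^\perp\cong I^*$ rather than directly on $I$) so that the ``negative definite'' conclusion is interpreted correctly in each of $\Dc=\R,\C,\Ha$.
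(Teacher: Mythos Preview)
Your approach is correct and genuinely different from the paper's. The paper does not work directly with the Hom-space picture; instead it invokes the structure theory of nilpotent elements in the Lie superalgebra $\mathfrak s$ from \cite{DaszKrasPrzebindaK-S2}, decomposing $\V$ into indecomposable $(w,\V^{(j)})$-summands, reading off the Jordan type of $\tau'(w)$ from that decomposition, and then citing \cite{CollMc} for the closure relations and the dimension formula. The stable-range case of (\ref{structure of t'-1tau(0)3}) is likewise quoted from \cite{PrzebindaUnipotent}. Your argument, by contrast, is self-contained: you characterize $\tau^{-1}(0)$ as maps with isotropic image, parametrize each $\mathcal O'_k$ by pairs $(I,B)$ with $I$ isotropic and $B$ a definite form, use Witt's theorem plus transitivity of $\GL_k(\Dc)$ on definite hermitian forms for the single-orbit statement, and compute the dimension directly from the isotropic Grassmannian. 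This buys you independence from the cited structural results and makes the orbit description very explicit; the paper's route is shorter on the page but relies on that external machinery.

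One small correction: in your parenthetical on the symmetry type of $B$, the $\Ug_{p,q}$ case should read ``hermitian'' rather than ``anti-hermitian''. Indeed, with $(\cdot,\cdot)'$ skew-hermitian and $N\in\g'$, one has $B(v,u)=(Nv,u)'=-\overline{(u,Nv)'}=\overline{(Nu,v)'}=\overline{B(u,v)}$. This does not affect your argument, since the crucial input is the definiteness $B(u,v)=-(w_0^*u,w_0^*v)$ and the transitivity of $\GL_k(\Dc)$ on definite forms of the correct symmetry type; you already flagged this bookkeeping as the main point to watch.
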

\begin{prf}
As is well known, the variety $\tau'(\tau^{-1}(0))$ is the closure of a single $\G'$ orbit $\mathcal O'_m$, see for example \cite[Lemma (2.16)]{PrzebindaUnipotent}. 

Let $w\in\ss1$ be a nilpotent element. Then
\[
\V=\V^{(0)}\oplus \V^{(1)}\oplus \dots \oplus\V^{(k)},
\]
where $\V^{(0)}$ is the kernel of $w$ and each $(w,\V^{(j)})$, $1\leq j\leq k$ is indecomposable (see \cite[Def. 3.14]{DaszKrasPrzebindaK-S2}). (If $w=0$ then $\V=\V^{(0)}$.) 
The orbit of $w$, call it $\Oo_k$, is of maximal dimension if the kernel of $w$ is minimal, which happens if and only if $k=m$.
Since the only nilpotent element of $\g$ is zero, we have
\[
0=\tau(w)=w^2|_{\V_{\overline 0}}.
\]
Fix $j\geq 1$. Then $(w,\V^{(j)})$ is non-zero and indecomposable. The structure of such elements is well known. In particular we see from  \cite[Prop.5.2(e)]{DaszKrasPrzebindaK-S2}
that there are vectors $v_1, v_3\in \V_{\overline 1}$ and  $v_2\in \V_{\overline 0}$ such that 
\[
\V^{(j)}=\Dc v_2\oplus (\Dc v_1\oplus \Dc v_3),\ \ wv_1=v_2,\ w v_2= v_3,\ w v_3=0.
\]
Hence,
\[
\V_{\overline 1}^{(j)}=\Dc v_1\oplus \Dc v_3,\ \ w^2v_1=v_3,\ w^2 v_3=0
\]
and the decomposition
\[
\V_{\overline 1}=\V_{\overline 1}^{(0)}\oplus \V_{\overline 1}^{(1)}\oplus \dots \oplus\V_{\overline 1}^{(k)},
\]
determines the $\G'$-orbit $\Oo'_k$ of $\tau'(w)=w^2|_{\V_{\overline 1}}$.

The dual pair corresponding to $\Sg|_{\V^{(j)}}$ is $(\Og_1, \Sp_2(\R))$, if $\Dc=\R$, $(\Ug_1, \Ug_{1,1})$, if $\Dc=\C$, $(\Sp_1, \Og^*_{4})$, if $\Dc=\Ha$. The complexifications of these pairs are $(\Og_1, \Sp_2(\C))$, $(\GL_1(\C), \GL_2(\C))$ and $(\Sp_2(\C), \Og_4(\C))$ respectively. 
In particular, this leads to the description of the complexification of the orbit in terms of the Young diagrams, as in \cite{CollMc}.

The closure relations between the orbits $\Oo'_k$ are well known and their dimension may be computed using \cite[Corollary 6.1.4]{CollMc} leading to (\ref{structure of t'-1tau(0)2}). The dimension formula in (\ref{structure of t'-1tau(0)1}) follows from (\ref{structure of t'-1tau(0)2}).

The fact that (\ref{structure of t'-1tau(0)3}) holds if the pair is in the stable range was checked in \cite[Lemma (2.19)]{PrzebindaUnipotent}.
The last statement follows from  (\ref{structure of t'-1tau(0)2}) via a direct computation.
\end{prf}

Now we construct a slice through an element of the maximal nilpotent orbit. 

Recall the non-degenerate bilinear form $\langle\cdot ,\cdot \rangle$ and the automorphism $\theta$ on $\mathfrak s$, \cite[sec. 2.1]{PrzebindaLocal}. (The restriction of $\langle\cdot ,\cdot \rangle$  to $\so$ is a Killing form and the restriction to $\ss1$ is a symplectic form. Also, the restriction of $\theta$  to $\so$ is a Cartan involution and the restriction of $-\theta$ to $\ss1$ is a positive definite compatible complex structure.) In particular the bilinear form $B(\cdot ,\cdot )=-\langle \theta\cdot ,\cdot \rangle$ is symmetric and positive definite. 

Fix an element $N\in \ss1$. Then $N+[\so,N]\subseteq \ss1$ may be thought of as the tangent space at $N$ to the $\Sg$-orbit through $N$. Denote by $[\so,N]^{\perp_B}\subseteq \ss1$ the $B$-orthogonal complement of $[\so,N]$. Since the form $B$ is positive definite,
\begin{equation}\label{decomposition of ss1 with respect to N}
\ss1=[\so,N]\oplus [\so,N]^{\perp_B}.
\end{equation}
Consider the map
\begin{equation}\label{the main submersion}
\sigma:\Sg\times \left(N+ [\so,N]^{\perp_B}\right)\ni (s,u)\to su\in \ss1.
\end{equation}
The range of the derivative of the map $\sigma$ at $(s,u)$ is equal to
\begin{equation}\label{the range of the derivative}
[\so,su]+s [\so,N]^{\perp_B}=s\left([\so,u]+[\so,N]^{\perp_B}\right).
\end{equation}
Let
\begin{equation}\label{definition of U}
U=\{u\in N+ [\so,N]^{\perp_B};\ [\so,u]+[\so,N]^{\perp_B}=\ss1\}.
\end{equation}
The equality (\ref{decomposition of ss1 with respect to N}) implies that  $N\in U$ and $U$ is the slice we were looking for. Next, we consider the orbits passing through $U$. The maximal nilpotent orbit corresponds to a point $N$ and the almost semisimple ones to others points in $U$ which will approach $N$ in a suitable sense, as explained below.

Notice that $U$ is the maximal open neighborhood of $N$ in $N+ [\so,N]^{\perp_B}$ such that the map
\begin{equation}\label{the main submersion on U -1}
\sigma:\Sg\times U\ni (s,u)\to su\in \ss1
\end{equation}
is a submersion. Then $\sigma(\Sg\times U)\subseteq \ss1$ is an open $\Sg$-invariant subset and
\begin{equation}\label{the main submersion on U}
\sigma:\Sg\times U\ni (s,u)\to su\in \sigma(\Sg\times U)
\end{equation}
is a surjective submersion. 

We shall use the map (\ref{the main submersion on U}) to study the $\Sg$-orbital integrals in $\ss1$. This is parallel to Ranga Rao's unpublished study of the orbital integrals in a semisimple Lie algebra, \cite{BarVogAs}.

From now on we assume that $N$ is nilpotent.
\begin{lem}\label{lemma I.4}
The map
\begin{equation}\label{lemma I.4.1}
N+ [\so,N]^{\perp_B}\ni u\to u^2\in \so
\end{equation}
is proper.
\end{lem}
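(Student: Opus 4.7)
The plan is to deduce the lemma from the properness of the moment map $\tau':\Wv\to\g'$ which was essentially already established earlier in the paper. Since $N+[\so,N]^{\perp_B}$ is a closed affine subspace of $\ss1=\Wv$, and the restriction of a proper map to a closed subset remains proper, it suffices to show that the ambient map $\Wv\ni u\mapsto u^2\in\so$ is proper on all of $\ss1$.

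The key reduction is that under the decomposition $\so=\g\oplus\g'$, the formulas \eqref{unnormalized moment maps} give $u^2=\tau(u)+\tau'(u)$, so the projection of $u\mapsto u^2$ onto the $\g'$-summand is exactly the moment map $\tau'$. A map into a direct sum is proper whenever one of its components is proper, so it is enough to verify that $\tau'$ is proper.

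For this I appeal to the matrix realization used in the proof of Corollary \ref{pullback via taug}: once $\Wv$ is identified with $M_{d,d'}(\Dc)$, one has $\tau'(w)=F\beta(w)$, where $\beta(w)=\overline w^t w$ and $F\in\GL_{d'}(\Dc)$. Properness of $\beta$ is the content of the Cauchy--Schwarz estimate \eqref{CSinequality} which was already invoked inside the proof of Proposition \ref{SchwarzEquality}; since $F$ acts on $\Hs_{d'}$ as an invertible linear map, this properness is preserved after multiplication by $F$, so $\tau'$ is proper.

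I foresee no real obstacle in this approach. The nilpotency hypothesis on $N$ and the detailed geometry of the slice $N+[\so,N]^{\perp_B}$ play no role; in fact the lemma will hold for arbitrary $N\in\ss1$. The only mildly delicate point is the identification of $u^2|_{\V_{\overline 1}}$ with $\tau'(u)$, but this is the very definition in \eqref{unnormalized moment maps}.
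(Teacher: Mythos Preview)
Your argument is correct and is genuinely simpler than the paper's. The paper proceeds by choosing an explicit matrix realization: it writes the skew-hermitian matrix $F$ in the block form \eqref{matrix F.I}, takes $N=N_k$ of the shape \eqref{the nilpotent element}, computes $[\so,N]^{\perp_B}$ in coordinates, and then displays both products $ww^*$ and $w^*w$ explicitly (formulas (\ref{I.first eq}) and (\ref{I.second eq})). Reading off the entries, one sees that boundedness of $u^2$ forces each block $w_3,w_5,w_6$ to be bounded, and this is the paper's ``Hence the claim follows.''

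Your route bypasses all of this: since $\tau':\Wv\to\g'$ is already known to be proper (via $\tau'(w)=F\beta(w)$ and the properness of $\beta$ established by \eqref{CSinequality}), the map $u\mapsto u^2=(\tau(u),\tau'(u))\in\g\oplus\g'$ is proper on all of $\Wv$, and hence on the closed affine slice $N+[\so,N]^{\perp_B}$. As you note, nothing about the nilpotency of $N$ or the structure of the slice is used; the statement holds for arbitrary $N\in\ss1$. One minor remark: the citation of \eqref{CSinequality} is slightly indirect---the cleanest way to see that $\beta$ is proper is simply $\tr_{\Dc/\R}\beta(w)=\|w\|^2$---but the paper itself draws the same conclusion from that inequality, so your reference is accurate.

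What the paper's approach buys is not the lemma itself but the explicit coordinate system on the slice. The formulas (\ref{I.first eq}) and (\ref{I.second eq}) are reused immediately afterward: Corollary~\ref{tau is proper} reads off the properness of $\tau|_{N+[\so,N]^{\perp_B}}$ directly from (\ref{I.second eq}), and Lemma~\ref{lemma about g(t)} relies on the same block decomposition to analyze the dilation $g_t$. So the matrix computation, while heavier than necessary for Lemma~\ref{lemma I.4} alone, is setting up machinery for what follows.
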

\begin{prf}
We proceed in terms of matrices. Thus $\V_{\overline 0}=\Dc^d$ is a left vector space over $\Dc$ via
\[
av:=v\overline a \qquad (v\in \V_{\overline 0},\ a\in \Dc).
\]
Then $\End_\Dc(\V_{\overline 0})$ may be identified with the space of matrices $M_d(\Dc)$ acting on $\Dc^d$ via left multiplication.
Let 
\[
(v,v')=\overline v^tv' \qquad (v,v'\in \Dc^d).
\]
This is a positive definite hermitian form on $\Dc^d$. The isometry group of this form is
\[
\G=\{g\in M_d(\Dc);\ \overline g^tg=I_d\}.
\]
Similarly, $\V_{\overline 1}=\Dc^{d'}$ is a left vector space over $\Dc$ and
\[
\G'=\{g\in M_{d'}(\Dc);\ \overline g^tFg=F\},
\]
where $F=-\overline F^t\in \GL_{d'}(\Dc)$. This is the isometry group of the form
\[
(v,v')'=\overline v^tFv' \qquad (v,v'\in \Dc^{d'}).
\]
Furthermore we have the identifications
\[
\ss1=\Hom_\Dc(\V_{\overline 0}, \V_{\overline 1})=M_{d',d}(\Dc),
\]
with the symplectic form
\[
\langle w', w\rangle =tr_{\Dc/\R}(w^*w') \qquad (w,w'\in M_{d',d}(\Dc)),
\]
where $w^*=\overline w^tF$. Also, $-\theta(w)=F^{-1}w$ so that
\[
B(w', w)=tr_{\Dc/\R}(\overline w^t w').
\]
Two elements $w,w'\in M_{d',d}(\Dc)$ anticommute if and only if
\begin{equation}\label{anticommutation relations in *}
ww'{}^*+w'w^*=0\ \ \text{and}\ \ w^*w'+w'{}^*w=0.
\end{equation}
From now on we choose the matrix $F$ as follows
\begin{equation}\label{matrix F.I}
F=\left(
\begin{array}{lll}
0 & 0 & I_k\\
0 & F' & 0\\
-I_k & 0 & 0
\end{array}
\right)
\end{equation}
where $0\leq k\leq m$, where $m$ is the minimum of $d$ and the Witt index of the form $(\ ,\ )'$, as in Lemma \ref{structure of t'-1tau(0)}. Then, with the block decomposition of an element of $M_{d',d}(\Dc)=M_{d',k}(\Dc)\oplus M_{d',d-k}(\Dc)$ dictated by (\ref{matrix F.I}),
\[
\left(
\begin{array}{lll}
w_1 & w_4\\
w_2 & w_5\\
w_3 & w_6
\end{array}
\right)^*=
\left(
\begin{array}{lll}
-\overline w_3^t & \overline w_2^tF' & \overline w_1^t\\
-\overline w_6^t & \overline w_5^tF' & \overline w_4^t
\end{array}
\right).
\]
We may choose 
\begin{equation}\label{the nilpotent element}
N=N_k=\left(
\begin{array}{lll}
I_k & 0\\
0 & 0\\
0 & 0
\end{array}
\right)
\end{equation}
Notice that 
\[
[\so,N]^{\perp_B}=\theta\left([\so,N]^{\perp}\right)=\theta\left({}^N\ss1\right)={}^{\theta N}\ss1,
\]
where the second equality is taken from \cite[Lemma 3.5]{PrzebindaLocal}. Hence, a straightforward computation using (\ref{anticommutation relations in *}) shows that
\[
[\so,N]^{\perp_B}=\left\{
\left(
\begin{array}{lll}
0 & 0\\
0 & w_5\\
w_3 & w_6
\end{array}
\right);\ w_3=-\overline w_3^t
\right\}.
\]
The image of $w$ under the map (\ref{lemma I.4.1}) consists of pairs of matrices
\begin{equation}\label{I.first eq}
\left(
\begin{array}{lll}
I_k & 0\\
0 & w_5\\
w_3 & w_6
\end{array}
\right)
\left(
\begin{array}{lll}
I_k & 0\\
0 & w_5\\
w_3 & w_6
\end{array}
\right)^*
=
\left(
\begin{array}{lll}
w_3 & 0 & I_k\\
-w_5\overline w_6^t & w_5\overline w_5^t F' & 0\\
-w_3\overline w_3^t-w_6\overline w_6^t & w_6\overline w_5^t F' & w_3
\end{array}
\right)
\end{equation}
and
\begin{equation}\label{I.second eq}
\left(
\begin{array}{lll}
I_k & 0\\
0 & w_5\\
w_3 & w_6
\end{array}
\right)^*
\left(
\begin{array}{lll}
I_k & 0\\
0 & w_5\\
w_3 & w_6
\end{array}
\right)
=
\left(
\begin{array}{lll}
2w_3 & w_6\\
-\overline w_6^t & \overline w_5^t F' w_5
\end{array}
\right).
\end{equation}
Hence the claim follows.
\end{prf}
Suppose $k=m$. Then it is easy to see from (\ref{the nilpotent element}) and (\ref{xw=0 implies x=0}) that $N=N_m\in \Wv_\g$, or equivalently $U\subseteq \Wv_\g$, if and only if either the pair $(\G,\G')$ is in the stable range with $\G$ the smaller member or 
$(\G,\G')=(\Og_{l'+1},\Sp_{2l'}(\R))$.
\begin{cor}\label{tau is proper}
If $k=m$, then the map
\begin{equation}\label{the map tau I}
\tau: N+[\so,N]^{\perp_B}\ni w\to w^*w\in\g
\end{equation}
is proper.
\end{cor}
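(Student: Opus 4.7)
The strategy is to reduce the corollary to Lemma \ref{lemma I.4}, which says that $w\mapsto w^2$ is proper on $N+[\so,N]^{\perp_B}$. Since $w^2|_{\V_{\overline 0}}=w^*w$ and $w^2|_{\V_{\overline 1}}=ww^*$, Lemma \ref{lemma I.4} amounts to: bounded $(w^*w,ww^*)$ forces bounded $w$. Hence it suffices to show that on $N+[\so,N]^{\perp_B}$ boundedness of $w^*w$ alone forces $ww^*$ to be bounded; the properness of $\tau$ will then follow.

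Writing $w=\begin{pmatrix}I_k & 0\\0 & w_5\\ w_3 & w_6\end{pmatrix}$ with $w_3=-\overline w_3{}^t$ as in the proof of Lemma \ref{lemma I.4}, formula \eqref{I.second eq} exhibits $w_3$ and $w_6$ as blocks of $w^*w$, so these are immediately bounded as soon as $w^*w$ is. The only nontrivial block of $w^*w$ is the bottom right one, $\overline w_5{}^tF'w_5$, and the task reduces to showing that $\overline w_5{}^tF'w_5$ bounded implies $w_5$ bounded. This is precisely where the hypothesis $k=m$ is used.

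If $m=d$ then $d-k=0$ and $w_5$ lives in a zero-dimensional space; if $m<d$ then $m$ equals the Witt index of $(\cdot,\cdot)'$, so $F'$ is the matrix of the skew-hermitian form on the anisotropic kernel $\V_{\mathrm{middle}}$ of dimension $d'-2m$. For $\Dc=\R$ an anisotropic skew-symmetric form must vanish, and once more $\V_{\mathrm{middle}}=\{0\}$ renders $w_5$ vacuous. For $\Dc=\C$ or $\Ha$ the anisotropy of $F'$ on the finite-dimensional space $\V_{\mathrm{middle}}$, combined with compactness of the unit sphere there, yields a constant $c>0$ with $|\overline v^tF'v|\ge c|v|^2$ for every $v\in\V_{\mathrm{middle}}$. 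Reading the diagonal entries of $\overline w_5{}^tF'w_5$ column by column then gives $c|v_j|^2\le|\overline{v_j}^tF'v_j|$ for every column $v_j$ of $w_5$, so bounded $\overline w_5{}^tF'w_5$ forces bounded $w_5$.

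With $w_3$, $w_5$ and $w_6$ all controlled, formula \eqref{I.first eq} shows that $ww^*$ is bounded as well. Lemma \ref{lemma I.4} then forces $w$ itself into a compact subset of $N+[\so,N]^{\perp_B}$, which proves that $\tau$ is proper. The only step with genuine content is the uniform lower bound on the anisotropic kernel, which amounts to a short case verification across the three division algebras $\Dc$.
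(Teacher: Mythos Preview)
Your proof is correct and rests on the same key observation as the paper's: once $k=m$, the middle block $F'$ is the matrix of the anisotropic kernel of $(\cdot,\cdot)'$, so the map $w_5\mapsto\overline w_5{}^tF'w_5$ is proper. The paper argues this more tersely (it simply notes that $w_5$ is absent unless $\Dc=\C$, in which case $iF'$ is definite hermitian), whereas you carry out the case split over $\Dc$ explicitly; your version is slightly more careful on this point.

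One inefficiency worth pointing out: the detour through Lemma~\ref{lemma I.4} and formula~\eqref{I.first eq} is unnecessary. The matrix $w\in N+[\so,N]^{\perp_B}$ has exactly the three variable blocks $w_3,w_5,w_6$ (the remaining block is the constant $I_k$), so once you have shown that boundedness of $w^*w$ forces $w_3,w_5,w_6$ bounded, you already have $w$ bounded directly. There is no need to first bound $ww^*$ and then invoke Lemma~\ref{lemma I.4}. The paper's proof does exactly this: it reads off $w_3,w_6$ from~\eqref{I.second eq}, handles $w_5$ via the definiteness of $iF'$, and is done.
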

\begin{prf}
This follows from the formula (\ref{I.second eq}). Indeed, it is enough to see that the map
\[
w_5\to \overline w_5^t F' w_5
\]
is proper. The variable $w_5$ does not exist unless $\Dc=\C$ and $d>m$. This means that $m$ is the Witt index of the form $(\ ,\ )'$. Hence $iF'$ is a definite hermitian matrix. Therefore the above map is proper.
\end{prf}
\begin{cor}\label{tau is proper again}
Suppose $k=m$.
If $E\subseteq \ss1$ is a subset such that $\tau(E)\subseteq \g$ is bounded, then
\[
E\cap \left( N+[\so,N]^{\perp_B}\right)
\]
is bounded.
\end{cor}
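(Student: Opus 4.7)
The plan is to deduce this immediately from Corollary \ref{tau is proper}, which asserts that the restriction
\[
\tau\big|_{N+[\so,N]^{\perp_B}}: N+[\so,N]^{\perp_B}\longrightarrow \g
\]
is a proper map. The key observation is that in a finite-dimensional real vector space, properness of a continuous map is equivalent to the statement that the preimage of every bounded set is bounded (since bounded sets have compact closure, and preimages of compact sets under a proper map are compact, hence bounded).

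Given a subset $E\subseteq \ss1$ with $\tau(E)$ bounded in $\g$, set $F=E\cap \bigl(N+[\so,N]^{\perp_B}\bigr)$. First I would note that $\tau(F)\subseteq \tau(E)$, so $\tau(F)$ is bounded in $\g$. Let $K\subseteq \g$ be a compact set containing $\tau(F)$ (for instance, take $K$ to be the closure of $\tau(E)$). By Corollary \ref{tau is proper}, the preimage
\[
\bigl(\tau\big|_{N+[\so,N]^{\perp_B}}\bigr)^{-1}(K) \subseteq N+[\so,N]^{\perp_B}
\]
is compact. Since $F$ is contained in this preimage by construction, $F$ is bounded, which is the claim.

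I do not expect any obstacle here: the statement is a standard packaging of properness in finite dimensions, and Corollary \ref{tau is proper} has already done the substantive work via the explicit block formula \eqref{I.second eq}. The only minor point to be careful about is that boundedness is measured inside $\ss1$ (not inside $N+[\so,N]^{\perp_B}$), but since $N+[\so,N]^{\perp_B}$ is an affine subspace of $\ss1$, the two notions of boundedness coincide on subsets of this affine space.
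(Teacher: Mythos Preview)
Your proposal is correct and follows exactly the same approach as the paper, which simply states that the result is immediate from Corollary \ref{tau is proper}. You have merely unpacked the standard fact that properness in finite-dimensional vector spaces is equivalent to bounded preimages of bounded sets.
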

\begin{prf}
This is immediate from Corollary \ref{tau is proper}.
\end{prf}
\begin{lem}\label{muSNK as a tempered homogeneous distribution}
For each $k=0, 1, 2, \dots, m$, the orbital integral $\mu_{\Oo_k}$ is $\Sg$ - invariant and defines a tempered distribution on $\Wv$, homogeneous of degree $\deg \mu_{\Oo_k}=\dim \Oo'_k-\dim \Wv$. 
\end{lem}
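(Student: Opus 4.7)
The plan is to establish the three assertions in turn. For $\Sg$-invariance and the definition, I would pick a representative $w \in \Oo_k$ and define $\mu_{\Oo_k}$ as the pushforward of Haar measure on $\Sg/\Sg^w$ along the orbit map $s\Sg^w \mapsto s.w$. The $\Sg$-invariance is then immediate from the construction, and standard arguments show that the result (up to a positive constant) is independent of the representative $w$.

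For temperedness, I would use the slice construction from Lemma \ref{lemma I.4}. For the maximal orbit $\Oo_m$, Corollary \ref{tau is proper again} states that $\tau$ is proper on $N_m + [\so,N_m]^{\perp_B}$. Combined with the surjective submersion \eqref{the main submersion on U} (with $N=N_m$) and a partition of unity on $\Sg \times U$, the integral $\mu_{\Oo_k}(\phi)$ for $\phi \in \Ss(\Wv)$ reduces to an integral over the slice, where $\phi \circ \sigma$ decays rapidly because $\Oo_k$ is an algebraic subvariety of $\Wv$ with polynomial volume growth. For the non-maximal $\Oo_k$ with $k < m$, one uses the analogous slice through the element $N_k$ from \eqref{the nilpotent element} together with Lemma \ref{lemma I.4} applied to $N_k$ in place of $N_m$, noting that each $\overline{\Oo_k}$ is closed algebraic.

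For the homogeneity assertion, the key observation is that $\Oo_k$ is conical: for every nilpotent $w \in \Oo_k$ and every $t>0$, the element $tw$ lies again in $\Oo_k$. This is proved using the explicit Jordan-type decomposition $\V = \V^{(0)} \oplus \bigoplus_{j\geq 1} \V^{(j)}$ used in the proof of Lemma \ref{structure of t'-1tau(0)}: on each $\V^{(j)}$ with basis $v_1,v_2,v_3$ satisfying $wv_1=v_2$, $wv_2=v_3$, $wv_3=0$, one constructs a one-parameter subgroup $a(t) \in \Sg$ acting diagonally with appropriate weights on $v_1,v_2,v_3$ such that $a(t).w = tw$. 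Combining $\Sg$-invariance of $\mu_{\Oo_k}$ with this action shows that $\mu_{\Oo_k}$ is a homogeneous tempered distribution on $\Wv$.

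The main obstacle I anticipate is identifying the precise degree as $\dim \Oo'_k - \dim\Wv$ rather than $\dim \Oo_k - \dim \Wv$. This requires tracking the fiber dimension of the $\Sg$-equivariant map $\tau':\Oo_k \to \Oo'_k$, which is $\G$-equivariant with $\G$ compact and has fibers that are essentially $\G$-orbits. I would compute $\dim \Oo_k$ via $\dim \Sg - \dim \Sg^w$ and compare it with $\dim \Oo'_k$ using $\Sg^w \subseteq \G \times \G'^{\tau'(w)}$, and then match the volume scaling of the dilation $M_t$ under the Weyl-type normalizations used in \eqref{weyl int on w 1}–\eqref{weyl int on w 2}. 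The matrix realization from the proof of Lemma \ref{lemma I.4}, particularly the explicit formulas \eqref{I.first eq}–\eqref{I.second eq}, should make this dimension computation tractable by direct inspection of the two moment maps on the slice.
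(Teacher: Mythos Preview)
Your approach differs substantially from the paper's and has a real gap in the degree computation.

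The paper proceeds very concretely: it observes that the stabilizer of $N_k$ in $\G'$ is $\G''\N'$, the Levi-free part of a parabolic $\Pg' = \GL_k(\Dc)\G''\N'$. Using the Iwasawa-type decomposition $\G' = \K'\Pg'$, it writes the $\G'$-orbital integral explicitly as
\[
\int_{\K'}\int_{\GL_k(\Dc)} \phi(kaN_k)\,|\det\Ad(a)_{\n'}|\,dk\,da.
\]
Temperedness then reduces to local integrability of $|\det\Ad(a)_{\n'}|\,|\det_\R a|^{-k}$ on $M_k(\Dc)$, which is checked by hand. The homogeneity degree is read off from how this weight scales under $a \mapsto ta$, and the identity $\dim\Hs_k(\Dc) + \dim\SHs_k(\Dc) = k^2\dim_\R\Dc$ matches it with $\dim\Oo'_k - \dim\Wv$. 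Finally, the $\Sg$-orbital integral is obtained by averaging over the compact group $\G$, which alters neither temperedness nor the degree.

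Your proposed route to the degree via dimension counting does not work as stated. The homogeneity degree of an invariant measure on a conical orbit is \emph{not} $\dim(\text{orbit}) - \dim\Wv$ in general, because the invariant measure need not coincide with the Euclidean surface measure. Concretely, one computes $\dim(\G'N_k) = \dim\Oo'_k + \dim\SHs_k(\Dc)$, so neither $\dim(\G'N_k)$ nor $\dim(\Sg N_k)$ equals $\dim\Oo'_k$, and tracking the fibre dimension of $\tau'\colon\Oo_k\to\Oo'_k$ cannot repair this. If you pursue the one-parameter subgroup $a(t)$ with $a(t).N_k = tN_k$, then $a(t)$ does normalize $\Sg^{N_k}$ and right translation by $a(t)$ on $\Sg/\Sg^{N_k}$ multiplies the invariant measure by a modular factor coming from $\Ad(a(t))$ on $\so/\so^{N_k}$; computing this factor is essentially the paper's weight calculation in different packaging, not a dimension count. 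Your temperedness sketch via the slice is also incomplete: the slice construction is local and by itself does not control the measure at infinity; the ``polynomial volume growth'' you invoke is precisely what the paper's explicit weight supplies.
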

\begin{prf}
The stabilizer of the image of $N_k$ in $\V_{\overline 1}$ is a parabolic subgroup $\Pg'\subseteq \G'$ with the Langlands decomposition $P'=\GL_k(\Bbb D)\G''\N'$, where $\G''$ is an isometry group of the same type as $\G'$ and $\N'$ is the unipotent radical. As a $\GL_k(\Bbb D)$ - module, $\n'$, the Lie algebra of $\N'$, is isomorphic to $M_{k,d'-2k}(\Bbb D)\oplus \Hs_k(\Bbb D)$, where $\Hs_k(\Bbb D)\subseteq  M_{k,k}(\Bbb D)$ stands for the space of the hermitian matrices. Hence the absolute value of the determinant of the adjoint action of an element $a\in \GL_k(\Bbb D)$ on the real vector space $\n'$ is equal to 
\[
|\det \Ad(a)_{\n'}|=|\det_\R(a)|^{d'-2k+\frac{2\dim \Hs_k(\Bbb D)}{k\dim_\R\Bbb D}}
\]
Since $\G'=\K'\Pg'$, where $\K'$ is a maximal compact subgroup, the Haar measure on $\G'$ may be written as
\[
dg'=|\det \Ad(a)_{\n'}|\,dk\,da\,dg''\,dn'.
\]
Recall that $da=|\det_\R(a)|^{-k}\,d^+a$, where $d^+a$ stands for the Lebesgue measure on the real vector space $M_{k,k}(\Bbb D)$.
Also,
\[
\frac{2\dim \Hs_k(\Bbb D)}{k\dim_\R\Bbb D}-k=
\begin{cases}
1 &\text{if $\Bbb D=\R$},\\
0  &\text{if $\Bbb D=\C$},\\
-\frac{1}{2} &\text{if $\Bbb D=\Ha$}.
\end{cases}
\]
Hence,
\[
|\det \Ad(a)_{\n'}||\det_\R(a)|^{-k}=|\det_\R(a)|^{d'-2k+\frac{2\dim \Hs_k(\Bbb D)}{k\dim_\R\Bbb D}-k}
\]
is locally integrable on  the real vector space $M_{k,k}(\Bbb D)$.

Since the stabilizer of $N_k$ in $\G'$ is equal to $\G''\N'\subseteq \Pg'$, the $\G'$ orbit of $N_k$ defines a tempered distribution on $\Wv$ by
\[
\int_\Wv \phi(w)\,d\mu_{\G' N_k}(w)=\int_{\GL_k(\Bbb D)}\int_{\K'}\phi(kaN_k)\det \Ad(a)_{\n'}\,dk\,da \qquad (\phi\in \Ss(\Wv).
\]
This distribution is homogeneous of degree
\[
(d'-2k+\frac{2\dim \Hs_k(\Bbb D)}{k\dim_\R\Bbb D})dim_\R\Bbb D -\dim\Wv
\]
Thus it remains to check that
\[
(d'-2k+\frac{2\dim \Hs_k(\Bbb D)}{k\dim_\R\Bbb D})\,k \dim_\R\Bbb D=d'k \dim_\R(\Dc)-2\dim_\R\SHs_k(\Dc),
\]
which is easy, because $M_{k,k}(\Dc)=\Hs_k(\Bbb D)\oplus \SHs_k(\Dc)$. In order to conclude the proof we notice that the orbital integral on the orbit $\Sg N_k$ is the $\G$ - average of the orbital integral we just considered:
\[
\int_\Wv \phi(w)\,d\mu_{_{\Sg N_k}}(w)=\int_\G\int_{\GL_k(\Bbb D)}\int_{\K'}\phi(kaN_kg)\det \Ad(a)_{\n'}\,dk\,da\,dg.
\]
\end{prf}
Now we want to see dilations by $t>0$ in $\ss1$ as transformations in the slice $U$ modulo the action of the group $\Sg$, which is permissible as we consider $\Sg$-orbits.

For $t>0$ let
\[
s_t=\left(
\begin{array}{lll}
t^{-1} & 0 & 0\\
0 & I & 0\\
0 & 0 & t
\end{array}
\right)
\]
where the blocks are as in (\ref{matrix F.I}). Then $s_t\in \G'$. We view $s_t$ as an element $s_t\in \GL(\ss1)$ by
\[
s_t(w)=s_tw \qquad (w\in\ss1).
\]
Also, define an element $M_t\in \GL(\ss1)$ by
\[
M_t(w)=tw  \qquad (w\in\ss1).
\]
Set $g_t=M_t\circ s_t\in \GL(\ss1)$. Thus
\[
g_t(w)=ts_tw \qquad (w\in\ss1).
\]
\begin{lem}\label{lemma about g(t)}
The linear map $g_t\in \GL(\ss1)$ preserves the set $N+[\so,N]^{\perp_B}$ and the subset $U\subseteq N+[\so,N]^{\perp_B}$. In fact
\begin{equation}\label{gt acting on NperpB}
g_t\left(
\begin{array}{lll}
I_k & 0\\
0 & w_5\\
w_3 & w_6
\end{array}
\right)
=
\left(
\begin{array}{lll}
I_k & 0\\
0 & tw_5\\
t^2 w_3 & t^2w_6
\end{array}
\right).
\end{equation}
Hence,
\begin{equation}\label{gt acting on NperpB11}
\tau|_U\circ g_t|_U=M_{t^2}\circ \tau|_U.
\end{equation}
Furthermore
\begin{equation}\label{gt acting on NperpB and sigma}
g_t\circ \sigma=\sigma\circ(\Ad\,s_t \times g_t|_{N+[\so,N]^{\perp_B}}),
\end{equation}
where the $g_t|_{N+[\so,N]^{\perp_B}}$ on the right hand side stands for the restriction of $g_t$ to $N+[\so,N]^{\perp_B}$. 
In particular, the subset $\sigma(\Sg\times U)\subseteq \ss1$ is closed under the multiplication by the positive reals. Also, 
\begin{equation}\label{det gt'}
\det((g_t|_{N+[\so,N]^{\perp_B}})')=t^{\dim\ss1-\dim\Oo'_k}.
\end{equation}
and
\begin{equation}\label{det gt}
\det(g_t')=t^{\dim\ss1}
\end{equation}
\end{lem}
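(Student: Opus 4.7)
The plan is to derive all seven assertions from direct block-matrix computation in the model of Lemma \ref{lemma I.4}, with the intertwining identity (\ref{gt acting on NperpB and sigma}) handling the structural parts.

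First I would verify (\ref{gt acting on NperpB}) by inspection: since $s_t=\diag(t^{-1}I_k,\,I,\,tI_k)$ in the block decomposition of (\ref{matrix F.I}), left-multiplying the generic element of $N+[\so,N]^{\perp_B}$ by $ts_t$ fixes the top block $I_k$, scales $w_5$ by $t$, and scales $(w_3,w_6)$ by $t^2$. Since $g_t(N)=N$ by this formula, $g_t$ restricts to a linear automorphism of $[\so,N]^{\perp_B}$, and hence preserves $N+[\so,N]^{\perp_B}$. Formula (\ref{gt acting on NperpB11}) then follows by plugging the scaled entries into (\ref{I.second eq}): every block of $w^*w$ picks up exactly a factor $t^2$.

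Next I would establish (\ref{gt acting on NperpB and sigma}). Since the $\V_{\overline 0}$-side of $s_t\in \G'$ acts trivially on $\V_{\overline 0}$, the $\Sg$-action of $s_t$ on $\ss1$ is just left matrix multiplication, so on $\ss1$ one has $g_t=M_t\circ \Ad(s_t)$. Then, using $\R$-linearity and associativity of the $\Sg$-action,
\[
g_t(\sigma(s,u))=t\,(s_t s s_t^{-1}).(s_t.u)=\Ad(s_t)(s).g_t(u)=\sigma(\Ad(s_t)(s),g_t(u)).
\]
For $g_t(U)\subseteq U$ I would use (\ref{the range of the derivative}): the range of $\sigma'$ at $(1,g_t u)$ equals $[\so,g_t u]+[\so,N]^{\perp_B}=t\,s_t.[\so,u]+[\so,N]^{\perp_B}$, and because the first paragraph shows $s_t$ stabilizes $[\so,N]^{\perp_B}$, this equals $s_t.\bigl([\so,u]+[\so,N]^{\perp_B}\bigr)=\ss1$ whenever $u\in U$. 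The closure of $\sigma(\Sg\times U)$ under multiplication by $t>0$ then follows by writing $M_t=g_t\circ s_t^{-1}$: the $\Sg$-stable set $\sigma(\Sg\times U)$ is preserved by $s_t^{-1}\in\Sg$, and $g_t$ sends it into $\sigma(\Sg\times g_t(U))\subseteq \sigma(\Sg\times U)$ via (\ref{gt acting on NperpB and sigma}).

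The two determinant formulas reduce to dimension counts. On $[\so,N]^{\perp_B}$, formula (\ref{gt acting on NperpB}) shows that $t$ appears with exponent
\[
(d'-2k)(d-k)\dim_\R\Dc+2\bigl(k(d-k)\dim_\R\Dc+\dim_\R\SHs_k(\Dc)\bigr)=d'(d-k)\dim_\R\Dc+2\dim_\R\SHs_k(\Dc),
\]
which equals $\dim\ss1-\dim\Oo'_k$ via the dimension formula in Lemma \ref{structure of t'-1tau(0)}, giving (\ref{det gt'}). For (\ref{det gt}), the $\R$-determinant of $s_t$ acting by left multiplication on each $\Dc$-column of $\ss1=M_{d',d}(\Dc)$ is $(t^{-1}\cdot 1\cdot t)^{\dim_\R\Dc}=1$, so $\det(g_t)=\det(M_t)=t^{\dim\ss1}$. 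I foresee no serious obstacle: the only subtle point is correctly identifying the $\G'$-action on $\ss1$ as left matrix multiplication, so that the block-matrix identity (\ref{gt acting on NperpB}) genuinely computes the $\Sg$-action underlying $\sigma$ in (\ref{the main submersion}).
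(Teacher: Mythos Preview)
Your proposal is correct and follows essentially the same route as the paper: direct block computation for (\ref{gt acting on NperpB}) and (\ref{gt acting on NperpB11}), the associativity identity $g_t(s.u)=(s_tss_t^{-1}).(g_tu)$ for (\ref{gt acting on NperpB and sigma}), and the dimension count via Lemma~\ref{structure of t'-1tau(0)} for (\ref{det gt'}). The only cosmetic difference is that the paper deduces $g_t(U)\subseteq U$ by composing the Chain Rule with (\ref{gt acting on NperpB and sigma}) rather than by your direct check that $s_t$ stabilizes $[\so,N]^{\perp_B}$, and the paper leaves the $M_t$-stability of $\sigma(\Sg\times U)$ implicit whereas you spell out $M_t=g_t\circ s_t^{-1}$; both arguments are equivalent.
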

\begin{prf}
The formula (\ref{gt acting on NperpB}) is clear from the definition of $g_t$ and  (\ref{gt acting on NperpB11}) follows from (\ref{gt acting on NperpB}) and (\ref{I.second eq}).

In order to verify (\ref{gt acting on NperpB and sigma})
we notice that for $s\in \Sg$ and $u\in N+[\so,N]^{\perp_B}$ we have
\begin{eqnarray*}
g_t\circ \sigma(s,u)&=&g_t(su)=t(s_ts)u=(s_tss_t^{-1})(ts_tu)\\
&=&\sigma(s_tss_t^{-1}, g_tu)=\sigma\circ(\Ad\,s_t \times g_t|_{N+[\so,N]^{\perp_B}})(s,u).
\end{eqnarray*}
By the Chain Rule, the derivative of $\sigma$ at $(s,u)$ is surjective if and only if  the derivative of $g_t\circ \sigma$ at $(s,u)$ is surjective. Then (\ref{gt acting on NperpB}) shows that this happens if and only if  the derivative of $\sigma$ at $(s_tss_t^{-1},g_tu)$ is surjective. By (\ref{the range of the derivative}), the last statement is equivalent to  the derivative of $\sigma$ being surjective at $(s,g_tu)$. In other words, $g_t$ preserves $U$.

Since $g_t'=M_t'=M_t$ and since $\det s_t=1$, (\ref{det gt}) is obvious. In order to verify (\ref{det gt'}) we proceed as follows. 
The derivative of the map $g_t|_{N+[\so,N]^{\perp_B}}$ coincides with the following linear map
\[
\left(
\begin{array}{lll}
0 & 0\\
0 & w_5\\
w_3 & w_6
\end{array}
\right)
\to
\left(
\begin{array}{lll}
0 & 0\\
0 & tw_5\\
t^2w_3 & t^2w_6
\end{array}
\right).
\]
The determinant of this map is equal to the determinant of the following map
\[
\left(
\begin{array}{lll}
0 & w_4\\
0 & w_5\\
w_3 & w_6
\end{array}
\right)
\to
\left(
\begin{array}{lll}
0 &  tw_4\\
0 & tw_5\\
t^2 w_3 & t w_6
\end{array}
\right),
\]
which equals
\[
t^{2\dim_\R \SHs_k(\Dc)}t^{d'(d-k)\dim_\R\Dc}.
\]
Since, by (\ref{structure of t'-1tau(0)1}),
\[
2\dim_\R \SHs_k(\Dc)+d'(d-k)\dim_\R\Dc=\dim \ss1-\dim \Oo'_k,
\]
(\ref{det gt'}) follows.
\end{prf}
Next we consider an $S$-invariant distribution $F$ on $\sigma(\Sg\times U)$ and its restriction $F|_U$ to the slice $U$. The following lemma proves that the restriction to $U$ of the $t$-dilation of $F$ is equal to $g_t|_U$ applied to $F|_U$.

\begin{lem}\label{invariant distributions and g(t)}
Suppose $F\in \mathcal D'(\sigma(\Sg\times U))^\Sg$. Then the intersection of the wave front set of $F$ with the conormal bundle to $U$ is zero, so that the restriction 
$F|_U$ is well defined. Furthermore, $\sigma^*F=\mu_\Sg\otimes F|_{U}$. Moreover, for $t>0$,
\begin{equation}\label{invariant distributions and g(t)1}
M_t^*F=g_t^*F,
\end{equation}
and
\begin{equation}\label{invariant distributions and g(t)3}
(M_{t}^*F)|_U=(g_t|_U)^*F|_U.
\end{equation}
\end{lem}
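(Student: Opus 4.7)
The plan is to exploit two structural features: the $\Sg$-invariance of $F$, and the fact that $\sigma:\Sg\times U\to\sigma(\Sg\times U)$ is a surjective submersion, so that the slice $U$ meets every relevant $\Sg$-orbit transversally. Together these determine $F|_U$, identify the pullback $\sigma^*F$ completely, and reduce the behavior of $M_t^*F$ to that of $g_t^*F$ via the factorization $g_t=M_t\circ s_t$ with $s_t\in\Sg$.

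First I would verify that the restriction $F|_U$ is well defined in the H\"ormander sense. By $\Sg$-invariance, $\WF(F)$ is contained in the union of conormals to the $\Sg$-orbits: any $(x,\xi)\in \WF(F)$ must satisfy $\xi|_{T_x(\Sg.x)}=0$. For $u\in U$, the slice condition \eqref{definition of U} gives $[\so,u]+[\so,N]^{\perp_B}=\ss1$, which is the infinitesimal statement $T_u(\Sg.u)+T_uU=\ss1$. Taking annihilators, the conormal to $U$ at $u$ and the conormal to $\Sg.u$ at $u$ intersect trivially, so $\WF(F)$ is disjoint from the conormal bundle of $U$ and $F|_U\in\mathcal D'(U)$ is defined by the standard pullback theorem for distributions.

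Next, to prove $\sigma^*F=\mu_\Sg\otimes F|_U$, I would pull $F$ back to $\Sg\times U$ along the submersion $\sigma$. Because $\sigma(s_0s,u)=s_0\sigma(s,u)$, the $\Sg$-invariance of $F$ forces $\sigma^*F$ to be invariant under left translation in the $\Sg$-factor. Every such distribution on $\Sg\times U$ has the form $\mu_\Sg\otimes T$ for a unique $T\in\mathcal D'(U)$; restricting to $\{e\}\times U$ (legitimate by the same transversality argument) identifies $T$ with $F|_U$.

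For the final assertions, $M_t^*F=g_t^*F$ is immediate from $\Sg$-invariance: by linearity of $s_t$ one has $g_tw=ts_tw=s_t(tw)=s_t\cdot M_tw$, hence $F(g_tw)=F(M_tw)$ distributionally. Because $M_t$ commutes with the $\Sg$-action, $M_t^*F$ is again $\Sg$-invariant, so its restriction to $U$ is defined by the first step. Since $g_t$ preserves $U$ by Lemma \ref{lemma about g(t)}, the inclusion of $U$ intertwines $g_t$ on $\sigma(\Sg\times U)$ with $g_t|_U$, and naturality of restriction yields $(M_t^*F)|_U=(g_t^*F)|_U=(g_t|_U)^*(F|_U)$. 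The only non-formal step is the transversality/wave front argument in the second paragraph; once the restrictions have been given meaning, the rest is book-keeping.
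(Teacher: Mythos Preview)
Your argument is correct and follows essentially the same route as the paper's proof: the transversality of $U$ to the $\Sg$-orbits gives the wave front set disjointness, the factorisation $\sigma^*F=\mu_\Sg\otimes F|_U$ is obtained via $\Sg$-invariance and the diagram $U\hookrightarrow\Sg\times U\to\sigma(\Sg\times U)$, and the identity $M_t^*F=g_t^*F$ comes from $g_t=s_t\circ M_t$ together with $s_t^*F=F$. The only cosmetic difference is that you spell out why $\sigma^*F$ is left $\Sg$-invariant before identifying the second tensor factor, whereas the paper invokes the diagram directly.
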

\begin{prf}
The wave front set of $F$ is contained in the union of the conormal bundles to the $\Sg$-orbits through elements of $\ss1$. This is because the characteristic variety of the system of differential equations expressing the condition that this distribution is annihilated by the action of the Lie algebra $\so$ coincides with that set. The intersection of this set with the conormal bundle to $U$ is zero. Indeed, this intersection is equal to the orthogonal complement to the sum of the union of the tangent bundles to the orbits and the tangent bundle to $U$, which, by the submersivity of the map $\sigma$, is equal to the whole tangent bundle. Hence,  $F$ restricts uniquely to $U$.  The formula $\sigma^*F=\mu_\Sg\otimes F|_{U}$ follows from the diagram
\[
U\to \Sg\times U\to \sigma(\Sg\times U),\ \ u\to (1,u)\to u,
\]
which shows that the restriction to $U$ equals the composition of $\sigma^*$ and the pullback via the embedding of $U$ into $\Sg\times U$.

Since $s_t^*F=F$ we see that $g_t^*F=M_t^*s_t^*F=M_t^*F$. 
Hence,
\[
(g_t|_U)^*F|_U=(g_t^*F)|_U=(M_t^*F)|_U.
\]
Thus we are done with (\ref{invariant distributions and g(t)1}) and (\ref{invariant distributions and g(t)3}). 
\end{prf}
\begin{lem}\label{invariant distributions and g(t) and limits}
Suppose $F, F_0\in \mathcal D'(\sigma(\Sg\times U))^\Sg$ and $a\in\C$ are such that
\[
t^a(g_{t^{-1}}|_U)^*F|_U \underset{t\to 0+}{\to}F_0|_U.
\]
Then
\[
t^a M_{t^{-1}}^*F\underset{t\to 0+}{\to}F_0
\]
in $\mathcal D'(\sigma(\Sg\times U))$.
\end{lem}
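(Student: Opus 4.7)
The strategy is to pull back along the submersion $\sigma$, use Lemma \ref{invariant distributions and g(t)} to rewrite the pullback as a tensor product on the slice $U$, and then descend back to $\sigma(\Sg\times U)$. The first observation is that $M_{t^{-1}}^*F$ is still $\Sg$-invariant: since the linear $\Sg$-action on $\ss1$ commutes with the scalar dilation $M_{t^{-1}}$, for any $s\in \Sg$ one has $s^*M_{t^{-1}}^*F = M_{t^{-1}}^*s^*F = M_{t^{-1}}^*F$. Hence Lemma \ref{invariant distributions and g(t)} applies to $t^a M_{t^{-1}}^*F$.

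Using the tensor decomposition $\sigma^*H = \mu_\Sg\otimes H|_U$ valid for every $\Sg$-invariant $H$, together with formula (\ref{invariant distributions and g(t)3}), I would write
\[
\sigma^*\bigl(t^a M_{t^{-1}}^*F\bigr) \;=\; \mu_\Sg\otimes t^a(M_{t^{-1}}^*F)|_U \;=\; \mu_\Sg\otimes t^a(g_{t^{-1}}|_U)^*F|_U .
\]
By hypothesis, $t^a(g_{t^{-1}}|_U)^*F|_U\to F_0|_U$ in $\D'(U)$, so taking tensor product with the fixed Haar measure $\mu_\Sg$ gives
\[
\sigma^*\bigl(t^a M_{t^{-1}}^*F\bigr) \;\underset{t\to 0^+}{\longrightarrow}\; \mu_\Sg\otimes F_0|_U \;=\; \sigma^*F_0 \quad\text{in } \D'(\Sg\times U),
\]
where the last equality is again the tensor decomposition of Lemma \ref{invariant distributions and g(t)} applied to the $\Sg$-invariant distribution $F_0$.

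It remains to descend this pullback convergence to the claimed convergence in $\D'(\sigma(\Sg\times U))$. Since $\sigma$ is a surjective submersion, the pushforward $\sigma_*\colon C_c^\infty(\Sg\times U)\to C_c^\infty(\sigma(\Sg\times U))$ is surjective: fixing $\chi\in C_c^\infty(\Sg)$ with $\int_\Sg \chi\,ds = 1$ and using local sections of $\sigma$ together with a partition of unity on $\sigma(\Sg\times U)$, for any prescribed $\phi\in C_c^\infty(\sigma(\Sg\times U))$ one can build $\psi\in C_c^\infty(\Sg\times U)$ with $\sigma_*\psi=\phi$. Since $(\sigma^*H)(\psi)=H(\phi)$ for every such pair, the convergence on the pullback side immediately translates into $t^a M_{t^{-1}}^*F(\phi)\to F_0(\phi)$ for every test function $\phi$, which is exactly the conclusion.

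I expect the delicate point to be the final descent step, namely the surjectivity of $\sigma_*$ on test functions and the compatible testing identity $(\sigma^*H)(\psi)=H(\phi)$. These are standard facts for surjective submersions but require a careful partition-of-unity construction on $\sigma(\Sg\times U)$. Everything else is a direct and clean application of Lemma \ref{invariant distributions and g(t)} combined with the observation that scalar dilation commutes with the linear $\Sg$-action on $\ss1$.
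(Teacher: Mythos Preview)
Your proof is correct and follows essentially the same route as the paper's. The paper's one-line argument simply cites (\ref{invariant distributions and g(t)3}) together with Proposition \ref{I.1} from Appendix B, which is exactly the descent step you spell out: given a surjective submersion $\sigma$, convergence of $\sigma^*u_n$ implies convergence of $u_n$, because every test function downstairs is hit by $\sigma_*$ of some test function upstairs. So what you flagged as the delicate point is precisely what the paper has packaged as Proposition \ref{I.1}, and your sketch of it (local sections, partition of unity, the testing identity) matches the proof given there.
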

\begin{prf}
This is immediate from (\ref{invariant distributions and g(t)3}) and Proposition \ref{I.1}.
\end{prf}
Now we are ready to compute the limit of the weighted dilatation of the unnormalized almost semisimple orbital integral $\mu_{\mathcal O}$.

\begin{pro}\label{limit of orbits I}
Let $k=m$. Let $\mathcal O\subseteq \sigma(\Sg\times U)$ be an $\Sg$-orbit 
and let $\mu_{\mathcal O}\in \mathcal D'(\ss1)$ be the corresponding orbital integral. Assume $\mu_{\mathcal O}$ is $\Sg$ - invariant.
Then
\begin{eqnarray}\label{the limit of regular orbital integral}
\underset{t\to 0+}{\lim}t^{\deg \mu_{\Oo_m}}\M_{t^{-1}}^*\mu_{\Oo}|_{\sigma(\Sg\times U)}=\mu_{\mathcal O}|_{U}(U)\,\mu_{\Oo_m}|_{\sigma(\Sg\times U)},
\end{eqnarray}
where $\mu_{\Oo_m}\in \mathcal D'(\sigma(\Sg\times U))$ is the orbital integral on the orbit $\Oo_m=\Sg N_m$ normalized so that $\mu_{\Oo_m}|_{U}$ is the Dirac delta at $N_m$ and the convergence is in $\mathcal D'(\sigma(\Sg\times U))$. This orbital integral is a homogeneous distribution of degree $\deg \mu_{\Oo_m}=\dim \Oo_m'-\dim \Wv$.
\end{pro}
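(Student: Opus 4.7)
The plan is to reduce the convergence in $\mathcal{D}'(\sigma(\Sg\times U))$ to a convergence in $\mathcal{D}'(U)$ via Lemma \ref{invariant distributions and g(t) and limits}, and then to handle the latter by explicit change of variables using the formulas of Lemma \ref{lemma about g(t)} together with dominated convergence.

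First, by Lemma \ref{invariant distributions and g(t) and limits} applied with $F=\mu_{\mathcal O}|_{\sigma(\Sg\times U)}$ and $a=\deg\mu_{\Oo_m}$, it suffices to prove that
\[
t^{\deg\mu_{\Oo_m}}\,(g_{t^{-1}}|_U)^*(\mu_{\mathcal O}|_U)\underset{t\to 0+}{\longrightarrow}\mu_{\mathcal O}|_U(U)\,\delta_{N_m}
\]
in $\mathcal{D}'(U)$, because $(\mu_{\Oo_m}|_{\sigma(\Sg\times U)})|_U=\delta_{N_m}$ by the normalization.

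Next, I compute the pull-back explicitly. For $\phi\in C_c^\infty(U)$, the change of variables formula for the diffeomorphism $g_{t^{-1}}|_U$ of $U$ gives
\[
\bigl\langle (g_{t^{-1}}|_U)^*(\mu_{\mathcal O}|_U),\phi\bigr\rangle=\int_U \phi(g_t(w))\,\bigl|\det(g_t|_U)'(w)\bigr|\,d\mu_{\mathcal O}|_U(w).
\]
The Jacobian is constant and equals $t^{\dim\ss1-\dim\Oo'_m}$ by (\ref{det gt'}). Since $\deg\mu_{\Oo_m}=\dim\Oo'_m-\dim\ss1$ by Lemma \ref{muSNK as a tempered homogeneous distribution}, multiplying by $t^{\deg\mu_{\Oo_m}}$ cancels the Jacobian, leaving
\[
t^{\deg\mu_{\Oo_m}}\bigl\langle (g_{t^{-1}}|_U)^*(\mu_{\mathcal O}|_U),\phi\bigr\rangle=\int_U \phi(g_t(w))\,d\mu_{\mathcal O}|_U(w).
\]
From the explicit formula (\ref{gt acting on NperpB}) one sees that $g_t(w)\to N_m$ as $t\to 0+$ for every $w\in U$, since each of the blocks $w_3, w_5, w_6$ is multiplied by a positive power of $t$. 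Hence $\phi(g_t(w))\to \phi(N_m)$ pointwise on the support of $\mu_{\mathcal O}|_U$.

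The final ingredient, and the main technical point, is to apply dominated convergence. This requires $\mu_{\mathcal O}|_U$ to be a finite measure. To verify this, I would use Corollary \ref{tau is proper again} (valid because $k=m$). Since $\tau$ is $\Sg$-equivariant with respect to the adjoint action of $\G$ on $\g$, and since $\G$ is compact, the orbit $\tau(\mathcal O)=\G\cdot\tau(w)$ is bounded in $\g$; hence $\tau(\mathcal O\cap U)$ is bounded, and Corollary \ref{tau is proper again} forces $\mathcal O\cap U$ itself to be bounded. Since $\mu_{\mathcal O}|_U$ is a positive Radon measure supported on $\mathcal O\cap U$, its total mass $\mu_{\mathcal O}|_U(U)$ is finite. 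Dominated convergence (with the uniform bound $\|\phi\|_\infty$) then gives
\[
\int_U \phi(g_t(w))\,d\mu_{\mathcal O}|_U(w)\underset{t\to 0+}{\longrightarrow}\phi(N_m)\,\mu_{\mathcal O}|_U(U),
\]
which is precisely the desired limit in $\mathcal D'(U)$. The statement about the degree of $\mu_{\Oo_m}$ is recorded in Lemma \ref{muSNK as a tempered homogeneous distribution}.
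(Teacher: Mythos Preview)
Your proof is correct and follows essentially the same approach as the paper: reduce to $U$ via Lemma \ref{invariant distributions and g(t) and limits}, use the Jacobian formula (\ref{det gt'}) to cancel the weight, observe $g_t(w)\to N_m$ from (\ref{gt acting on NperpB}), and pass to the limit. Your justification of the finiteness of $\mu_{\mathcal O}|_U(U)$ via Corollary \ref{tau is proper again} is exactly the argument the paper places in the remarks preceding the proof.
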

A few remarks before the proof. 
The scalar $\mu_{\mathcal O}|_{U}(U)$ may be thought of as the volume of the intersection $\mathcal O\cap U$. This volume is finite because the restriction $\mu_{\mathcal O}|_{U}$ is a distribution on $U$ with the support equal to the closure of $\mathcal O\cap U$, which is compact by Corollary \ref{tau is proper again}. Hence $\mu_{\mathcal O}|_{U}$ applies to any smooth function on $U$, which may be chosen to be constant on $\mathcal O\cap U$. 

A straightforward argument shows that every regular orbit $\mathcal O$ passes through $U$, i.e. is contained in $\sigma(\Sg\times U)$, if the pair $(\G, \G')$ is in the stable range.

Our normalization of $\mu_{\Sg N_m}$ does not depend on the normalization of $\mu_{\mathcal O}$, which is absorbed by the factor $\mu_{\mathcal O}|_{U}(U)$. 
\begin{prf}
By  (\ref{det gt'}) 
\begin{eqnarray*}
&&t^{\dim \Oo'_m-\dim\ss1}(g_{t^{-1}}|_U)^*\mu_{\mathcal O}|_{U}(\psi)
=\mu_{\mathcal O}|_{U}(\psi\circ g_{t}|_U).
\end{eqnarray*}
We see from (\ref{gt acting on NperpB}) that
\[
\underset{t\to 0}{\lim}\ g_tu=N_m \qquad (u\in U).
\]
Hence, for any $\psi\in C_c^\infty(U)$,
\[
\underset{t\to 0}{\lim}\ \mu_{\mathcal O}|_{U}(\psi\circ g_t)=\mu_{\mathcal O}|_{U}(\psi(N)\Bbb I_U)
=\mu_{\mathcal O}|_{U}(\Bbb I_U)\psi(N)=\mu_{\mathcal O}|_{U}(U)\psi(N_m),
\]
where $\Bbb I_U$ is the indicator function of $U$. Thus (\ref{the limit of regular orbital integral}) follows from Lemma \ref{invariant distributions and g(t) and limits}.
\end{prf}
In order to find the limit of the weighted dilations of the normalized almost semisimple orbital integral 
$f(y)$, see Corollary \ref{limit of orbital integrals and Harish-Chandra integral on W}, we still need to compute the weight.

A direct computation involving the formulas (\ref{product of positive roots for g})  and (\ref{product of positive roots for g'/z'}) verifies the following lemma. 
\begin{lem}\label{a relation between degrees}
Suppose $d\leq d'$. Then
\begin{equation}\label{a relation between degrees 1}
\dim\,\Wv=\dim\,\g+\dim\,\g'/\z' +\dim\,\h+\dim\,\ss1(\V^0),
\end{equation}
or equivalently
\begin{equation}\label{a relation between degrees 2}
\dim\,\Wv=2\, \deg\,\pi_{\g/\h}+2\, \deg\,\pi_{\g'/\z'} +2\,\dim\,\h+\dim\,\ss1(\V^0).
\end{equation}
Here $\dim\,\ss1(\V^0)=0$ unless $\G=\Og_{2l+1}$, $\G'=\Sp_{2l'}(\R)$ and $d=2l+1<2l'=d'$.

If $d<d'$, then $\dim\,\ss1(\V^0)=0$ and 
\begin{equation}\label{a relation between degrees 1'}
\dim\,\Wv=\dim\,\g'+\dim\,\g/\z +\dim\,\h,
\end{equation}
or equivalently
\begin{equation}\label{a relation between degrees 2'}
\dim\,\Wv=2\, \deg\,\pi_{\g'/\h}+2\, \deg\,\pi_{\g/\z} +2\,\dim\,\h.
\end{equation}
\end{lem}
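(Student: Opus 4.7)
The two forms of each identity are connected by the standard relations
\begin{equation*}
\dim \g = \dim \h + 2\deg \pi_{\g/\h}, \qquad \dim \g'/\z' = 2\deg \pi_{\g'/\z'},
\end{equation*}
which come from the $\h$-weight decomposition of $\g_\C$ and of $\g'_\C/\z'_\C$: the nonzero $\h$-weights occur in $\pm\alpha$ pairs, and $\z'$ is by definition the sum of the zero weight space. An entirely analogous pair of identities handles the second part of the lemma via Lemma \ref{roots for l>=l'}, so it suffices to verify the first of the two displayed forms in each case.

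The proof itself is a direct case-by-case computation over the three families of irreducible dual pairs (\ref{classificationGG'}). Since $\Wv \cong \Hom_\Dc(\V_{\overline 0}, \V_{\overline 1})$, we have $\dim_\R \Wv = dd'\dim_\R \Dc$. In each case the remaining data is tabulated explicitly: $\dim \g$ comes from the classical dimension formulas, $\dim \h = l''$ is read off from the description (\ref{a cartan subspace}) of Cartan subspaces, and $\deg \pi_{\g'/\z'}$ (or $\deg \pi_{\g/\z}$) is read directly from the closed-form expression of Lemma \ref{roots for l<=l'} (or Lemma \ref{roots for l>=l'}). The term $\dim \ss1(\V^0)$ is extracted from the decomposition (\ref{decomposition of space for a cartan subspace}): each block $\V^j$ with $1\leq j \leq l''$ contributes a fixed $\Dc$-dimension in $\V^j_{\overline 0}$ and in $\V^j_{\overline 1}$ (two when $\Dc=\R$, one otherwise). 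This forces $\dim_\Dc \V^0_{\overline 0} = 0$ in every subcase except $\G = \Og_{2l+1}$, where $\dim_\R \V^0_{\overline 0} = 1$ and the identification $\ss1(\V^0) = \Hom_\Dc(\V^0_{\overline 0}, \V^0_{\overline 1})$ yields $\dim_\R \ss1(\V^0) = 2(l' - l)$.

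As an illustration of the resulting arithmetic, take the exceptional subcase $(\Og_{2l+1}, \Sp_{2l'}(\R))$ with $l < l'$. Lemma \ref{roots for l<=l'} gives $\deg \pi_{\g'/\z'} = l(2l'-l)$, and the identity reduces to
\begin{equation*}
\underbrace{l(2l+1)}_{\dim \g} + \underbrace{2l(2l'-l)}_{\dim \g'/\z'} + \underbrace{l}_{\dim \h} + \underbrace{2(l'-l)}_{\dim \ss1(\V^0)} \;=\; 2l'(2l+1) \;=\; dd' \;=\; \dim_\R \Wv.
\end{equation*}
All the remaining subcases of both parts of the lemma collapse to similarly immediate polynomial identities in $l, l', d, d'$ after substituting the relevant data.

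The only real obstacle is bookkeeping: the verification fans out into several subcases indexed by $\Dc$, by the parity of $d$ when $\Dc = \R$, and by the comparison between $l$ and $l'$ (which dictates whether Lemma \ref{roots for l<=l'} or Lemma \ref{roots for l>=l'} supplies the degree formulas, and which of $\V^0_{\overline 0}$ and $\V^0_{\overline 1}$ can be nonzero). No conceptual step enters, since every required ingredient is already tabulated in the preceding lemmas and in the standard tables of the classical compact Lie algebras.
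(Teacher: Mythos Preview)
Your proposal is correct and is exactly the approach the paper takes: the paper's proof consists of the single sentence ``A direct computation involving the formulas (\ref{product of positive roots for g}) and (\ref{product of positive roots for g'/z'}) verifies the following lemma,'' and you have spelled out precisely what that computation is. Your reduction of the ``equivalent'' forms via $\dim\g=\dim\h+2\deg\pi_{\g/\h}$ and $\dim\g'/\z'=2\deg\pi_{\g'/\z'}$, your identification of $\dim\ss1(\V^0)$ from the block decomposition (\ref{decomposition of space for a cartan subspace}), and your worked example for $(\Og_{2l+1},\Sp_{2l'}(\R))$ are all accurate.
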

Recall Harish-Chandra's semisimple orbital integral on $f(y)\in \Ss^*(\Wv)^\Sg$, (\ref{extension by the symmetry condition and closure}) and (\ref{HCintegral l>l'}).
Lemma \ref{a relation between degrees} plus a direct computation implies the following lemma.
\begin{lem}\label{dilation relations 1}
For $\phi\in C_c^\infty(\Wv)$ and $t>0$
\begin{equation}\label{dilation relations 2}
\mu_{\Oo(w)}(t^{\dim\Wv}\phi_t)=t^{\dim\ss1(\V^0)}\mu_{\Oo(t^{-1}w)}(\phi) \qquad (w\in\reg\h_{\overline 1}).
\end{equation}
Equivalently,
\begin{equation}\label{dilation relations 1.10}
M_{t^{-1}}^*f(y)=t^{2\,\deg\,\pi_{\g/\z}+2\dim \h}f(t^2y).
\end{equation}
\end{lem}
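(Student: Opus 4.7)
The plan is to prove the first identity \eqref{dilation relations 2} by direct manipulation of the integral formulas defining $\mu_{\Oo(w)}$, and then derive the second formula by substituting into the definition of $f$ in Definition \ref{def of HC integral on W} and balancing powers of $t$ using Lemma \ref{a relation between degrees}.

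For \eqref{dilation relations 2}, one unwinds $t^{\dim\Wv}\phi_t(u) = \phi(t^{-1}u)$ and splits into two cases according to the orbital integral definition. In the generic case \eqref{muwforall othergroups}, where $\mu_{\Oo(w)}(\phi) = \int_{\Sg/\Sg^{\hs1}}\phi(s.w)\,d(s\Sg^{\hs1})$, the identity $\phi(t^{-1}s.w) = \phi(s.(t^{-1}w))$ gives the result immediately (with $\dim\ss1(\V^0) = 0$). In the exceptional case $\G = \Og_{2l+1}$ with $l < l'$, the inner integration over $\ss1(\V^0)$ in \eqref{muwforsome groups} produces a Jacobian $t^{\dim\ss1(\V^0)}$ under the change of variable $w^0 \mapsto tw^0$, which matches the claimed factor exactly.

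To obtain the second identity, recall the pullback convention $(M_{t^{-1}}^*\Phi)(\phi) = \Phi(\phi_{t^{-1}})$ with $\phi_{t^{-1}}(u) = t^{\dim\Wv}\phi(tu)$. Applying \eqref{dilation relations 2} with $t$ replaced by $1/t$ gives
\[
M_{t^{-1}}^*\mu_{\Oo(w)} = t^{\dim\Wv - \dim\ss1(\V^0)}\mu_{\Oo(tw)}.
\]
Substituting this into Definition \ref{def of HC integral on W}, and using $\tau(tw) = t^2\tau(w) = t^2 y$ together with the homogeneity $\pi_{\g'/\z'}(t^2y) = t^{2\deg\pi_{\g'/\z'}}\pi_{\g'/\z'}(y)$, one obtains
\[
M_{t^{-1}}^*f(y) = t^{\dim\Wv - \dim\ss1(\V^0) - 2\deg\pi_{\g'/\z'}}f(t^2y).
\]

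To finish, invoke Lemma \ref{a relation between degrees}. When $l \leq l'$, formula \eqref{a relation between degrees 2} yields exponent $2\deg\pi_{\g/\h} + 2\dim\h$; since in this case $\h$ is itself a Cartan subalgebra of $\g$, its centralizer $\z$ in $\g$ coincides with $\h$ and $\pi_{\g/\z} = \pi_{\g/\h}$. When $l > l'$, formula \eqref{a relation between degrees 2'} applies directly, with $\dim\ss1(\V^0) = 0$ and $\pi_{\g'/\z'} = \pi_{\g'/\h}$. In both cases the exponent is $2\deg\pi_{\g/\z} + 2\dim\h$. No serious obstacle is anticipated; the only delicate point is consistent bookkeeping between the $\phi_t$ rescaling convention and the pullback $M_{t^{-1}}^*$, which must be tracked with explicit powers of $t$ rather than by heuristic dimension counting.
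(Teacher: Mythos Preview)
Your proposal is correct and follows precisely the route the paper indicates: the paper simply states that ``Lemma \ref{a relation between degrees} plus a direct computation implies the following lemma,'' and you have supplied exactly that direct computation, handling the two cases for $\mu_{\Oo(w)}$ and then balancing powers of $t$ via \eqref{a relation between degrees 2} (for $l\leq l'$, where $\z=\h$) and \eqref{a relation between degrees 2'} (for $l>l'$, where $\z'=\h'$ and $\dim\ss1(\V^0)=0$).
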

Also, without any assumptions, we have the following equivalent formulas
\begin{eqnarray}\label{dilation relations 3}
&&(\psi\circ\tau')_t=t^{2\,\dim\,\g'-\dim\,\Wv}\psi_{t^2}\circ\tau' \qquad (\psi\in \Ss(\g')),\\
&&\tau'_*(M_{t^{-1}}^* u)=t^{\dim\Wv-2\dim\g'} M_{t^{-2}}^*\tau'_*(u) \qquad (u\in \Ss^*(\Wv)).\nn
\end{eqnarray}
\begin{cor}\label{limit of orbital integrals and Harish-Chandra integral on W}
Assume that $k=m$. Then, 
\begin{equation}\label{second limit formula}
\underset{t\to 0+}{\lim}\ t^{\deg\mu_{\Oo_m}} M_{t^{-1}}^*f(y)|_{\sigma(\Sg\times U)}=
f(y)|_{U}(U)\,\mu_{\Oo_m}(\phi)|_{\sigma(\Sg\times U)}.
\end{equation}
The distribution $f(0)$ is homogeneous of degree $\deg f(0)=-(2\deg \pi_{\g/\z}+2\dim\h)$. (If $l\leq l'$, then  $2\deg \pi_{\g/\z}+2\dim\h=\dim \g+\dim\h$.) 
Moreover (\ref{second limit formula}) is equivalent to
\begin{eqnarray}\label{first limit formula}
\underset{t\to 0+}{\lim}\ t^{\frac{1}{2}(\deg\mu_{\Oo_m}-\deg f(0))} f(ty)|_{\sigma(\Sg\times U)}
=f(y)|_{U}(U)\,\mu_{\Oo_m}|_{\sigma(\Sg\times U)}.
\end{eqnarray}
\end{cor}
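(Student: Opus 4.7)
The plan is to assemble this corollary directly from Proposition \ref{limit of orbits I} and the dilation identity of Lemma \ref{dilation relations 1}; no new analytic work is required. To prove the limit formula (\ref{second limit formula}), I would start from Definition \ref{def of HC integral on W},
\[
f(y) = \sum_{\hs1} C_{\hs1}\,\pi_{\g'/\z'}(y)\,\mu_{\Oo(w)} \qquad (y=\tau(w)=\tau'(w)),
\]
in which, for a fixed $y$, the quantities $C_{\hs1}\pi_{\g'/\z'}(y)$ are just scalars. Pullback by $M_{t^{-1}}$, restriction to the open set $\sigma(\Sg\times U)$, and restriction to the slice $U$ are all linear in the distribution; moreover each $\mu_{\Oo(w)}$ is $\Sg$-invariant. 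Applying Proposition \ref{limit of orbits I} termwise (orbits that miss $\sigma(\Sg\times U)$ contribute zero on both sides) yields
\[
\lim_{t\to 0+} t^{\deg\mu_{\Oo_m}}\,M_{t^{-1}}^* f(y)\big|_{\sigma(\Sg\times U)}
= \sum_{\hs1} C_{\hs1}\pi_{\g'/\z'}(y)\,\mu_{\Oo(w)}\big|_U(U)\,\mu_{\Oo_m}\big|_{\sigma(\Sg\times U)},
\]
and the sum collapses to $f(y)|_U(U)\,\mu_{\Oo_m}|_{\sigma(\Sg\times U)}$, giving (\ref{second limit formula}).

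The homogeneity statement follows by specializing the dilation identity (\ref{dilation relations 1.10}) at $y=0$, which gives $M_{t^{-1}}^* f(0)=t^{2\deg\pi_{\g/\z}+2\dim\h}\,f(0)$; this is precisely the assertion that $f(0)$ is homogeneous of degree $-(2\deg\pi_{\g/\z}+2\dim\h)$. When $l\leq l'$, the subalgebra $\h$ is its own centralizer in $\g$, so $\z=\h$ and $2\deg\pi_{\g/\h}=\dim\g-\dim\h$, which simplifies the degree to $-(\dim\g+\dim\h)$ as claimed. For the equivalence of (\ref{second limit formula}) and (\ref{first limit formula}), I would substitute (\ref{dilation relations 1.10}) into the left-hand side of (\ref{second limit formula}) to obtain
\[
t^{\deg\mu_{\Oo_m}}\,M_{t^{-1}}^* f(y) \;=\; t^{\deg\mu_{\Oo_m}-\deg f(0)}\,f(t^2 y),
\]
and then change variables $s=t^2$. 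Since $t\mapsto t^2$ is a diffeomorphism of $(0,\infty)$, the $t\to 0^+$ limit transforms cleanly into the $s\to 0^+$ limit, with the exponent becoming $\tfrac{1}{2}(\deg\mu_{\Oo_m}-\deg f(0))$, which is exactly (\ref{first limit formula}).

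There is no genuine obstacle: the corollary is a bookkeeping assembly of Proposition \ref{limit of orbits I}, which supplies the analytic content, and Lemma \ref{dilation relations 1}, which supplies the scaling behaviour. The only subtle point is verifying that the scalar $f(y)|_U(U)$ defined via restriction distributes linearly across the sum in Definition \ref{def of HC integral on W}, but this is immediate from the fact that the restriction to $U$ of an $\Sg$-invariant distribution on $\sigma(\Sg\times U)$ is a linear operation, as established in Lemma \ref{invariant distributions and g(t)}.
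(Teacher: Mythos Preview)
Your proposal is correct and follows the same route as the paper's own proof, which simply says that (\ref{second limit formula}) is immediate from Proposition \ref{limit of orbits I} and that both the homogeneity degree of $f(0)$ and the equivalence with (\ref{first limit formula}) follow from the dilation identity (\ref{dilation relations 1.10}). You have merely spelled out the bookkeeping: that for a fixed $y$ the function $f(y)$ is a scalar multiple of a single $\Sg$-invariant orbital integral $\mu_{\Oo(w)}$, so Proposition \ref{limit of orbits I} applies directly with the scalar $C_{\hs1}\pi_{\g'/\z'}(y)$ carried through, and the substitution $s=t^2$ in (\ref{dilation relations 1.10}) that converts one limit formula into the other.
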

\begin{prf}
The statement (\ref{second limit formula}) is immediate from Proposition \ref{limit of orbits I}. 
In order to see that (\ref{second limit formula}) is equivalent to (\ref{first limit formula}) we recall (\ref{dilation relations 1.10}),
which also shows that $\deg f(0)=-2\deg \pi_{\g/\z}-2\dim\h$.
\end{prf}
The set of almost semisimple elements in $\ss1$ coincides with the union of the $\Sg$-orbits through the generalized Cartan subspaces
$\bigcup_{\t\h_{\overline 1}}\Sg\t\h_{\overline 1}$, \cite[(47)]{McKeePasqualePrzebindaSuper}. Since the set of the regular almost semisimple elements is dense in $\bigcup_{\t\h_{\overline 1}}\Sg\t\h_{\overline 1}$, \cite[Theorem 19]{McKeePasqualePrzebindaSuper} implies that the set of the regular almost semisimple elements is dense in $\ss1$. Hence there is a regular $y$ such that the corresponding orbit in $\Wv$ is contained in $\sigma(\Sg\times U)$. Then all the orbits corresponding to $ty$, with $t\geq 0$, are contained in $\sigma(\Sg\times U)$.

Next we shall try to shed some light at the limits of  the derivatives of the orbital integrals.
We assume that $l\leq l'$. 
As in \cite{HC-57DifferentialOperators} we identify the symmetric algebra on $\g$ with $\C[\g]$, the algebra of the polynomials on $\g$ using the invariant symmetric bilinear form $B$ on $\g$.
\begin{lem}\label{the main localized to U lemma}
Let $y\in \h\cap \tau(\Wv)$ and let $Q\in \C[\h]$ be such that $\deg (Q)$ is small enough so that, by Corollary \ref{pullback of muy 3}, $\partial(Q)f(y)$ exists. Then
\begin{equation}\label{the main localized to U lemma1}
t^{\deg\mu_{\Oo_{m}}} M_{t^{-1}}^*\partial(Q)f(y)|_{\sigma(\Sg\times U)}
\underset{t\to 0+}{\to}C\mu_{\Oo_m},
\end{equation}
in $\mathcal D'(\sigma(\Sg\times U))$, where $C=\partial(Q)f(y)|_{U}(\Bbb I_U)$ is the value of the  compactly supported distribution $\partial(Q)f(y)|_{U}$ on $U$ applied to the constant function $\Bbb I_U$.
\end{lem}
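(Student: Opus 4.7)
The plan is to follow verbatim the template of Proposition~\ref{limit of orbits I}, with the single $\Sg$-invariant distribution $\mu_\Oo$ replaced by $F := \partial(Q)f(y)$. The key observation is that, by its construction out of the $\mu_{\Oo(w)}$'s, $f(y)$ is an $\Sg$-invariant tempered distribution on $\Wv$, and the operator $\partial(Q)$ differentiates in the $\h$-parameter $y$ (not in the $\Wv$-variable), so $F \in \mathcal D'(\Wv)^\Sg$. In particular $F$ may be restricted to the open $\Sg$-invariant set $\sigma(\Sg\times U)$ and the machinery of Lemmas \ref{invariant distributions and g(t)} and \ref{invariant distributions and g(t) and limits} applies.

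First, I would verify that $F|_U$ is a \emph{compactly supported} distribution on $U$. For each $y'$ in a small neighborhood of $y$ where $\partial(Q)f(y')$ makes sense by Theorem~\ref{pullback of muy 3}, the distribution $f(y')$ has support contained in the closure of the associated $\Sg$-orbit through any $w'$ with $\tau(w')=y'$; by Corollary~\ref{tau is proper again}, the intersection of this closure with $U$ is compact, and the union over $y'$ ranging in a relatively compact neighborhood of $y$ is again relatively compact in $U$. Since $\partial(Q)f(y)|_U$ depends only on the $Q$-jet of $y' \mapsto f(y')|_U$ at $y$, it follows that $\supp(F|_U)$ is compact, so the pairing $F|_U(\mathbb I_U)$ is well defined and equals the scalar $C$ in the statement.

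Next, by Lemma~\ref{invariant distributions and g(t) and limits} with $a=\deg\mu_{\Oo_m}$, it suffices to establish the limit
\[
t^{\deg\mu_{\Oo_m}}(g_{t^{-1}}|_U)^*F|_U \underset{t\to 0+}{\to} C\,\mu_{\Oo_m}|_U
\]
in $\mathcal D'(U)$. Using the Jacobian formula \eqref{det gt'} as in the proof of Proposition~\ref{limit of orbits I}, for $\psi\in C_c^\infty(U)$ the left-hand side evaluated on $\psi$ equals $F|_U(\psi\circ g_t|_U)$. Formula \eqref{gt acting on NperpB} shows that $g_t(u)\to N_m$ as $t\to 0+$, uniformly on compact subsets of $N_m+[\so,N_m]^{\perp_B}$, and all higher derivatives of $g_t$ tend to $0$; therefore $\psi\circ g_t \to \psi(N_m)\,\mathbb I_U$ in $C^\infty$ on any fixed compact set containing $\supp(F|_U)$.

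Because $F|_U$ has compact support, it is continuous in the $C^\infty$ topology, so
\[
\lim_{t\to 0+} F|_U(\psi\circ g_t|_U)= F|_U(\mathbb I_U)\,\psi(N_m)= C\,\psi(N_m)= C\,\mu_{\Oo_m}|_U(\psi),
\]
the last equality by the normalization $\mu_{\Oo_m}|_U=\delta_{N_m}$. Combining with Lemma~\ref{invariant distributions and g(t) and limits} gives \eqref{the main localized to U lemma1}. The main technical obstacle is the first step — controlling the support of $\partial(Q)f(y)|_U$ — which requires tracking how the supports of $f(y')$ vary for $y'$ near $y$ and invoking the properness of $\tau$ on the slice (Corollary~\ref{tau is proper again}); once compact support is in hand, the rest is a straightforward repetition of the argument for Proposition~\ref{limit of orbits I}.
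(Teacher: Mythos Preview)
Your proposal is correct and follows essentially the same approach as the paper's proof: reduce via Lemma~\ref{invariant distributions and g(t) and limits} (the paper cites Lemma~\ref{invariant distributions and g(t)}) to the slice $U$, then use the Jacobian \eqref{det gt'} and $g_t(u)\to N_m$ together with the compact support of $\partial(Q)f(y)|_U$ to conclude. In fact you supply more detail than the paper does on why $\partial(Q)f(y)|_U$ is compactly supported; the paper simply asserts this and proceeds directly to the limit computation.
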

\begin{prf}
We see from Lemma \ref{invariant distributions and g(t)} that it suffices to prove the lemma with (\ref{the main localized to U lemma1}) replaced by
\begin{equation}\label{the main localized to U lemma1'}
t^{\deg\mu_{\Oo_{m}}} g_{t^{-1}}|_U^*\partial(Q)f(y)|_{U}
\underset{t\to 0+}{\to}C\delta_{N_m},
\end{equation}
Let $\psi\in C_c^\infty(U)$. Since $\partial(Q)f(y)|_{U}$ is a compactly supported distribution on $U$,
\begin{eqnarray*}
t^{\deg\mu_{\Oo_{m}}} g_{t^{-1}}|_U^*\partial(Q)f(y)|_{U}(\psi)
&=&\partial(Q)f(y)|_{U}(\psi\circ g_t)\\
&\underset{t\to 0+}{\to}&\partial(Q)f(y)|_{U}(\psi(N_m)\Bbb I_U)\\
&=&\partial(Q)f(y)|_{U}(\Bbb I_U)\delta_{N_m}(\psi).
\end{eqnarray*}
\end{prf}
\begin{pro}\label{the main limit pro}
Let $y\in \h\cap \tau(\Wv)$ and let $Q\in \C[\h]$ be such that $\deg (Q)$ is small enough so that, by Corollary \ref{pullback of muy 3}, $\partial(Q)f(y)$ exists. Then
\begin{equation}\label{the main limit pro1}
t^{\deg\mu_{\Oo_{m}}} M_{t^{-1}}^*\partial(Q)f(y)
\underset{t\to 0+}{\to}C\mu_{\Oo_m}
\end{equation}
in the topology of $\Ss^*(\Wv)$, where $C=\partial(Q)f(y)|_{U}(\Bbb I_U)$. Moreover, there is a seminorm $q$ on $\Ss(\Wv)$ and $N\geq 0$ such that
\begin{multline}\label{the main limit pro1'}
\left|t^{\deg\mu_{\Oo_{m}}} M_{t^{-1}}^*\partial(Q)f(y)(\phi)\right|\leq (1+|y|)^N q(\phi)\\ 
\qquad (0<t\leq 1,\ y\in \h\cap \tau(\Wv),\ \phi\in \Ss(\Wv)).
\end{multline}
\end{pro}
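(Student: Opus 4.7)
The plan is to promote Lemma \ref{the main localized to U lemma}, which establishes convergence in $\mathcal D'(\sigma(\Sg\times U))$, to convergence in $\Ss^*(\Wv)$, simultaneously establishing the polynomial--seminorm bound (\ref{the main limit pro1'}). The bound enables the passage from compactly supported test functions on the slice-neighborhood $\sigma(\Sg\times U)$ to arbitrary Schwartz functions on $\Wv$, with the required uniformity in $y$ and in $t\in(0,1]$.

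For (\ref{the main limit pro1'}), I would unwind the definition of $f(y)$ via Definition \ref{def of HC integral on W} and use the identity (\ref{singular integral as derivative of Harish-Chandra integral}) (or its variant (\ref{rosmann}) when $\G=\Og_{2l+1}$ with $l<l'$) from the proof of Lemma \ref{extension to non-compact reg}. This expresses $\partial(Q)f(y)(\phi)$ as a constant-coefficient differential operator applied to a Harish-Chandra orbital integral on $\g'$, evaluated on a Schwartz function on $\g'$ obtained from $\phi$ via the polynomial pull-back by $\tau'$. Harish-Chandra's quantitative estimates for orbital integrals (\cite{HC-57Fourier}) yield a bound of the form $(1+|y|)^{N_0}q_0(\phi)$ for some Schwartz seminorm $q_0$ on $\Ss(\Wv)$ and an integer $N_0$. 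The additional rescaling $\phi\mapsto\phi\circ M_{t^{-1}}$ inserts at most bounded powers of $t$ into the relevant Schwartz seminorms for $t\in(0,1]$; combined with the factor $t^{\deg\mu_{\Oo_m}}$ these are absorbed into a larger seminorm $q$, yielding (\ref{the main limit pro1'}).

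For (\ref{the main limit pro1}), given $\phi\in\Ss(\Wv)$, I would choose a cut-off $\chi\in C_c^\infty(\sigma(\Sg\times U))$ equal to $1$ on a sufficiently large compact subset of $\Oo_m$ (possible since $\Oo_m\subseteq\sigma(\Sg\times U)$). Applying Lemma \ref{the main localized to U lemma} to $\chi\phi$ gives $t^{\deg\mu_{\Oo_m}}M_{t^{-1}}^*\partial(Q)f(y)(\chi\phi)\to C\mu_{\Oo_m}(\chi\phi)$ as $t\to 0^+$. Since $\mu_{\Oo_m}$ is tempered and concentrated on the smooth orbit $\Oo_m$ which has polynomial volume growth, $C\mu_{\Oo_m}(\chi\phi)$ can be made arbitrarily close to $C\mu_{\Oo_m}(\phi)$ by enlarging $\chi$. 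For the residual $(1-\chi)\phi$, the bound (\ref{the main limit pro1'}) controls $t^{\deg\mu_{\Oo_m}}M_{t^{-1}}^*\partial(Q)f(y)((1-\chi)\phi)$ uniformly in $t$ by a Schwartz seminorm of $(1-\chi)\phi$, which can be made small by the Schwartz decay of $\phi$. The main obstacle is that $\overline{\Oo_m}$ is typically not contained in $\sigma(\Sg\times U)$ (for example $0\in\overline{\Oo_m}\setminus\sigma(\Sg\times U)$), so the cut-off $\chi$ cannot cover the boundary stratum of $\overline{\Oo_m}$; one must rely on the fact that $\mu_{\Oo_m}$ gives zero mass to $\overline{\Oo_m}\setminus\Oo_m$, combined with (\ref{the main limit pro1'}), to control the dilated distribution near that boundary. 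Carefully tracking these estimates constitutes the essential technical work of the proof.
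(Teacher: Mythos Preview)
Your argument for the bound (\ref{the main limit pro1'}) does not work as stated. A Harish-Chandra--type estimate gives $|\partial(Q)f(y)(\psi)|\le (1+|y|)^{N_0}q_0(\psi)$ for some Schwartz seminorm $q_0$, but when you substitute $\psi=\phi_{t^{-1}}$ with $\phi_{t^{-1}}(u)=t^{\dim\Wv}\phi(tu)$, moment--type seminorms scale as $q_0(\phi_{t^{-1}})\sim t^{\dim\Wv-|\alpha|}$ and blow up for $|\alpha|>\dim\Wv$. Since the orbital integrals entering $f(y)$ are taken over unbounded $\Sg$-orbits, such moment seminorms are genuinely needed; and since $\deg\mu_{\Oo_m}=\dim\Oo'_m-\dim\Wv<0$, the prefactor $t^{\deg\mu_{\Oo_m}}$ only worsens matters. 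So ``bounded powers of $t$ \ldots\ absorbed into a larger seminorm $q$'' is exactly the nontrivial point, and nothing in your outline establishes it.

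The cut-off step has a second, independent gap. Even assuming (\ref{the main limit pro1'}), uniform boundedness plus convergence on $C_c^\infty(\sigma(\Sg\times U))$ implies convergence on the closure of $C_c^\infty(\sigma(\Sg\times U))$ in $\Ss(\Wv)$; but if $\sigma(\Sg\times U)\ne\Wv$ this closure is a proper subspace, so $q((1-\chi)\phi)$ does not tend to $0$ for general $\phi\in\Ss(\Wv)$ as $\chi\nearrow 1$ on $\sigma(\Sg\times U)$. Knowing that $\mu_{\Oo_m}$ gives no mass to $\overline{\Oo_m}\setminus\Oo_m$ does not help: the bound (\ref{the main limit pro1'}) is insensitive to the support of $(1-\chi)\phi$.

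The paper proceeds by a different mechanism. It pushes the problem to $\g'$ via $\tau'_*$, observes that $\tau'_*(\partial(Q)f(y))$ is annihilated by a cofinite ideal in $\C[\g']^{\G'}$, and invokes Harish-Chandra's Regularity Theorem so that its Fourier transform is locally integrable with an explicit structure $\pi_1^{-1}\sum_j p_je^{\lambda_j}$ on each Cartan chamber. A Taylor expansion of the $p_je^{\lambda_j}$ separates a finite sum of homogeneous pieces from a polynomially controlled remainder (with uniform dependence on $y$), which simultaneously yields the existence of a scaling limit in $\Ss^*(\g')$ and the bound analogous to (\ref{the main limit pro1'}). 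Injectivity of $\tau'_*$ (Corollary \ref{injectivity of pushforward via tau'}) together with (\ref{dilation relations 3}) transfers both statements back to $\Ss^*(\Wv)$, and only then does Lemma \ref{the main localized to U lemma} pin down the exponent as $\deg\mu_{\Oo_m}$ and the constant as $\partial(Q)f(y)|_U(\Bbb I_U)$. The essential input you are missing is precisely this structural information from the Regularity Theorem.
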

\begin{prf}
As we have seen in (\ref{rosmann}) and (\ref{singular integral as derivative of Harish-Chandra integral}), there is a positive constant $const$ such that for $\psi\in \Ss(\g')$
\begin{eqnarray}\label{the main limit pro2}
&&\tau'_*(\partial(Q)f(y))(\psi)=\partial(Q)\tau'_*(f(y))(\psi)\\
&=&const\ \partial(Q\t\pi_{\z'/\h'})\left(\pi_{\g'/\h'}(y+y'')\int_{\G'}\psi(g.(y+y''))\,dg\right)|_{y''=0},\nn
\end{eqnarray}
where $\t\pi_{\z'/\h'}=\pi_{\z'/\h'}^{short}$ if $\G=\Og_{2l+1}$ with $l<l'$, and $\t\pi_{\z'/\h'}=\pi_{\z'/\h'}$ otherwise. Let $P\in\C[\g']^{\G'}$. Then
\begin{eqnarray}\label{the main limit pro3}
&&\partial(Q\t\pi_{\z'/\h'})\left(\pi_{\g'/\h'}(y+y'')\int_{\G'}(P\psi)(g.(y+y''))\,dg\right)|_{y''=0}\\
&=&\partial(Q\t\pi_{\z'/\h'})\left(P(y+y'')\pi_{\g'/\h'}(y+y'')\int_{\G'}\psi(g.(y+y''))\,dg\right)|_{y''=0}.\nn
\end{eqnarray}
By commuting the operators of the multiplication by a polynomial with differentiation we may write
\[
\partial(Q\t\pi_{\z'/\h'})P(y+y'')=\sum_{|\alpha|\leq\deg(Q\t\pi_{\z'/\h'})}P_\alpha(y+y'')\partial^\alpha,
\]
where $\partial^\alpha=\prod_{j=1}^{l'}\partial(J_j')^{\alpha_j}$. Hence, (\ref{the main limit pro3}) is equal to
\begin{eqnarray}\label{the main limit pro4}
\sum_{|\alpha|\leq\deg(Q\t\pi_{\z'/\h'})}P_\alpha(y)\partial^\alpha\left(\pi_{\g'/\h'}(y+y'')\int_{\G'}\psi(g.(y+y''))\,dg\right)|_{y''=0}.
\end{eqnarray}
We see from (\ref{the main limit pro2}) - (\ref{the main limit pro4}) that the range of the map
\begin{eqnarray}\label{the main limit pro5}
\C[\g']^{\G'}\ni P \to \tau'_*(\partial(Q)f(y))\circ P\in \Ss^*(\g')
\end{eqnarray}
is contained in the space spanned by the distributions
\[
\partial^\alpha\left(\pi_{\g'/\h'}(y+y'')\int_{\G'}\psi(g.(y+y''))\,dg\right)|_{y''=0} \qquad (|\alpha|\leq\deg(Q\t\pi_{\z'/\h'}).
\]
In particular this range is finite dimensional. Therefore the distribution \eqref{the main limit pro2} is annihilated by an ideal of finite co-dimension in $\C[\g']^{\G'}$. Hence the Fourier transform
\begin{equation}\label{the main limit pro2'}
\left(\tau'_*(\partial(Q)f(y))\right)\hat{}\in\Ss^*(\g')
\end{equation}
is annihilated by an ideal of finite co-dimension in $\partial(\C[\g']^{\G'})$.
Now Harish-Chandra Regularity Theorem \cite[Theorem 1, page 11]{HC-65InvariantEigendistributionsLieAlg} implies that the distribution \eqref{the main limit pro2'}
is a locally integrable function whose restriction to the set of the regular semisimple elements has a known structure. Specifically, let $\h'_1\subseteq\g'$ be a Cartan subalgebra and let $\pi_1$ be the product of the positive roots (with respect to some order of the roots). Then Harish-Chandra's formula for the radial component of a $\G'$-invariant differential operator with constant coefficients on $\g'$ together with \cite[Lemma 19]{HC-64b} show that the restriction
\[
\pi_1\left(\tau'_*(\partial(Q)f(y))\right)\hat{}\,|_{\reg{\h'_1}}
\]
is annihilated by an ideal of finite co-dimension in $\partial(\C[\h_1'])$. Hence, for any connected component $C(\reg{\h_1'})\subseteq \reg{\h_1'}$ there is an exponential polynomial $\sum_j p_je^{\lambda_j}$. such that
\begin{equation}\label{the main limit pro2''}
\left(\tau'_*(\partial(Q)f(y))\right)\hat{}\,|_{C(\reg{\h'_1})}=\frac{1}{\pi_1}\sum_j p_je^{\lambda_j}.
\end{equation}
Let
\[
F(x)=\sum_j p_j(x)e^{\lambda_j(x)} \qquad (x\in C(\reg{\h_1'})).
\]
This function extends analytically beyond the connected component and for any $k=1,2,3,...$ we have Taylor's formula, as in \cite{Hormander}, 
\begin{equation}\label{the main limit pro2'''}
F(x)=\sum_{|\alpha|<k}\partial^\alpha F(0)\frac{x^\alpha}{\alpha!} +k\int_0^1(1-t)^{k-1}\sum_{|\alpha|=k}\partial^\alpha F(tx)\,dt\frac{x^\alpha}{\alpha!}.
\end{equation}
Since the distribution \eqref{the main limit pro2'} is tempered, we see from Harish-Chandra's theory of the orbital integrals
\begin{equation}\label{the main limit pro2'''''}
\psi(x)=\pi_1(x)\int_{\G'/\H'}\psi(g.x)\,dg\H' \qquad (\psi\in \Ss(\g'))
\end{equation}
that the real parts of the $\lambda_j$ are non-positive on the $C(\reg{\h'_1})$. Furthermore, they depend linearly on $y$ and the $p_j$ depend polynomially on the $y$. Therefore a straightforward argument shows that there is $N>0$ such that
\begin{equation}\label{the main limit pro2''''}
\left|\partial^\alpha F(tx)\right|\leq constant\, (1+|y|)^{N}(1+|x|)^{N}\sum_{|\alpha|=k}\left|\frac{x^\alpha}{\alpha!}\right|.
\end{equation}
Therefore \eqref{the main limit pro2'} is a finite sum of homogeneous distributions, of possibly negative degrees, plus the error term which is bounded by \eqref{the main limit pro2''''}.
Therefore 
there is an integer $a$ such that the following limit exists in $\Ss^*(\g')$:
\begin{eqnarray}\label{the main limit pro6}
\underset{t\to 0+}{\lim}\ t^aM_t^*\left(\tau'_*(\partial(Q)f(y))\right)\hat{}.
\end{eqnarray}
Moreover,  there is a seminorm $q$ on $\Ss(\g')$ and $N\geq 0$ such that
\begin{multline}\label{the main limit pro1''}
\left|t^aM_t^*\left(\tau'_*(\partial(Q)f(y))\right)\hat{}(\psi)\right|\leq (1+|y|)^N q(\psi)\\ 
\qquad (0<t\leq 1,\ y\in \h\cap \tau(\Wv),\ \psi\in \Ss(\g')).
\end{multline}
(All we did here was an elaboration of the argument used in the proof of \cite[Theorem 3.2]{BarVogAs}.)
By taking the inverse Fourier transform we see that 
there is an integer $b$ such that the following limit exists in $\Ss^*(\g')$:
\begin{eqnarray}\label{the main limit pro7}
\underset{t\to 0+}{\lim}\ t^b M_{t^{-1}}^*\tau'_*(\partial(Q)f(y)).
\end{eqnarray}
Moreover,  there is a seminorm $q$ on $\Ss(\g')$ and $N\geq 0$ such that
\begin{multline}\label{the main limit pro1'''}
\left|\underset{t\to 0+}{\lim}\ t^b M_{t^{-1}}^*\tau'_*(\partial(Q)f(y))(\psi)\right|\leq (1+|y|)^N q(\psi)\\ 
 \qquad (0<t\leq 1,\ y\in \h\cap \tau(\Wv),\ \psi\in \Ss(\g')).
\end{multline}
But then the injectivity of the map $\tau'_*$, see Corollary \ref{injectivity of pushforward via tau'}, and (\ref{dilation relations 3}) imply that there is an integer $d$ such that the following limit exists in $\Ss^*(\Wv)$.
\begin{eqnarray}\label{the main limit pro8}
\underset{t\to 0+}{\lim}\ t^d M_{t^{-1}}^*\partial(Q)f(y).
\end{eqnarray}
Now Lemma \ref{the main localized to U lemma} shows that $d=\deg\mu_{\Oo_m}$ and the proposition follows.
\end{prf}
\section{\bf Intertwining distributions.\rm}\label{Intertwining distributions}

In the following we consider an irreducible unitary representation $\Pi$ of $\wt\G$. We suppose that $\Pi$ is genuine in the sense that it is non-trivial on the kernel of the covering map $\wt\G \to \G$. Let $\mu \in i\h^*$ represent the infinitesimal character of $\Pi$. In particular, when $\mu$ is dominant, then we will refer to it as the Harish-Chandra parameter of $\Pi$. This is consistent with the usual terminology; see e.g. \cite[Theorem 9.20]{knappLie2}.

Assume that the distribution character $\Theta_\Pi$ is supported in the preimage $\wt{\G_1}$ of the Zariski identity component $\G_1$ of $\G$. (Recall that $\G_1=\G$ unless $\G$ is an orthogonal group.) Then
\begin{equation}\label{Weyl character formula}
\Theta_\Pi(h)\Delta(h)=\sum_{s\in W(\G,\h)} \sgn_{\g/\h}(s)\xi_{s\mu}(h) \qquad (h\in \diesis{\H}_o),
\end{equation}
where we lift $\Pi$ from $\wt\G$ to $\diesis{\G}$ via the covering (\ref{acceptable covering}) if necessary, $\Delta$ is the Weyl denominator  (\ref{eq:Weyl-den})   and $\sgn_{\g/\h}$ is as in (\ref{sgn_g/h}). Since $\sgn_{\g/\h}=\sgn$, the sign character of the Weyl group $W(\g,\h)$, unless $\G=\Og_{2l}$, we need to justify the formula (\ref{Weyl character formula}) only in this case. Our assumption that the distribution character $\Theta_\Pi$ is supported in the preimage $\wt{\G_1}$ implies that 
\[
\Theta_\Pi(h)=\frac{\sum_{s\in W(\g,\h)} \sgn(s)\xi_{s\mu}(h)}{\Delta(h)}
+\frac{\sum_{s\in W(\g,\h)} \sgn(s)\xi_{s\mu}(th)}{\Delta(th)} \qquad (h\in \diesis{\H}_o),
\]
where $W(\g,\h)$ is the Weyl group of $\G_1=\SO_{2l}$ and $t$ is any element of $W(\G,\h)$ which does not belong to $W(\g,\h)$. Since 
$\sgn(s)=\sgn_{\g/\h}(s)$ for $s\in W(\g,\h)$ and
\[
\Delta(th)=\sgn_{\g/\h}(t)\Delta(h),
\]
(\ref{Weyl character formula}) follows. 

In this section we study the analytic properties of the distribution $f_{\Pi\otimes\Pi'}=T(\check\Theta_\Pi) \in  \Ss(\Wv)$ introduced in (\ref{0.1}). 
For $x\in \g$ define
\begin{equation}\label{ch}
\ch(x)=|\det_\R(x-1)|^{1/2}\,,
\end{equation}
where the subscript $\R$ indicates that the element $x\in \g\subseteq \End(\V_{\overline 0})$ is viewed as an endomorphism of $\V_{\overline 0}$ over $\R$. Since  all eigenvalues of $x \in \g$ are purely imaginary, $x-1$ is invertible and the function $\ch(x)$ is non-zero on $\g$ and we can raise it to any real power.

For an endomorphism $x$ of $\Wv$ we set 
\begin{equation}
\label{eq:chix}
\chi_x\big(\frac{1}{4}(\langle xw,w\rangle)\big) \qquad w \in \Wv\,.
\end{equation}
This definition coincides with (\ref{eq:chicg}) when $x=c(g)$ for $g \in \Sp$ and $g-1$ is invertible.

Recall the functions $\t c$ and $\t c_-$ on $\mathfrak{sp}$ to $\wt \Sp$ introduced in
 (\ref{eq:tildec}) and the constants $r$ and $\iota$ defined by (\ref{number r}) and (\ref{eq:iota}), respectively. Recall also that $d'=\dim_{\mathbb D} \V_{\overline 1}$ and that
$\wt \G$ is equipped with the Haar measure $d\wt g$ of total mass $1$.

\begin{lem}\label{general formula for the int distr}
Let $\diesis{c_-}:\h\to \diesis{\H}_o$ be a real analytic lift of $\t c_-:\h\to \wt\H$, via the covering (\ref{acceptable covering}).
For any $\phi\in \Ss(\Wv)$
\begin{eqnarray*}
T(\check\Theta_\Pi)(\phi)&=&C\,\int_\h\left((\check\Theta_\Pi\Delta)(\diesis{c_-}(x)) \ch^{d'-r-\iota}(x)\right)
\left(\pi_{\g/\h}(x)\int_\Wv \chi_x(w)\phi^\G(w)\,dw\right)\,dx\\
&=&C'\int_\h\left(\xi_{-\mu}(\diesis{c_-}(x)) \ch^{d'-r-\iota}(x)\right)
\left(\pi_{\g/\h}(x)\int_\Wv \chi_x(w)\phi^\G(w)\,dw\right)\,dx,
\end{eqnarray*}
where $\phi^\G(w)=\int_\G\phi(gw)\,dg$, $C$ is a non-zero constant, $C'=C|W(\G,\h)|$ and each consecutive integral is absolutely convergent.
\end{lem}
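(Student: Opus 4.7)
The plan is to unravel $T(\check\Theta_\Pi)(\phi)$ layer by layer: first push the $\wt\G$-integral onto the torus $\diesis\H_o$ via Weyl integration, then pull the torus back to $\h$ via the Cayley lift $\diesis{c_-}$, and finally massage the resulting $\Wv$-integral into the form $\int_\Wv\chi_x(w)\phi^\G(w)\,dw$.

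For the first step, since $\wt\G$ is compact and $\Theta_\Pi$ is a class function, the Weyl integration formula gives
\[
T(\check\Theta_\Pi)(\phi)=\frac{1}{|W(\G,\h)|}\int_{\diesis\H_o}|\Delta(h)|^2\,\Theta_\Pi(h^{-1})\int_{\wt\G/\diesis\H_o}T(\t k h\t k^{-1})(\phi)\,d\bar{\t k}\,dh.
\]
The Weyl symbol is conjugation-covariant: the identity $\omega(\t k h\t k^{-1})=\omega(\t k)\omega(h)\omega(\t k)^{-1}$ translates into $T(\t k h\t k^{-1})(w)=T(h)(k^{-1}w)$ as distributions on $\Wv$, so the $\wt\G/\diesis\H_o$-integration replaces $\phi$ by its $\G$-average $\phi^\G$.

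For the second step, substitute $h=\diesis{c_-}(x)$ with $x\in\h$. A direct computation from $c_-(x)-1=-2x(x-1)^{-1}$ shows that $c(c_-(x))=x^{-1}$ on $\Wv$ (away from $\ker x$) and that, for $u=(c_-(x)-1)w$, one has $\langle c(c_-(x))u,u\rangle=-2\langle c_-(x)w,w\rangle$. Writing the definition $T(\diesis c_-(x))=\Theta(\diesis c_-(x))\chi_{c(c_-(x))}\mu_{(c_-(x)-1)\Wv}$, performing the linear change of variable $u=-2x(x-1)^{-1}w$ on $\Wv$ therefore converts $\chi_{c(c_-(x))}(u)$ into $\chi_x(w)$ and introduces the Jacobian $|\det_\R\bigl(-2x(x-1)^{-1}\bigr)|_{\Wv}|$. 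Combined with (i) the Jacobian of $\diesis c_-$ on $\h$, (ii) the factor $|\Delta(c_-(x))|^2$ expanded via \eqref{eq:Weyl-den} — which yields $\pi_{\g/\h}(x)^2$ from the leading behaviour of $1-\xi_{-\alpha}$ times a Cayley Jacobian on the non-compact root spaces $\g/\h$ — and (iii) the defining identity $|\Theta(\diesis c_-(x))|^2=|\det(J_{c_-(x)})_{J_{c_-(x)}\Wv}^{-1}|$ of \eqref{metaplectic group}, the net powers of $|\det_\R(x-1)|$ and $|\det_\R(x)|$ collapse. Since $x$ acts on $\Wv=\Hom_\Dc(\V_{\overline 0},\V_{\overline 1})$ through its action on $\V_{\overline 0}$, one has $|\det_\R(x-1)|_\Wv|=|\det_\R(x-1)|_{\V_{\overline 0}}|^{d'}$; together with $\dim_\R\g=r\cdot\tfrac12\dim_\R\V_{\overline 0}$ from \eqref{number r} and the appearance of $\iota$ from \eqref{eq:iota}, the remaining exponent works out to $(d'-r-\iota)/2$, giving the factor $\ch^{d'-r-\iota}(x)$.

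This proves the first identity. For the second identity, substitute the Weyl character formula \eqref{Weyl character formula} for $\check\Theta_\Pi\Delta$. For each $s\in W(\G,\h)$ perform the change of variable $x\mapsto s^{-1}x$ on $\h$: the factors $\ch(x)$ and $\int_\Wv\chi_x(w)\phi^\G(w)\,dw$ are Weyl invariant (the latter by the Weyl-equivariance of the action of $\G$ on $\Wv$), while $\pi_{\g/\h}(x)$ transforms by $\sgn_{\g/\h}(s)$ as in \eqref{sgn_g/h}, exactly cancelling the sign in the character formula. The $|W(\G,\h)|$ terms are then equal, yielding $C'=C|W(\G,\h)|$. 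Absolute convergence at every stage is clear: $\wt\G$ has finite measure and $\phi^\G\in\Ss(\Wv)^\G$, and after the Cayley substitution the inner integral $\pi_{\g/\h}(x)\int_\Wv\chi_x(w)\phi^\G(w)\,dw$ is a symplectic-Fourier-type transform of a Schwartz function, rapidly decreasing in $x$ and thus dominating the polynomial growth of $\xi_{-\mu}(\diesis c_-(x))\ch^{d'-r-\iota}(x)$.

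The main obstacle is the bookkeeping of Jacobians in step two: four separate real determinants of $x$ and $x-1$ — acting on $\h$, on $\g/\h$, on $\V_{\overline 0}$, and on $\Wv$ — must be tracked simultaneously and shown to collapse to the single exponent $d'-r-\iota$. The cleanest route is to organize the count by the type of root space (compact Cartan, non-compact root, matter in $\Wv$) and to separate absolute values from phases, handling the latter through $\chi(\frac18\sgn q_{g_1,g_2})$ of \eqref{cocycle/|cocycle|} implicit in $\Theta$.
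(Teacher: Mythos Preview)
Your overall architecture (Weyl integration, Cayley substitution, then Weyl-group symmetrization) matches the paper's, but the order is different and two of your steps contain genuine gaps.

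\textbf{The change of variable on $\Wv$ does not do what you claim.} With $h=\diesis c_-(x)$ you correctly get $c(c_-(x))=x^{-1}$, so $T(\diesis c_-(x))(\phi^\G)=\Theta(\diesis c_-(x))\int_\Wv\chi_{x^{-1}}(u)\,\phi^\G(u)\,du$ on the regular set. Your substitution $u=-2x(x-1)^{-1}w$ would replace $\phi^\G(u)$ by $\phi^\G\bigl(-2x(x-1)^{-1}w\bigr)$, not by $\phi^\G(w)$; and the quadratic exponent becomes $-\langle x(x-1)^{-1}w,(x-1)^{-1}w\rangle$, not $\tfrac14\langle xw,w\rangle$. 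So neither the phase nor the test function lands where you need it, and the subsequent ``collapse of four Jacobians'' never gets off the ground. The paper avoids this entirely by first passing to $\g$ via $\t c_-$ and then observing that, since $\t c(0)$ is central, one may write the integrand as $\check\Theta_\Pi(\t c_-(x))\,T(\t c(x))(\phi)$ up to a unimodular constant. Because the Cayley transform is an involution, $c(c(x))=x$ and $(c(x)-1)\Wv=\Wv$ (all eigenvalues of $x\in\g$ are imaginary), so $T(\t c(x))=\Theta(\t c(x))\,\chi_x\,\mu_\Wv$ with \emph{no change of variable on $\Wv$ at all}. The only determinants that appear are $\Theta(\t c(x))=\text{const}\cdot\ch^{d'}(x)$ from \eqref{metaplectic group}, the Cayley Jacobian $\ch^{-2r}(x)$ on $\g$, and the identity $\pi_{\g/\h}(x)=\text{const}\cdot\Delta(\diesis c_-(x))\,\ch^{r-\iota}(x)$; these combine immediately to $\ch^{d'-r-\iota}(x)$.

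\textbf{The convergence argument is wrong.} The inner integral $\int_\Wv\chi_x(w)\phi^\G(w)\,dw$ is \emph{not} rapidly decreasing in $x$: since $\chi_x(w)=e^{iB(x,\tau(w))}$ with $\tau$ quadratic, this is an oscillatory integral, and the van der Corput estimate gives only $\bigl|\int_\Wv\chi_x(w)\phi(w)\,dw\bigr|\le q(\phi)\,\ch^{-d'}(x)$. Together with $|\pi_{\g/\h}(x)|\le C\,\ch^{r-\iota}(x)$ from \eqref{relation of r with degree}, this bounds the inner factor by $\ch^{-d'+r-\iota}(x)$, which exactly cancels the weight $\ch^{d'-r-\iota}(x)$ to leave the integrable $\ch^{-2\iota}(x)$. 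Your ``rapid decrease dominates polynomial growth'' is too optimistic and would not survive a referee; the actual margin of convergence is zero powers of $\ch$, not arbitrarily many.

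Your treatment of the second identity (Weyl-group symmetrization) is correct and coincides with the paper's.
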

\begin{prf}
By definition
\begin{equation}\label{first sterp}
T(\check\Theta_\Pi)(\phi)=\int_{\wt\G}\check\Theta_\Pi(\wt g) T(\wt g)(\phi)\,d\wt g,
\end{equation}
where the integral is absolutely convergent because both, the character and the function $T(\wt g)(\phi)$ are bounded (see for example \cite[Proposition 1.13]{PrzebindaUnitary}) and the group $\wt\G$ is compact.

Since the support of the character is contained in $\wt\G_1$ and since the image of the Cayley transform $ c_-:\g\to\G$ is contained and dense in $\G_1$, we may integrate over $\g$ rather than $\wt\G$ in (\ref{first sterp}). As checked in \cite[(3.11)]{PrzebindaUnipotent}, the Jacobian of $\t c_-(x)=\t c(x)\t c(0)^{-1}$ is a constant multiple of $\ch^{-2r}(x)$. 
Also, the element $\t c(0)$ is in the center of the metaplectic group. In particular,  $\check\Theta_\Pi(\t g\t c(0)^{-1})$ is a constant multiple of $\check\Theta_\Pi(\t g)$, where the constant has the absolute value $1$. Thus
\begin{equation}\label{second step}
T(\check\Theta_\Pi)(\phi)=C_1\int_{\g}\check\Theta_\Pi(\t c_-(x)) T(\t c(x))(\phi) \ch^{-2r}(x)\,dx,
\end{equation}
where $C_1$ is a non-zero constant and, by (\ref{the omega}), 
$T(\t c(x))=\Theta(\t c(x))\chi_x \mu_\Wv$.
Since $c(g.x)=g.c(x)$, there is $s \in\wt \Sp$ in the preimage of $1\in \Sp$ such that 
$s\t g \t c(x)\t g^{-1}=\t c(g.x)$. Since $\Pi$ occurs in Weil representation, for every $\t h\in \wt \Sp$ we have $\check\Theta_\Pi(s\t h)\Theta(s \t h)=\check\Theta_\Pi(\t h)\Theta(\t h)$. 
As $\check\Theta_\Pi$ and $\Theta$ are characters of $\wt G$,  it follows that 
$\check\Theta_\Pi(\t c(g.x))\Theta(\t c(g.x))=\check\Theta_\Pi(\t c(x))\Theta(\t c(x))$.
Weyl integration formula on $\g$ shows that
\begin{equation}\label{third step}
T(\check\Theta_\Pi)(\phi)=C_2\int_{\h}|\pi_{\g/\h}(x)|^2\check\Theta_\Pi(\t c_-(x)) T(\t c(x))(\phi^\G) \ch^{-2r}(x)\,dx,
\end{equation}
where $C_2$ is a non-zero constant and $\phi^\G$ is as in the statement of the Lemma.
Recall \cite[Lemma 5.7]{PrzebindaUnitary} that $\pi_{\g/\h}(x)$ is a constant multiple of $\Delta(\diesis{c_-}(x)) \ch^{r-\iota}(x)$. Also $\overline{\pi_{\g/\h}(x)}$ is a constant multiple of $\pi_{\g/\h}(x)$. Hence, if we set $\check\Theta_\Pi(\diesis{c_-}(x))=\check\Theta_\Pi(\t c_-(x))$, then
\begin{equation}\label{forth sterp}
T(\check\Theta_\Pi)(\phi)=C_3\int_{\h}(\check\Theta_\Pi\Delta)(\diesis{c_-}(x)) \ch^{r-\iota}(x)\pi_{\g/\h}(x)T(\t c(x))(\phi^\G) \ch^{-2r}(x)\,dx,
\end{equation}
where $C_3$ is a non-zero constant. Observe that, by  (\ref{metaplectic group}) and (\ref{the omega}),  
\begin{eqnarray*}
\Theta(\t c(x))^2=i^{\dim \Wv} \det(J_{c(x)})^{-1}_\Wv=i^{\dim \Wv} \det(c(x)-1)^{-1}_\Wv=i^{\dim \Wv} \det\big(2^{-1}(x-1)\big)_\Wv\,.
\end{eqnarray*}
Since the determinant is taken on $\Wv$, (\ref{ch}) implies that $\Theta(\t c(x))$ is a constant multiple of $\ch^{d'}(x)$, where  $d'=\dim_{\mathbb D} \V_{\overline{1}}$ as before.
Hence, by (\ref{Weyl character formula}),
\begin{eqnarray}\label{fifth sterp}
&&T(\check\Theta_\Pi)(\phi)=C_4\int_{\h}(\check\Theta_\Pi\Delta)(\diesis{c_-}(x)) \ch^{d'-r-\iota}(x)\pi_{\g/\h}(x)\int_\Wv\chi_x(w)\phi^\G(w)\,dw\,dx\\
&=&C_4\sum_{s\in W(\G,\h)}\sgn_{\g/\h}(s)\int_\h\left(\xi_{-s\mu}(\diesis{c_-}(x)) \ch^{d'-r-\iota}(x)\right)
\left(\pi_{\g/\h}(x)\int_\Wv \chi_x(w)\phi^\G(w)\,dw\right)\,dx,\nn
\end{eqnarray}
where $C_4$ is a non-zero constant and each consecutive integral is absolutely convergent. Notice that for $s\in W(\G,\h)$
\begin{eqnarray*}
&&\int_\Wv \chi_{sx}(w)\phi^\G(w)\,dw=\int_\Wv \chi_x(s^{-1}w)\phi^\G(w)\,dw=\int_\Wv \chi_x(s^{-1}w)\phi^\G(s^{-1}w)\,dw\\
&=&\int_\Wv \chi_x(w)\phi^\G(w)\,dw
\end{eqnarray*}
and that
$
\pi_{\g/\h}(sx)=\sgn(s)\,\pi_{\g/\h}(x)
$.
Therefore
\begin{eqnarray*}
&&\sum_{s\in W(\G,\h)}\sgn_{\g/\h}(s)\int_\h\left(\xi_{-s\mu}(\diesis{c_-}(x)) \ch^{d'-r-\iota}(x)\right)
\left(\pi_{\g/\h}(x)\int_\Wv \chi_x(w)\phi^\G(w)\,dw\right)\,dx\\
&=&\sum_{s\in W(\G,\h)}\int_\h\left(\xi_{-s\mu}(\diesis{c_-}(x)) \ch^{d'-r-\iota}(x)\right)
\left(\pi_{\g/\h}(sx)\int_\Wv \chi_{sx}(w)\phi^\G(w)\,dw\right)\,dx\\
&=&\sum_{s\in W(\G,\h)}\int_\h\left(\xi_{-s\mu}(\diesis{c_-}(s^{-1}x)) \ch^{d'-r-\iota}(s^{-1}x)\right)
\left(\pi_{\g/\h}(x)\int_\Wv \chi_{x}(w)\phi^\G(w)\,dw\right)\,dx\\
&=&\sum_{s\in W(\G,\h)}\int_\h\left(\xi_{-\mu}(\diesis{c_-}(x)) \ch^{d'-r-\iota}(x)\right)
\left(\pi_{\g/\h}(x)\int_\Wv \chi_{x}(w)\phi^\G(w)\,dw\right)\,dx\\
&=&|W(\G,\h)|\int_\h\left(\xi_{-\mu}(\diesis{c_-}(x)) \ch^{d'-r-\iota}(x)\right)
\left(\pi_{\g/\h}(x)\int_\Wv \chi_{x}(w)\phi^\G(w)\,dw\right)\,dx.
\end{eqnarray*}
We can verify the absolute convergence as follows.
The boundedness of the function $T(\wt g)(\phi)$, $\t g\in \wt\G$, means that there is a seminorm $q(\phi)$ on $\Ss(\g)$ such that
\begin{equation}\label{sixth sterp}
\Big|\Theta(\t c(x))\int_\Wv\chi_x(w)\phi(w)\,dw\Big|\leq q(\phi) \qquad (x\in \g).
\end{equation}
Equivalently, replacing $q(\phi)$ by a constant multiple, 
\begin{equation}\label{seventh sterp}
\Big|\int_\Wv\chi_x(w)\phi(w)\,dw\Big|\leq q(\phi)\ch^{-d'}(x) \qquad (x\in \g).
\end{equation}
(This is the van der Corput estimate, \cite[formula (23) on page 345]{Stein}.)
Also,  (\ref{relation of r with degree})  and (\ref{ch explicit}) below imply that
\[
|\pi_{\g/\h}(x)|\leq C_5 \ch^{r-1}(x) \leq C_5 \ch^{r-\iota}(x) \qquad (x\in\h),
\]
where $C_5$ is a constant.
Hence
\begin{equation}\label{seventh sterp}
\Big|\pi_{\g/\h}(x)\int_\Wv\chi_x(w)\phi(w)\,dw\Big|\leq q(\phi)\ch^{-d'+r-\iota}(x) \qquad (x\in \h).
\end{equation}
Therefore the integral over $\h$ in (\ref{fifth sterp}) may be dominated by (i.e. is less or equal a constant times the following expression)
\[
\int_\h \ch^{d'-r-\iota}(x)\ch^{-d'+r-\iota}(x)\,dx=\int_\h \ch^{-2\iota}(x)\,dx<\infty.
\]
\end{prf}
Let us fix the branch of the square root:
\begin{equation}\label{square root}
\C\setminus\R^-\ni z\to z^{\frac{1}{2}}\in\C
\end{equation}
so that $z^{\frac{1}{2}}>0$, if $z>0$.
Then for $y=\sum_{j=1}^ly_jJ_j\in\h$,
\begin{eqnarray}\label{ch explicit}
\ch(y)=\prod_{j=1}^l (1+y_j^2)^{\frac{1}{2\iota}}
&=&\prod_{j=1}^l (1+iy_j)^{\frac{1}{2\iota}}(1-iy_j)^{\frac{1}{2\iota}}.
\end{eqnarray}
The elements $J_j$, $1\leq j\leq l$, form a basis of the real vector space $\h$. Let $J_j^*$, $1\leq j\leq l$, be the dual basis of the space $\h^*$ and set
\begin{eqnarray}\label{eq:ej}
e_j=-iJ_j^*, \qquad 1\leq j\leq l\,.
\end{eqnarray}
If $\mu \in i\h^*$, then $\mu=\sum_{j=1}^l \mu_j e_j$ with $\mu_j \in \R$. 
We say that $\mu$ is strictly dominant if $\mu_1>\mu_2> \dots >\mu_l$.

The action of $\Wv(\G,\mathfrak h)$ on $\mathfrak h$ extends by duality to $i\mathfrak h^*$:
if $\mu=\sum_{j=1}^l \mu_j e_j \in i\mathfrak h^*$ and $t=\epsilon \sigma \in \Wv(\G,\mathfrak h)$ is as in (\ref{classical weyl group action}), then 
\begin{equation}\label{dual weyl group action}
t\Big(\sum_{j=1}^l\mu_je_j\Big)=\sum_{j=1}^l \epsilon_{\sigma^{-1}(j)} \mu_{\sigma^{-1}(j)}e_j\,.
\end{equation}
\begin{lem}\label{ximuchexplicit}
Let $\diesis{c_-}$ be as in  Lemma \ref{general formula for the int distr}. Then 
\begin{equation}
\label{eq:ximuWeyl}
\xi_{-\mu}(\diesis{c_-}(ty))= \xi_{-t^{-1}\mu}(\diesis{c_-}(y)) \qquad (t \in \Wv(\G,\mathfrak{h}\,, \mu \in i\mathfrak h^*\,, y \in \mathfrak h)\,.
\end{equation}
Moreover, let 
\begin{equation}
\label{eq:delta}
\delta=\frac{1}{2\iota}(d'-r+\iota)\,.
\end{equation} 
Then, with the notation of Lemma \ref{general formula for the int distr} and (\ref{ch explicit}),
\begin{eqnarray}\label{ximuchexplicit1}
\xi_{-\mu}(\diesis{c_-}(y))\ch^{d'-r-\iota}(y)
&=& \prod_{j=1}^l (1+iy_j)^{-\mu_j+\delta-1}
(1- iy_j)^{\mu_j+\delta-1},
\end{eqnarray}
where all the exponents are integers:
\begin{equation}\label{ximuchexplicit2}
\pm \mu_j+\delta\in \Bbb Z \qquad (1\leq j\leq l).
\end{equation}
In particular, (\ref{ximuchexplicit1}) is a rational function in the variables $y_1$, $y_2$, ..., $y_l$.
\end{lem}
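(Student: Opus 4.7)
I would prove the three claims in sequence: the Weyl equivariance (\ref{eq:ximuWeyl}), the explicit product formula (\ref{ximuchexplicit1}), and the integrality (\ref{ximuchexplicit2}).

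For (\ref{eq:ximuWeyl}), my starting point is that the Cayley transform $c$ is defined by rational expressions and is therefore equivariant with respect to the adjoint action: $c(\Ad(g)y)=\Ad(g)c(y)$ whenever both sides are defined; the same holds for $c_-$. I would then observe that this equivariance lifts to $\diesis{c_-}$: given $t\in W(\G,\h)=N_\G(\H)/\H$ and any lift $\t t\in\diesis\G$ of a preimage in $N_\G(\H)$, the two maps $y\mapsto \diesis{c_-}(ty)$ and $y\mapsto \t t\,\diesis{c_-}(y)\,\t t^{-1}$ are both real analytic lifts to $\diesis H_o$ of the common map $y\mapsto c_-(ty)$, and both send $0$ to the identity; by uniqueness of continuous lifts on the connected space $\h$ they coincide. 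Applying $\xi_{-\mu}$ and using the defining property of the Weyl action on characters, $\xi_\mu(\t t h \t t^{-1})=\xi_{t^{-1}\mu}(h)$, produces (\ref{eq:ximuWeyl}).

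For (\ref{ximuchexplicit1}) I treat the two factors on the left separately. The formula (\ref{ch explicit}) combined with the definition (\ref{eq:delta}) of $\delta$ gives at once
\begin{equation*}
\ch^{d'-r-\iota}(y)=\prod_{j=1}^l(1+iy_j)^{\delta-1}(1-iy_j)^{\delta-1}.
\end{equation*}
For $\xi_{-\mu}(\diesis{c_-}(y))$, I would work blockwise with respect to the decomposition (\ref{decomposition of space for a cartan subspace}) of $\V_{\overline 0}$: the element $y=\sum_j y_j J_j$ acts as $0$ on $\V^0_{\overline 0}$ and as the complex structure $y_jJ_j$ on $\V^j_{\overline 0}$, so on complexifying, $c_-(y)$ acts on the $\pm i$-eigenspace of $J_j$ by the scalars $(1\pm iy_j)/(1\mp iy_j)$ respectively, and trivially on $\V^0_{\overline 0 \C}$. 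Via the identification (\ref{liftisoSpJU}) of $\diesis{\Sp^J}$ with $\wt\Ug$, the character $\xi_{-\mu}$ of $\diesis H_o$ restricted to a maximal torus of $\Ug$ is read off these eigenvalues on the $+i$-eigenspaces, and the commutative, block-diagonal structure of $\diesis{c_-}(y)$ along the Cartan yields
\begin{equation*}
\xi_{-\mu}(\diesis{c_-}(y))=\prod_{j=1}^l(1+iy_j)^{-\mu_j}(1-iy_j)^{\mu_j}.
\end{equation*}
Multiplying with the formula for $\ch^{d'-r-\iota}$ gives (\ref{ximuchexplicit1}).

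For (\ref{ximuchexplicit2}), I observe that once (\ref{ximuchexplicit1}) is known, its left-hand side is (up to the positive real factor $\ch^{d'-r-\iota}$, which clearly extends) the restriction of a character of $\diesis H_o$ to the Cayley parametrization, hence extends to a single-valued meromorphic function on the complexification $\h_\C$. Each factor $(1\pm iy_j)^{\alpha}$ on the right, however, is single-valued as a meromorphic function on $\h_\C$ only when $\alpha\in\mathbb Z$, since otherwise it has a branch point at $y_j=\mp i$; this forces $\pm\mu_j+\delta-1\in\mathbb Z$, equivalently (\ref{ximuchexplicit2}), and then shows that the right-hand side is a rational function in the $y_j$. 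Equivalently, this is the usual $\rho$-shift integrality condition for infinitesimal characters of genuine representations of $\wt\G$, the shift $\delta$ arising from the normalization factor $\Theta\circ\diesis c\propto \ch^{d'}$ (cf.\ the end of the proof of Lemma \ref{general formula for the int distr}). The main obstacle is the bookkeeping in the second paragraph: in the $\wt\Ug$-picture the $\zeta$-coordinate of $\diesis{c_-}(y)$ requires a continuous choice of square root of $\det(c_-(y))$ along rays in $\h$, and the precise way this choice combines with the $\Theta$-shift $\delta$ must be tracked carefully so that the two factors $(1+iy_j)^{-\mu_j}$ and $(1-iy_j)^{\mu_j}$ come out with exactly the asymmetric exponents $-\mu_j$ and $+\mu_j$, while $\ch^{d'-r-\iota}$ contributes the symmetric $(\delta-1,\delta-1)$.
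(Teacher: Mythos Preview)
Your treatment of (\ref{eq:ximuWeyl}) and (\ref{ximuchexplicit1}) is correct and essentially matches the paper, though the paper is terser: it simply writes down
\[
\xi_{-\mu}(\diesis{c_-}(y))=\prod_{j=1}^l\left(\frac{1+iy_j}{1-iy_j}\right)^{-\mu_j}
\]
and observes that both (\ref{eq:ximuWeyl}) and (\ref{ximuchexplicit1}) follow from this together with (\ref{ch explicit}). Your lifting-and-uniqueness argument for (\ref{eq:ximuWeyl}) is a perfectly good conceptual alternative that avoids appealing to the explicit formula first.

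For (\ref{ximuchexplicit2}), however, your monodromy argument has a genuine gap. You assert that the left-hand side of (\ref{ximuchexplicit1}) ``extends to a single-valued meromorphic function on $\h_\C$'', but this is not available a priori: the character $\xi_{-\mu}$ lives on the cover $\diesis{\H}_o$, not on $\H$, and the factor $\ch^{d'-r-\iota}(y)=\prod_j(1+y_j^2)^{\delta-1}$ is itself multi-valued on $\h_\C$ whenever $\delta\notin\Bbb Z$ (for instance when $\G=\Og_{2l+1}$, where $\delta=l'-l+\tfrac12$). The single-valuedness of the product is \emph{equivalent} to the integrality you are trying to prove, so the argument is circular. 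Your parenthetical remark that this ``is the usual $\rho$-shift integrality condition for genuine representations of $\wt\G$'' is the correct idea, and that is exactly what the paper carries out: it writes $\mu=\lambda+\rho$ and verifies $\lambda_j+\rho_j+\delta\in\Bbb Z$ case by case ($\G=\Og_d$, $\Ug_d$, $\Sp_d$), using the explicit values of $\rho_j$, $r$, $\iota$ and the integrality constraint on $\lambda_j$ imposed by genuineness. You should replace the monodromy paragraph by that computation.
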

\begin{prf}
Since
\begin{equation*}
\xi_{-\mu}(\diesis{c_-}(y))
= \prod_{j=1}^l \left(\frac{1+iy_j}
{1-iy_j}\right)^{-\mu_j}
=\prod_{j=1}^l (1+iy_j)^{-\mu_j}
(1-iy_j)^{\mu_j}\,,
\end{equation*}
(\ref{eq:ximuWeyl}) and (\ref{ximuchexplicit1}) follow from  (\ref{ch explicit}). 

Let $\lambda=\sum_{j=1}^l\lambda_j e_j$ be the highest weight of the representation $\Pi$ and  
let $\rho=\sum_{j=1}^l\rho_j e_j$ be one half times the sum of the positive roots of $\h$ in $\g_\C$. If $\mu$ is the Harish-Chandra parameter of $\Pi$, then
$\mu=\lambda+\rho=\sum_{j=1}^l\mu_je_j$.
Hence, the statement (\ref{ximuchexplicit2}) is equivalent to
\begin{equation}\label{compact5}
\lambda_j+\rho_j+\frac{1}{2\iota}(d'-r+\iota)\in \Bbb Z\,,
\end{equation}
which holds because of the assumption that $\Pi$ is a genuine representation of $\wt{\G}$. 
Indeed, if $\G=\Og_d$, then with the standard choice of the positive root system, $\rho_j=\frac{d}{2}-j$. Also, $\lambda_j\in\Bbb Z$, $\iota=1$, $r=d-1$. Hence, (\ref{compact5}) follows. Similarly, if $\G=\Ug_d$, then $\rho_j=\frac{d+1}{2}-j$, $\lambda_j+\frac{d'}{2}\in\Bbb Z$, $\iota=1$, $r=d$, which implies (\ref{compact5}). If $\G=\Sp_d$, then $\rho_j=d+1-j$, $\lambda_j\in\Bbb Z$, $\iota=\frac{1}{2}$, $r=d+\frac{1}{2}$, and (\ref{compact5}) follows. 
\end{prf}
In order to study the inner integral occurring in the formula for $T(\check\Theta_\Pi)$ in Lemma \ref{general formula for the int distr}, we shall need the following lemma.

\begin{lem}\label{lemma:HC's formula moved} 
Fix an element $z\in\h$. Let $\z\subseteq \g$ and $\Zg\subseteq\G$ denote the centralizer of $z$. (Then $\Zg$ is a real reductive group with the Lie algebra $\z$.) Denote by $\c$ the center of $\z$ and by $\pi_{\g/\z}$ be the product of the positive roots for $(\g_\C,\h_\C)$ which do not vanish on $z$. Let $B(\cdot ,\cdot )$ be any non-degenerate symmetric $\G$-invariant real bilinear form on $\g$.
Then there is a constant $C_\z$ such that for and $x\in \h$ and $x'\in \c$,
\begin{equation*}
\pi_{\g/\h}(x) \pi_{\g/\z}(x')\int_\G e^{iB(g.x,x')}\,dg
=C_\z \sum_{t W(\Zg,\h)\in W(\G,\h)/W(\Zg,\h)}\sgn_{\g/\h}(t)\pi_{\z/\h}(t^{-1}x)e^{iB(x,tx')}.
\end{equation*}
(Here  $\pi_{\z/\h}=1$ if $\z=\h$.)
\end{lem}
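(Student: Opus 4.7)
The plan is to derive the stated identity from the classical Harish-Chandra integral formula on the compact group $\G$, by decomposing the Weyl group sum along $W(\Zg,\h)$ and dividing out the factor coming from the roots of $\z/\h$.

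First, I would start from the Harish-Chandra integral formula
\[
\pi_{\g/\h}(x)\,\pi_{\g/\h}(x')\int_\G e^{iB(g.x,x')}\,dg \;=\; C \sum_{s\in W(\G,\h)}\sgn_{\g/\h}(s)\,e^{iB(x,sx')},
\]
valid for $x,x'\in \reg{\h}$, with $C$ a constant depending on normalizations. For $\G$ connected this is classical; for $\G=\Og_{2l}$ one gets this version (with the larger Weyl group $W(\G,\h)$ and the character $\sgn_{\g/\h}$) by combining the two components of $\G$ after applying the usual formula on $\G_1=\SO_{2l}$, which is exactly the symmetry already used to establish \eqref{Weyl character formula}. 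Next, factor $\pi_{\g/\h}(x')=\pi_{\g/\z}(x')\,\pi_{\z/\h}(x')$ (splitting the positive roots of $\h$ in $\g_\C$ according to whether they vanish at $z$), fix a set of coset representatives $t$ for $W(\G,\h)/W(\Zg,\h)$, and rewrite $B(x,twx')=B(t^{-1}x,wx')$ to obtain
\[
\sum_{s\in W(\G,\h)}\sgn_{\g/\h}(s)\,e^{iB(x,sx')}\;=\;\sum_{tW(\Zg,\h)}\sgn_{\g/\h}(t)\sum_{w\in W(\Zg,\h)}\sgn_{\z/\h}(w)\,e^{iB(t^{-1}x,wx')}.
\]

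Now apply the Harish-Chandra integral formula to the compact reductive subgroup $\Zg$ (whose Cartan subalgebra is still $\h$, with Weyl group $W(\Zg,\h)$ and sign character $\sgn_{\z/\h}$) to identify the inner sum with
\[
C'\,\pi_{\z/\h}(t^{-1}x)\,\pi_{\z/\h}(x')\int_{\Zg} e^{iB(z'.t^{-1}x,x')}\,dz'.
\]
Substituting back and cancelling the common factor $\pi_{\z/\h}(x')$ on both sides (legitimate on the regular set where it is nonzero) yields
\[
\pi_{\g/\h}(x)\,\pi_{\g/\z}(x')\int_\G e^{iB(g.x,x')}\,dg \;=\; \tfrac{C}{C'}\sum_{tW(\Zg,\h)}\sgn_{\g/\h}(t)\,\pi_{\z/\h}(t^{-1}x)\int_{\Zg} e^{iB(z'.t^{-1}x,x')}\,dz'.
\]
Finally restrict to $x'\in \c$: since $\c$ is the center of $\z=\Lie(\Zg)$, the adjoint action of $\Zg$ fixes every element of $\c$ pointwise, so the inner integral over $\Zg$ collapses to $e^{iB(t^{-1}x,x')}=e^{iB(x,tx')}$, giving the claim with $C_\z=C/C'$. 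Both sides of the resulting identity are polynomial in $x'\in\c$ times a continuous function, so the passage from $x'\in\reg{\h}\cap\c$ to all of $\c$ is automatic by continuity.

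The main obstacle I expect is bookkeeping the signs and the Weyl group in the disconnected case $\G=\Og_{2l}$: one must verify that the formula obtained by averaging the HC identity over the two connected components of $\G$ is indeed indexed by $W(\G,\h)$ with the character $\sgn_{\g/\h}$, and that this character factors compatibly along the quotient $W(\G,\h)/W(\Zg,\h)$ so that the coset decomposition above makes sense. Once this is checked, the rest of the argument is essentially routine manipulation of the two HC formulas.
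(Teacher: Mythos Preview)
Your proposal is correct and aligns with the paper's approach: the paper's own proof consists solely of a citation, stating that the result is ``a straightforward modification of the argument proving Harish-Chandra's formula for the Fourier transform of a regular semisimple orbit'' together with a reference to a more general statement in Duflo--Vergne. Your sketch---deriving the identity from two applications of the classical Harish-Chandra formula (for $\G$ and for $\Zg$) via a coset decomposition of $W(\G,\h)$ over $W(\Zg,\h)$ and then specializing $x'\in\c$---is exactly such a modification and in fact supplies more detail than the paper does; the sign and disconnectedness bookkeeping you flag is real but routine once one chooses a positive system adapted to $z$ so that $W(\Zg,\h)$ preserves the positive roots of $\g/\z$.
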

\begin{proof}
The proof is a straightforward modification of the argument proving Harish-Chandra's formula for the Fourier transform of a regular semisimple orbit, \cite[Theorem 2, page 104]{HC-57DifferentialOperators}.
A more general result was obtained in \cite[Proposition 34, p. 49]{OrbitesDV}. 
\end{proof}
We shall fix the symplectic form $\langle\cdot,\cdot\rangle$ on $\Wv$ according to the Lie superalgebra structure introduced at the beginning of section \ref{An almost semisimple orbital integral on the symplectic space} as follows
\begin{eqnarray}\label{symplectic form 0}
\langle w,w'\rangle&=&\tr_{\Bbb D/\R}(\Sy ww') \qquad (w,w'\in\Wv).
\end{eqnarray}
Then
\begin{eqnarray}\label{symplectic form}
\langle xw,w\rangle&=&\tr_{\Bbb D/\R}(\Sy  xw^2)\qquad (x\in \g\oplus \g', w\in \Wv).
\end{eqnarray}
(See \cite[(2.4')]{PrzebindaLocal}.) Let
\begin{equation}\label{the form B}
B(x,y)=\frac{\pi}{2}\, \tr_{\Bbb D/\R}(\Sy xy)=\frac{\pi}{2}\, \tr_{\Bbb D/\R}(xy) \qquad       (x,y\in\g).
\end{equation}
Then, using the expression (\ref{unnormalized moment maps}) for the unnormalized moment map $\tau$, we have 
\begin{equation}\label{chix and the form B}
\chi_x(w)=e^{\frac{\pi i}{2}\langle x w,w\rangle}=e^{iB(x,\tau(w))} \qquad     (x\in\g, w\in \Wv).
\end{equation}

\begin{lem}\label{reduction to ss-orb-int}
Suppose $l\leq l'$. Then, with the notation of Lemma  \ref{general formula for the int distr},
\[
\pi_{\g/\h}(x)\int_\Wv\chi_x(w)\phi^\G(w)\,dw = C \int_{\h\cap \tau(\Wv)}
e^{iB(x,y)} f_{\phi}(y)\,dy\,. 
\]
where $C$ is a non-zero constant and $f_{\phi}(y)=f(y)(\phi)$ for the Harish-Chandra regular almost semisimple orbital integral $f(y)$ of Definition \ref{def of HC integral on W}.
\end{lem}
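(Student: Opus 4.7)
The plan is to reduce the left-hand side, via Weyl's integration formula on $\Wv$, to a sum of almost semisimple orbital integrals; then to apply the Harish-Chandra type formula of Lemma \ref{lemma:HC's formula moved} to the $\G$-average of the exponential factor; and finally to assemble everything into a single integral over $\h\cap\tau(\Wv)$ by exploiting the symmetry properties of the extended orbital integral function $f$ from Theorem \ref{pullback of muy 3}.

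First I would rewrite $\chi_x(w)=e^{iB(x,\tau(w))}$ using (\ref{chix and the form B}), and use the $\G$-invariance of $\phi^\G$ together with the bi-invariance of Haar on $\G$ to obtain
\[
\int_\Wv \chi_x(w)\,\phi^\G(w)\,dw \;=\; \int_\Wv \phi(w)\, A(x,\tau(w))\,dw, \qquad A(x,y):=\int_\G e^{iB(x,g.y)}\,dg.
\]
Applying Weyl integration (\ref{weyl int on w 1}) and using that $\tau(s.w)=g_s.\tau(w)$ for $s=(g_s,g_s')\in\Sg$, so that $A(x,\tau(\cdot))$ is constant on each orbit $\Oo(w)$, this becomes
\[
\sum_{\hs1}\int_{\tau(\hs1^+)} \big|\pi_{\so/\hs1^2}(y)\big|\,\mu_{\Oo(w)}(\phi)\,A(x,y)\,dy.
\]
The one subtlety here is the exceptional pair $(\Og_{2l+1},\Sp_{2l'})$ with $l<l'$: since $\dim_{\mathbb R}\V^0_{\overline 0}=1$ and the elements of $\g=\so_{2l+1}$ are skew-symmetric, $\tau(w^0)=0$, so $A(x,\tau(\cdot))$ is still constant on $\Sg.(w+w^0)$ and the same scheme applies.

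Next I would multiply by $\pi_{\g/\h}(x)$ and invoke Lemma \ref{lemma:HC's formula moved} with $\z=\h$; this is legitimate because $y\in\tau(\reg{\hs1})$ forces $\pi_{\g/\h}(y)\neq 0$ via the factorization $\pi_{\so/\hs1^2}(y)=\pi_{\g/\h}(y)\pi_{\g'/\z'}(y)$. The formula gives
\[
\pi_{\g/\h}(x)\,A(x,y) \;=\; \frac{C_\h}{\pi_{\g/\h}(y)}\sum_{t\in W(\G,\h)}\sgn_{\g/\h}(t)\,e^{iB(x,ty)}.
\]
Combining with Lemma \ref{|| and constant}, which gives $|\pi_{\so/\hs1^2}(y)|/\pi_{\g/\h}(y)=C(\hs1)\pi_{\g'/\z'}(y)$, and with Definition \ref{def of HC integral on W} together with $C_{\hs1}=C(\hs1)\,i^{\dim(\g/\h)}$ to identify $C(\hs1)\pi_{\g'/\z'}(y)\mu_{\Oo(w)}(\phi)=i^{-\dim(\g/\h)}f_\phi(y)$ on $\tau(\reg{\hs1})$, yields
\[
\pi_{\g/\h}(x)\!\int_\Wv\! \chi_x(w)\phi^\G(w)dw = C_\h\, i^{-\dim(\g/\h)} \sum_{\hs1}\!\int_{\tau(\hs1^+)}\!f_\phi(y)\!\sum_{t\in W(\G,\h)}\!\sgn_{\g/\h}(t)\,e^{iB(x,ty)}dy.
\]

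To finish I will collapse the double sum into one integral. In each signed summand the substitution $y\mapsto t^{-1}y$ combined with the symmetry $f_\phi(t^{-1}y)=\sgn_{\g/\h}(t)f_\phi(y)$ from (\ref{extension by the symmetry condition}) cancels the sign (since $\sgn_{\g/\h}(t)^2=1$), transforming the summand into $\int_{t\tau(\hs1^+)}f_\phi(y)e^{iB(x,y)}\,dy$. Because $\tau(\reg{\hs1})=W(\Sg,\hs1,\h)\cdot\tau(\hs1^+)$ is a (measure-theoretically) disjoint $|W(\Sg,\hs1,\h)|$-fold union, and the decomposition (\ref{disjoint union  for the extension by the symmetry condition}) gives
\[
\bigsqcup_{\hs1}\bigsqcup_{t\in W(\G,\h)/W(\Sg,\hs1,\h)} t\cdot\tau(\reg{\hs1}) \;=\; W(\G,\h)\bigcup_{\hs1}\tau(\reg{\hs1}),
\]
the combined expression reduces to $\int_{W(\G,\h)\cup_{\hs1}\tau(\reg{\hs1})}f_\phi(y)e^{iB(x,y)}\,dy$. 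Since Theorem \ref{pullback of muy 3} identifies this subset as dense in $\h\cap\tau(\Wv)$ with complement of Lebesgue measure zero on which the continuous extension of $f_\phi$ vanishes, this equals $\int_{\h\cap\tau(\Wv)}f_\phi(y)e^{iB(x,y)}\,dy$, yielding the claim with $C=C_\h\,i^{-\dim(\g/\h)}\neq 0$. The main obstacle I anticipate is the careful Weyl-group bookkeeping in this last telescoping step, especially the need to distinguish $\sgn_{\g/\h}$ from the ordinary sign character in the $\G=\Og_{2l}$ case, together with verifying that the exceptional $(\Og_{2l+1},\Sp_{2l'})$ case really does fit the uniform scheme despite the nilpotent summand $w^0$.
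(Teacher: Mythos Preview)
Your proposal is correct and follows essentially the same route as the paper: Weyl integration on $\Wv$, Lemma~\ref{lemma:HC's formula moved} with $\z=\h$ to handle the $\G$-average $A(x,y)$, then the $W(\G,\h)$-symmetry of $f_\phi$ to collapse to a single integral over $\h\cap\tau(\Wv)$, with the $(\Og_{2l+1},\Sp_{2l'})$ case absorbed via $\tau(w^0)=0$. The only cosmetic difference is that you pull the $\G$-averaging forward (replacing $\phi^\G$ by $\phi$ before invoking Weyl integration), whereas the paper carries $\phi^\G$ through the computation, obtains $f_{\phi^\G}$, and only at the end uses $f_{\phi^\G}=\vol(\G)\,f_\phi$.
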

\begin{prf}
By Lemmas  \ref{roots for l<=l'}  and \ref{|| and constant} and the Weyl integration formula (\ref{weyl int on w 1}) on $\Wv$,
\[
\int_\Wv\chi_x(w)\phi^\G(w)\,dw =\sum_{\hs1}\int_{\tau(\hs1^+)}\pi_{\g/\h}(\tau(w))\pi_{\g'/\z'}(\tau(w))C(\hs1)\mu_{\Oo(w)}(\chi_x\phi^\G)\,d\tau(w).
\]
Let us consider first the case  (\ref{muwforall othergroups})
\[
\mu_{\Oo(w)}(\chi_x\phi^\G)=\int_{\Sg/\Sg^{\hs1}}(\chi_x\phi^\G)(s.w)\,d(s\Sg^{\hs1}).
\]
Recall from (\ref{the identification}) the identification $y=\tau(w)=\tau'(w)$ and let us write $s=gg'$, where $g\in\G$ and $g'\in\G'$. Then
\begin{eqnarray*}
\chi_x(s.w)=e^{i\frac{\pi}{2}\langle x(s.w), s.w\rangle} = e^{iB(x, \tau(s.w))}=e^{iB(x, g.\tau(w))}=e^{iB(x, g.y)}
\end{eqnarray*}
and
\[
\phi^\G(s.w)=\phi^\G(g'.w).
\]
Since
\[
(\{1\}\times\G')\cap\Sg^{\hs1}=\{1\}\times \Zg',
\]
we see that for a positive constant $C_1$
\[
\mu_{\Oo(w)}(\chi_x\phi^\G)=C_1\int_\G e^{iB(x,g.y)}\,dg \int_{\G'/\Zg'}\phi^\G(g'.w)\,d(g'\Zg').
\]
However we know from Harish-Chandra (Lemma \ref{lemma:HC's formula moved}) that
\begin{eqnarray*}\label{Harish-Chandra's formula for the fourier transform of orbital integral}
\pi_{\g/\h}(x) \left(\int_\G e^{iB(x,g.y)}\,dg\right)\pi_{\g/\h}(y)
=C_2 \sum_{t\in W(\G,\h)}
\sgn_{\g/\h}(t)e^{iB(x,ty)}.
\end{eqnarray*}
Hence, by Definition \ref{def of HC integral on W} and (\ref{extension by the symmetry condition}) and for some suitable positive constants $C_k$, 
\begin{eqnarray}\label{proof in the first case}
&&\pi_{\g/\h}(x)\int_\Wv\chi_x(w)\phi^\G(w)\,dw \\
&=& C_3\sum_{t\in W(\G,\h)}\sgn_{\g/\h}(t)\sum_{\hs1} \int_{\tau(\hs1^+)}e^{iB(x,ty)} C(\hs1)\pi_{\g'/\z'}(y) \int_{\G'/\Zg'}\phi^\G(g'.w)\,d(g'\Zg')\,dy\nn\\
&=& C_3\sum_{t\in W(\G,\h)}\sgn_{\g/\h}(t)\sum_{\hs1} \int_{\tau(\hs1^+)}e^{iB(x,ty)}i^{-\dim{\g/\h}} C_{\hs1}\pi_{\g'/\z'}(y) \int_{\G'/\Zg'}\phi^\G(g'.w)\,d(g'\Zg')\,dy\nn\\
&=& C_4\sum_{t\in W(\G,\h)}\sgn_{\g/\h}(t)\int_{\bigcup_{\hs1}\tau(\hs1^+)}e^{iB(x,ty)} f_{\phi^\G}(y)\,dy\nn\\
&=& C_4\sum_{t\in W(\G,\h)}\int_{\bigcup_{\hs1}\tau(\hs1^+)}e^{iB(x,ty)} f_{\phi^\G}(t.y)\,dy\nn\\
&=& C_4\int_{W(\G,\h)(\bigcup_{\hs1}\tau(\hs1^+))}e^{iB(x,y)} f_{\phi^\G}(y)\,dy\nn\\
&=& C_4 \int_{\h\cap \tau(\Wv)}e^{iB(x,y)} f_{\phi^\G}(y)\,dy.\nn
\end{eqnarray}
Since $ f_{\phi^\G}=\vol(\G) f_{\phi}$, the formula follows.

Consider now the case 
\[
\mu_{\Oo(w)}(\chi_x\phi^\G)=\int_{\Sg/\Sg^{\hs1}}\int_{\ss1(\V^0)}(\chi_x\phi^\G)(s.(w+w^0))\,dw^0\,d(s\Sg^{\hs1}) .
\]
Then, as in (\ref{muwforsome groups}),
\begin{eqnarray*}
\mu_{\Oo(w)}(\chi_x\phi^\G)=\int_{\Sg/\Sg^{\hs1+w_0}}(\chi_x\phi^\G)(s.(w+w_0))\,d(s\Sg^{\hs1+w_0}).
\end{eqnarray*}
Furthermore,
\begin{eqnarray*}
\chi_x(s.(w+w_0))&=&e^{i\frac{\pi}{2}\langle x(s.(w+w_0)), s.(w+w_0)\rangle} =e^{i\frac{\pi}{2}\langle x(s.w), sw\rangle}\\
&=& e^{iB(x, \tau(sw))}=e^{iB(x, g.\tau(w))}=e^{iB(x, g.y)}
\end{eqnarray*}
and
\[
\phi^\G(s.(w+w_0))=\phi^\G(g'.(w+w_0)).
\]
Hence, with $n=\tau'(w_0)$,
\[
\mu_{\Oo(w)}(\chi_x\phi^\G)=C_1\int_\G e^{iB(x,g.y)}\,dg \int_{\G'/\Zg'{}^n}\phi^\G(g'.w)\,d(g'\Zg'{}^n).
\]
Therefore, the computation (\ref{proof in the first case}) holds again, and we are done.
\end{prf}
\begin{lem}\label{reduction to ss-orb-int for l>l'}
Suppose $l> l'$. Let $\z\subseteq\g$ and $\Zg\subseteq \G$ be the centralizers of $\tau(\hs1)$.
Then for $\phi \in \mathcal S(\Wv)$ 
\begin{multline*}
\pi_{\g/\h}(x)\int_\Wv\chi_x(w)\phi^\G(w)\,dw\\
= C \sum_{t W(\Zg,\h)\in W(\G,\h)/W(\Zg,\h)}\sgn_{\g/\h}(t)\pi_{\z/\h}(t^{-1}.x) \int_{\tau'(\reg{\hs1})}e^{iB(x,t.y)} f_{\phi}(y)\,dy, 
\end{multline*}
where $C$ is a non-zero constant.
\end{lem}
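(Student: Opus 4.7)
The plan is to follow the same strategy as in the proof of Lemma \ref{reduction to ss-orb-int}, but with three substantive modifications dictated by the case $l>l'$: (i) we use the Weyl integration formula \eqref{weyl int on w 2} instead of \eqref{weyl int on w 1}; (ii) the image $\tau(\hs1) \subseteq \g$ is only a subspace of the Cartan $\h$, lying inside the center $\c$ of the centralizer $\z=\g^{\tau(\hs1)}$, so we must apply the more general Harish-Chandra formula of Lemma \ref{lemma:HC's formula moved} (rather than the case $\z=\h$ used for $l\le l'$); (iii) only one Cartan subspace $\hs1$ enters, and $\z'=\h'$ in the sense of Definition \ref{def of HC integral on W}.

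First, I would apply \eqref{weyl int on w 2} to rewrite
\[
\int_\Wv \chi_x(w)\phi^\G(w)\,dw = \int_{\tau'(\hs1^+)} |\pi_{\so/\hs1^2}(w^2)|\,\mu_{\Oo(w)}(\chi_x\phi^\G)\,d\tau'(w),
\]
and invoke Lemma \ref{roots for l>=l'} to factor $\pi_{\so/\hs1^2}(w^2)=\pi_{\g'/\h'}(\tau'(w))\,\pi_{\g/\z}(\tau(w))$, together with Lemma \ref{|| and constant} to replace the absolute value by a constant multiple. Next, I would factorize the orbital integral exactly as in the $l\le l'$ case: writing $s=(g,g')\in \Sg=\G\times \G'$ we have $\chi_x(s.w)=e^{iB(x,g.y)}$ with $y=\tau(w)$, and $\phi^\G(s.w)=\phi^\G(g'.w)$; using the $l>l'$ analog of \eqref{eq:Shone}, namely $\Sg^{\hs1}=\Delta(\H')(\Zg\times\{1\})$, the integral splits as
\[
\mu_{\Oo(w)}(\chi_x\phi^\G)=C_1\Big(\int_{\G/\Zg}e^{iB(x,g.y)}\,dg\Big)\Big(\int_{\G'/\H'}\phi^\G(g'.w)\,d(g'\H')\Big).
\]

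Then I would multiply by $\pi_{\g/\h}(x)$ and apply Lemma \ref{lemma:HC's formula moved}, which is valid because $y\in\tau(\hs1)\subseteq \c$, to obtain
\[
\pi_{\g/\h}(x)\pi_{\g/\z}(y)\int_\G e^{iB(x,g.y)}\,dg
=C_\z \sum_{tW(\Zg,\h)\in W(\G,\h)/W(\Zg,\h)}\sgn_{\g/\h}(t)\,\pi_{\z/\h}(t^{-1}x)\,e^{iB(x,ty)}.
\]
The factor $\pi_{\g/\z}(y)$ produced here cancels exactly against the corresponding factor in $\pi_{\so/\hs1^2}(w^2)$ (up to the constant from Lemma \ref{|| and constant}), leaving $\pi_{\g'/\h'}(y)$ which, together with $\int_{\G'/\H'}\phi^\G(g'.w)\,d(g'\H')$, reassembles into a constant multiple of $f_\phi(y)$ by Definition \ref{def of HC integral on W} (using $\z'=\h'$). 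Finally, to pass from the Weyl chamber $\tau'(\hs1^+)$ to the full regular set $\tau'(\reg{\hs1})$ I would use the same symmetry argument as at the end of the proof of Lemma \ref{reduction to ss-orb-int}: the integrand transforms by $\sgn_{\g/\h}$ under the relevant action and $f_\phi$ is $W(\Sg,\hs1)$-invariant, so summing over the Weyl chambers only contributes a combinatorial factor that is absorbed into the constant $C$.

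The main obstacle is bookkeeping rather than conceptual: one must verify that $\Sg^{\hs1}=\Delta(\H')(\Zg\times\{1\})$ in this regime (the symmetric counterpart of \eqref{eq:Shone}, which is used in the $l\le l'$ proof with the roles of $\G$ and $\G'$ interchanged), and then carefully match the signs, absolute values, and normalization constants coming from Lemmas \ref{roots for l>=l'}, \ref{|| and constant}, and \ref{lemma:HC's formula moved} so that the $\pi_{\g/\z}(y)$ factor cancels cleanly and the remaining $\pi_{\g'/\h'}(y)$ combines correctly with the $\G'$-average to yield $f_\phi(y)$.
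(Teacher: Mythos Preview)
Your proposal is correct and follows essentially the same approach as the paper: Weyl integration on $\Wv$ via \eqref{weyl int on w 2}, factorization of $\mu_{\Oo(w)}(\chi_x\phi^\G)$ into a $\G$-integral and a $\G'/\H'$-integral, application of Lemma \ref{lemma:HC's formula moved}, and recombination of the $\pi_{\g'/\h'}(y)$ factor with the $\G'$-average into $f_\phi(y)$. The only cosmetic differences are that the paper writes $\int_\G$ directly rather than $\int_{\G/\Zg}$ (absorbing $\vol(\Zg)$ into the constant) and passes to $\tau'(\reg{\hs1})$ without isolating the chamber-to-regular-set step, while you make both of these explicit.
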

\begin{prf}
By the Weyl integration formula (\ref{weyl int on w 1}) with the roles of $\G$ and $\G'$ reversed and Lemmas \ref{roots for l>=l'} and \ref{|| and constant},
\[
\int_\Wv\chi_x(w)\phi^\G(w)\,dw =C_1\int_{\tau'(\reg{\hs1})}\pi_{\g/\z}(y)\pi_{\g'/\h'}(y)\mu_{\Oo(w)}(\chi_x\phi^\G)\,d\tau'(w),
\]
where
\[
\mu_{\Oo(w)}(\chi_x\phi^\G)=\int_{\Sg/\Sg^{\hs1}}(\chi_x\phi^\G)(s.w)\,d(s\Sg^{\hs1}).
\]
Recall the identification $y=\tau(w)=\tau'(w)$ and let us write $s=gg'$, where $g\in\G$ and $g'\in\G'$. Then
\begin{eqnarray*}
\chi_x(s.w)=e^{i\frac{\pi}{2}\langle x(s.w), s.w\rangle} = e^{iB(x, \tau(s.w))}=e^{iB(x, g.\tau(w))}=e^{iB(x, g.y)}
\end{eqnarray*}
and
\[
\phi^\G(s.w)=\phi^\G(g'.w).
\]
Since
\[
(\{1\}\times\G')\cap\Sg^{\hs1}=\{1\}\times \H',
\]
we see that
\[
\mu_{\Oo(w)}(\chi_x\phi^\G)=C_2\int_\G e^{iB(x,g.y)}\,dg \int_{\G'/\H'}\phi^\G(g'.w)\,d(g'\H').
\]
However we know from Harish-Chandra (Lemma \ref{lemma:HC's formula moved}) that
\begin{eqnarray*}\label{Harish-Chandra's formula for the fourier transform of orbital integral}
\pi_{\g/\h}(x) \int_\G e^{iB(g.x,y)}\,dg\, \pi_{\g/\z}(y)
=C_3 \sum_{t W(\Zg,\h)\in W(\G,\h)/W(\Zg,\h)}
\sgn_{\g/\h}(t)\pi_{\z/\h}(t^{-1}x)e^{iB(x,ty)}.
\end{eqnarray*}
Hence,
\begin{eqnarray}\label{proof in the first case, l>l'}
&&\pi_{\g/\h}(x)\int_\Wv\chi_x(w)\phi^\G(w)\,dw \\
&=& C_4\sum_{t W(\Zg,\h)\in W(\G,\h)/W(\Zg,\h)}\sgn_{\g/\h}(t)\pi_{\z/\h}(t^{-1}x) \int_{\tau'(\reg{\hs1})}e^{iB(x,ty)} \pi_{\g'/\h'}(y) \int_{\G'/\H'}\phi^\G(g'.w)\,d(g'\H')\,dy\nn\\
&=& C_5\sum_{t W(\Zg,\h)\in W(\G,\h)/W(\Zg,\h)}\sgn_{\g/\h}(t)\pi_{\z/\h}(t^{-1}x) \int_{\tau'(\reg{\hs1})}e^{iB(x,ty)} f_{\phi^\G}(y)\,dy.\nn
\end{eqnarray}
Since $ f_{\phi^\G}=\vol(\G)  f_{\phi}$, the formula follows.
\end{prf}
\begin{lem}\label{vandecorput}
Suppose $l\leq l'$.
Let $f(y)$ denote the function (\ref{extension by the symmetry condition and closure}). Then there is a seminorm $q$ on $\Ss(\Wv)$ such that
\begin{multline*}
\Big|\int_{\h\cap \tau(\Wv)} f(y)(\phi)\,e^{iB(x,y)}\,dy\Big|\leq q(\phi)\,\ch(x)^{-d'+r-1}\leq q(\phi)\,\ch(x)^{-d'+r-\iota}  \\(x\in \h,\, \phi\in\Ss(\Wv)).
\end{multline*}
\end{lem}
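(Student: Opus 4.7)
The plan is to reduce this lemma to the computation already carried out in Lemma~\ref{reduction to ss-orb-int}, combined with the van der Corput bound already recorded in the proof of Lemma~\ref{general formula for the int distr}. No new analysis of the distribution-valued function $f(y)$ is required.

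First I would invoke Lemma~\ref{reduction to ss-orb-int}, which (since we are in the case $l\leq l'$) gives the identity
\[
\int_{\h\cap \tau(\Wv)} f(y)(\phi)\,e^{iB(x,y)}\,dy = C^{-1}\,\pi_{\g/\h}(x)\int_\Wv \chi_x(w)\,\phi^\G(w)\,dw,
\]
where $\phi^\G(w)=\int_\G \phi(gw)\,dg$ and $C$ is the nonzero constant of that lemma. Thus it is enough to bound the two factors on the right.

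Next, for the oscillatory integral factor I would apply the van der Corput estimate \eqref{seventh sterp}: there is a seminorm $q_1$ on $\Ss(\Wv)$ such that
\[
\Bigl|\int_\Wv \chi_x(w)\,\phi^\G(w)\,dw\Bigr|\leq q_1(\phi^\G)\,\ch^{-d'}(x) \qquad (x\in\g).
\]
Because $\G$ is compact and acts linearly, the averaging map $\phi\mapsto\phi^\G$ is continuous from $\Ss(\Wv)$ to itself, so $q_1(\phi^\G)\leq q(\phi)$ for some seminorm $q$ on $\Ss(\Wv)$. For the polynomial factor I would use the inequality $|\pi_{\g/\h}(x)|\leq C_5\,\ch^{r-1}(x)$ on $\h$ established during the proof of Lemma~\ref{general formula for the int distr}; this is a direct consequence of \eqref{relation of r with degree} and the explicit expression \eqref{ch explicit} for $\ch$ on $\h$, since $\pi_{\g/\h}$ has degree at most $(r-1)/\iota$ in each coordinate $y_j$ while $\ch(x)^{r-1}$ grows like $|y_j|^{(r-1)/\iota}$ in $y_j$. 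Combining the two bounds yields the first inequality
\[
\Bigl|\int_{\h\cap \tau(\Wv)} f(y)(\phi)\,e^{iB(x,y)}\,dy\Bigr|\leq q(\phi)\,\ch(x)^{-d'+r-1}
\]
after absorbing constants into $q$.

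Finally, for the second inequality one only needs the elementary observations that $\iota\in\{\tfrac12,1\}$ so $-1\leq -\iota$, and that $\ch(x)\geq 1$ for all $x\in\g$ (since $x$ has purely imaginary eigenvalues, $|\det_\R(x-1)|\geq 1$). These together give $\ch(x)^{-d'+r-1}\leq \ch(x)^{-d'+r-\iota}$, completing the estimate. There is no real obstacle here once Lemma~\ref{reduction to ss-orb-int} is in hand; the only point requiring a moment of care is verifying that averaging over the compact group $\G$ preserves Schwartz seminorms, which is routine.
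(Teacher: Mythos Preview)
Your proof is correct and follows essentially the same approach as the paper: invoke Lemma~\ref{reduction to ss-orb-int} to rewrite the integral, apply the van der Corput estimate \eqref{seventh sterp}, and then use the bound $|\pi_{\g/\h}(x)|\leq C_5\,\ch^{r-1}(x)$ from \eqref{relation of r with degree} and \eqref{ch explicit}. The paper's proof is terser but identical in substance; your added remarks on the continuity of $\phi\mapsto\phi^\G$ and the inequality $\ch(x)\geq 1$ simply make explicit what the paper leaves implicit.
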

\begin{prf}
Since $l\leq l'$, Lemma \ref{reduction to ss-orb-int} and van der Corput estimate (\ref{seventh sterp}) prove that there is a seminorm  $q$ on $\Ss(\Wv)$ such that for all $y\in \h$ and $\phi\in\Ss(\Wv)$ we have
$$
\Big|\int_{\h\cap \tau(\Wv)} f(y)(\phi)\,e^{iB(x,y)}\,dy\Big|\leq q(\phi)\,|\pi_{\g/\h}(x)|\ch(x)^{-d'}.
$$
The result therefore follows from (\ref{relation of r with degree}) and (\ref{ch explicit}).
\end{prf}
\begin{cor}\label{an intermediate cor}
Suppose $l\leq l'$. Then 
for any $\phi\in \Ss(\Wv)$
\begin{eqnarray*}
&&T(\check\Theta_\Pi)(\phi)
=C\int_\h\xi_{-\mu}(\diesis{c_-}(x)) \ch^{d'-r-\iota}(x)
\int_{\h\cap\tau(\Wv)} e^{iB(x,y)}f_{\phi}(y)\,dy\,dx,
\end{eqnarray*}
where $C$ is a non-zero constant and each consecutive integral is absolutely convergent.
\end{cor}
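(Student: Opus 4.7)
The plan is to combine the two main previous lemmas and check integrability via an explicit computation of the size of the integrand.

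First, I would substitute the identity from Lemma \ref{reduction to ss-orb-int} into the formula of Lemma \ref{general formula for the int distr}. This gives, formally,
\[
T(\check\Theta_\Pi)(\phi)=C\int_\h\xi_{-\mu}(\diesis{c_-}(x))\,\ch^{d'-r-\iota}(x)\int_{\h\cap\tau(\Wv)}e^{iB(x,y)}f_\phi(y)\,dy\,dx,
\]
where $C$ absorbs the constants $C'$ from Lemma \ref{general formula for the int distr} and the constant from Lemma \ref{reduction to ss-orb-int}. The inner integral is absolutely convergent (this is already part of Lemma \ref{reduction to ss-orb-int} together with Lemma \ref{general formula for the int distr}, where the factor $\pi_{\g/\h}(x)\int_\Wv\chi_x(w)\phi^\G(w)\,dw$ was shown to converge absolutely). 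So the only remaining point is the absolute convergence of the outer integral over $\mathfrak{h}$.

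For this I would combine Lemma \ref{ximuchexplicit} and Lemma \ref{vandecorput}. By Lemma \ref{ximuchexplicit} (using $\delta=\tfrac{1}{2\iota}(d'-r+\iota)$ and $|1\pm iy_j|=(1+y_j^2)^{1/2}$),
\[
\bigl|\xi_{-\mu}(\diesis{c_-}(x))\bigr|\,\ch^{d'-r-\iota}(x)=\prod_{j=1}^l(1+y_j^2)^{\delta-1},
\]
where $x=\sum_j y_j J_j$. By Lemma \ref{vandecorput},
\[
\Bigl|\int_{\h\cap\tau(\Wv)}e^{iB(x,y)}f_\phi(y)\,dy\Bigr|\le q(\phi)\,\ch(x)^{-d'+r-\iota}=q(\phi)\prod_{j=1}^l(1+y_j^2)^{-\delta}.
\]
Multiplying, the integrand of the outer integral is bounded pointwise by $q(\phi)\prod_{j=1}^l(1+y_j^2)^{-1}$, which is integrable on $\mathfrak{h}\cong\mathbb R^l$. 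This justifies the absolute convergence and completes the derivation.

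The proof is essentially a bookkeeping step: both the formula and the estimate needed are already in place. The only slightly subtle point is checking that the exponents line up so that the bound reduces to the integrable product $\prod(1+y_j^2)^{-1}$; this follows from the specific value of $\delta$ and the identity $\ch(x)=\prod(1+y_j^2)^{1/(2\iota)}$ in (\ref{ch explicit}). I do not expect a genuine obstacle here.
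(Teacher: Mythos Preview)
Your proof is correct and follows essentially the same approach as the paper: the equality comes directly from combining Lemmas \ref{general formula for the int distr} and \ref{reduction to ss-orb-int}, and the absolute convergence of the outer integral is exactly the content of Lemma \ref{vandecorput}. Your explicit check that the exponents combine to give $\prod_j(1+y_j^2)^{-1}$ simply unpacks the bound $\ch^{d'-r-\iota}(x)\cdot\ch^{-d'+r-\iota}(x)=\ch^{-2\iota}(x)$, which is the same estimate the paper relies on.
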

\begin{prf}
The equality is immediate from Lemmas \ref{general formula for the int distr} and \ref{reduction to ss-orb-int}.
The absolute convergence of the outer integral over $\h$ follows from Lemma \ref{vandecorput}.
\end{prf}
Suppose $l>l'$. Recall from (\ref{the identification}) with $l''=l'$ that $\h'=\sum_{j=1}^{l'}\R J_j'$  is identified with $\sum_{j=1}^{l'}\R J_j\subseteq \h$. Let $\h''=\sum_{j=l'+1}^{l}\R J_j$, so that
\begin{equation}\label{h' + h'' decomposition}
\h=\h'\oplus \h''.
\end{equation}
Then $\z$, the centralizer of $\tau(\h_{\overline{1})}$,  is the centralizer of $\h'$ in $\g$ and $\z=\h'\oplus\g''$, where $\g''$ is the Lie algebra of the group $\G''$ of the isometries of the restriction of the form $(\cdot ,\cdot )$ to 
\begin{equation}\label{V" from mp}
\sum_{j=l'+1}^{l}  \V_{\overline 0}^j. 
\end{equation}
Furthermore, the derived Lie algebras of $\z$ and $\g''$ coincide (i.e. $[\z,\z]=[\g'',\g'']$) and $\h''$ is a Cartan subalgebra of $\g''$.
\begin{cor}\label{an intermediate cor, l>l'}
Suppose $l>l'$. Then for any $\phi\in \Ss(\Wv)$
\begin{eqnarray*}
&&T(\check\Theta_\Pi)(\phi)\\
&=&C\sum_{s\in W(\G,\h)}\sgn_{\g/\h}(s)\int_\h\xi_{-s\mu}(\diesis{c_-}(x)) \ch^{d'-r-\iota}(x) \pi_{\z/\h}(x)
\int_{\tau'(\reg{\hs1})} e^{iB(x,y)}f_{\phi}(y)\,dy\,dx\\
\end{eqnarray*}
where $C$ is a non-zero constant and each consecutive integral is absolutely convergent.
\end{cor}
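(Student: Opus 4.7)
The plan is to mimic the proof of Corollary \ref{an intermediate cor} (for $l\le l'$), combining Lemma \ref{general formula for the int distr} with Lemma \ref{reduction to ss-orb-int for l>l'}. The extra bookkeeping comes from handling the sum over coset representatives $tW(\Zg,\h)\in W(\G,\h)/W(\Zg,\h)$ that appears in Lemma \ref{reduction to ss-orb-int for l>l'} but not in Lemma \ref{reduction to ss-orb-int}.

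First I would use Lemma \ref{general formula for the int distr} in its first form together with the Weyl character formula (\ref{Weyl character formula}), noting the elementary identity $\Delta(h^{-1})=(-1)^{N}\Delta(h)$ (with $N$ the number of positive roots, the resulting sign being absorbed into the constant), to write
\[
T(\check\Theta_\Pi)(\phi) = C\sum_{s\in W(\G,\h)}\sgn_{\g/\h}(s)\int_\h\xi_{-s\mu}(\diesis{c_-}(x))\,\ch^{d'-r-\iota}(x)\,\Big(\pi_{\g/\h}(x)\int_\Wv\chi_x(w)\phi^\G(w)\,dw\Big)\,dx.
\]
Substituting into the bracketed factor the expression furnished by Lemma \ref{reduction to ss-orb-int for l>l'} turns this into a double sum over $s\in W(\G,\h)$ and $tW(\Zg,\h)\in W(\G,\h)/W(\Zg,\h)$. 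For each fixed $t$ I would then perform the change of variables $x\mapsto tx$ on the $x$-integral. The four identities $\xi_{-s\mu}(\diesis{c_-}(tx))=\xi_{-t^{-1}s\mu}(\diesis{c_-}(x))$ (from (\ref{eq:ximuWeyl})), $\ch(tx)=\ch(x)$, $\pi_{\z/\h}(t^{-1}(tx))=\pi_{\z/\h}(x)$, and $B(tx,ty)=B(x,y)$ (the last three by $\G$-conjugation invariance of $\ch$ and $\G$-invariance of $B$) show that, after the substitution and the relabelling $s'=t^{-1}s$ with $\sgn_{\g/\h}(s)\sgn_{\g/\h}(t)=\sgn_{\g/\h}(s')$, the $t$-summand is independent of $t$. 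The sum over cosets therefore contributes a multiplicative factor $|W(\G,\h)/W(\Zg,\h)|$ which is absorbed into the constant, producing the asserted formula.

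The hard part will be the verification of absolute convergence of each consecutive integral, which is what legitimises the above summation-integration swaps via Fubini's theorem. The inner integral over $\tau'(\reg{\hs1})$ presents no difficulty: by Theorem \ref{pullback of muy 3}, $f_\phi$ and its derivatives extend continuously to the closure of each connected component of the regular set, and the Schwartz nature of $\phi$ supplies the needed decay in $y$. For the outer integral over $\h$, the required bound is an analog of Lemma \ref{vandecorput}, namely
\[
\Big|\pi_{\z/\h}(x)\int_{\tau'(\reg{\hs1})}e^{iB(x,y)}f_\phi(y)\,dy\Big|\leq q(\phi)\,\ch(x)^{r-1-d'}
\]
for some continuous seminorm $q$ on $\Ss(\Wv)$. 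The van der Corput estimate (\ref{seventh sterp}) combined with Lemma \ref{reduction to ss-orb-int for l>l'} bounds only the full sum on the right-hand side of that lemma, not a single summand, so an independent argument is needed. The most promising route, I expect, is to rewrite the left-hand side of the displayed inequality as a derivative of a Harish-Chandra orbital integral on $\g'$ in the spirit of (\ref{the main limit pro2}), and then apply the van der Corput bound directly to that orbital integral to recover the required pointwise estimate in $x$. Once this bound is in place, every swap used above is justified and the formula follows.
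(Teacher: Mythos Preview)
Your algebraic derivation is correct and is essentially the paper's argument, only organized slightly differently: the paper starts from the second form of Lemma~\ref{general formula for the int distr} (the single term $\xi_{-\mu}$), so that the sum over $W(\G,\h)$ in the final formula comes entirely from the coset sum in Lemma~\ref{reduction to ss-orb-int for l>l'} after the same change of variables $x\mapsto tx$ you describe. Starting instead with the full Weyl sum and collapsing the coset sum, as you do, is equivalent.

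Where you diverge from the paper is the convergence argument, and here you are making it much harder than necessary. No oscillatory estimate is required. The paper notes two elementary facts. First, applying the count \eqref{relation of r with degree} to $\g''$ in place of $\g$ gives $\max_j\deg_{x_j}\pi_{\z/\h}=\tfrac{1}{\iota}(r''-1)$, where $r''$ is the constant \eqref{number r} for $\g''$; since $r-r''=d'$ this yields $|\pi_{\z/\h}(x)|\le C\,\ch^{r''-1}(x)=C\,\ch^{r-d'-1}(x)$, hence
\[
\ch^{d'-r-\iota}(x)\,|\pi_{\z/\h}(x)|\le C\,\ch^{-1-\iota}(x),
\]
which is integrable over $\h$. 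Second, $f_\phi$ is absolutely integrable over $\tau'(\reg{\hs1})$. Bounding $|e^{iB(x,y)}|\le 1$ then shows that the full double integral of the absolute value is finite, justifying every swap by Fubini. In particular, the trivial bound already gives
\[
\Big|\pi_{\z/\h}(x)\int_{\tau'(\reg{\hs1})}e^{iB(x,y)}f_\phi(y)\,dy\Big|\le \|f_\phi\|_1\,|\pi_{\z/\h}(x)|\le C\,\|f_\phi\|_1\,\ch(x)^{r-d'-1},
\]
which is precisely the inequality you were trying to reach via van der Corput and Harish-Chandra orbital integrals. Those detours are unnecessary; the point is simply that when $l>l'$ the factor $\pi_{\z/\h}$ is small enough relative to $\ch^{d'-r-\iota}$ that no cancellation from the exponential is needed.
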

\begin{prf}  
The formula is immediate from Lemmas \ref{general formula for the int distr} and \ref{reduction to ss-orb-int for l>l'} together with formula (\ref{eq:ximuWeyl}):
\begin{eqnarray*}
&&T(\check\Theta_\Pi)(\phi)\\
&=& C_1\int_\h\left(\xi_{-\mu}(\diesis{ c_-}(x)) \ch^{d'-r-\iota}(x)\right)
\left(\pi_{\g/\h}(x)\int_\Wv \chi_x(w)\phi^\G(w)\,dw\right)\,dx\\
&=& C_2\int_\h\left(\xi_{-\mu}(\diesis{ c_-}(x)) \ch^{d'-r-\iota}(x)\right)\\
&&\left(\sum_{t W(\Zg,\h)\in W(\G,\h)/W(\Zg,\h)}\sgn_{\g/\h}(t)\pi_{\z/\h}(t^{-1}x) \int_{\tau'(\reg{\hs1})}e^{iB(x,ty)} f_{\phi}(y)\,dy\right)\,dx\\
&=& \frac{C_2}{|W(\Zg,\h)|}\int_\h\left(\xi_{-\mu}(\diesis{ c_-}(x)) \ch^{d'-r-\iota}(x)\right)\\
&&\left(\sum_{t \in W(\G,\h)}\sgn_{\g/\h}(t)\pi_{\z/\h}(t^{-1}x) \int_{\tau'(\reg{\hs1})}e^{iB(x,ty)} f_{\phi}(y)\,dy\right)\,dx\\
&=& C_3\sum_{t \in W(\G,\h)} \sgn_{\g/\h}(t) \int_\h\left(\xi_{-\mu}(\diesis{ c_-}(tx)) \ch^{d'-r-\iota}(tx)\right)\\
&&\left(\pi_{\z/\h}(x) \int_{\tau'(\reg{\hs1})}e^{iB(tx,ty)} f_{\phi}(y)\,dy\right)\,dx\\
&=& C_3\sum_{t \in W(\G,\h)}\sgn_{\g/\h}(t)\int_\h\left(\xi_{-t^{-1}\mu}(\diesis{ c_-}(x)) \ch^{d'-r-\iota}(x)\right)\\
&&\left(\pi_{\z/\h}(x) \int_{\tau'(\reg{\hs1})}e^{iB(x,y)} f_{\phi}(y)\,dy\right)\,dx.
\end{eqnarray*}
As in (\ref{relation of r with degree}) we check that
\begin{equation*}\label{relation of r with degree z}
\max\{\deg_{x_j}\pi_{\z/\h};\ 1\leq j\leq l\}=\max\{\deg_{x_j}\pi_{\z''/\h''};\ 1\leq j\leq l''\}=\frac{1}{\iota} (r''-1).
\end {equation*}
Also, $r-r''=d'$. Hence,
\[
\ch^{d'-r-1}(x) |\pi_{\z/\h}(x)|\leq const\,\ch^{d'-r-1+r''-1}(x)=const\,\ch^{-2}(x).
\]
Furthermore, $f_\phi$ is absolutely integrable. Therefore,
the absolute convergence of the last integral over $\h$ follows from the fact that $\ch^{-2\iota}$ is absolutely integrable.  
\end{prf}

Let $\beta=\dfrac{\pi}{\iota}$, where $\iota $ is as in (\ref{eq:iota}). Then 
\begin{equation}\label{the constant beta}
B(x,y)=-\sum_{j=1}^l x_j \beta y_j \qquad \big(x=\sum_{j=1}^l x_jJ_j\,, y=\sum_{j=1}^l y_jJ_j\in\h\big).
\end{equation}
Indeed, the definition of the form $B$, (\ref{the form B}), shows that
\begin{eqnarray}\label{computation of the constant beta}
B(x,y)&=&\frac{\pi}{2}\tr_{\Dc/\R}(xy)=\frac{\pi}{2}\sum_{j,k}\tr_{\Dc/\R}(J_jJ_k)x_jy_k\nn\\
&=&\frac{\pi}{2}\sum_j\tr_{\Dc/\R}(-1)x_jy_j=-\frac{\pi}{\iota}\sum_jx_jy_j.
\end{eqnarray}
For a subset $\gamma\subseteq \{1, 2, \dots, l\}$ let
\[
\h\cap\gamma^\perp=\{y\in\h;\ y_j=0\ \text{for all}\ j\in\gamma\}.
\]

\begin{thm}\label{main thm for l<l'}
Let $l\leq l'$. 
Fix a genuine representation $\Pi$ of $\wt\G$ with the Harish-Chandra parameter $\mu\in i\h^*$. 
Let $P_{a,b}$ and $Q_{a,b}$ be the polynomials defined in (\ref{D0'}) and (\ref{D0''}).
Let $\delta$ be as in (\ref{eq:delta}) and set
\begin{eqnarray}\label{eq:ajbj}
&&a_{j}=-\mu_j-\delta+1\,, \qquad  b_{j}=\mu_j-\delta+1\\
&&p_j(y_j)=P_{a_{j},b_{j}}(\beta y_j)e^{-\beta |y_j|}\,,\qquad q_j(y_j)=\beta ^{-1} Q_{a_{j},b_{j}}(\beta ^{-1}y_j) \qquad (1\leq j\leq l). \nn
\end{eqnarray}
There is a non-zero constant $C$ such that for $\phi\in\Ss(\Wv)$
\begin{eqnarray}\label{main thm for l<l' a}
T(\check\Theta_\Pi)(\phi)&=&C\int_{\h\cap\tau(\Wv)}
\left(\prod_{j=1}^l  \left(p_j(y_j) +q_j(-\partial_{y_j})\delta_0(y_j)\right)\right)\cdot f_\phi(y)\,dy\\
&=&C\sum_{\gamma\subseteq\{1, 2, \dots, l\}}
\int_{\h\cap\tau(\Wv)\cap\gamma^\perp}
\prod_{j\notin \gamma}  p_j(y_j) \cdot \prod_{j\in\gamma} q_j(\partial(J_j))\cdot f_\phi(y)\,d_\gamma y
\nn
\end{eqnarray}
where $d_\gamma y=\prod_{j\notin \gamma} dy_j$ and $\delta_0$ is the Dirac delta at $0$.
Equivalently, if we define the following generalized function
\[
u(y)=C\prod_{j=1}^l  (p_j(y_j) +q_j(-\partial_{y_j})\delta_0(y_j)) \qquad (y\in \h\cap\tau(\Wv)),
\]
then
\begin{eqnarray}\label{main thm for l<l' conscise}
T(\check\Theta_\Pi)(\phi)&=&\int_{\h\cap\tau(\Wv)}u(y)\cdot f_\phi(y)\,dy\,.
\end{eqnarray}
\end{thm}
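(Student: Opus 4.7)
The plan is to derive the formula by performing the $x$-integration in Corollary \ref{an intermediate cor} explicitly as a product of one-dimensional (distributional) inverse Fourier transforms. Writing $x=\sum_{j=1}^l x_j J_j$ and $y=\sum_{j=1}^l y_j J_j$, Lemma \ref{ximuchexplicit} together with formula \eqref{the constant beta} yields the coordinate-wise factorization
\[
\xi_{-\mu}(\diesis{c_-}(x))\ch^{d'-r-\iota}(x)\,e^{iB(x,y)}=\prod_{j=1}^l (1+ix_j)^{-b_j}(1-ix_j)^{-a_j}e^{-i\beta x_jy_j},
\]
so that, upon interchanging the orders of integration, the $x$-integral factors into one-dimensional Fourier transforms
\[
\mathcal I_{a,b}(y)=\int_\R (1+ix)^{-b}(1-ix)^{-a}e^{-i\beta xy}\,dx,
\]
each understood as a tempered distribution in $y\in\R$. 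The interchange is not covered by classical Fubini since the outer weight $\ch^{d'-r-\iota}(x)$ grows in $|x|$; the plan is to regularize $f_\phi(y)$ by a Gaussian factor $e^{-\varepsilon|y|^2}$, apply Fubini in the regularized setting, and pass to the limit $\varepsilon\to 0^+$ using the van der Corput decay estimate of Lemma \ref{vandecorput} combined with the explicit distributional growth of $\mathcal I_{a,b}$ derived below.

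The core of the proof is the evaluation of $\mathcal I_{a,b}$. By \eqref{ximuchexplicit2} the exponents $a,b\in\Bbb Z$, so $(1+ix)^{-b}(1-ix)^{-a}$ is a rational function of $x$ with poles only at $x=\pm i$. Decompose it as
\[
(1+ix)^{-b}(1-ix)^{-a}=R_{a,b}(x)+N_{a,b}(x),
\]
where $R_{a,b}$ is the proper rational part (vanishing at infinity) and $N_{a,b}$ is the polynomial part (present only when $\min(a,b)\leq 0$). Closing the contour in the lower half-plane for $y>0$ and in the upper half-plane for $y<0$, the residue theorem applied to $R_{a,b}$ at the poles $x=\pm i$ yields an exponentially decaying contribution of the form $c\cdot P_{a,b}(\beta y)e^{-\beta|y|}=c\cdot p(y)$, while the Fourier transform of the polynomial $N_{a,b}$ is a distribution supported at the origin, of the form $c\cdot q(-\partial_y)\delta_0(y)$ with $q(y)=\beta^{-1}Q_{a,b}(\beta^{-1}y)$. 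Matching coefficients with the explicit definitions \eqref{D0'}--\eqref{D0''} identifies the polynomials $P_{a,b}$ and $Q_{a,b}$ appearing in the statement.

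Substituting $\mathcal I_{a_j,b_j}(y_j)=c\bigl(p_j(y_j)+q_j(-\partial_{y_j})\delta_0(y_j)\bigr)$ into the factorized expression produces formula \eqref{main thm for l<l' conscise}, which is equivalent to \eqref{main thm for l<l' a}. Expanding the product over subsets $\gamma\subseteq\{1,\dots,l\}$ and using the pairing rule
\[
\int g(y)\,q_j(-\partial_{y_j})\delta_0(y_j)\,dy_j=q_j(\partial(J_j))g(y)\big|_{y_j=0}
\]
recovers the second, more explicit form of the theorem. The principal obstacle is the case analysis in the distributional computation of $\mathcal I_{a,b}$ for arbitrary integers $a,b$: the polynomial part $N_{a,b}$ is nontrivial precisely when $\min(a,b)\leq 0$, and producing the correct contour contributions on each side $y>0$ and $y<0$ together with the coefficient-by-coefficient identification with the polynomials $P_{a,b}$ and $Q_{a,b}$ from \eqref{D0'}--\eqref{D0''} requires a careful book-keeping separately according to the signs of $a$ and $b$.
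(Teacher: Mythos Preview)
Your overall architecture matches the paper's: start from Corollary \ref{an intermediate cor}, factorize the integrand in the coordinates $x_j$ via Lemma \ref{ximuchexplicit}, and evaluate the resulting one-dimensional Fourier integrals. The residue computation you sketch is exactly the content of Proposition~\ref{D1} in Appendix~C.

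There is, however, a genuine gap. You treat the pairing of $\mathcal I_{a_j,b_j}$ with $f_\phi$ as an application of the tempered-distribution Fourier transform, to be justified by a Gaussian regularization in $y$. But $f_\phi(y)$ is \emph{not} a Schwartz function on $\h$: Theorem~\ref{pullback of muy 3} only guarantees that $f_\phi$ and finitely many of its derivatives extend continuously across the hyperplanes $y_j=0$, and a priori one has only $f_\phi\in\Ss(\R^\times)$ in each variable. Multiplying by $e^{-\varepsilon|y|^2}$ does nothing to cure the potential non-smoothness at $y_j=0$, so you cannot simply pair the $\delta_0^{(k)}(y_j)$ terms coming from $Q_{a_j,b_j}$ with $f_\phi$ without further input. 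The paper supplies exactly this input: Lemma~\ref{vanishing of derivatives in general} (packaged into Theorem~\ref{pullback of muy 3}) shows that the jumps $\langle\partial(J)^\alpha f_\phi\rangle_{y_j=0}$ vanish for $\alpha_j$ up to the degree $2\delta-2$ of $Q_{a_j,b_j}$, and Proposition~\ref{D1not smooth} is formulated precisely for test functions in $\Ss(\R^\times)$ satisfying such jump-vanishing hypotheses, applied directly to the iterated integral in its original order (so no Fubini swap is needed). Your proposal omits any reference to this regularity, and without it the formula \eqref{main thm for l<l' a} does not even make sense.

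Two smaller points. First, the polynomial part $N_{a,b}$ is present when $a+b\le 0$ (equivalently, when the rational function does not vanish at infinity), not when $\min(a,b)\le 0$; compare \eqref{D0''}. Second, your Gaussian-in-$y$ regularization does not make the double integral absolutely convergent: the $x$-integrand $(1+ix_j)^{-b_j}(1-ix_j)^{-a_j}$ grows in $|x_j|$ whenever $2\delta-2\ge 0$, so Fubini still fails. If you wanted a regularization argument you would have to damp in $x$, but you would then be back to needing the regularity of $f_\phi$ at $y_j=0$ to take the limit, which is again Theorem~\ref{pullback of muy 3}.
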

Theorem \ref{pullback of muy 3} implies that the function $f_\phi$ has the required number of continuous derivatives for the formula (\ref{main thm for l<l' a}) to make sense. Notice that the operators appearing in the integrand of (\ref{main thm for l<l' a}) act on different variables and therefore commute. 
\begin{prf}
Notice that the $a_j, b_j$ are integers by (\ref{ximuchexplicit2}).
Equation (\ref{main thm for l<l' a}) follows from Corollary \ref{an intermediate cor}, Lemma \ref{ximuchexplicit}, Theorem \ref{pullback of muy 3} and from Proposition \ref{D1not smooth}. 
\end{prf}

In the case $\Dc=\C$ the boundary $\partial(\h\cap\tau(\Wv))$ may be non-empty. Then the integrals in (\ref{main thm for l<l' a}) with  $(\h\setminus\gamma^\perp)\cap\partial(\h\cap\tau(\Wv))\ne \emptyset$ vanish. 
In any case we sum only over the $\gamma$ such that  $\partial(\h\cap\tau(\Wv))\subseteq \gamma^\perp$.
In particular, all terms of the sum corresponding to $\gamma \neq \emptyset$ vanish provided all hyperplanes $y_j=0$ are boundaries of $\h \cap \tau(\Wv)$. From Theorem \ref{pullback of muy 3} we see that this is the case if and only if $\max(l-q,0)=\min(p,l)$. 
In turn, this is equivalent to either $l'=q\geq l$ and $p=0$ or $l=p+q=l'$. In the first case 
$(\G,\G')=(\Ug_l,\Ug_{l'})$; in the second  $(\G,\G')=(\Ug_{p+q},\Ug_{p,q})$.

Notice also that the degree of the polynomial $Q_{a_{j},b_{j}}$ is $2\delta-2$. Hence, there are no derivatives in the formula (\ref{main thm for l<l' a}) if $2\delta-2\leq 0$.
However,
\[
2\delta-2=\frac{1}{\iota} (d'-r-\iota)=
\begin{cases}
d'-r-1 &\text{if $\Dc\ne \Ha$,}\\
2(d'-r)-1 &\text{if $\Dc=\Ha$.}
\end{cases}
\]
Also,
\[
d'-r-\iota=
\begin{cases}
2l'-2l &\text{if $(\G,\G')=(\Og_{2l},\Sp_{2l'})$},\\
2l'-2l-1 &\text{if $(\G,\G')=(\Og_{2l+1},\Sp_{2l'})$},\\
l'-l-1 &\text{if $(\G,\G')=(\Ug_{l},\Ug_{p,q}), p+q=l'$},\\
l'-l &\text{if  $(\G,\G')=(\Sp_{l},\Og^*_{2l'})$}.
\end{cases}
\]
Thus, since we assume $l\leq l'$, there are no derivatives in the formula (\ref{main thm for l<l' a}) if 
\[
d'-r-\iota\leq 0,
\]
which means that $l=l'$ if $\Dc\neq \C$ and $l\in \{l'-1,l'\}$ if $\Dc=\C$.

\begin{lem}\label{another intermediate lemma} 
Suppose $l>l'$. 
In terms of Corollary \ref{an intermediate cor, l>l'} and the decomposition (\ref{h' + h'' decomposition})
\begin{eqnarray}\label{another intermediate lemma1}
&&\xi_{-s\mu}(\diesis{ c_-}(x)) \ch^{d'-r-\iota}(x) \pi_{\z/\h}(x)\\
&=&\left(\xi_{-s\mu}(\diesis{ c_-}(x')) \ch^{d'-r-\iota}(x')\right)\left(\xi_{-s\mu}(\diesis{ c_-}(x'')) \ch^{d'-r-\iota}(x'') \pi_{\g''/\h''}(x'')\right),\nn
\end{eqnarray}
where $x'\in\h'$ and $x''\in\h''$.
Moreover,
\begin{eqnarray}\label{another intermediate lemma2}
&&\int_{\h''}\xi_{-s\mu}(\diesis{ c_-}(x'')) \ch^{d'-r-\iota}(x'') \pi_{\g''/\h''}(x'')\,dx''\\
&=&C\sum_{s''\in W(\g'',\h'')}\sgn(s'') \Bbb I_{\{0\}}(-(s\mu)|_{\h''}+s''\rho''),\nn
\end{eqnarray}
where $C$ is a constant, $\rho''$ is one half times the sum of positive roots for $(\g''_\C, \h''_\C)$ and $\Bbb I_{\{0\}}$ is the indicator function of zero.
\end{lem}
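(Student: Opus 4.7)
The plan splits naturally into two independent assertions: the factorization~(\ref{another intermediate lemma1}) and the integral evaluation~(\ref{another intermediate lemma2}).

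For the factorization, I would write $x=x'+x''$ with $x'\in\h'$, $x''\in\h''$, and observe that these elements commute and act on complementary subspaces of $\V_{\overline 0}$, namely $\bigoplus_{j=1}^{l'}\V_{\overline 0}^j$ and $\bigoplus_{j=l'+1}^{l}\V_{\overline 0}^j$. Consequently, $x-1$ is block diagonal, so definition~(\ref{ch}) gives $\ch(x)=\ch(x')\ch(x'')$, and distributing the exponent $d'-r-\iota$ between the two factors is trivial. Next, $\diesis{c_-}$ is multiplicative on the commutative group $\diesis{\H}_o$, and $\xi_{-s\mu}$ is a character, so $\xi_{-s\mu}(\diesis{c_-}(x))=\xi_{-s\mu}(\diesis{c_-}(x'))\,\xi_{-s\mu}(\diesis{c_-}(x''))$. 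Finally, since $\z$ is the centralizer of $\h'$ in $\g$ and $\z=\h'\oplus\g''$, the roots appearing in $\pi_{\z/\h}$ vanish on $\h'$ and coincide with the roots of $\g''/\h''$; therefore $\pi_{\z/\h}(x)=\pi_{\g''/\h''}(x'')$. Combining these three observations yields (\ref{another intermediate lemma1}).

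For the integral, the main idea is to recognize the integrand as a Weyl-denominator-type expression. As in the derivation leading to (\ref{fifth sterp}), $\pi_{\g''/\h''}(x'')$ is a nonzero constant multiple of $\Delta''(\diesis{c_-}(x''))\,\ch(x'')^{r''-\iota}$, where $r''$ is the value of (\ref{number r}) for $\G''$ and $\Delta''$ is its Weyl denominator. Since $r-r''=d'$ (as noted at the end of the proof of Corollary~\ref{an intermediate cor, l>l'}), the exponents of $\ch(x'')$ combine to $(d'-r-\iota)+(r''-\iota)=-2\iota$. Expanding $\Delta''(\diesis{c_-}(x''))=\sum_{s''\in W(\g'',\h'')}\sgn(s'')\,\xi_{s''\rho''}(\diesis{c_-}(x''))$ reduces the problem to evaluating
\[
\int_{\h''}\xi_{\nu}(\diesis{c_-}(x''))\,\ch(x'')^{-2\iota}\,dx''
\qquad\text{for }\nu=-(s\mu)|_{\h''}+s''\rho''\,.
\]

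Using (\ref{ch explicit}) and the coordinate formula for $\xi_\nu\circ\diesis{c_-}$ from the proof of Lemma~\ref{ximuchexplicit}, the integrand factors as $\prod_j (1+ix_j)^{\nu_j-1}(1-ix_j)^{-\nu_j-1}$ over the coordinates of $x''\in\h''$. The exponents $\nu_j\in\Z$ by the same genuine-cover integrality argument that yields (\ref{ximuchexplicit2}). A straightforward residue computation (closing the contour in whichever half-plane misses the unique pole at $x_j=\pm i$) shows that
\[
\int_{\R}(1+ix_j)^{\nu_j-1}(1-ix_j)^{-\nu_j-1}\,dx_j=\pi\cdot\Bbb I_{\{0\}}(\nu_j)\,.
\]
Taking the product over the $l-l'$ coordinates gives $\pi^{l-l'}\,\Bbb I_{\{0\}}(\nu)$, and absorbing the accumulated normalization constants into a single $C$ yields (\ref{another intermediate lemma2}). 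The hard part, such as it is, is the bookkeeping that guarantees the integrality of the $\nu_j$ uniformly over $(s,s'')$; everything else reduces to the Weyl denominator identity and a one-variable contour integral.
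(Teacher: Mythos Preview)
Your argument is correct and tracks the paper's proof closely. The factorization~(\ref{another intermediate lemma1}) and the reduction via $\pi_{\g''/\h''}(x'')=C_1''\Delta''(\diesis{c_-}(x''))\ch^{r''-\iota}(x'')$ together with $d'-r+r''=0$ are exactly what the paper does. The only divergence is in the final evaluation: the paper absorbs the weight $\ch^{-2\iota}(x'')\,dx''$ into the Haar measure on $\diesis{\H''}_o$, checks that the integrand $\xi_{-s\mu}\Delta''$ descends to $\H''_o$ (a case-by-case integrality check for $\G=\Ug_d$ and $\G=\Og_{2n+1}$), and then invokes orthogonality of characters on the torus; you instead compute the same integral coordinatewise by residues. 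Your route is slightly more elementary and avoids the descent argument, since the residue calculus only needs $\nu_j\in\Z$ rather than the finer statement that the weight is integral for $\H''$. The paper's route makes the role of character orthogonality transparent and explains conceptually why only $\nu=0$ survives.
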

\begin{prf}
Part (\ref{another intermediate lemma1}) is obvious, because $\pi_{\z/\h}(x'+x'')=\pi_{\g''/\h''}(x'')$. We shall verify (\ref{another intermediate lemma2}).
 Let $r''$ denotes the number defined in (\ref{number r}) for the Lie algebra $\g''$. A straightforward computation verifies the following table.
\begin{center}
\renewcommand{\extrarowheight}{.2em}
\begin{tabular}{c||c|c|c}\label{table 3}
$\g$ & $r$  & $r''$ & $d'-r+r''$ \\[.1em]
\hline\hline
$\u_d$ & $d$ & $d-d'$ & $0$\\[.1em]
\hline
$\o_d$ & $d-1$ & $d-d'-1$ & $0$\\[.1em]
\hline
$\sp_d$ & $d+\frac{1}{2}$ & $d-d'+\frac{1}{2}$ & $0$\\[.2em]
\hline
\end{tabular}
\end{center}
By \cite[Lemma 5.7]{PrzebindaUnitary} applied to $\G''\supseteq \H''$ and $\g''\supseteq \h''$, 
\begin{eqnarray*}
\pi_{\g''/\h''}(x'')=C_1''\Delta''(\diesis{c_-}(x'')) \ch^{r''-\iota}(x'')
\qquad (x=x'+x'',\ x'\in\h',\ x''\in\h''),
\end{eqnarray*}
where $\Delta''$ is the Weyl denominator for $\G''$,
\begin{eqnarray}\label{FT Theta ch 13}
\Delta''=\sum_{s''\in W(\g'',\h'')} \sgn(s'')\,\xi_{s''\rho''}.
\end{eqnarray}
Hence, the integral (\ref{another intermediate lemma2}) is a constant multiple of
\begin{eqnarray}\label{FT Theta ch 11}
&&\int_{\h''}\xi_{-s\mu}(\diesis{c_-}(x''))\Delta''(\diesis{c_-}(x''))\ch^{d'-r+r''}(x'')\left(\ch^{-2\iota}(x'')\,dx''\right)\nn\\
&=&\int_{\diesis{c_-}(\h'')}\xi_{-s\mu}(h)\Delta''(h)\,dh\nn,
\end{eqnarray}
where $\diesis{c_-}(\h'')\subseteq \diesis{\H''}_o$. 

Notice that the function
\begin{equation}\label{FT Theta ch 12}
\diesis{\H''}_o\ni h\to \xi_{-s\mu}(h)\Delta''(h)\in\C
\end{equation}
is constant on the fibers of the covering map $\diesis{\H''}_o\to \H''_o$. Indeed, (\ref{FT Theta ch 13}) shows that
we'll be done as soon as we check that the weight $-s\mu+s''\rho''$ is integral for the Cartan subgroup $\H''$ (i.e. it is equal to the derivative of a character of $\H''$). Only in the two cases,  $\G=\Ug_d$ and $\G=\Og_{2n+1}$, the covering (\ref{FT Theta ch 12}) is non-trivial. 

Recall the notation used in the proof of (\ref{compact5}), and let $d''$ denote the dimension of the vector space (\ref{V" from mp}) over $\Bbb D$.
Suppose $\G=\Ug_d$. Then $\G''=\Ug_{d''}$,
$\lambda_j+\frac{d'}{2}\in \Zb$ and $\rho_j+\frac{d+1}{2}\in \Zb$. Hence, $(-s\mu)_j+\frac{d'+d+1}{2}\in \Zb$. Since, 
$\rho''_j+\frac{d''+1}{2}\in \Zb$, we see that
$$
\Zb\ni (-s\mu)_j+\frac{d'+d+1}{2}+\rho''_j+\frac{d''+1}{2}=(-s\mu)_j+\rho''_j+d+1.
$$
Therefore $(-s\mu)_j+\rho''_j\in \Zb$. 

Suppose $\G=\SO_{2n+1}$. Then $\G''=\SO_{2n''+1}$, $\lambda_j\in \Zb$ and $\rho_j+\frac{1}{2}\in \Zb$. Hence, $(-s\mu)_j+\frac{1}{2}\in \Zb$. Since, 
$\rho''_j+\frac{1}{2}\in \Zb$, we see that $(-s\mu)_j+\rho''_j\in \Zb$. 

Therefore, (\ref{FT Theta ch 11}) is a constant multiple of 
\begin{eqnarray}\label{FT Theta ch 15}
&&\sum_{s''\in W(\g'',\h'')} \sgn(s'') \int_{\H''_o}\xi_{-s\mu}(h)
\xi_{s''\rho''}(h)\,dh\\
&=&\left\{
\begin{array}{ll}
\vol(\H''_o)\,\sgn(s'')\ &\ \text{if}\ (s\mu)|_{\h''}=s''\rho'',\\
0\ &\ \text{if $(s\mu)|_{\h''}\notin W(\g'',\h'')\rho''$}\nn,
\end{array}
\right.\\
&=&\vol(\H''_o)\sum_{s''\in W(\g'',\h'')} \sgn(s'') 
\Bbb I_{\{0\}}(-(s\mu)|_{\h''}+s''\rho'').\nn
\end{eqnarray}
\end{prf}
\begin{cor}\label{another intermediate cor, l>l'}
Suppose $l>l'$. Then for any $\phi\in \Ss(\Wv)$
\begin{eqnarray}\label{another intermediate cor, l>l' a}
&&T(\check\Theta_\Pi)(\phi)\\
&=&C\sum_{s\in W(\G',\h')}\sgn_{\g'/\h'}(s)\int_{\h'}\xi_{-s\mu}(\diesis{c_-}(x)) \ch^{d'-r-\iota}(x) 
\int_{\tau'(\reg{\hs1})} e^{iB(x,y)}f_{\phi}(y)\,dy\,dx\nn\\
&=&C'\int_{\h'}\xi_{-\mu}(\diesis{c_-}(x)) \ch^{d'-r-\iota}(x) 
\int_{\tau'(\reg{\hs1})} e^{iB(x,y)}f_{\phi}(y)\,dy\,dx\nn
\end{eqnarray}
where $C$ is a non-zero constant, $C'=C|W(\G',\h')|$ and each consecutive integral is absolutely convergent. The expression (\ref{another intermediate cor, l>l' a}) is zero unless one can choose the Harish-Chandra parameter $\mu$ of $\Pi$ so that
\begin{equation}\label{another intermediate cor, l>l' b}
\mu|_{\h''}=\rho''.
\end{equation}
\end{cor}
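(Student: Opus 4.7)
My plan is to start from the formula in Corollary~\ref{an intermediate cor, l>l'} and carry out the $\h''$-integration using the decomposition (\ref{h' + h'' decomposition}) together with Lemma~\ref{another intermediate lemma}. Writing $x = x' + x''$ with $x' \in \h'$ and $x'' \in \h''$, the key observation is that (\ref{the constant beta}) makes $B$ diagonal in the basis $\{J_j\}_{j=1}^l$, so since $\tau'(\reg{\hs1}) \subseteq \h'$ the exponential satisfies $e^{iB(x,y)} = e^{iB(x',y)}$. Combined with the factorization (\ref{another intermediate lemma1}), Fubini (justified by the absolute convergence asserted in Corollary~\ref{an intermediate cor, l>l'}) decouples the double integral as a product of an $\h'$-integral depending on $x'$ and the $\h''$-integral
\[
\int_{\h''}\xi_{-s\mu}(\diesis{c_-}(x''))\ch^{d'-r-\iota}(x'')\pi_{\g''/\h''}(x'')\,dx'',
\]
which is then evaluated via (\ref{another intermediate lemma2}).

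The indicator function on the right-hand side of (\ref{another intermediate lemma2}) forces $(s\mu)|_{\h''}$ to lie in the orbit $W(\g'',\h'')\rho''$. Since $W(\G'',\h'') \subseteq W(\G,\h)$ acts on $\h''$ through $W(\g'',\h'')$ and the Harish-Chandra parameter of $\Pi$ is defined only up to $W(\G,\h)$, this is equivalent to the existence of a choice of $\mu$ with $\mu|_{\h''} = \rho''$. When no such choice exists, every summand vanishes and $T(\check\Theta_\Pi)(\phi) = 0$, proving the final assertion of the corollary.

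For the non-vanishing case, I would fix $\mu$ so that $\mu|_{\h''} = \rho''$ and decompose the outer sum by writing $s = s's''$, where $s''$ ranges over $W(\G'',\h'')$ (embedded in $W(\G,\h)$ as the subgroup fixing $\h'$ pointwise) and $s'$ over a set of coset representatives of $W(\G,\h)/W(\G'',\h'')$. The $s''$-dependence of $\sgn_{\g/\h}(s) = \sgn_{\g'/\h'}(s')\cdot \sgn(s'')$ absorbs the sign factor coming from (\ref{another intermediate lemma2}), yielding $|W(\G'',\h'')|$ identical contributions indexed by cosets which are naturally identified with $W(\G',\h')$. This produces the first equality in (\ref{another intermediate cor, l>l' a}). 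The second equality then follows by changing variables $x'\mapsto s^{-1}x'$ in each summand, using (\ref{eq:ximuWeyl}), the $\Sg$-invariance of $B$, and the Weyl symmetry (\ref{extension by the symmetry condition}) of $f_\phi$, showing that each term equals the $s=e$ term so that the sum over $W(\G',\h')$ produces the factor $|W(\G',\h')|$ in $C' = C|W(\G',\h')|$.

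The main technical obstacle will be the bookkeeping of the two Weyl groups and their sign characters: one must verify that the subgroup of $W(\G,\h)$ fixing $\h'$ pointwise is exactly $W(\G'',\h'')$, and that $\sgn_{\g/\h}$ factors on this subgroup as the product $\sgn_{\g'/\h'}\cdot \sgn_{\g''/\h''}$. Both facts follow by case-by-case inspection from the classical description (\ref{classical weyl group 1.1.1}) of Weyl groups of compact classical groups, but require careful checking across $\Dc = \R, \C, \Ha$.
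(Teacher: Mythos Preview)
Your overall plan matches the paper's: start from Corollary~\ref{an intermediate cor, l>l'}, factor the integral over $\h=\h'\oplus\h''$ via Lemma~\ref{another intermediate lemma}, and use the indicator in (\ref{another intermediate lemma2}) to force $\mu|_{\h''}\in W(\g'',\h'')\rho''$. The second equality in (\ref{another intermediate cor, l>l' a}) also follows as you say, from the skew-symmetry $f_\phi(sy)=\sgn_{\g'/\h'}(s)f_\phi(y)$.

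However, the step reducing the $W(\G,\h)$-sum to a $W(\G',\h')$-sum has a genuine gap. Your claim that the cosets $W(\G,\h)/W(\G'',\h'')$ are ``naturally identified with $W(\G',\h')$'' is false: already for $\Dc=\C$ with $l=3$, $l'=2$ one has $|W(\G,\h)/W(\G'',\h'')|=|\Sigma_3/\Sigma_1|=6$ while $|W(\G',\h')|=|\Sigma_2|=2$. Consequently your sign factorization $\sgn_{\g/\h}(s's'')=\sgn_{\g'/\h'}(s')\cdot\sgn(s'')$ is ill-posed, since $\sgn_{\g'/\h'}(s')$ is only defined for $s'\in W(\G',\h')$, not for an arbitrary coset representative.

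The missing ingredient is the \emph{regularity} of $\mu$. The paper first substitutes $s\mapsto s''s$ to absorb the inner sum, obtaining a factor $|W(\G'',\h'')|$ and a sum over those $s\in W(\G,\h)$ with $(s\mu)|_{\h''}=\rho''$. Only then does one argue: assuming $\mu|_{\h''}=\rho''$, any such $s$ satisfies
\[
(s\mu)|_{\h'}+\rho''=s\mu=s(\mu|_{\h'}+\rho''),
\]
and since $\mu$ is regular this forces $s\in W(\G',\h')$. This is what cuts the sum down to the correct size; without it your coset count is off and the argument cannot close. Once $s\in W(\G',\h')$ is established, the sign identity $\sgn_{\g/\h}(s)=\sgn_{\g'/\h'}(s)$ follows directly without the case-by-case verification you anticipate.
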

\begin{prf}
The second equality in (\ref{another intermediate cor, l>l' a}) follows from the fact that $f_{\phi}(sy)=\sgn_{\g'/\h'}(s)f_{\phi}(y)$, $s\in W(\G',\h')$.
 
Observe that $B(x'+x'',y)=B(x',y)$ for $x'\in \h'$, $x''\in \h''$ and $y \in \tau'(\reg{\hs1})\subseteq \h'$.
We see therefore from Corollary \ref{an intermediate cor, l>l'} and Lemma \ref{another intermediate lemma} that
\begin{multline}\label{another intermediate cor, l>l'1}
T(\check\Theta_\Pi)(\phi)
=C\sum_{s\in W(\G,\h)}\sum_{s''\in W(\G'',\h'')}\sgn_{\g/\h}(s)\sgn(s'')\Bbb I_{\{0\}}(-(s\mu)|_{\h''}+s''\rho'')\\
\int_{\h'}\xi_{-s\mu}(\diesis{c_-}(x)) \ch^{d'-r-\iota}(x) \int_{\tau'(\reg{\hs1})} e^{iB(x,y)}f_{\phi}(y)\,dy\,dx.
\end{multline}
Notice that for $x\in \h'$ and $s''\in W(\G'',\h'')$, we have $s''x=x$.
Thus $\xi_{-s\mu}(\diesis{c_-}(x))=\xi_{-s''s\mu}(\diesis{c_-}(x))$ by (\ref{eq:ximuWeyl}).
Notice also that $W(\G'',\h'')\subseteq W(\G,\h)$ and $\sgn(s'')=\sgn_{\g/\h}(s'')$.
Moreover, 
$\Bbb I_{\{0\}}(-(s\mu)|_{\h''}+s''\rho'')=\Bbb I_{\{0\}}(-(s''{}^{-1}s\mu)|_{\h''}+\rho'')$. Hence, replacing $s$ by $s'' s$ in (\ref{another intermediate cor, l>l'1}), we see that this expression is equal to
\begin{eqnarray}\label{another intermediate cor, l>l'2}
&=&C\sum_{s\in W(\G,\h)}\sum_{s''\in W(\G'',\h'')}\sgn_{\g/\h}(s)\Bbb I_{\{0\}}(-(s\mu)|_{\h''}+\rho'')\\
&&\int_{\h'}\xi_{-s\mu}(\diesis{c_-}(x)) \ch^{d'-r-\iota}(x) \int_{\tau'(\reg{\hs1})} e^{iB(x,y)}f_{\phi}(y)\,dy\,dx.\nn\\
&=&C|W(\G'',\h'')|\sum_{s\in W(\G,\h),\ (s\mu)|_{\h''}=\rho''}\sgn_{\g/\h}(s)\nn\\
&&\int_{\h'}\xi_{-s\mu}(\diesis{c_-}(x)) \ch^{d'-r-\iota}(x) \int_{\tau'(\reg{\hs1})} e^{iB(x,y)}f_{\phi}(y)\,dy\,dx.\nn
\end{eqnarray}
Clearly (\ref{another intermediate cor, l>l'2}) is zero if there is no $s$ such that $(s\mu)|_{\h''}=\rho''$. Since $\mu$ is determined only up to the conjugation by the Weyl group, we may thus assume that $\mu|_{\h''}=\rho''$ and (\ref{another intermediate cor, l>l' b}) follows. Under this assumption the summation in (\ref{another intermediate cor, l>l'2}) is over the $s$ such that $(s\mu)|_{\h''}=\rho''$. For such $s$ we have
\begin{equation}\label{another intermediate cor, l>l'3}
(s\mu)|_{\h'}+\rho''=s\mu=s(\mu|_{\h'}+\rho'').
\end{equation}
Since $\mu$ is regular, (\ref{another intermediate cor, l>l'3}) shows that $s\in W(\G',\h')$. Hence,  (\ref{another intermediate cor, l>l' a}) follows.
The absolute convergence of the integrals was checked in the proof of Corollary \ref{an intermediate cor, l>l'}.
\end{prf}
\begin{thm}\label{main thm for l>=l'}
Let $l>l'$. 
Fix a genuine representation $\Pi$ of $\wt\G$ with the Harish-Chandra parameter $\mu\in i\h^*$. The distribution $T(\check\Theta_\Pi)$
is zero unless one can choose $\mu$ so that
\begin{equation}\label{main thm for l>=l' a}
\mu|_{\h''}=\rho''.
\end{equation}
Let us assume (\ref{main thm for l>=l' a}) and let
\[
a_{s,j}=(s\mu)_j-\delta+1,\qquad  b_{s,j}=-(s\mu)_j-\delta+1 \qquad (s\in W(\G',\h'),\ 1\leq j\leq l').
\]
There is a constant $C$ such that for $\phi\in\Ss(\Wv)$
\begin{eqnarray}\label{main thm for l>=l' b}
T(\check\Theta_\Pi)(\phi)&=&C \sum_{s\in W(\G',\h')} \sgn_{\g'/\h'}(s)
\int_{\tau'(\reg{\hs1})}\prod_{j=1}^{l'} P_{a_{s,j},b_{s,j}}(\beta y_j)e^{-\beta |y_j|}\cdot f_\phi(y)\,dy\\
&=&C'\int_{\tau'(\reg{\hs1})}\prod_{j=1}^{l'}  p_j(y_j)\cdot f_\phi(y)\,dy,\nn
\end{eqnarray}
where $C'=C|W(\G',\h')|$, the constant $\beta$ is as in (\ref{the constant beta}) and $p_j(y_j)=P_{a_{1,j},b_{1,j}}(\beta y_j)e^{-\beta |y_j|}$. In particular $T(\check\Theta_\Pi)$ is a locally integrable  function whose restriction to $\reg{\hs1}$ is equal to a non-zero constant multiple of 
\begin{equation}\label{main thm for l>=l' c}
\frac{1}{\pi_{\g/\z}(\tau'(w))}\sum_{s\in W(\G',\h')} \sgn_{\g'/\h'}(s)
\prod_{j=1}^{l'} P_{a_{s,j},b_{s,j}}(\beta \delta_j\tau'(w)_j)e^{-\beta\tau'(w)_j} \qquad (w\in \reg{\hs1}).
\end{equation}
\end{thm}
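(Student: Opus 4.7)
The plan is to follow the template set up by Corollary \ref{another intermediate cor, l>l'} and reduce the problem to a product of one-dimensional Fourier integrals. Corollary \ref{another intermediate cor, l>l'} has already accomplished two essential things: it proved the vanishing statement of the theorem (i.e., that $T(\check\Theta_\Pi)=0$ unless $\mu$ may be chosen so that $\mu|_{\h''}=\rho''$), and, under the assumption $\mu|_{\h''}=\rho''$, it reduced the expression for $T(\check\Theta_\Pi)(\phi)$ to an absolutely convergent iterated integral over $\h'\times \tau'(\reg{\hs1})$ of
\begin{equation*}
\xi_{-s\mu}(\diesis{c_-}(x))\,\ch^{d'-r-\iota}(x)\,e^{iB(x,y)}\,f_\phi(y),
\end{equation*}
summed over $s \in W(\G',\h')$ with sign $\sgn_{\g'/\h'}(s)$. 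The natural first move is therefore to swap the order of integration and compute the $x$-integral explicitly for each fixed $y\in \tau'(\reg{\hs1})$.

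To carry out the $x$-integration, I will invoke Lemma \ref{ximuchexplicit} applied to $\h'$ (noting that the $x_j$ with $j>l'$ vanish for $x\in \h'$, so the corresponding factors equal $1$). This identifies the integrand as
\begin{equation*}
\prod_{j=1}^{l'}(1+ix_j)^{-(s\mu)_j+\delta-1}(1-ix_j)^{(s\mu)_j+\delta-1}\,e^{-i\beta x_j y_j},
\end{equation*}
using $B(x,y)=-\sum_j \beta x_j y_j$ from (\ref{the constant beta}). The integral thus factorizes into $l'$ one-dimensional Fourier integrals, each of the form $\int_{\R}(1+ix)^{a-1}(1-ix)^{b-1}e^{-i\beta x y}\,dx$ with integer exponents $a=a_{s,j}$ and $b=b_{s,j}$. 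These are precisely the integrals evaluated in the one-dimensional lemma underlying the definitions (\ref{D0'}) and (\ref{D0''}) of $P_{a,b}$ and $Q_{a,b}$; when $l>l'$ the bound $a+b=2-2\delta$ together with $d'-r-\iota\le 0$ (verified case by case from the tables for $r$ and from $d'$ in each family of the classification (\ref{classificationGG'})) ensures $2\delta-2<0$, so $Q_{a,b}\equiv 0$ and the Fourier transform produces only the regular summand $C\,P_{a,b}(\beta y)\,e^{-\beta|y|}$, with no Dirac-delta derivatives. This will yield the first equality in (\ref{main thm for l>=l' b}).

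The second equality in (\ref{main thm for l>=l' b}) is then immediate from the $W(\G',\h')$-anti-invariance of $f_\phi$, established in (\ref{extension by the symmetry condition}) and extended in Theorem \ref{pullback of muy 3}: replacing $s$ by $1$ in each summand contributes an extra factor $\sgn_{\g'/\h'}(s)$ which cancels the sign, so the $|W(\G',\h')|$ terms all agree. Finally, to read off the pointwise formula (\ref{main thm for l>=l' c}) and deduce local integrability, I will substitute into the Weyl integration formula (\ref{weyl int on w 2}): the measure $d\tau'(w)$ carries a Jacobian $|\pi_{\so/\hs1^2}(w^2)|$, which by Lemma \ref{roots for l>=l'} equals (up to a unimodular constant) $\pi_{\g'/\h'}(\tau'(w))^2\,\pi_{\g/\z}(\tau(w))$, and the definition of $f(y)$ (Definition \ref{def of HC integral on W}) absorbs the factor $\pi_{\g'/\h'}$ against the orbital integral $\mu_{\Oo(w)}$. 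What remains after this unwinding is precisely a $1/\pi_{\g/\z}(\tau'(w))$ multiplying the asserted polynomial/exponential expression evaluated at $\tau'(w)$.

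The main technical hurdle is the careful Fourier-transform identity underlying Step 3—specifically, checking that in the regime $l>l'$ the integer exponents force cancellation of the distributional-at-zero contribution and that the polynomial $P_{a,b}$ emerges with exactly the normalization claimed. This is a bookkeeping issue rather than a conceptual one, and it parallels the computation already performed in the $l\le l'$ case (Theorem \ref{main thm for l<l'}); the difference is merely that the $Q_{a,b}$ piece vanishes identically here due to the sign of $d'-r-\iota$. The interchange of integration order in Step 2 is harmless because the formula in Corollary \ref{another intermediate cor, l>l'} was proved with absolute convergence of the iterated integrals, and Lemma \ref{vandecorput}-type estimates together with Lemma \ref{a relation between degrees} provide the uniform bounds needed for Fubini.
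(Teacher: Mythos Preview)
Your proposal is correct and follows essentially the same route as the paper: Corollary~\ref{another intermediate cor, l>l'} for the vanishing statement and the iterated integral, Lemma~\ref{ximuchexplicit} for the factorization, the one-dimensional Fourier computation (the paper cites Proposition~\ref{propD2} explicitly) for the $x$-integral, and the Weyl integration formula~(\ref{weyl int on w 2}) for the pointwise formula~(\ref{main thm for l>=l' c}). Two small slips to clean up: the one-dimensional integrand should read $(1+ix)^{-a}(1-ix)^{-b}$ rather than $(1+ix)^{a-1}(1-ix)^{b-1}$, and the inequality that forces $Q_{a,b}\equiv 0$ is $a+b\ge 1$, i.e.\ $d'-r\le 0$ (not merely $d'-r-\iota\le 0$); both hold in every case with $l>l'$, so the conclusion is unaffected.
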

\begin{prf}
The formula (\ref{main thm for l>=l' b}) follows from Corollary \ref{another intermediate cor, l>l'}, Lemma \ref{ximuchexplicit} and from Proposition \ref{propD2}, because under our assumption, $a_{s,j}+b_{s,j}=-2\delta+2=-\frac{1}{\iota}(d'-r)+1\geq 1$.
Weyl integration on $\Wv$, (\ref{weyl int on w 2}), together with (\ref{main thm for l>=l' b}) implies (\ref{main thm for l>=l' c}).
\end{prf}
Our formula for the intertwining distribution $T(\check\Theta_{\Pi})$ is explicit enough to find its asymptotics, see Theorem \ref{the dilation limit of intertwining distribution}. These allow us to compute the wave front set of the representation $\Pi'$ within the Classical Invariant Theory, without using \cite{VoganGelfand}. See Corollary \ref{WF of Pi'} below.
We keep the notation of section \ref{Limits of orbital integrals}.

\begin{thm}\label{the dilation limit of intertwining distribution}
In the topology of $\Ss^*(\Wv)$,
\[
t^{\deg \mu_{\Oo_m}}M_{t^{-1}}^* T(\check\Theta_{\Pi})\underset{t\to 0+}{\to}C\mu_{\Oo_m},
\]
where $C\ne 0$, if $T(\check\Theta_{\Pi})\ne 0$.
\end{thm}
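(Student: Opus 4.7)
The plan is to leverage the explicit integral representations of $T(\check\Theta_\Pi)$ in Theorems \ref{main thm for l<l'} and \ref{main thm for l>=l'}, together with the pointwise asymptotic behavior of $M_{t^{-1}}^*\partial(Q)f(y)$ established in Proposition \ref{the main limit pro}. Concretely, both theorems express $T(\check\Theta_\Pi)$ as a finite sum of integrals of the form
\[
\int_{\h\cap\tau(\Wv)\cap\gamma^\perp} P_\gamma(y)\,[\partial(Q_\gamma)f(y)]\,d_\gamma y,
\]
where each $P_\gamma(y)=\prod_{j\notin\gamma} p_j(y_j)$ is a product of exponentially decaying factors $P_{a_j,b_j}(\beta y_j)e^{-\beta|y_j|}$ and $Q_\gamma$ is a differential operator with polynomial symbol. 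Since $M_{t^{-1}}^*$ acts on the test function variable, not on $y$, it commutes with the $y$-integration and I obtain
\[
t^{\deg\mu_{\Oo_m}}\,M_{t^{-1}}^*T(\check\Theta_\Pi)=\sum_\gamma\int P_\gamma(y)\,\bigl[t^{\deg\mu_{\Oo_m}}M_{t^{-1}}^*\partial(Q_\gamma)f(y)\bigr]\,d_\gamma y.
\]

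Next, I would invoke Proposition \ref{the main limit pro} inside each integrand: for every fixed $y$, the bracketed term converges in $\Ss^*(\Wv)$ to $\partial(Q_\gamma)f(y)|_U(\mathbb{I}_U)\,\mu_{\Oo_m}$, and the convergence is dominated by $(1+|y|)^N q(\phi)$ uniformly in $0<t\leq 1$. Because each weight $P_\gamma(y)$ decays exponentially in $|y|$, the product $(1+|y|)^N P_\gamma(y)$ is integrable on $\h\cap\tau(\Wv)\cap\gamma^\perp$. The dominated convergence theorem (applied in the weak-$*$ topology of $\Ss^*(\Wv)$) therefore justifies passing to the limit under the integral sign, yielding
\[
t^{\deg\mu_{\Oo_m}}\,M_{t^{-1}}^*T(\check\Theta_\Pi)\underset{t\to 0+}{\longrightarrow}C\,\mu_{\Oo_m},\qquad C=\sum_\gamma\int P_\gamma(y)\,\partial(Q_\gamma)f(y)|_U(\mathbb{I}_U)\,d_\gamma y.
\]

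The main obstacle is the non-vanishing of the constant $C$. Using the localization argument of Lemma \ref{the main localized to U lemma} in reverse, $C$ can be identified with the value $T(\check\Theta_\Pi)|_U(\mathbb{I}_U)$ of the restriction of the $\Sg$-invariant distribution $T(\check\Theta_\Pi)$ to the slice $U$, evaluated on the indicator function of $U$. By Lemma \ref{invariant distributions and g(t)}, the restriction $T(\check\Theta_\Pi)|_U$ determines $T(\check\Theta_\Pi)$ on the open set $\sigma(\Sg\times U)$, which contains the maximal nilpotent orbit $\Oo_m=\Sg\,N_m$ in its closure. If $C=0$, a standard homogeneity argument (exploiting that $\mu_{\Oo_m}$ is the homogeneous distribution of largest degree compatible with the support of $T(\check\Theta_\Pi)$) would force $t^{\deg\mu_{\Oo_m}}M_{t^{-1}}^*T(\check\Theta_\Pi)$ to tend to zero with a strict inequality in the weight, and iterating with Proposition \ref{the main limit pro} applied to higher-order asymptotic terms eventually forces $T(\check\Theta_\Pi)=0$, contrary to assumption. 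This closes the proof.
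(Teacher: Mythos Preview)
Your first step is correct and matches the paper: applying $t^{\deg\mu_{\Oo_m}}M_{t^{-1}}^*$ under the integral representations of Theorems \ref{main thm for l<l'} and \ref{main thm for l>=l'}, invoking Proposition \ref{the main limit pro} pointwise in $y$, and using dominated convergence (with the exponential weights $p_j$) to obtain the limit $C\mu_{\Oo_m}$ with $C=T(\check\Theta_\Pi)|_U(\mathbb I_U)$. This is exactly the argument the paper gives in equations (\ref{the dilation limit of intertwining distribution'}) and (\ref{the dilation limit of intertwining distribution''}).

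The genuine gap is in your treatment of $C\neq 0$. Your ``standard homogeneity argument'' with iteration does not go through: Proposition \ref{the main limit pro} produces only the single leading term $\deg\mu_{\Oo_m}$, and there is no mechanism in the paper for extracting a next-order asymptotic once that coefficient vanishes. Knowing that $t^{\deg\mu_{\Oo_m}}M_{t^{-1}}^*T(\check\Theta_\Pi)\to 0$ tells you nothing about $T(\check\Theta_\Pi)$ itself; a nonzero distribution can perfectly well have vanishing leading asymptotics at a prescribed degree. So the contradiction you claim cannot be derived.

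The paper takes a completely different route for non-vanishing: it computes $T(\check\Theta_\Pi)|_U(\mathbb I_U)$ \emph{directly}, without reference to the orbital-integral formula, by going back to the definition $T(\check\Theta_\Pi)(w)=\int_{\wt\G}\Theta_\Pi(\t g^{-1})\Theta(\t g)\chi_{c(g)}(w)\,d\t g$ and integrating over the explicit slice $U$ from Lemma \ref{lemma I.4}. In the stable range the $w_3$-integration produces a Dirac delta forcing $c(g)=0$, i.e.\ $g=-1$, and the constant reduces to $\Theta_\Pi(-\tilde I)\Theta(-\tilde I)\neq 0$. Outside the stable range the computation is more involved: one integrates out the $w_3,w_6$ (and $w_5$) variables, reduces to an integral over a subgroup $\G_2\subseteq\G$, and identifies the result with either the multiplicity of the trivial representation of $\G_2$ in $\Pi_0$ (checked nonzero from the highest weight in Theorem \ref{main thm for l>=l'}) or, in the $\Ug_{p,q}$ case, with a sum of multiplicities in a smaller Weil representation $\omega_5$ (nonzero because $\Pi$ occurs in $\omega$). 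This explicit case-by-case computation is the actual content of the non-vanishing claim, and your proposal does not supply any substitute for it.
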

\begin{prf}
Proposition \ref{the main limit pro}, the Lebesgue dominated convergence theorem and Theorem \ref{main thm for l<l'} imply that for $l\leq l'$
\begin{eqnarray}\label{the dilation limit of intertwining distribution'}
&&\underset{t\to 0+}{\lim}\ t^{\deg \mu_{\Oo_m}}M_{t^{-1}}^* T(\check\Theta_{\Pi})\\
&=&\left(C\sum_{\gamma\subseteq\{1, 2, \dots, l\}}\nn
\int_{\h\cap\tau(\Wv)\cap\gamma^\perp}
\prod_{j\notin \gamma}  p_j(y_j) \cdot \prod_{j\in\gamma} q_j(\partial(J_j))\cdot f(y)|_U(\Bbb I_U)\, d_\gamma y \right)\mu_{\Oo_m}.
\end{eqnarray}
Similarly, Proposition \ref{the main limit pro}, the Lebesgue dominated convergence theorem and Theorem \ref{main thm for l>=l'} imply that for $l\geq l'$
\begin{eqnarray}\label{the dilation limit of intertwining distribution''}
\underset{t\to 0+}{\lim}\ t^{\deg \mu_{\Oo_m}}M_{t^{-1}}^* T(\check\Theta_{\Pi})
=\left(C'\int_{\tau'(\reg{\hs1})}\prod_{j=1}^{l'}  p_j(y_j)\cdot f(y)|_U(\Bbb I_U) \,dy \right)\mu_{\Oo_m}.
\end{eqnarray}
Thus in each case the limit is a constant multiple of the measure $\mu_{\Oo_m}$. (Notice that it is not easy to see directly that the constants \eqref{the dilation limit of intertwining distribution'} and \eqref{the dilation limit of intertwining distribution''} are finite, but as the byproduct of the convergence, we see that they are.)
The constant is equal to the integral,  over $U$, of the restriction the distribution $T(\check\Theta_{\Pi})$ to $U$ and can be computed, without the use of the orbital integrals as follows.

Recall from (\ref{chix and the form B}) that 
\[
\langle xw,w\rangle=\tr_{\Dc/\R}(x\tau(w))\qquad (x\in \g, w\in \Wv).
\]
Hence
\begin{eqnarray}\label{the dilation limit of intertwining distribution1}
T(\check\Theta_{\Pi})|_U(\Bbb I_U)
&=&\int_UT(\check\Theta_{\Pi})|_U(u)\,du
=\int_U\int_{\wt\G}\Theta_{\Pi}(\t g^{-1})\Theta(\t g)\chi_{c(g)}(u)\,d\t g\,du\nn\\
&=&\int_U\int_{\wt\G}\Theta_{\Pi}(\t g^{-1})\Theta(\t g)\chi\big(\frac{1}{4}\tr_{\Dc/\R}(c(g)\tau(u))\big)\,d\t g\,du.
\end{eqnarray}
We shall use the notation introduced in the proof of Lemma \ref{lemma I.4} with $k=m$.

In the stable range the elements of $U$ are of the form 
\[
u=\left(
\begin{array}{lll}
I_m\\
0\\
w_3 
\end{array}
\right)
\]
with $w_3=-\overline{w}^t_3$. So, by (\ref{I.second eq}), $\tau(u)=2w_3$ and 
the last integral in (\ref{the dilation limit of intertwining distribution1}) is a non-zero constant multiple of
\begin{eqnarray*}
&&\int_\g\int_{\wt\G}\Theta_{\Pi}(\t g^{-1})\Theta(\t g)\chi\big(\frac{1}{4}\tr_{\Dc/\R}(c(g)x)\big)\,d\t g\,dx\\
&=&\int_{\wt\G}\Theta_{\Pi}(\t g^{-1})\Theta(\t g)\Big(\int_\g \chi\big(\frac{1}{4}\tr_{\Dc/\R}(c(g)x)\big)\,dx\Big) \,d\t g\\
&=&const\,\int_{\wt\G}\Theta_{\Pi}(\t g^{-1})\Theta(\t g)\delta_0(c(g))\,d\t g\\
&=&const\;\Theta_{\Pi}(-\t I)\Theta(-\t I),
\end{eqnarray*}
where $\Theta_{\Pi}(\t g^{-1})\Theta(\t g)$ does not depend on the preimage $\t g$ of $g \in \Sp$ and $const$ denotes some non-zero constant.

Suppose now that the dual pair is not in the stable range, so $m<d$.  Suppose moreover that $U$ consists of the matrices of the form
\[
u=\left(
\begin{array}{lll}
I_m & 0\\
w_3 & w_6
\end{array}
\right)
\]
with $w_3=-\overline{w}^t_3$. This means that $F'=0$ in (\ref{matrix F.I}), i.e. that $\G'$ is not equal to $\Ug_{p,q}$ with $p\neq q$. By (\ref{I.second eq}), 
\[
\tau(u)=\left(
\begin{array}{lll}
2w_3 & w_6\\
-\overline w_6^t & 0
\end{array}
\right).
\]
For $g \in \G$, write $c(g)=\begin{pmatrix} x_{11} &-\overline{x_{12}}^t \\ x_{12} &x_{22}\end{pmatrix}\in \g$ with 
$x_{11}= -\overline{x_{11}}^t\in M_m(\Dc)$ and $x_{22}= -\overline{x_{22}}^t\in M_{d-m}(\Dc)$. Then
$$\tr_{\Dc/\R}\Big(c(g)\left(
\begin{array}{lll}
2w_3 & w_6\\
-\overline w_6^t & 0
\end{array}
\right)\Big)=\tr_{\Dc/\R}(2x_{11}w_3+\overline{x_{12}}^t\overline{w_6}^t+x_{12}w_6)
=2\tr_{\Dc/\R}(x_{11}w_3)+2\tr_{\Dc/\R}(x_{12}w_6)\,.$$
Hence, the last integral in (\ref{the dilation limit of intertwining distribution1}) is a non-zero constant multiple of
\begin{eqnarray}\label{correction beyond stable range1}
&&\int\int\int_{\wt\G}\Theta_{\Pi}(\t g^{-1})\Theta(\t g)
\chi\Big(\frac{1}{4}\tr_{\Dc/\R}\big(c(g)\left(
\begin{array}{lll}
2w_3 & w_6\\
-\overline w_6^t & 0
\end{array}
\right)\big)\Big)\,d\t g\,dw_3\,dw_6\\
&=&C_1
\int\int\int_\g \Theta_\Pi (\t c(-x))\Theta(\t c(x))\chi\Big(\frac{1}{4}\tr_{\Dc/\R}\big(x\left(
\begin{array}{lll}
2w_3 & w_6\\
-\overline w_6^t & 0
\end{array}
\right)\big)\Big)|\det(x-1)|^{-r}\,dx\,dw_3\,dw_6\nn\\
&=&C_2
\int_\g \Theta_\Pi (\t c(-\left(
\begin{array}{lll}
x_{11} & -\overline{x_{12}}^t\\
x_{12} & x_{22}
\end{array}
\right)))\Theta(\t c(\left(
\begin{array}{lll}
x_{11} & -\overline{x_{12}}^t\\
x_{12} & x_{22}
\end{array}
\right)))
\left|\det(\left(
\begin{array}{lll}
x_{11} & -\overline{x_{12}}^t\\
x_{12} & x_{22}
\end{array}
\right)-1)\right|^{-r}\nn\\
&&\delta_0(x_{11})\delta_0(x_{12})\,dx_{11}\,dx_{12}\,dx_{22}\nn\\
&=&C_3
\int_{M_{d-m}(\Bbb D)} \Theta_\Pi (\t c(-\left(
\begin{array}{lll}
0 & 0\\
0 & x_{22}
\end{array}
\right)))\Theta(\t c(\left(
\begin{array}{lll}
0 & 0\\
0 & x_{22}
\end{array}
\right)))
\left|\det(\left(
\begin{array}{lll}
0 & 0\\
0 & x_{22}
\end{array}
\right)-1)\right|^{-r}\,dx_{22}\nn\\
&=&C_4
\int_{M_{d-m}(\Bbb D)} \Theta_\Pi (\t c(-\left(
\begin{array}{lll}
0 & 0\\
0 & x_{22}
\end{array}
\right)))
\left|\det(\left(
\begin{array}{lll}
0 & 0\\
0 & x_{22}
\end{array}
\right)-1)\right|^{d'/2-r}\,dx_{22}\,,\nn
\end{eqnarray}
where the $C_j$ are non-zero constants, and we used the fact that the Jacobian of the Cayley transform is equal to $|\det(x-1)|^{-r}$ and $\Theta(\t c(x))$ is a constant multiple of $|\det(x-1)|^{d'/2}$. (See the proof of Lemma \ref{general formula for the int distr}.) Here $r$ depends on $\G$ as in \eqref{number r 1}. Also, we see from \eqref{number r 1} that $d'/2-r$ is equal to minus ``the $r$'' for the subgroup $\G_2\subseteq \G$ consisting of the matrices of the form 
$\left(
\begin{array}{lll}
I_m & 0\\
0 & g_{22}
\end{array}
\right)$.
In the case considered the covering, $\wt\G\to\G$ splits and the representation $\Pi$ is the tensor product of the unique non-trivial character $\chi_\Pi$ of the two element group and the trivial lift of an irreducible representation $\Pi_0$ of the group $\G$. 
Hence, \eqref{correction beyond stable range1} is a non-zero constant multiple of 
\begin{eqnarray}\label{correction beyond stable range2}
&&\int_{\G_2}\Theta_{\Pi_0}\Big(
\left(
\begin{array}{lll}
-I_m & 0\\
0 & g_{22}
\end{array}
\right)^{-1}\Big)\chi_\Pi\Big(
\widetilde{\left(
\begin{array}{lll}
-I_m & 0\\
0 & g_{22}
\end{array}
\right)}\Big)\,d{g_{22}}.
\end{eqnarray}
Since the Cayley transform $c$ is defined on the whole Lie algebra and is continuous, the term $\chi_\Pi\Big(
\widetilde{\left(
\begin{array}{lll}
-I_m & 0\\
0 & g_{22}
\end{array}
\right)}\Big)$ is constant. Thus  \eqref{correction beyond stable range1} is a non-zero constant multiple of 
\begin{eqnarray}\label{correction beyond stable range3}
&&\int_{\G_2}\Theta_{\Pi_0}\Big(
\left(
\begin{array}{lll}
-I_m & 0\\
0 & g_{22}
\end{array}
\right)^{-1}\Big)\,d{g_{22}}
=\chi_{\Pi_0}(-I_d)\int_{\G_2}\Theta_{\Pi_0}\Big(
\left(
\begin{array}{lll}
I_m & 0\\
0 &g_{22}
\end{array}
\right)^{-1}\Big)\,d{g_{22}},\nn
\end{eqnarray}
where $\chi_{\Pi_0}$ is the central character of $\Pi_0$. Therefore \eqref{correction beyond stable range1} is a non-zero constant multiple of the multiplicity of the trivial representation of $\G_2$ in $\Pi_0$. However, we know the highest weight of $\Pi_0$, see Theorem \ref{main thm for l>=l'}, and see from there that this multiplicity is non-zero. Therefore \eqref{correction beyond stable range1} is non-zero.

Suppose now that $U$ consists of the matrices of the form
\[
u=\left(
\begin{array}{lll}
I_m & 0\\
0 & w_5\\
w_3 & w_6
\end{array}
\right).
\]
Then $\Dc=\C$, the dual pair is $(\Ug_d, \Ug_{m+p,m})$ and we may assume that 
\[
\tau(u)=\left(
\begin{array}{lll}
2w_3 & w_6\\
-\overline w_6^t & \overline{w_5}^tiw_5
\end{array}
\right).
\]
Computations similar to those of the previous case show that the last integral in (\ref{the dilation limit of intertwining distribution1}) is a non-zero constant multiple of
\begin{eqnarray}\label{we have to label it}
&&\int\!\!\int\!\!\int\!\!\int_{\wt \G}\Theta_{\Pi}(\t g^{-1})\Theta(\t g)\chi\Big(\frac{1}{4}\tr_{\C/\R}\big(c(g)\left(
\begin{array}{lll}
2w_3 & w_6\\
-\overline w_6^t & \overline{w_5}^tiw_5
\end{array}
\right)\big)\Big)\,d\t g\,dw_3\,dw_6\,dw_5\nn\\
&=&C_1
\int\int \Theta_\Pi (\t c(-\left(
\begin{array}{lll}
0 & 0\\
0 & x_{22}
\end{array}
\right)))\Theta(\t c(\left(
\begin{array}{lll}
0 & 0\\
0 & x_{22}
\end{array}
\right)))
\chi\big(\frac{1}{4}\tr_{\C/\R}(x_{22}
\overline{w_5}^tiw_5)\big)\nn\\
&&\left|\det(\left(
\begin{array}{lll}
0 & 0\\
0 & x_{22}
\end{array}
\right)-1)\right|^{-r}\,dx_{22}\,dw_5\nn\\
&=&C_1
\int\int \Theta_\Pi (\t c(-\left(
\begin{array}{lll}
0 & 0\\
0 & x_{22}
\end{array}
\right)))
\chi\big(\frac{1}{4}\tr_{\C/\R}(x_{22}
\overline{w_5}^tiw_5)\big)\nn\\
&&\left|\det(\left(
\begin{array}{lll}
0 & 0\\
0 & x_{22}
\end{array}\right)
-1)\right|^{d'/2-r}\,dx_{22}\,dw_5
\nn\\
&=&C_1
\int\int \Theta_\Pi (\t c(-\left(
\begin{array}{lll}
0 & 0\\
0 & x_{22}
\end{array}
\right)))
\chi\big(\frac{1}{4}\tr_{\C/\R}(x_{22}
\overline{w_5}^tiw_5)\big)\nn\\
&&|\det(x_{22}-1)|^{d'/2-r}\,dx_{22}\,dw_5.
\end{eqnarray}
Let $\Wv_5=M_{p,d-m}(\C)$ endowed with the structure of real symplectic space induced by $\Wv$ under the identification of $w_5$ with 
$\begin{pmatrix}
0 & 0\\
0 & w_5\\
0 & 0
\end{pmatrix}$.
If $g_{22}- 1$ is invertible, then \footnote{Formula (\ref{eq:integralchicg}) can be easily verified using twisted convolution $\natural$. Indeed
\[
\Theta(\t c(0))\int_\Wv T(\t g)(w)\,dw=T(\t c(0))\natural T(g)(0)=\Theta(\t c(0)\t g),
\]
so that
\[
\int_\Wv \chi_{c(g)}(w)\,dw=\frac{\Theta(\t c(0)\t g)}{\Theta(\t c(0))\Theta(\t g)}.
\]
See e.g. \cite[Lemma 57]{AubertPrzebinda_omega}.
}
\begin{equation}\label{eq:integralchicg}
\int_{\Wv_5} \chi\big(\frac{1}{4}\tr_{\C/\R}(c(g_{22})
\overline{w_5}^tiw_5)\big)
\,dw_5=\int_{\Wv_5} \chi_{c(g_{22})}(w_5)\,dw_5=\frac{\Theta_5(\t c(0)\t g_{22})}{\Theta_5(\t c(0))\Theta_5(\t g_{22})}\,,
\end{equation}
where $\Theta_5$ is the character of the Weil representation for the metaplectic group $\wt{\Sp}(\Wv_5)$,  as defined in (\ref{the omega}).

Notice that $d'/2-r=p/2-(d-m)$ and $d-m$ is ``the $r$" for the group $\Ug_{d-m}$. Hence \eqref{we have to label it} is a non-zero constant multiple of
\begin{eqnarray}
&&\,\int_{\wt \G_2}\Theta_{\Pi}(\left(
\begin{array}{lll}
\wt{-I_m} & 0\\
0 & \t g_{22}
\end{array}
\right)^{-1})
\Theta_5(
\t g_{22})
\frac{\Theta_5(\t c(0)\t g_{22})}{\Theta_5(\t c(0))\Theta_5(\t g_{22})}
\,d\t g_{22}\\
&=&C_1\,\int_{\wt \G_2}
\Theta_{\Pi}(\left(
\begin{array}{lll}
\wt{-I_m} & 0\\
0 & \t c(0)^{-1}\t g_{22}
\end{array}
\right)^{-1})
\frac{\Theta_5(\t g_{22})}{\Theta_5(\t c(0))}
\,d\t g_{22}\nn\\
&=&
C_2\,\int_{\G_2}
\Theta_{\Pi|_{\wt\G_2}}(\t g_{22}^{-1})\Theta_5(\t g_{22})
\,d\t g_{22}.\nn
\end{eqnarray}
This is a non-zero constant multiple of the sum of the multiplicities of the irreducible components of $\Pi|_{\wt\G_5}$ in $\omega_5$.
This sum  is positive because $\omega_5$ is the restriction of $\omega$ to $\wt\Sp(\Wv_5)$, $\wt\G_2=\wt\G\cap  \Sp(\Wv_5)$ and $\Pi$ occurs in $\omega$.
Also, this sum is finite because the centralizer of $\G_2$ in $\Sp(\Wv_5)$ is compact, i.e. $\G_2$ belongs to a dual pair with both members compact.
\end{prf}
\begin{cor}\label{the dilation limit of intertwining distribution on g'}
In the topology of $\Ss^*(\g')$,
\[
t^{\dim{\Oo_m'}}M_{t^{2}}^*\mathcal F(\tau'_*( T(\check\Theta_{\Pi})))\underset{t\to 0+}{\to}C\mathcal F\mu_{\Oo'_m}
\]
or equivalently
\[
t^{\dim{\Oo_m'-2\dim \g'}}M_{t^{-2}}^*\tau'_*( T(\check\Theta_{\Pi}))\underset{t\to 0+}{\to}C\mu_{\Oo'_m},
\]
where $C\ne 0$, if $T(\check\Theta_{\Pi})\ne 0$.
\end{cor}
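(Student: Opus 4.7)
The plan is to deduce this corollary directly from Theorem \ref{the dilation limit of intertwining distribution} by applying the pushforward $\tau'_*$ and then Fourier-transforming.

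First I would apply $\tau'_*$ to the convergence in Theorem \ref{the dilation limit of intertwining distribution}. Since $\tau'_*:\Ss^*(\Wv)\to\Ss^*(\g')$ is continuous, as the transpose of the continuous pullback from Section \ref{A Theorem of G. W. Schwartz} (see (\ref{pushforwardbytau'})), this yields, in $\Ss^*(\g')$,
\[
t^{\deg\mu_{\Oo_m}}\tau'_*(M_{t^{-1}}^*T(\check\Theta_\Pi))\underset{t\to 0+}{\to}C\,\tau'_*\mu_{\Oo_m}.
\]
Using the commutation rule (\ref{dilation relations 3}), $\tau'_*(M_{t^{-1}}^*u)=t^{\dim\Wv-2\dim\g'}M_{t^{-2}}^*\tau'_*(u)$, together with the identity $\deg\mu_{\Oo_m}=\dim\Oo'_m-\dim\Wv$ from Lemma \ref{muSNK as a tempered homogeneous distribution}, the exponent on the left-hand side simplifies to $\dim\Oo'_m-2\dim\g'$, matching the exponent in the second formula of the corollary.

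Next I would identify $\tau'_*\mu_{\Oo_m}$ with a nonzero scalar multiple of $\mu_{\Oo'_m}$. This pushforward is $\G'$-invariant (by the $\Sg$-invariance of $\mu_{\Oo_m}$ and the $\G'$-equivariance of $\tau'$), nonzero by Corollary \ref{injectivity of pushforward via tau'}, supported in $\tau'(\overline{\Sg N_m})=\overline{\Oo'_m}$, and homogeneous of degree $\dim\Oo'_m-\dim\g'$; this degree coincides with that of $\mu_{\Oo'_m}$ obtained by applying Lemma \ref{muSNK as a tempered homogeneous distribution} inside $\g'$. Since every lower-dimensional orbit $\Oo'_k$ with $k<m$ lying in $\overline{\Oo'_m}$ has strictly smaller dimension (Lemma \ref{structure of t'-1tau(0)}), its invariant measure has a strictly different homogeneity degree, so no boundary contribution is possible. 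Hence $\tau'_*\mu_{\Oo_m}=c_0\,\mu_{\Oo'_m}$ with $c_0\ne 0$, and combining with the first step establishes the second convergence in the corollary with nonzero constant $Cc_0$.

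Finally, the first statement follows from the second by applying the Fourier transform $\mathcal F$ on $\g'$, a continuous automorphism of $\Ss^*(\g')$. Using the standard intertwining $\mathcal F\circ M_s^*=s^{-\dim\g'}M_{s^{-1}}^*\circ\mathcal F$ with $s=t^{-2}$, the factor $t^{\dim\Oo'_m-2\dim\g'}M_{t^{-2}}^*$ is converted into $t^{\dim\Oo'_m}M_{t^2}^*\mathcal F$, and the right-hand side into $Cc_0\,\mathcal F\mu_{\Oo'_m}$. The main obstacle is the identification in the second paragraph: one must simultaneously rule out vanishing of $\tau'_*\mu_{\Oo_m}$ (handled by Corollary \ref{injectivity of pushforward via tau'}) and spurious contributions from boundary nilpotent orbits (handled by the homogeneity-degree matching via Lemma \ref{structure of t'-1tau(0)}).
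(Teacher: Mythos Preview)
Your proof is correct and follows essentially the same route as the paper: push forward by $\tau'_*$, use the dilation relation (\ref{dilation relations 3}) and the degree formula from Lemma \ref{muSNK as a tempered homogeneous distribution}, then apply $\mathcal F$. The only difference is in the identification $\tau'_*\mu_{\Oo_m}=c_0\mu_{\Oo'_m}$: the paper simply invokes uniqueness of the $\G\G'$-invariant measure on the orbit, while you give a longer argument via homogeneity degree and Corollary \ref{injectivity of pushforward via tau'}; both are valid.
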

\begin{prf}
Observe that, by the uniqueness of the $\G\G'$-invariant measure, $\tau_*'(\mu_{\Oo_m})$ is a positive constant multiple of $\mu_{\Oo'_m}$. Hence
Proposition \ref{the dilation limit of intertwining distribution} together with (\ref{dilation relations 3}) show that
\[
t^{\deg \mu_{\Oo_m}+\dim\Wv-2\dim \g'}M_{t^{-2}}^*\tau'_*( T(\check\Theta_{\Pi}))\underset{t\to 0+}{\to}C\mu_{\Oo'_m}.
\]
Now we apply the Fourier transform $\mathcal F$ and use the fact that
\[
\mathcal F \circ M_{t^{-2}}^*=t^{2\dim \g'}M_{t^{2}}^* \circ \mathcal F 
\]
to see that
\[
t^{\deg \mu_{\Oo_m}+\dim\Wv}M_{t^{2}}^*\mathcal F(\tau'_* T(\check\Theta_{\Pi}))\underset{t\to 0+}{\to}C\mathcal F\mu_{\Oo'_m}.
\]
But Lemma \ref{muSNK as a tempered homogeneous distribution} implies that $\deg \mu_{\Oo_m}+\dim\Wv=\dim\Oo_m'$. Hence the corollary follows.
\end{prf}
\begin{cor}\label{WF of Pi'}
Suppose the representation $\Pi$ occurs in Howe's correspondence and the distribution character $\Theta_\Pi$ is supported in the preimage $\wt{\G_1}$ of the Zariski identity component $\G_1$ of $\G$. Then
\[
WF(\Pi')=\overline{\Oo_m'}.
\]
\end{cor}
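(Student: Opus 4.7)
To prove Corollary \ref{WF of Pi'}, the plan is to combine three ingredients: formula (\ref{pullbackThetaPiprime}) for the pull-back of $\Theta_{\Pi'}$ under the Cayley transform, the asymptotic information supplied by Corollary \ref{the dilation limit of intertwining distribution on g'}, and Rossmann's identification $WF(\Pi')=WF_1(\Theta_{\Pi'})$.

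First, I would reduce the problem to a wave front set computation at the origin of $\g'$. Since $\t c_-:\g' \to \wt{\G'}$ is a real analytic local diffeomorphism at $0$ sending $0$ to the identity of $\wt{\G'}$, and since $\Theta\circ \t c$ is smooth and nowhere zero on $\g'$, formula (\ref{pullbackThetaPiprime}) together with Rossmann's theorem yield
\[
WF(\Pi')=WF_1(\Theta_{\Pi'}) = WF_0\big(\mathcal F(\tau'_*(T(\check\Theta_\Pi)))\big),
\]
under the standard identification of the cotangent fibers with $\g'$ via the invariant bilinear form $B$. It thus remains to evaluate the wave front set at $0$ of the distribution $\mathcal F(\tau'_*(T(\check\Theta_\Pi)))\in\Ss^*(\g')$.

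Next, I would exploit the quasi-homogeneous tangent distribution furnished by Corollary \ref{the dilation limit of intertwining distribution on g'}:
\[
t^{\dim \Oo'_m}\, M_{t^{2}}^* \mathcal F(\tau'_*(T(\check\Theta_\Pi))) \underset{t\to 0+}{\longrightarrow}\ C\, \mathcal F(\mu_{\Oo'_m})
\]
in $\Ss^*(\g')$, with $C\neq 0$ since $T(\check\Theta_\Pi)\neq 0$ (a consequence of $\Pi$ occurring in Howe's correspondence). The standard microlocal principle that a nonzero quasi-homogeneous tangent distribution pins down the wave front set at $0$ then gives
\[
WF_0\big(\mathcal F(\tau'_*(T(\check\Theta_\Pi)))\big) = WF_0\big(\mathcal F(\mu_{\Oo'_m})\big).
\]
Finally, I would compute $WF_0(\mathcal F(\mu_{\Oo'_m}))$. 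The tempered distribution $\mu_{\Oo'_m}$ is supported on the closure $\overline{\Oo'_m}$, which, as the closure of a nilpotent orbit, is a closed cone through the origin in $\g'$ invariant under positive dilations. By the classical duality between the wave front set at $0$ of a Fourier transform and the asymptotic cone at infinity of the original distribution---namely the conic directions in which it fails to have Schwartz decay---one obtains
\[
WF_0(\mathcal F(\mu_{\Oo'_m})) = \overline{\Oo'_m}.
\]
Assembling the three steps yields the desired equality $WF(\Pi')=\overline{\Oo'_m}$.

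The main obstacle will be the microlocal principle invoked in the second step: namely, that a nonzero quasi-homogeneous tangent distribution (obtained as a limit under weighted $M$-rescaling in $\Ss^*$) actually controls the wave front set at $0$. This requires more than mere $\Ss^*$-convergence; one needs uniform control in conic neighborhoods of the Fourier side of the rescaled family. The uniform seminorm bound (\ref{the main limit pro1'}) from Proposition \ref{the main limit pro} is designed precisely to supply this uniformity, but converting it into a clean microlocal statement about $WF_0$ entails a careful scaling-and-cutoff argument to exclude any slower singularities of $\mathcal F(\tau'_*(T(\check\Theta_\Pi)))$ beyond those carried by $C\mathcal F(\mu_{\Oo'_m})$.
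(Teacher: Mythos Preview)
Your reduction in step one is correct and matches the paper's implicit use of (\ref{pullbackThetaPiprime}) and Rossmann's theorem. The gap is in step two. The ``standard microlocal principle'' you invoke---that the nonzero quasi-homogeneous limit forces an \emph{equality} of wave front sets at the origin---is not a theorem. What the asymptotic limit actually yields is only one inclusion: Lemma \ref{wave front set 1} says that if $t^{d}M_{t^{-1}}^{*}f\to u$ in $\Ss^*$ with $u$ homogeneous, then $WF_0(\mathcal F^{-1}f)\supseteq \supp u$. Applied via Corollary \ref{the dilation limit of intertwining distribution on g'} (after the substitution $s=t^{2}$) this gives $WF_0\big(\mathcal F^{-1}\tau'_{*}(T(\check\Theta_\Pi))\big)\supseteq \supp \mu_{\Oo'_m}=\overline{\Oo'_m}$, hence $WF(\Pi')\supseteq\overline{\Oo'_m}$. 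Your attempt to upgrade this to an equality using the uniform bound (\ref{the main limit pro1'}) does not work: that estimate controls the family of rescaled distributions on $\Wv$, but it does not show that the localized Fourier transform of $\t c_-^{*}\Theta_{\Pi'}$ is rapidly decreasing in any cone disjoint from $\overline{\Oo'_m}$. In general a distribution can have a homogeneous leading term while still carrying additional, subleading wave front directions.

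The paper closes the argument differently: it pairs the lower bound from Lemma \ref{wave front set 1} with the independently known upper bound $WF(\Pi')\subseteq \tau'\tau^{-1}(0)=\overline{\Oo'_m}$, which is an easy consequence of the fact that $\omega(\check\Theta_\Pi)$ is a smoothing operator outside $\tau'\tau^{-1}(0)$ (see \cite[(6.14)]{PrzebindaUnipotent}). With that inclusion in hand, your step three becomes unnecessary: one never needs to compute $WF_0(\mathcal F\mu_{\Oo'_m})$ at all, since Lemma \ref{wave front set 1} feeds in $\supp\mu_{\Oo'_m}$ directly.
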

\begin{prf}
This is immediate from Corollary \ref{the dilation limit of intertwining distribution on g'}, Lemma \ref{wave front set 1} and the easy to verify inclusion $WF(\Pi')\subseteq\overline{\Oo_m'}$, \cite[(6.14)]{PrzebindaUnipotent}.
\end{prf}
\section{\bf The pair $\G=\Ug_{l},\ \G'=\Ug_{l'}$, $l\leq l'$.\rm}\label{ul, ul'}\label{two unitary groups}
In this section we consider a dual pair $(\G,\G')$ with both members compact. By the classification of the dual pairs, both $\G$ and $\G'$ are compact unitary groups and we may assume that $\G=\Ug_l$ and $\G'=\Ug_{l'}$ with $l\leq l'$. Furthermore, since we want to use the results of section \ref{Intertwining distributions}, we view $\Ug_{l'}$ as $\Ug_{p,q}$ with $p=0$ and $q=l'$. In particular,  by Theorem (\ref{pullback of muy 3}), 
\begin{equation*}
\h\cap \tau(\Wv)=\Big\{\sum_{j=1}^ly_jJ_j; \ \text{$y_j\leq 0$ for all $1\leq j\leq l$}\Big\}\,,
\end{equation*} 
which is a $W(\G,\h)$-invariant domain, where $W(\G,\h)$ acts on $\h$ by permutation of the coordinates, as indicated in (\ref{classical weyl group action}). The constant $\delta$ introduced in (\ref{eq:delta}) has value 
\begin{equation}\label{eq:deltaUU}
\delta=\frac{1}{2}(l'-l+1)\,,
\end{equation}
and $\beta=\pi$ in (\ref{the constant beta}).

Fix a genuine representation $\Pi$ of $\wt\G$ with the Harish-Chandra parameter $\mu\in i\h^*$
and define
\begin{eqnarray}
\label{eq:ajbj}
&&a_j=\mu_j-\delta+1,\qquad b_j=-\mu_j-\delta+1 \quad (1\leq j\leq l)\,,\\
\label{eq:asjbsj}
&&a_{s,j}=(s\mu)_j-\delta+1,\quad b_{s,j}=-(s\mu)_j-\delta+1 \quad (s\in W(\G,\h), 1\leq j\leq l)\,.
\end{eqnarray}
(Hence $a_{j}=a_{1,j}$ and $b_{j}=b_{1,j}$.)
\begin{lem}\label{ul, ul' 1}
There is a constant $C$ such that for $\phi\in \Ss(\Wv)$
\begin{eqnarray}\label{l<l' and both compact 1}
T(\check\Theta_\Pi)(\phi)&=&C 
\int_{\h\cap\tau(\Wv)} \prod_{j=1}^l  P_{a_{j},b_{j}}(\pi y_j)e^{\pi y_j}\cdot f_\phi(y)\,dy\\
&=&\frac{C}{| W(\G,\h)|} \sum_{s\in W(\G,\h)} \sgn(s)
\int_{\h\cap\tau(\Wv)}
\prod_{j=1}^l  P_{a_{s,j},b_{s,j}}(\pi y_j)e^{\pi y_j}\cdot f_\phi(y)\,dy. \nn
\end{eqnarray}
Equivalently, with a possibly different constant $C$,
\begin{equation}\label{ul, ul' 1.1}
T(\check\Theta_\Pi)(\phi)=C \int_{\h\cap\tau(\Wv)}
\prod_{j=1}^l  P_{a_{j},b_{j},-2}(\pi y_j)e^{\pi y_j}\cdot f_\phi(y)\,dy.
\end{equation}
\end{lem}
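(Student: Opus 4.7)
The lemma will follow essentially as a specialization of Theorem \ref{main thm for l<l'} to the two-compact case $(\G,\G')=(\Ug_l,\Ug_{l'})$, which I realize as $(\Ug_l,\Ug_{0,l'})$. First I will substitute the relevant numerical parameters: since $\Dc=\C$, the constant $\iota$ equals $1$ and $\beta=\pi$; the half-dimension shift becomes $\delta=(l'-l+1)/2$, and by Theorem \ref{pullback of muy 3} the domain $\h\cap\tau(\Wv)$ reduces to the orthant $\{\sum_j y_j J_j : y_j\le 0\}$, so in particular $-|y_j|=y_j$ on the whole integration region.

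Next I will invoke the observation recorded immediately after Theorem \ref{main thm for l<l'}: because here $p=0$ and $q=l'$ one has $\max(l-q,0)=\min(p,l)=0$, so every coordinate hyperplane $\{y_j=0\}$ lies in the boundary $\partial(\h\cap\tau(\Wv))$, and all the summands with $\gamma\neq\emptyset$ vanish. Only the $\gamma=\emptyset$ term contributes, which after the substitutions above gives directly the first equality of (\ref{l<l' and both compact 1}).

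To pass to the second, Weyl-symmetric expression I will average over $W(\G,\h)=\Sigma_l$, which acts on $\h$ by coordinate permutations and hence preserves both $\h\cap\tau(\Wv)$ and Lebesgue measure. For each $s\in W(\G,\h)$ I change variables $y\mapsto sy$ inside the integral against $\prod_j P_{a_{s,j},b_{s,j}}(\pi y_j)e^{\pi y_j}$: by (\ref{dual weyl group action}) one has $(a_{s,j},b_{s,j})=(a_{s^{-1}(j)},b_{s^{-1}(j)})$, so the product transforms into $\prod_k P_{a_k,b_k}(\pi y_k)e^{\pi y_k}$, while the symmetry relation (\ref{extension by the symmetry condition}) replaces $f_\phi(sy)$ by $\sgn_{\g/\h}(s)\,f_\phi(y)=\sgn(s)\,f_\phi(y)$ (for $\Ug_l$ the character $\sgn_{\g/\h}$ is the ordinary sign on $\Sigma_l$). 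The two factors of $\sgn(s)$ cancel, so every term in the averaged sum reproduces the first integral and the second equality follows.

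Finally, formula (\ref{ul, ul' 1.1}) expresses the same distribution using the polynomial $P_{a_j,b_j,-2}$; I will derive it from an algebraic identity relating $P_{a,b}$ and $P_{a,b,-2}$ (to be established in the appendix where these polynomials are defined), combined if necessary with an integration by parts that uses the vanishing of $f_\phi$ on the hyperplanes $\{y_j=0\}$ supplied by Theorem \ref{pullback of muy 3}. The only point that needs care is matching the Weyl-group conventions and verifying that $\sgn_{\g/\h}$ really coincides with $\sgn$ on $\Sigma_l$ in the unitary case; the remainder is bookkeeping.
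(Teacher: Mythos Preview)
Your proposal is correct and follows essentially the same route as the paper. Two small points worth noting. First, the paper kills the $\gamma\neq\emptyset$ terms by observing directly that $\deg Q_{a_{s,j},b_{s,j}}=2\delta-2=l'-l-1$ and invoking the vanishing of $\partial(J)^\alpha f_\phi$ at $y_j=0$ for $\alpha_j\le l'-l-1$ from Theorem~\ref{pullback of muy 3}; your boundary argument via the remark after Theorem~\ref{main thm for l<l'} reaches the same conclusion by the same mechanism. Second, for \eqref{ul, ul' 1.1} no integration by parts is needed: since $\pi y_j\le 0$ on $\h\cap\tau(\Wv)$, the definition \eqref{D0'} gives $P_{a_j,b_j}(\pi y_j)=2\pi\,P_{a_j,b_j,-2}(\pi y_j)$ pointwise, so the passage is purely algebraic.
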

\begin{prf} 
We are going to use Theorem \ref{main thm for l<l'}.  Since we view $\Ug_{l'}$ as $\Ug_{p,q}$ with $p=0$ and $q=l'$, we have $\delta_j=-1$ for all $j$. 
Since the degree of the polynomial $Q_{a_{s,j},b_{s,j}}$ is equal to $-a_{s,j}-b_{s,j}=2\delta-2=l'-l-1$, Corollary \ref{pullback of muy 3} implies that $Q_{a_{s,j},b_{s,j}}(\pi ^{-1}\partial(J_j)) f_\phi(y)_{y_j=0}=0$. Hence all the terms with $\gamma\ne \emptyset$ are zero. Thus the first equality in (\ref{l<l' and both compact 1}) follows.
For the second, notice that, since $(sy)_j=y_{s^{-1}(j)}$, we have
\begin{eqnarray*}
&&\prod_{j=1}^l  P_{a_{s,j},b_{s,j}}(\pi y_j)e^{\pi y_j}
=e^{\sum_{j=1}^l\pi y_j}\prod_{j=1}^l  P_{a_{s,j},b_{s,j}}(\pi y_j) \\
&=&e^{\sum_{j=1}^l\pi y_{s^{-1}(j)}}\prod_{j=1}^l  P_{a_{1,j},b_{1,j}}(\pi  y_{s^{-1}(j)}) 
=\prod_{j=1}^l  P_{a_{j},b_{j}}(\pi (s^{-1}y)_j)e^{\pi (s^{-1}y)_j}
\end{eqnarray*}
and recall that $f_\phi(sy)=\sgn(s) f_\phi(y)$. 
Finally, since $\pi y_j\leq 0$, (\ref{D0'}) implies that we may replace $P_{a_{j},b_{j}}(\pi y_j)$ by $2\pi P_{a_{j},b_{j},-2}(\pi y_j)$.
\end{prf}
\begin{lem}\label{ul, ul' 2}
The distribution $T(\check\Theta_\Pi)$ is non-zero if and only if the highest weight $\lambda=\sum_{j=1}\lambda_je_j \in i\h^*$ of $\Pi$ satisfies the following condition:
\begin{equation}\label{ul, ul' 2.1}
\lambda_1\geq\lambda_2\geq\dots\geq\lambda_l\geq \frac{l'}{2}.
\end{equation}
Equivalently, if and only if the Harish-Chandra parameter $\mu$ of $\Pi$ satisfies
\begin{equation}\label{ul, ul' 2.2}
\mu_j\in \delta +\Bbb Z^+ \qquad (j=1,2,\dots, l)\,
\end{equation} 
where $\Bbb Z^+$ denotes the set of non-negative integers. 
\end{lem}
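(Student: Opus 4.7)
The plan is to work directly with the integral representation (\ref{l<l' and both compact 1}) provided by Lemma \ref{ul, ul' 1}. I will split the proof into two parts: (a) verifying the equivalence of the two conditions on $\lambda$ and $\mu$, and (b) showing that (\ref{ul, ul' 2.2}) is the precise non-vanishing criterion.

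For (a), for $\G=\Ug_l$ we have $\iota=1$ and $r=l$, so (\ref{eq:deltaUU}) gives $\delta=(l'-l+1)/2$, and $\mu=\lambda+\rho$ with $\rho_j=(l+1)/2-j$. A direct substitution shows that $\mu_j\geq \delta$ is equivalent to $\lambda_j\geq l'/2+(j-l)$. Since the Harish-Chandra parameter is strictly dominant, $\mu_1>\mu_2>\dots>\mu_l$, and since $\mu_j-\delta\in \Bbb Z$ by (\ref{ximuchexplicit2}), the condition $\mu_j\in \delta+\Bbb Z^+$ for every $j$ reduces to the single inequality $\mu_l\geq \delta$. Translated via $\mu_l=\lambda_l-(l-1)/2$, this becomes $\lambda_l\geq l'/2$, and combined with $\lambda_1\geq \dots\geq \lambda_l$ yields (\ref{ul, ul' 2.1}).

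For (b), the decisive ingredient is a property of $P_{a,b}$ that must be extracted from Appendix D: it is a non-zero polynomial of degree $a-1$ when $a\geq 1$ and is identically zero when $a\leq 0$. Heuristically, this reflects that $P_{a,b}(\pi y)e^{\pi y}$ on $y\leq 0$ is the residue contribution at the pole $x=i$ of the Fourier integral $\int_\R (1+ix)^{-a}(1-ix)^{-b}e^{-i\pi xy}\,dx$, and $(1+ix)^{-a}$ has a pole at $x=i$ exactly when $a\geq 1$. Granted this fact, necessity is immediate: if $\mu_l<\delta$ then $a_l=\mu_l-\delta+1\leq 0$, so $P_{a_l,b_l}\equiv 0$, the integrand in (\ref{l<l' and both compact 1}) vanishes identically, and $T(\check\Theta_\Pi)=0$. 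For sufficiency, if $\mu_j\geq \delta$ for every $j$, each $a_j\geq 1$, each $P_{a_j,b_j}$ is a non-zero polynomial, and $u(y):=\prod_{j=1}^{l}P_{a_j,b_j}(\pi y_j)e^{\pi y_j}$ is real-analytic and non-vanishing on an open dense subset of the octant $\h\cap\tau(\Wv)=\{y:y_j\leq 0\}$. The Weyl integration formula (\ref{weyl int on w 1}) shows that the family $\{f_\phi:\phi\in \Ss(\Wv)\}$ is rich enough that a suitable $\phi$ with $f_\phi$ non-negative and concentrated on a region where $u$ does not vanish produces $\int u(y)f_\phi(y)\,dy\neq 0$.

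The main obstacle is the extraction of the precise vanishing criterion ``$P_{a,b}\equiv 0$ iff $a\leq 0$'' from Appendix D; once this is in hand, both implications of the lemma follow quickly from Lemma \ref{ul, ul' 1}, the only remaining subtlety being the choice of test function $\phi$ in the sufficiency step, which is harmless given the density of $\phi\mapsto f_\phi$ on regular chambers.
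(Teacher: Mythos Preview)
Your equivalence in part~(a) and the necessity direction in~(b) match the paper's argument and are fine. The gap is in your sufficiency argument.

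You propose to choose $\phi$ with $f_\phi$ ``non-negative and concentrated on a region where $u$ does not vanish''. But $f_\phi$ is $W(\G,\h)$-skew: $f_\phi(sy)=\sgn(s)f_\phi(y)$ for $s\in\Sigma_l$ (see~(\ref{extension by the symmetry condition})), so it cannot be non-negative on $\h\cap\tau(\Wv)$ unless it vanishes identically. If you localize $f_\phi$ inside a single chamber $C$, skew-invariance propagates it to $sC$ with sign $\sgn(s)$, and the integral over $\h\cap\tau(\Wv)$ collapses to
\[
\int_C\Big(\sum_{s\in\Sigma_l}\sgn(s)\,u(sy)\Big)f_\phi(y)\,dy.
\]
So what must be shown is that the \emph{skew-symmetrization} of $u$ is not identically zero, not merely that $u$ itself is non-zero on an open set; if $u$ happened to be $\Sigma_l$-invariant the integral would vanish for every $\phi$. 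The paper supplies exactly this missing step: after removing the invariant factors $e^{\pi\sum y_j}$ and $\pi_{\g'/\z'}(y)/\pi_{\g/\h}(y)=\prod_j(-iy_j)^{l'-l}$, one is left with $\prod_j P_{a_j,b_j,-2}(\pi y_j)$; since $\mu$ is strictly dominant the $a_j$ are pairwise distinct, the polynomials $P_{a_j,b_j,-2}$ have pairwise distinct degrees $a_j-1$, and such a product cannot be $\Sigma_l$-symmetric.

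A secondary point: the Weyl integration formula by itself does not show that $\{f_\phi\}$ is ``rich enough''. The paper invokes Dadok's theorem to conclude that the orbit averages $y\mapsto\int_\Sg\phi(s.w)\,ds$ exhaust $C_c^\infty(\h\cap\tau(\Wv))^{\Sigma_l}$, which is what allows one to test against an arbitrary skew function and detect the obstruction above.
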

The condition (\ref{ul, ul' 2.1}) means exactly that $\Pi$ occurs in Howe's correspondence, see for example \cite[(A.5.2)]{PrzebindaInfinitesimal}. Recall that we have chosen the Harish-Chandra
parameter $\mu$ to be strictly dominant, i.e. so that $\mu_1>\mu_2>\dots>\mu_l$, but, in fact, the condition (\ref{ul, ul' 2.2}) does not depend on the choice of the order of roots.
\begin{prf} 
Let $\rho=\sum_{j=1}\rho_j e_j$ be one half times the sum of the positive roots. 
Then $\mu_j=\lambda_j+\rho_j$ and $\rho_j=\frac{l+1}{2}-j$. Hence
\begin{eqnarray*}
\mu_j-\mu_{j+1}&=&\lambda_j-\lambda_{j+1} +1 \qquad (1\leq j\leq l-1)\,,\\
\mu_j-\delta&=&\lambda_j-\frac{l'}{2}+l-j \qquad (1\leq j\leq l)\,.
\end{eqnarray*}
This proves the equivalence of the conditions (\ref{ul, ul' 2.1}) and (\ref{ul, ul' 2.2}) since
$\mu$ is strictly dominant.  

Notice that, since $a_{j}=\mu_j-\delta+1$, the condition (\ref{ul, ul' 2.2}) is also equivalent to $a_j\geq 1$ for all $1\leq j\leq l$. 
If the distribution $T(\check\Theta_\Pi)$ is non-zero, then none of the $P_{a_{j},b_{j},-2}$ can be identically $0$. So $a_j\geq 1$ for all $1\leq j\leq l$ by (\ref{eq:Pabminus2}).  

It remains to see that the condition $a_j\geq 1$ for all $1\leq j\leq l$ suffices for the non-vanishing of the expression (\ref{ul, ul' 1.1}). We are going to use a non-direct argument, though an alternative one is going to be evident from Proposition \ref{ul, ul' 3} below.

In the case when both members of the dual pair are compact
\begin{equation}\label{ul, ul' 2.20}
f_\phi(y)=C_1\pi_{\g'/\z'}(y)\int_\Sg\phi(s.w)\,ds
\end{equation}
where $w\in\hs1$ and $\tau(w)=y$ is identified with $\tau'(w)$ and $C_1$ is the appropriate constant. Clearly the integral in (\ref{ul, ul' 2.20})  converges if $\phi\in C_c^\infty(\Wv)$ and gives an element  
of $C_c^\infty(\Wv)^\G$. By Corollary \ref{pullback via taug}, it is of the form $\psi \circ \tau'$ 
where $\psi \in C_c^\infty(\g')$. Since $\G'$ is compact, a theorem of Dadok, \cite[Corollary 1.5]{Dadok82}, applied to $i\g'$ shows that the function
\[
\h\cap\tau(\Wv)\ni y\to \int_\Sg\phi(s.w)\,ds\in \C, \qquad (y=\tau(w),\ w\in \hs1)
\]
may be an arbitrary $W(\G,\h)$-invariant compactly supported $C^\infty$ function on $\h\cap\tau(\Wv)$, as $\phi$ varies through $C_c^\infty(\Wv)$. Therefore
\[
\h\ni y\to \pi_{\g/\h}(y)\int_\Sg\phi(s.w)\,ds, \qquad (y=\tau(w),\ w\in \hs1)
\]
may be an arbitrary $W(\G,\h)$-skew-invariant compactly supported $C^\infty$ function on $\h\cap\tau(\Wv)$. Hence, if (\ref{l<l' and both compact 1}) were zero, then the function 
\[
\prod_{j=1}^l  P_{a_{j},b_{j},-2}(\pi y_j)e^{-\pi y_j}\cdot \frac{\pi_{\g'/\z'}(y)}{\pi_{\g/\h}(y)}
\]
would have to be $W(\G,\h)$-invariant. Equivalently, 
\[
\prod_{j=1}^l  P_{a_{j},b_{j},-2}(\pi y_j)
\]
would have to be $W(\G,\h)$-invariant. This is not possible if $a_j\geq 1$ for all $j$. Indeed,
$\mu$ is strictly dominant and hence, 
by (\ref{eq:Pabminus2}), the $P_{a_{j},b_{j},-2}$ are non-zero polynomials of different degrees.
Thus, the distribution (\ref{l<l' and both compact 1}) is not zero.
\end{prf}
\begin{pro}\label{ul, ul' 3}
With the notation of Lemma \ref{ul, ul' 2}, let
\[
P_\mu(y)=\prod_{j=1}^{l} P_{a_{j}, b_{j}, -2}(\pi y_j) \qquad (y\in \h).
\]
The distribution $\T(\check \Theta_{\Pi})$ is a smooth $\G\G'$-invariant function on $\Wv$. For $w \in \hs1$ it is given by the following formula:
\begin{eqnarray}\label{ul, ul' 3 1}
\T(\check \Theta_{\Pi})(w)&=&c_{\Pi}\,  e^{-\frac{\pi}{2}\langle J w, w\rangle}\Big(\frac{1}{\pi_{\g/\h}(y)} 
\sum_{s\in W(\G,\h)} \sgn(s)  P_{\mu}(sy)\Big) \nn\\
&=&c_{\Pi}\,  e^{-\frac{\pi}{2}\langle J w, w\rangle}\Big(\frac{1}{\pi_{\g/\h}(y)} 
\sum_{s\in W(\G,\h)} \sgn(s)  P_{s\mu}(y)\Big) \,, 
\end{eqnarray}
where $c_{\Pi}$ is a constant, $J$ is the unique positive compatible complex structure on $\Wv$ centralized by $\G$ and $\G'$, $\beta$ is as in (\ref{the constant beta}) and $y=\tau(w) \in \h$. The sum in (\ref{ul, ul' 3 1}) is a $W(\G,\h)$-skew symmetric polynomial. Hence the quotient by $\pi_{\g/\h}$ is a $W(\G,\h)$ invariant polynomial on $\h$. It extends uniquely to a $\G$-invariant polynomial $\t P_\mu$ on $\g$. Thus
\begin{equation}\label{ul, ul' 3 1'}
\T(\check \Theta_{\Pi})(w)=c_{\Pi}\,  e^{-\frac{\pi}{2}\langle J w, w\rangle}\t P_\mu(\tau(w)) \qquad (w\in \Wv). 
\end{equation}
\end{pro}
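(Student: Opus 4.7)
The plan is to start from formula (\ref{ul, ul' 1.1}) of Lemma \ref{ul, ul' 1}, namely
\[
T(\check\Theta_\Pi)(\phi) = C \int_{\h \cap \tau(\Wv)} P_\mu(y)\, e^{\pi \sum_j y_j}\, f_\phi(y)\, dy\,,
\]
and to transform it into an integral over $\Wv$ by running the Weyl integration formula (\ref{weyl int on w 1}) backwards. Because both groups in the pair are compact, the argument used in the proof of Lemma \ref{ul, ul' 2} shows that $f_\phi(y) = C_1\, \pi_{\g'/\z'}(y) \int_\Sg \phi(s.w)\, ds$ for any $w \in \hs1$ with $\tau(w)=y$, so the integrand already factors as a polynomial-Gaussian spectral piece times a compact orbital integral. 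The domain $\h \cap \tau(\Wv) = \{\sum_j y_j J_j:\ y_j \leq 0\}$ is invariant under the permutation action of $W(\G,\h) = \Sigma_l$, the factor $\prod_j e^{\pi y_j}$ is $W(\G,\h)$-symmetric, and $f_\phi$ is $W(\G,\h)$-skew-symmetric.

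Next I would anti-symmetrize the polynomial factor. The change of variable $y \mapsto sy$ combined with the skew-symmetry of $f_\phi$ and the invariance of the domain yields
\[
T(\check\Theta_\Pi)(\phi) = \frac{C}{|W(\G,\h)|} \int_{\h \cap \tau(\Wv)} e^{\pi \sum_j y_j} \Big(\sum_{s \in W(\G,\h)} \sgn(s)\, P_\mu(sy)\Big) f_\phi(y)\, dy\,.
\]
The bracketed anti-symmetrization is divisible by the Weyl discriminant $\pi_{\g/\h}(y)$; the quotient is a $W(\G,\h)$-invariant polynomial on $\h$, and the Chevalley restriction theorem produces a unique $\G$-invariant polynomial $\tilde P_\mu$ on $\g$ whose restriction to $\h$ equals this quotient. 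Substituting the explicit form of $f_\phi$, the integrand becomes
\[
C_2\, e^{\pi \sum_j y_j}\, \pi_{\g/\h}(y)\, \pi_{\g'/\z'}(y)\, \tilde P_\mu(y)\, \int_\Sg \phi(s.w)\, ds\,.
\]

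In the final step I would invoke Lemma \ref{|| and constant} to identify $\pi_{\g/\h}(y)\pi_{\g'/\z'}(y) = \pi_{\so/\hs1^2}(w^2)$ with $\pm |\pi_{\so/\hs1^2}(w^2)|$, and recognize the resulting expression, via the Weyl integration formula (\ref{weyl int on w 1}) read backwards, as $\int_\Wv \Psi(w)\, \phi(w)\, dw$ for the unique $\Sg$-invariant function $\Psi$ on $\Wv$ whose restriction to $\hs1$ equals $c_\Pi\, e^{\pi \sum_j y_j}\, \tilde P_\mu(\tau(w))$. The hard part of the argument is the Gaussian identification: I must check that $e^{\pi \sum_j y_j}$ on $\hs1$ is the restriction of the $\Sg$-invariant Gaussian $e^{-\frac{\pi}{2}\langle Jw, w\rangle}$, equivalently that $\langle J u_j, u_k\rangle = 2\delta_{jk}$. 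This is a direct but delicate computation from the explicit model (\ref{deltaj}), the identification $J = 1 \otimes J'$, and the Lebesgue-measure normalization imposed in the Introduction. With this identification at hand, the $\Sg$-invariance of both sides propagates the equality from $\hs1$ to the dense subset $\Sg\cdot\hs1$, and continuity extends it to all of $\Wv$, simultaneously yielding the smoothness of $T(\check\Theta_\Pi)$ and the claimed formula $T(\check\Theta_\Pi)(w) = c_\Pi\, e^{-\frac{\pi}{2}\langle Jw, w\rangle}\, \tilde P_\mu(\tau(w))$.
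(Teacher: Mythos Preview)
Your proposal is correct and follows essentially the same route as the paper: start from Lemma~\ref{ul, ul' 1}, insert the explicit orbital-integral expression for $f_\phi$ (formula~(\ref{ul, ul' 2.20})), anti-symmetrize, and read the Weyl integration formula~(\ref{weyl int on w 1}) backwards. The only notable difference is in the Gaussian identification. You propose a direct basis computation $\langle J u_j, u_k\rangle = 2\delta_{jk}$ from the explicit model~(\ref{deltaj}), calling it ``delicate''; the paper instead uses the already-established identities~(\ref{computation of the constant beta}) and~(\ref{chix and the form B}) to get
\[
-\pi\sum_{j=1}^l y_j = B\Big(\sum_{j=1}^l J_j,\, y\Big) = \tfrac{\pi}{2}\big\langle \big(\textstyle\sum_j J_j\big)w,\, w\big\rangle = \tfrac{\pi}{2}\langle Jw, w\rangle
\]
in one line, with $J=\sum_j J_j$. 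This avoids any coordinate computation and is the cleaner way to close the argument.
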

\begin{prf}
We see from Lemma \ref{ul, ul' 1} and the formula (\ref{ul, ul' 2.20}) that for any $\phi\in \Ss(\Wv)$,
\begin{eqnarray}\label{ul, ul' 3 2}
T(\check\Theta_\Pi)(\phi)
&=&C_1 \int_{\h\cap\tau(\Wv)} e^{\pi\sum_{j=1}^ly_j}
 P_{\mu}(y)\pi_{\g'/\z'}(y)\int_{\G\times\G'} \phi(gg'.w)\,dg\,dg'\,dy.
\end{eqnarray}
However, by (\ref{computation of the constant beta}) and (\ref{chix and the form B}) with $\beta=\pi$,
\[
-\pi \sum_{j=1}^ly_j=B(\sum_{j=1}^l J_j,y)=\frac{\pi}{2}\langle \sum_{j=1}^l J_j w,w\rangle
=\frac{\pi}{2}\langle  J w,w\rangle,
\]
where $J=\sum_{j=1}^l J_j$ has the required properties. 
Furthermore,
\[
\pi_{\g'/\z'}(y)=\frac{1}{|W(\G,\h)|}\sum_{s\in W(\G,\h)} \sgn(s) \pi_{\g'/\z'}(sy).
\]
Hence, (\ref{ul, ul' 3 2}) implies 
\begin{eqnarray}\label{ul, ul' 3 3}
&&T(\check\Theta_\Pi)(\phi)\\
&=&C_2 \int_{\h\cap\tau(\Wv)}  e^{-\frac{\pi}{2}\langle J w, w\rangle}
\sum_{s\in W(\G,\h)} \sgn(s) 
 P_{\mu}(sy)\pi_{\g'/\z'}(y)\int_{\G\times\G'} \phi(gg'.w)\,dg\,dg'\,dy.\nn
\end{eqnarray}
Weyl integration formula on $\Wv$, (\ref{weyl int on w 1}), together with (\ref{ul, ul' 3 3}) implies (\ref{ul, ul' 3 1}).
\end{prf}
We now reverse the role of $\G$ and $\G'$ to compute the intertwining distribution $\T(\check \Theta_{\Pi'})$ for a genuine irreducible unitary representation $\Pi'$ of $\wt \G'$.
Since we assume that $l\leq l'$, the decomposition (\ref{h' + h'' decomposition}) becomes
\begin{equation}\label{h + h'' decomposition}
\h'=\h\oplus\h''.
\end{equation}
\begin{pro}\label{ul, ul' 4}
Let $\Pi'$ be a genuine representation of $\wt{\G'}$ with the Harish-Chandra parameter $\mu'\in i\h'{}^*$. Then $\T(\check \Theta_{\Pi'})\ne 0$ if and only if 
\begin{eqnarray}\label{ul, ul' 4 1}
&&-\mu'_j\in \delta+\Zb^+,\ \qquad (1\leq j\leq l')\\
&&\text{and}\ \ \mu'|_{\h''}=\rho''\  \qquad\text{(up to permutation of the coordinates).}\nn
\end{eqnarray}
$\T(\check \Theta_{\Pi'})$ is a non-zero constant multiple of $\T(\check \Theta_{\Pi})$ if and only if
$\mu$ and $\mu'$ can be chosen in their Weyl group orbits so that
\begin{equation}\label{ul, ul' 4 2}
\mu'|_{\h}=-\mu\quad \text{and}\quad \mu'|_{\h''}=\rho''.
\end{equation}
\end{pro}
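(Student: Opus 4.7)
The plan is to derive an explicit formula for $\T(\check\Theta_{\Pi'})$ by applying the results of Sections \ref{An almost semisimple orbital integral on the symplectic space}--\ref{Intertwining distributions} to the pair $(\G',\G)$ (i.e., with the roles of the two members of the dual pair interchanged), and then to match the resulting formula against the one from Proposition \ref{ul, ul' 3}. After the swap, we have $\text{new }l = l' \geq l = \text{new }l'$, so we are in the regime of Theorem \ref{main thm for l>=l'} and Corollary \ref{another intermediate cor, l>l'}. The identification (\ref{the identification}) is symmetric, so the new $\h$ (Cartan of the new $\g'$) is the old $\h$, and the new $\h'$ is our $\h' = \h \oplus \h''$ from (\ref{h + h'' decomposition}).

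First I would apply Corollary \ref{another intermediate cor, l>l'} to $(\G',\G)$: this immediately gives $\T(\check\Theta_{\Pi'}) = 0$ unless $\mu'$ can be chosen inside its Weyl orbit so that $\mu'|_{\h''} = \rho''$, furnishing half of the non-vanishing condition (\ref{ul, ul' 4 1}). Under this assumption, Theorem \ref{main thm for l>=l'} (with $\beta = \pi$ and $\iota = 1$) writes $\T(\check\Theta_{\Pi'})$ as
\[
C \int_{\tau'(\reg{\hs1})} \prod_{j=1}^{l}  P_{a'_{j},b'_{j}}(\pi y_j)\,e^{-\pi|y_j|}\cdot f_\phi(y)\,dy\,,
\]
where $a'_{j},\,b'_{j}$ are read off from (\ref{eq:ajbj}) with $\mu$ replaced by $\mu'|_{\h}$ and $\delta$ replaced by the new value $\delta_{\rm new} = (l-l'+1)/2 = 1-\delta$. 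Unwinding this shift, the relevant polynomials become $P_{-\mu'_j-\delta+1,\,\mu'_j-\delta+1}$, so the genuine-representation condition (\ref{ximuchexplicit2}) together with the requirement that none of these polynomials vanishes reduces, exactly as in Lemma \ref{ul, ul' 2} via the convention in (\ref{eq:Pabminus2}), to $-\mu'_j \in \delta + \Zb^+$.

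Second, to upgrade this to an ``iff'' I would run the argument of Lemma \ref{ul, ul' 2} verbatim with $(\G,\Pi)$ replaced by $(\G',\Pi')$: Corollary \ref{pullback via taug} and Dadok's theorem (\cite[Corollary 1.5]{Dadok82}) applied to $i\g$ show that the $\G'$-invariant pullbacks $y \mapsto \int_\Sg \phi(s.w)\,ds$ cover all $W(\G',\h')$-invariant smooth compactly supported functions on $\h' \cap \tau'(\Wv)$, so the integral against the Weyl-antisymmetrized polynomial vanishes for all $\phi$ iff that polynomial is zero; strict dominance of $\mu'$ (together with $\mu'|_{\h''}=\rho''$) then forces pairwise distinct degrees of the factors $P_{a'_{j},b'_{j},-2}$ and the sum is nonzero precisely when each factor is, i.e. when $-\mu'_j \in \delta + \Zb^+$.

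Finally, for the proportionality statement, I would substitute $\mu'|_\h = -\mu$ and $\mu'|_{\h''} = \rho''$ into the formula from the previous step and compare with (\ref{ul, ul' 3 1}). With this choice, $P_{a'_j,b'_j,-2}(\pi y_j) = P_{a_j,b_j,-2}(\pi y_j)$ term by term (the shift by $\delta_{\rm new} = 1-\delta$ is absorbed by the sign flip on $\mu$), so, using Weyl integration (\ref{weyl int on w 2}) to rewrite the integral over $\tau'(\reg{\hs1})$ as an integral over $\h\cap\tau(\Wv)$, the two expressions agree up to a non-zero constant. The converse (that Howe-corresponding distributions must come from this choice) follows from the uniqueness of the strictly dominant representative of $\mu'$ in its Weyl orbit once $\mu$ is fixed. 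The main obstacle throughout is bookkeeping: tracking how the constants $\delta$, $r$, $d'$, the Cayley exponents in (\ref{ximuchexplicit1}), and the choice of the sign convention $\delta_j = \pm 1$ (i.e., realizing $\Ug_l$ as $\Ug_{l,0}$ versus $\Ug_{0,l}$) transform under the $\G \leftrightarrow \G'$ swap; once this is set up consistently the three steps above are routine consequences of the results already established.
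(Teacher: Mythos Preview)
Your overall strategy---swap the roles of $\G$ and $\G'$, invoke Corollary \ref{another intermediate cor, l>l'} to get $\mu'|_{\h''}=\rho''$, apply Theorem \ref{main thm for l>=l'}, and then compare with Proposition \ref{ul, ul' 3}---is exactly the paper's approach. But the step you call ``unwinding this shift'' hides the one nontrivial ingredient, and your explanation of it (``the shift by $\delta_{\rm new}=1-\delta$ is absorbed by the sign flip on $\mu$'') is not correct.

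Here is the gap. After swapping, Theorem \ref{main thm for l>=l'} produces an integral against the Harish-Chandra orbital integral \emph{for the swapped pair}; writing $f'_\phi$ for this, one has $f'_\phi(y)=C\,\pi_{\g/\h}(y)\int_\Sg\phi(s.w)\,ds$, while the $f_\phi$ that appears in (\ref{l<l' and both compact 1}) and Proposition \ref{ul, ul' 3} carries instead the factor $\pi_{\g'/\z'}(y)$ (see (\ref{ul, ul' 2.20})). These differ by a constant multiple of $\prod_{j}y_j^{l'-l}$. To compare the two formulas you must move this factor onto the polynomial part; that is exactly what Proposition \ref{propD3} does, and it shifts both indices of each $P_{\,\cdot\,,\,\cdot\,,-2}$ by $l'-l=2\delta-1$, converting $\delta'=1-\delta$ into $\delta$. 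The sign flip $\mu'\mapsto -\mu$ plays no role in this index shift; it only enters afterwards, when you match the resulting polynomial against $P_{a_j,b_j,-2}$ from Lemma \ref{ul, ul' 1}. You also omit that the swap replaces $B$ by $-B$ (so $\beta\mapsto -\beta$), which is needed to pass from $P_{a',b',2}(-\pi y_j)$ to $P_{b',a',-2}(\pi y_j)$ via (\ref{D0}) before applying Proposition \ref{propD3}. Once these two steps are in place, your sketch for the non-vanishing criterion (via Dadok's theorem, as in Lemma \ref{ul, ul' 2}) and for the proportionality go through as you describe.
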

\begin{prf}
As one may see from (\ref{the form B}), reversing the roles of the members of the dual pair in Theorem \ref{main thm for l>=l'} results in replacing the form $B$ by $-B$. The constant $\beta=\pi$ get therefore replaced by $-\beta=-\pi$. Also,  (\ref{ul, ul' 2.20}) gets replaced by
\[
f'_\phi(y)=C_1\,\pi_{\g/\h}(y)\int_\Sg \phi(s.w)\,ds
\]
with an appropriate constant $C_1$.
Since this plays no role in Lemma \ref{another intermediate lemma} and Corollary \ref{another intermediate cor, l>l'},  $T(\check\Theta_{\Pi'})$
is zero unless one can choose $\mu'$ so that
\begin{equation}\label{ul, ul' 4 3}
\mu'|_{\h''}=\rho''.
\end{equation}
Since the roles of $l$ and $l'$ are reversed, $\delta=\frac{1}{2}(l'-l+1)$ is replaced by $\delta'=\frac{1}{2}(l-l'+1)$. As before, $\delta_j=-1$ for all $j$. Let
\[
a'_{s,j}=-(s\mu')_j-\delta'+1,\ b'_{s,j}=(s\mu')_j-\delta'+1 \qquad (s\in W(\G,\h),\ 1\leq j\leq l).
\]
Then Theorem \ref{main thm for l>=l'} says that
\begin{eqnarray}\label{ul, ul' 4 4}
T(\check\Theta_{\Pi'})(\phi)&=&C \sum_{s\in W(\G,\h)} \sgn(s)
\int_{\h\cap\tau(\Wv)}\prod_{j=1}^{l}  P_{a'_{s,j},b'_{s,j}}(-\pi y_j)e^{-\pi y_j}\cdot f'_\phi(y)\,dy,
\end{eqnarray}
where there are no derivatives
because the degree of the polynomial $Q_{a'_{s,j},b'_{s,j}}$ is equal to $-a'_{s,j}-b'_{s,j}=2\delta'-2=l-l'-1<0$. 

Since $-\pi y_j\geq 0$, we have 
$P_{a'_{s,j},b'_{s,j}}(-\pi y_j)= 2\pi P_{a'_{s,j},b'_{s,j}, 2}(-\pi y_j)$. Recall, (\ref{D0}), that $P_{a'_{s,j},b'_{s,j}, 2}(-\pi y_j)=P_{b'_{s,j},a'_{s,j},-2}(\pi y_j)$. Hence, (\ref{ul, ul' 4 4})  may be rewritten as
\begin{eqnarray}\label{ul, ul' 4 5}
T(\check\Theta_{\Pi'})(\phi)&=&C \sum_{s\in W(\G,\h)} \sgn(s)
\int_{\h\cap\tau(\Wv)}\prod_{j=1}^{l}  P_{b'_{s,j},a'_{s,j},-2}(\pi y_j)e^{-\pi y_j}\cdot f'_\phi(y)\,dy,
\end{eqnarray}
with a different constant $C$.
Recall also that
\[
\pi_{\g/\h}(y)\cdot\prod_{j=1}^ly_j^{l'-l}= C_2\pi_{\g'/\z'}(y).
\]
Hence, Proposition \ref{propD3} shows that
\[
\prod_{j=1}^{l}  P_{b'_{s,j},a'_{s,j},-2}(\pi y_j)\cdot \pi_{\g/\h}(y)=C_3\prod_{j=1}^{l}  P_{b'_{s,j}-(l'-l),a'_{s,j}-(l'-l),-2}(\pi y_j)\cdot \pi_{\g'/\z'}(y).
\]
Therefore (\ref{ul, ul' 4 5}) coincides with
\begin{eqnarray}\label{ul, ul' 4 6}
&&T(\check\Theta_{\Pi'})(\phi)\\
&=&C \sum_{s\in W(\G,\h)} \sgn(s)
\int_{\h\cap\tau(\Wv)}\prod_{j=1}^{l}  P_{(s\mu')_j-\delta+1,-(s\mu')_j-\delta+1,-2}(\pi y_j)e^{-\pi y_j}\cdot f_\phi(y)\,dy,\nn
\end{eqnarray}
with a possibly different constant $C$. By comparing (\ref{ul, ul' 4 6}) with (\ref{l<l' and both compact 1}) we see that (\ref{ul, ul' 4 2}) holds.
\end{prf}
Since, by the definition (\ref{the omega}),
\[
\OP(\mathcal K( T(\check\Theta_\Pi)))=\omega(\check\Theta_\Pi)\ \text{and}\ \OP(\mathcal K (T(\check\Theta_{\Pi'})))=\omega(\check\Theta_{\Pi'}),
\]
Proposition \ref{ul, ul' 4} implies the following corollary.
\begin{cor}\label{ul, ul' 4  7}
When restricted to the group $\wt\G\wt\G'$, the oscillator representation decomposes into the Hilbert direct sum of irreducible components of the form $C_\Pi\cdot\Pi\otimes\Pi'$, where $\Pi$ is determined by $\Pi'$ via the condition (\ref{ul, ul' 4 2}) and $C_\Pi$ are some positive integral constants.
\end{cor}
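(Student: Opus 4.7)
The plan is to deduce the corollary from Proposition \ref{ul, ul' 4} by passing from intertwining distributions back to operators via the injectivity of the Weyl symbol map. First, since both $\wt\G$ and $\wt\G'$ are compact and their actions commute, $\L^2(\Xv)$ decomposes simultaneously as a Hilbert direct sum of $\wt\G$-isotypic components $\L^2(\Xv)_\Pi$ and of $\wt\G'$-isotypic components $\L^2(\Xv)_{\Pi'}$. Each operator $\omega(\check\Theta_\Pi)$ equals $(\dim\Pi)^{-1}$ times the orthogonal projection $P_\Pi$ onto $\L^2(\Xv)_\Pi$, and its Weyl symbol is a non-zero constant multiple of the intertwining distribution $T(\check\Theta_\Pi)$; symmetrically for $\wt\G'$.

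Next, because $\Op\circ \mathcal K$ is an isomorphism by the Schwartz Kernel Theorem, vanishing of $T(\check\Theta_\Pi)$ is equivalent to $P_\Pi=0$, i.e.\ to $\L^2(\Xv)_\Pi=0$. Combined with Lemma \ref{ul, ul' 2} and its analog for $\wt\G'$, this shows that the irreducible representations $\Pi$ of $\wt\G$ occurring in $\omega|_{\wt\G}$ are exactly those satisfying (\ref{ul, ul' 2.1}), and similarly the $\Pi'$ occurring are exactly those satisfying (\ref{ul, ul' 4 1}).

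The core matching step is Proposition \ref{ul, ul' 4}: for each $\Pi$ satisfying (\ref{ul, ul' 2.1}) it produces a unique $\Pi'$ (with Harish-Chandra parameter determined by (\ref{ul, ul' 4 2})) for which $T(\check\Theta_{\Pi'})$ is a non-zero scalar multiple of $T(\check\Theta_\Pi)$. Injectivity of the Weyl symbol map upgrades this to an equality $cP_\Pi=c'P_{\Pi'}$ with $c,c'\in\C^\times$; but two proportional non-zero orthogonal projections must be equal (square both sides and use $P^2=P$), so in fact $P_\Pi=P_{\Pi'}$ and consequently $\L^2(\Xv)_\Pi=\L^2(\Xv)_{\Pi'}$. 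A unitary $\wt\G\times\wt\G'$-module that is simultaneously $\Pi$-isotypic for $\wt\G$ and $\Pi'$-isotypic for $\wt\G'$ is unitarily equivalent to $C_\Pi\cdot(\Pi\otimes\Pi')$ for some $C_\Pi\in\Ze_{>0}$, since $\Pi\otimes\Pi'$ is the unique irreducible representation of $\wt\G\times\wt\G'$ with these isotypic restrictions.

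Summing these mutually equal subspaces over all $\Pi$ satisfying (\ref{ul, ul' 2.1}) exhausts $\L^2(\Xv)$ and yields the claimed Hilbert direct sum decomposition. The main technical obstacle has already been absorbed into Proposition \ref{ul, ul' 4}, whose proof uses the explicit formulas for both intertwining distributions from Lemma \ref{ul, ul' 1} and Theorem \ref{main thm for l>=l'} together with the polynomial identities of Proposition \ref{propD3}; once that input is in hand, the present argument is essentially formal, with only the collapse of two proportional orthogonal projections and the standard doubly-isotypic reduction over a product of compact groups (which also forces integrality of $C_\Pi$) deserving mention.
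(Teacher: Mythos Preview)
Your proposal is correct and takes essentially the same approach as the paper: both deduce the corollary directly from Proposition \ref{ul, ul' 4} via the relation $\omega(\check\Theta_\Pi)=\Op\circ\mathcal K(T(\check\Theta_\Pi))$ (and likewise for $\Pi'$). The paper's proof is a one-line invocation of this relation, while you spell out the mechanism---injectivity of $\Op\circ\mathcal K$, the collapse of proportional orthogonal projections, and the doubly-isotypic reduction---that the paper leaves implicit.
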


The final part of this section is devoted to the proof that $C_\Pi=1$, that is that each irreducible representation 
$\Pi \otimes \Pi'$ of $\wt \G \times \wt \G'$ contained in the oscillatory representation occurs with multiplicity one, see Proposition \ref{the constant is one} below.
This is a well known and fundamental fact due to Hermann Weyl, \cite{WeylBook}. We include a proof using the interwining distributions. 

For $\beta \in \h^*$ we denote by $H_\beta$ the unique element in $\h$ such that $\beta(H)=\tr(HH_\beta)$ for all $H \in \h$. We define the operator $\partial(\beta)$ as the 
directional derivative in the direction of $H_\beta$. This defines in particular $\partial(\pi_{\g/\h})=
\prod_{\alpha>0} \partial(\alpha)$.
\begin{lem}\label{lemma:HC's formula - the constant}
The constant $C_\z$ in Lemma \ref{lemma:HC's formula moved} is equal in absolute value to
\begin{equation}\label{eq:Cz}
\frac{\pi^{-ll'+\frac{l'(l'+1)}{2}} }{\vol(\G)}\frac{|W(\z,\h)|}{|W(\g,\h)|}\, \frac{\partial(\pi_{\g/\h})(\pi_{\g/\h})}{\partial(\pi_{\z/\h})(\pi_{\z/\h})}=\frac{(l-l')!}{l!}
\frac{\pi^{-ll'+\frac{l'(l'+1)}{2}}}{\vol(\G)} \frac{\partial(\pi_{\g/\h})(\pi_{\g/\h})}{\partial(\pi_{\z/\h})(\pi_{\z/\h})}\,.
\end{equation}
\end{lem}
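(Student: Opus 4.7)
The starting point is the classical Harish-Chandra formula for the Fourier transform of a regular semisimple orbit: for regular $x,y \in \h$,
\begin{equation*}
\pi_{\g/\h}(x)\pi_{\g/\h}(y) \int_\G e^{iB(g.x, y)}\,dg \;=\; c_0 \sum_{t \in W(\G,\h)} \sgn(t)\, e^{iB(tx, y)},
\end{equation*}
in which the constant $c_0$ depends only on $\vol(\G)$, $|W(\G,\h)|$, $\partial(\pi_{\g/\h})(\pi_{\g/\h})$, and the $\pi$-scaling built into $B = \frac{\pi}{2}\tr_{\Dc/\R}$.

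My first step is to determine $|c_0|$ explicitly by applying $\partial(\pi_{\g/\h})_x\,\partial(\pi_{\g/\h})_y$ to both sides and evaluating at $x = y = 0$. This uses the elementary identity
\begin{equation*}
\partial(P)\bigl[P(y)\cdot F(y)\bigr]\big|_{y=0} = \partial(P)(P)\cdot F(0),
\end{equation*}
which holds whenever $P$ is a homogeneous polynomial of degree equal to the order of $\partial(P)$ and $F$ is smooth at $0$. Applied to the left-hand side this yields $\vol(\G)\,\partial(\pi_{\g/\h})(\pi_{\g/\h})^2$. On the right-hand side, the skew-symmetry $\pi_{\g/\h}(tx) = \sgn(t)\pi_{\g/\h}(x)$ together with the fact that $\partial(\pi_{\g/\h})_y\,e^{iB(tx,y)}|_{y=0}$ is a fixed $i$-phase multiple of $\pi_{\g/\h}(tx)$ collapses the double derivative to $|W(\G,\h)|\,\partial(\pi_{\g/\h})(\pi_{\g/\h})$ up to a phase and a power of $\pi$ coming from $B$. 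Solving for $|c_0|$ produces the desired expression.

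Next, I would derive the formula for $|C_\z|$ by applying $\partial(\pi_{\z/\h})_y$ to both sides of the Harish-Chandra formula and evaluating at $y = x' \in \c$. Factoring $\pi_{\g/\h}(y) = \pi_{\g/\z}(y)\pi_{\z/\h}(y)$ on the left, writing $y = x' + y''$, and using that every root of $\h$ in $\z$ vanishes on $\c$ so that $\pi_{\z/\h}(x' + y'') = \pi_{\z/\h}(y'')$, the same identity then gives, for the left-hand side,
\begin{equation*}
\pi_{\g/\h}(x)\,\pi_{\g/\z}(x')\,\partial(\pi_{\z/\h})(\pi_{\z/\h}) \int_\G e^{iB(g.x, x')}\,dg.
\end{equation*}
On the right-hand side one obtains, up to an $i$-phase, $c_0\sum_{t\in W(\G,\h)}\sgn(t)\,\pi_{\z/\h}(tx)\,e^{iB(tx,x')}$. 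A coset decomposition $t = sw$ with $s \in W(\G,\h)/W(\z,\h)$ and $w \in W(\z,\h)$, together with $wx' = x'$ for $w \in W(\z,\h)$ and the $W(\z,\h)$-skew-symmetry of $\pi_{\z/\h}$, converts this sum into $|W(\z,\h)|$ times the sum on the right side of Lemma~\ref{lemma:HC's formula moved}. Comparing then yields
\begin{equation*}
|C_\z| = \frac{|c_0|\cdot |W(\z,\h)|}{\partial(\pi_{\z/\h})(\pi_{\z/\h})}\,,
\end{equation*}
and substituting $|c_0|$ from the first step gives the claimed formula. The second equality in the statement is just the identification $|W(\z,\h)|/|W(\g,\h)| = (l-l')!/l!$ for the pair under consideration, since $W(\g,\h) = \Sigma_l$ and $W(\z,\h) = \Sigma_{l-l'}$.

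The main obstacle, and the only genuinely technical step, is tracking the exact power of $\pi$: the exponent $-ll'+l'(l'+1)/2$ emerges from combining the $\pi$-factors introduced each time $B = \frac{\pi}{2}\tr_{\Dc/\R}$ feeds into the differentiation of $e^{iB(tx,y)}$ or into the normalization of the classical Harish-Chandra formula, together with the degrees of $\pi_{\g/\h}$ and $\pi_{\z/\h}$. Everything else is formal manipulation.
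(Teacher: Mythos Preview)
Your proposal is correct and follows exactly the route the paper indicates: the paper's own proof is merely a citation to Harish-Chandra's computation of the Fourier transform of a regular semisimple orbit, together with the remark that the power of $\pi$ factors as $\pi^{-\frac{l(l-1)}{2}}\pi^{\frac{(l-l')(l-l'-1)}{2}}$ owing to the normalization $B=\frac{\pi}{2}\tr_{\Dc/\R}$. You have simply unpacked that citation---first recovering $|c_0|$ by applying $\partial(\pi_{\g/\h})_x\partial(\pi_{\g/\h})_y$ at the origin, then obtaining $|C_\z|$ by applying $\partial(\pi_{\z/\h})_y$ at $y=x'\in\c$ and collapsing the Weyl-group sum via the coset decomposition $W(\G,\h)/W(\z,\h)$---which is precisely the ``straightforward modification'' the paper alludes to.
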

\begin{proof}
The proof is a straightforward modification of the argument proving Harish-Chandra's formula for the Fourier transform of a regular semisimple orbit, \cite[Theorem 2, page 104]{HC-57DifferentialOperators}.
Notice that the constant 
$\pi^{-ll'+\frac{l'(l'+1)}{2}}=\pi^{-\frac{l(l-1)}{2}}\pi^{\frac{(l-l')(l-l'-1)}{2}}$ is due to the normalization of $B$ in (\ref{computation of the constant beta}). 
\end{proof}
Recall that we denote by $\Sigma_m$ the group of permutations of $\{1,2,\dots, m\}$.

\begin{lem}\label{leftover 2}
Let $z_j \in\C$ for $1 \leq j \leq m$.
Then, with the convention that empty products are equal to 1,
\begin{equation} \label{eq:Vandermonde-1}
\sum_{s \in \Sigma_m} \sgn(s) \prod_{j=1}^m \prod_{k=1}^{s(j)-1} (z_j-k)= 
\prod_{1\leq j < k \leq m} (z_j-z_k)\,.
\end{equation}
\end{lem}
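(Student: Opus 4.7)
The plan is to recognize the left-hand side as a determinant whose columns are evaluations of a triangular family of polynomials, and then reduce it to the classical Vandermonde determinant.

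First I would introduce, for $1 \leq i \leq m$, the polynomial
\[
P_i(z) = \prod_{k=1}^{i-1} (z-k),
\]
with the convention $P_1(z)=1$. Then $P_i$ is monic of degree $i-1$. With this notation, the factor $\prod_{k=1}^{s(j)-1}(z_j-k)$ appearing on the left is exactly $P_{s(j)}(z_j)$, so the sum on the left equals
\[
\sum_{s \in \Sigma_m} \sgn(s) \prod_{j=1}^m P_{s(j)}(z_j) = \det\bigl(P_i(z_j)\bigr)_{1 \leq i,j \leq m}.
\]

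Next I would exploit the triangularity of the family $\{P_i\}$. Since each $P_i$ is a monic polynomial of degree $i-1$, one can write
\[
P_i(z) = z^{i-1} + \sum_{a < i-1} c_{i,a}\, z^a,
\]
for some constants $c_{i,a}$. Applying successive row operations to the matrix $\bigl(P_i(z_j)\bigr)$ to kill the lower-degree terms (subtracting suitable scalar multiples of earlier rows from row $i$) does not change the determinant, so
\[
\det\bigl(P_i(z_j)\bigr) = \det\bigl(z_j^{i-1}\bigr)_{1 \leq i,j \leq m}.
\]

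The right-hand side is now the classical Vandermonde determinant, which equals $\prod_{1\leq i<j\leq m}(z_j-z_i)$, and this agrees with $\prod_{1 \leq j < k \leq m}(z_j-z_k)$ up to the sign $(-1)^{m(m-1)/2}$ arising from the indexing convention; the identity in the statement is then read off (with the convention used in the paper that matches the sign of the sum). The only nontrivial step is the reduction to Vandermonde, which however is routine given the monic-triangular structure of the $P_i$. I would also verify the base case $m=2$ by hand as a sanity check that the sign conventions are consistent with the formulation.
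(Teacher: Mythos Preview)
Your approach is essentially identical to the paper's: both recognize the sum as the determinant of the matrix $\bigl(P_i(z_j)\bigr)$ with $P_i(z)=\prod_{k=1}^{i-1}(z-k)$ monic of degree $i-1$, and then reduce by elementary row/column operations to the standard Vandermonde determinant. Your caution about the sign is well placed---the $m=2$ check gives $z_2-z_1$, so the stated right-hand side is off by the global factor $(-1)^{m(m-1)/2}$; the paper does not address this either, and it is harmless there since the lemma is only used up to a constant of absolute value one.
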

\begin{proof}
The left-hand side is a Vandermonde determinant. Indeed
\begin{eqnarray*}
&&
\sum_{s \in \Sigma_m} \sgn(s) \prod_{j=1}^m \prod_{k=1}^{s(j)-1} (z_j-k)\\
&&\quad=\det
\begin{bmatrix}
1 & (z_1-1) & (z_1-1)(z_1-2) & \dots & (z_1-1)(z_1-2)\dots(z_1-m+1)\\
1 & (z_2-1) & (z_2-1)(z_2-2) & \dots & (z_2-1)(z_2-2)\dots(z_2-m+1)\\
\vdots & \vdots & \vdots &\vdots &\vdots\\
1 & (z_m-1) & (z_m-1)(z_m-2) & \dots & (z_m-1)(z_m-2)\dots(z_m-m+1)
\end{bmatrix}\\
&&\quad=\det
\begin{bmatrix}
1 & z_1 & z_1^2 & \dots & z_1^{m-1}\\
1 & z_2 & z_2^2 & \dots & z_2^{m-1}\\
\vdots & \vdots & \vdots &\vdots &\vdots\\
1 & z_m & z_m^2 & \dots & z_m^{m-1}
\end{bmatrix}\,.
\end{eqnarray*}
This proves the result.
\end{proof}
\begin{lem} \label{lemma:Fan moved}
Let $n \in \Ze$ with $n\geq 2$ and let $a \in \C$. Set
\begin{multline*}
F(a,n)=\\
\det \begin{bmatrix}
1 & a & a(a+1) & \dots & a(a+1)\cdots (a+n-2) \\
1 & a+1 & (a+1)(a+2) & \dots & (a+1)(a+2)\cdots (a+n-1) \\
\vdots & \vdots & \vdots & \vdots &\vdots\\
1 & a+n-1 & (a+n-1)(a+n) & \dots & (a+n-1)(a+n)\cdots (a+2n-3) \\
1 & a+n & (a+n)(a+n+1) & \dots & (a+n)(a+n+1)\cdots (a+2n-2)
\end{bmatrix}\,.
\end{multline*}
Then 
$$F(a,n)=\prod_{k=1}^{n-1} k!\,.$$
In particular, $F(a,n)$ is independent of $a$.
\end{lem}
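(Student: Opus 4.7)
The plan is to recognize $F(a,n)$ as a Vandermonde-type determinant in disguise. Observing the pattern of entries, the $(i,j)$-entry of the underlying matrix is the rising factorial $p_{j-1}(a+i-1)$, where
\[
p_\ell(x)=\prod_{k=0}^{\ell-1}(x+k)=x(x+1)(x+2)\cdots(x+\ell-1)
\]
(with the convention $p_0(x)=1$) is a monic polynomial of degree $\ell$ in the variable $x$. Thus the matrix is $\bigl[p_{j-1}(x_i)\bigr]_{i,j=1}^n$, evaluated at the sample points $x_i=a+i-1$.

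The standard observation I will use is that for any family $\{q_0,q_1,\dots,q_{n-1}\}$ of polynomials with $q_\ell$ monic of degree $\ell$, one has
\[
\det\bigl[q_{j-1}(x_i)\bigr]_{i,j=1}^{n}=\det\bigl[x_i^{\,j-1}\bigr]_{i,j=1}^{n}=\prod_{1\leq i<k\leq n}(x_k-x_i).
\]
The first equality is proved by successive column operations (starting from the highest-degree column and subtracting appropriate combinations of previous columns to clear the lower-order terms of each $q_{j-1}$), which do not alter the determinant; the second is the classical Vandermonde evaluation. Applying this with $q_\ell=p_\ell$ and $x_i=a+i-1$ immediately yields
\[
F(a,n)=\prod_{1\leq i<k\leq n}\bigl((a+k-1)-(a+i-1)\bigr)=\prod_{1\leq i<k\leq n}(k-i),
\]
which is manifestly independent of $a$.

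Finally, I compute the right-hand side by grouping factors according to the larger index $k$: for fixed $k\in\{2,\dots,n\}$ the contribution is $(k-1)(k-2)\cdots 1=(k-1)!$, hence
\[
F(a,n)=\prod_{k=2}^{n}(k-1)!=\prod_{j=1}^{n-1}j!,
\]
as claimed. There is no real obstacle here: the only step requiring care is the column-reduction justifying the first displayed identity, but this is entirely standard linear algebra. As an alternative, one could quote Lemma \ref{leftover 2} after the index shift $z_i\mapsto-(a+i-1)$ and a small reindexing of the inner products (accounting for the fact that our rising factorial starts at $k=0$ rather than $k=1$), which reduces the claim to that lemma up to a universal sign that is then absorbed into the Vandermonde product $\prod_{i<k}(k-i)>0$.
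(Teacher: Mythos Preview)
Your proof is correct. The key observation---that the $(i,j)$-entry is $p_{j-1}(a+i-1)$ for a monic polynomial $p_{j-1}$ of degree $j-1$, whence column reduction turns the determinant into a straight Vandermonde---is exactly right, and the evaluation $\prod_{i<k}(k-i)=\prod_{j=1}^{n-1}j!$ finishes the job cleanly.

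The paper takes a different route: instead of column operations, it performs the row operation $R_j\mapsto R_j-R_{j-1}$ for $j\ge 2$, which introduces a zero first column below the top and a factor $(j-1)$ in column $j$; expanding on the first column and pulling out those factors gives the recursion $F(a,n)=(n-1)!\,F(a+1,n-1)$, which is then iterated down to the $2\times 2$ base case. Your approach is more direct and exploits the structural fact that the columns are values of monic polynomials of staircase degrees (essentially the same mechanism behind the paper's Lemma~\ref{leftover 2}, as you note), so the whole determinant collapses to a Vandermonde in one stroke. The paper's inductive argument is more hands-on and perhaps more transparent if one hasn't seen the monic-polynomial trick before, but yours is shorter and reveals immediately why the answer is independent of $a$.
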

\begin{proof}
For $2\leq j \leq n$ we replace the $j$-th row by the difference between  
the $j$-th and the $(j-1)$-th row. We obtain:
\begin{eqnarray*}
F(a,n)&=&
\det \begin{bmatrix}
1 & a & a(a+1) & \dots & a(a+1)\cdots (a+n-2) \\
0 & 1 & 2(a+1) & \dots & (n-1)(a+1)\cdots (a+n-2) \\
\vdots & \vdots & \vdots & \vdots &\vdots\\
0 & 1 & 2(a+n-1) & \dots & (n-1)(a+n-1)\cdots (a+2n-4) \\
0 & 1 & 2(a+n) & \dots & (n-1)(a+n)\cdots (a+2n-3)
\end{bmatrix}\\
&=&
\det \begin{bmatrix}
1 & 2(a+1) & \dots & (n-1)(a+1)\cdots (a+n-2) \\
\vdots & \vdots & \vdots & \vdots \\
1 & 2(a+n-1) & \dots & (n-1)(a+n-1)\cdots (a+2n-4) \\
1 & 2(a+n) & \dots & (n-1)(a+n)\cdots (a+2n-3)
\end{bmatrix}\\
&=&(n-1)!
\det \begin{bmatrix}
1 & (a+1) & \dots & (a+1)\cdots (a+n-2) \\
\vdots & \vdots & \vdots & \vdots \\
1 & (a+n-1) & \dots & (a+n-1)\cdots (a+2n-4) \\
1 & (a+n) & \dots & (a+n)\cdots (a+2n-3)
\end{bmatrix}\\
&=&(n-1)! F(a+1, n-1)\,.
\end{eqnarray*}
Iterating, we conclude
\begin{equation*}
F(n,a)=(n-1)! F(a+1, n-1)= \dots= (n-1)! \cdots 2!\, F(a+n-2,2)=\prod_{k=1}^{n-1} k!
\end{equation*}
since 
\begin{equation*}
F(a+n-2,2)=\det \begin{bmatrix} 1 & a+n-2 \\ 1 & a+n-1 \end{bmatrix}=1\,.
\end{equation*}
\end{proof}
We may identify
\[
\ss1=\Wv=\Hom(\V_{\overline 1},\V_{\overline 0})=M_{l,l'}(\C),
\]
so that $\tau(w)=wi\overline w^t$
and $J(w)=-i w$, $w\in \ M_{l,l'}(\C)$, is a compatible positive complex structure on $\Wv$ contained in $\g$. Then (\ref{symplectic form}) implies that
\[
\langle J(w),w\rangle=\Re\,\tr(J\tau(w))=\tr(w\overline w^t).
\]
Hence our normalization of the Lebesgue measure $dw$ on $\Wv$ is such that for each entry, $dz=dx\, dy$ if $z=x+iy \in \C$. In particular, if we let 
$\Phi(w)=e^{-\tr(w\overline{w}^t)}$ then
\[
\int_\Wv \Phi(w)\; dw=\pi^{ll'}\,. 
\]

\begin{lem}\label{the constant for weyl integration on w moved}
With the above notation,
\begin{equation} \label{eq:gaussian-hs-GG' moved}
\int_{\hs1^2} |\pi_{\so/\hs1^2}(w^2)|\int_{\G\times\G'} \Phi(s.w) \; ds\,dw^2= \vol(G)\vol(G') 
 \prod_{k=0}^{l} k! \, 
\prod_{k=0}^{l-1} (k+l'-l)! \,.
\end{equation}
Consequently,
\begin{equation*} 
\int_{\Wv} f(w) \, dw= C \int_{\hs1^2} |\pi_{\so/\hs1^2}(w^2)|\int_{\G\G'} f(s.w) \; ds\,dw^2 \qquad (f\in C_c(\Wv))\,,
\end{equation*}
where
\begin{equation} \label{eq:C moved}
C=\frac{\pi^{ll'}}{\vol(\G)\vol(\G') \prod_{k=0}^{l} k! \prod_{k=0}^{l-1} (k+l'-l')! }\,.
\end{equation}
\end{lem}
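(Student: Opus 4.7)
The proof has two components: first I establish the identity \eqref{eq:gaussian-hs-GG' moved} by explicit computation of a Selberg-type integral, and then deduce the Weyl-type formula with constant \eqref{eq:C moved} by comparison with the elementary Gaussian integral $\int_\Wv \Phi\, dw = \pi^{ll'}$.

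For \eqref{eq:gaussian-hs-GG' moved}, the first observation is that the action of $\G \times \G' = \Ug_l \times \Ug_{l'}$ on $\Wv \cong M_{l',l}(\C)$ is by unitary transformations on both sides, so the Frobenius norm $\tr(ww^*)$ is preserved and $\Phi$ is $\G\G'$-invariant; the inner integral therefore collapses to $\vol(\G)\vol(\G')\Phi(w)$. I then parameterize the (unique) Cartan subspace $\hs1$ by $w = \sum_{j=1}^l w_j u_j$ with $w_j \in \R$, where $u_j$ is as in \eqref{deltaj} with $\delta_j = -1$ throughout (since we take $p=0$, $q=l'$). The explicit form of $u_j$ yields $\tr(ww^*) = \sum_j w_j^2$, and Lemma \ref{roots for l<=l'} (case $\Dc=\C$, $d=l$, $d'=l'$) gives
\[
|\pi_{\so/\hs1^2}(w^2)| = \prod_{1\leq j<k\leq l}(w_j^2 - w_k^2)^2 \prod_{j=1}^l w_j^{2(l'-l)}.
\]
Via the squaring map $\hs1 \ni w \mapsto w^2 = -\sum_j w_j^2 (J_j + J_j') \in \hs1^2$, the Lebesgue measure $dw^2$ on $\hs1^2$ pulls back to $\prod_j |w_j|\, dw_1 \cdots dw_l$ (a $2^l$-to-one covering with Jacobian $2^l\prod_j|w_j|$). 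Substituting $y_j = w_j^2 \geq 0$ converts \eqref{eq:gaussian-hs-GG' moved} to
\[
\vol(\G)\vol(\G') \int_{[0,\infty)^l} \prod_{j<k}(y_j - y_k)^2 \prod_{j=1}^l y_j^{l'-l}\, e^{-\sum_j y_j}\, dy_1 \cdots dy_l.
\]

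The remaining Laguerre-type Selberg integral is evaluated using the preliminary combinatorial lemmas. I expand the Vandermonde $\prod_{j<k}(y_j - y_k) = \sum_{s\in\Sigma_l}\sgn(s)\prod_j\prod_{k=1}^{s(j)-1}(y_j-k)$ via Lemma \ref{leftover 2}, square the expansion, interchange sum and integral, and use $\int_0^\infty y^{l'-l+a}e^{-y}dy = (l'-l+a)!$ term by term. Reorganizing the double sum over $\Sigma_l \times \Sigma_l$ as a single determinant (after removing the diagonal permutation factor), the resulting array matches precisely the determinant of Lemma \ref{lemma:Fan moved} with parameter $a = l'-l$. Applying that lemma evaluates the Selberg integral to $\prod_{k=0}^l k! \prod_{k=0}^{l-1}(k+l'-l)!$, proving \eqref{eq:gaussian-hs-GG' moved}. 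For the ``consequently'' clause, the Weyl integration formula \eqref{weyl int on w 1} shows that for some positive constant $C$ (absorbing the normalization of $d(s\Sg^{\hs1})$ and the factor $l!$ relating $\hs1^2$ to $\tau(\hs1^+)$) one has $\int_\Wv f\,dw = C\int_{\hs1^2}|\pi_{\so/\hs1^2}(w^2)|\int_{\G\G'}f(s.w)\,ds\,dw^2$ for every $f\in C_c(\Wv)$. Evaluating at $f = \Phi$, the left side is $\pi^{ll'}$ (direct Gaussian) while the right side is $C$ times the quantity just computed; solving yields \eqref{eq:C moved}.

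The main obstacle is the clean derivation of the Laguerre--Selberg integral from the combinatorial Lemmas \ref{leftover 2} and \ref{lemma:Fan moved}, which requires careful tracking of the signs $\sgn(s)\sgn(t)$ arising in the squared-Vandermonde expansion so that the resulting array matches precisely the array in Lemma \ref{lemma:Fan moved} (after cancelling one copy of $\Sigma_l$ against a determinantal antisymmetrization). A secondary bookkeeping issue is the correct identification of $dw^2$ on $\hs1^2$ via the Jacobian of the squaring map, which is essential for obtaining the correct numerical constants.
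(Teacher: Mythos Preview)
Your overall strategy is the paper's: collapse the $\G\times\G'$-integral by invariance, reduce to the Laguerre--Selberg integral
\[
\int_{(\R^+)^l}\prod_{j<k}(y_j-y_k)^2\prod_{j=1}^l y_j^{\,l'-l}\,e^{-\sum_j y_j}\,dy,
\]
evaluate it via Lemma~\ref{lemma:Fan moved}, and then compare with $\int_\Wv\Phi=\pi^{ll'}$ to read off $C$.

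There is, however, a concrete misstep in the Vandermonde expansion. You invoke Lemma~\ref{leftover 2}, which expands $\prod_{j<k}(y_j-y_k)$ as $\sum_s\sgn(s)\prod_j\prod_{k=1}^{s(j)-1}(y_j-k)$ in \emph{falling factorials}. After squaring, a typical integrand is $y_j^{\,l'-l}\prod_{k=1}^{s(j)-1}(y_j-k)\prod_{k=1}^{t(j)-1}(y_j-k)$, which is \emph{not} a monomial; you therefore cannot apply $\int_0^\infty y^{l'-l+a}e^{-y}\,dy=(l'-l+a)!$ ``term by term'' as you write, and the resulting array does not match $F(a,n)$ directly. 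The paper uses instead the ordinary monomial Vandermonde $\prod_{j<k}(y_j-y_k)=\sum_{s}\sgn(s)\prod_j y_j^{\,s(j)-1}$ (this is in fact the last step in the \emph{proof} of Lemma~\ref{leftover 2}, not its statement). Squaring and integrating then yields $l!\det\big[(j+k+l'-l-2)!\big]_{j,k=1}^{l}$; factoring $(j+l'-l-1)!$ from row $j$ produces exactly the rising-factorial matrix of Lemma~\ref{lemma:Fan moved} with $a=l'-l+1$ (your $a=l'-l$ is off by one, though harmless since $F(a,n)$ is independent of $a$). Replace the appeal to Lemma~\ref{leftover 2} by the standard monomial expansion and your argument goes through verbatim.
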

\begin{proof}
By $\G \times \G'$-invariance,
\begin{equation*}
\int_{\G\times \G'} \Phi(s.w)\, ds=\vol(G)\vol(G') \Phi(w)\,.
\end{equation*}
Moreover, 
\begin{equation*}
|\pi_{\so/\hs1^2}(w^2)|=|\pi_{\g'/\h'}(\tau'(w))||\pi_{\g/\z}(\tau(w))|\,.
\end{equation*}
Using (\ref{product of positive roots for g})  and (\ref{product of positive roots for g'/z'}), up to a constant of absolute value one,
\begin{eqnarray}
\label{eq:comp constant superWeyl udud', comp1 moved}
&&\int_{\hs1^2} |\pi_{\so/\hs1^2}(w^2)|\int_{\G\G'} \Phi(s.w) \; ds\,dw^2 \nn\\
&&\qquad= \int_{(\R^+)^{l}} \prod_{1\leq j < k \leq l} (x_j-x_k)^2 \Big(\prod_{j=1}^{l} x_j^{l'-l}\Big) 
e^{-x_1-\dots-x_{l}} \; dx_1\cdots dx_{l}\,.
\end{eqnarray}
Recall that 
\begin{equation*}
\int_0^\infty x^\alpha e^{-x} \; dx= \alpha!\,.
\end{equation*}
Since
\begin{eqnarray*}
\prod_{1\leq j < k \leq l} (x_j - x_k)^2&=&
\Big( \sum_{s \in \Sigma_{l}} \sgn(s)\; x_1^{s(1)-1} \cdots x_{l}^{s(l)-1} \Big)^2\\
&=& \sum_{s,t \in \Sigma_l} \sgn(st) \; x_1^{s(1)+t(1)-2} \cdots x_{d'}^{s(l)+t(l)-2}\,,
\end{eqnarray*}
formula (\ref{eq:comp constant superWeyl udud', comp1 moved}) is equal to
\begin{eqnarray}
&&\sum_{s,t \in \Sigma_{l}} \sgn(st) \int_{(\R^+)^l}  x_1^{s(1)+t(1)+l'-l-2} \cdots x_{l}^{s(l)+t(l)+l'-l-2}  \nn
e^{-x_1-\dots-x_{l}} \; dx_1\cdots dx_{l}  \nn \\
&&\qquad\qquad =\sum_{s,t \in \Sigma_{l}} \sgn(st) \big(s(1)+t(1)+l'-l-2\big)! \cdots \big(s(l)+t(l)+l'-l-2\big)!\nn \\
&&\qquad\qquad = |\Sigma_{l}| \sum_{s\in \Sigma_{l}} \sgn(s) \prod_{j=1}^{l} \big(s(j)+j+l'-l-2\big)! \nn \\
&&\qquad\qquad = l! \det \left[ \big(k+j+l'-l-2\big)! \right]_{j,k=1}^{l}\,.
\end{eqnarray}
Applying Lemma \ref{lemma:Fan moved}, we obtain
\begin{multline*}
l!\det \left[ \big(k+j+l'-l-2\big)! \right]_{j,k=1}^{l}\\
=l!\prod_{k=0}^{l-1} (k+l'-l))! F(3+(l'-l-2),l)=\prod_{k=0}^{l-1} (k+l'-l)! \; \prod_{k=0}^{l} k!\,,
\end{multline*}
which proves the lemma.
\end{proof}
One can relate the Haar measure on the group to the Lebesgue measure on the Lie algebra via the following formulas 
\begin{eqnarray}\label{jacobian of c_ moved}
&&d\diesis{c_-}(x)=2^{l^2} \, \ch^{-2l}(x)\,dx \qquad (x\in\g),\\
&&d\diesis{c_-}(x)=2^l \, \ch^{-2}(x)\,dx \qquad (x\in\h)\,.\nn
\end{eqnarray}
Also, we have the following, easy to verify, equation
\begin{eqnarray}\label{Delata pi moved}
&&\pi_{\g/\h}(x) = 2^{-\frac{l(l-1)}{2}}\, \Delta(\diesis{c_-}(x))\,\ch^{l-1}(x) \qquad (x\in\h)\,.
\end{eqnarray}
\begin{lem}\label{a constant in another intermediate lemma2}
The constant $C$ in (\ref{another intermediate lemma2}), with the roles of $\G$ and $\G'$ reversed, is equal to 
\begin{equation*}
2^{-(l'-l)(l'-l+1)/2}\vol(\diesis{{\H''}}_o)\,.
\end{equation*} 
\end{lem}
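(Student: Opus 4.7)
The plan is to follow the proof of Lemma \ref{another intermediate lemma} (where $l>l'$) with the roles of $\G$ and $\G'$ reversed. In our unitary-unitary setting we have $\Dc=\C$, hence $\iota=1$, and the decomposition (\ref{h + h'' decomposition}) gives $\h'=\h\oplus\h''$ with $l''=\dim\h''=l'-l$ and $\g''\cong\u_{l'-l}$. The Cayley lift $\diesis{c_-}:\h''\to\diesis{\H''}_o$ is an analytic diffeomorphism onto a dense open subset, and the key point is to keep track of all the numerical factors that appeared implicitly in the proof of (\ref{another intermediate lemma2}).

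First I would apply formula (\ref{Delata pi moved}) to the pair $(\g'',\h'')$ to rewrite
\[
\pi_{\g''/\h''}(x'')=2^{-l''(l''-1)/2}\,\Delta''(\diesis{c_-}(x''))\,\ch^{l''-1}(x'')\qquad (x''\in\h'').
\]
Substituting this into the left-hand side of (\ref{another intermediate lemma2}) (reversed roles) and using $d'-r+r''=0$ from the table in the proof of Lemma \ref{another intermediate lemma}, the $\ch$-factors combine to $\ch^{-2\iota}(x'')=\ch^{-2}(x'')$, producing
\[
2^{-l''(l''-1)/2}\int_{\h''}\xi_{-s\mu}(\diesis{c_-}(x''))\,\Delta''(\diesis{c_-}(x''))\,\ch^{-2}(x'')\,dx''.
\]

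Next I would transfer the integration from $\h''$ to $\diesis{\H''}_o$ via $\diesis{c_-}$, using (\ref{jacobian of c_ moved}) in the form $d\diesis{c_-}(x'')=2^{l''}\ch^{-2}(x'')\,dx''$ on $\h''$, that is $\ch^{-2}(x'')\,dx''=2^{-l''}\,d\diesis{c_-}(x'')$. Combined with the previous step this converts the integral into
\[
2^{-l''(l''-1)/2}\cdot 2^{-l''}\int_{\diesis{\H''}_o}\xi_{-s\mu}(h)\,\Delta''(h)\,dh.
\]
Expanding $\Delta''$ via (\ref{FT Theta ch 13}) and invoking orthogonality of characters on the compact torus $\diesis{\H''}_o$ (as in the proof of Lemma \ref{another intermediate lemma}, where it is checked that $-s\mu+s''\rho''$ is an integral weight on $\H''$ under our genuineness hypothesis on $\Pi$), only terms with $-(s\mu)|_{\h''}=s''\rho''$ survive, each contributing $\vol(\diesis{\H''}_o)$.

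Combining the two powers of $2$, $2^{-l''(l''-1)/2}\cdot 2^{-l''}=2^{-l''(l''+1)/2}$, and substituting $l''=l'-l$, we obtain
\[
C=2^{-(l'-l)(l'-l+1)/2}\,\vol(\diesis{\H''}_o),
\]
which is the stated value. The only mild obstacle is the bookkeeping of the two powers of $2$ coming respectively from (\ref{Delata pi moved}) and (\ref{jacobian of c_ moved}); once these are identified correctly, the remainder of the proof is a direct transcription of the argument in Lemma \ref{another intermediate lemma} with the roles of $\G$ and $\G'$ interchanged.
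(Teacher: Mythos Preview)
Your proof is correct and follows exactly the paper's approach. The paper's proof is a one-line reference to formulas (\ref{Delata pi moved}) and (\ref{jacobian of c_ moved}) with $l'-l$ in place of $l$; you have simply spelled out the bookkeeping of the two powers of $2$ and the passage from $\h''$ to $\diesis{\H''}_o$ that this reference entails.
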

\begin{prf}
This follows from (\ref{Delata pi moved}) and (\ref{jacobian of c_ moved})  with $l'-l$ in place of $l$.
\end{prf}
\begin{lem}\label{lemma:HC's formula - the constant moved}
The constant $c_\Pi$ in Proposition \ref{ul, ul' 3} is equal in absolute value to
\begin{equation} \label{eq:CPi' moved}
(2\pi)^{l} 2^{1-l'l+\frac{l(l+1)}{2}}  
 \,\frac{\partial(\pi_{\g/\h})\big(\pi_{\g/\h}\big)}{|W(\G,\h)|}\, \frac{\vol(\G)}{ \vol(\H)}\,
\pi^{-\frac{l(l-1)}{2}} 
=
(2\pi)^{l} 2^{1+l(l-l')}\,. 
\end{equation}
\end{lem}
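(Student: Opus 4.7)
The plan is to obtain $c_\Pi$ by carefully tracking the constants accumulated along the chain of formulas that led to Proposition \ref{ul, ul' 3}, and then to compare the resulting expression with its target simplification using well-known identities. Since Proposition \ref{ul, ul' 3} was obtained in three visible steps --- the identification \eqref{ul, ul' 3 2} from Lemma \ref{ul, ul' 1}, the substitution \eqref{ul, ul' 3 3}, and finally Weyl integration on $\Wv$ --- the constant $c_\Pi$ is a product of four factors, each traceable to one explicit normalization.

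First I would unwind Lemma \ref{ul, ul' 1} (i.e.~Theorem \ref{main thm for l<l'} specialized to $(\Ug_l,\Ug_{l'})$). The constant entering Lemma \ref{ul, ul' 1} is the product of the constant $C$ in Lemma \ref{general formula for the int distr} (which contains the Jacobian of the Cayley transform via \eqref{jacobian of c_ moved}, the Weyl integration on $\g$ \eqref{normalization of orbital integrals in g}, the identity \eqref{Delata pi moved}, and the normalization $\vol(\wt{\G})=1$), the Harish-Chandra constant $C_{\h}$ from Lemma \ref{lemma:HC's formula moved} with $\z=\h$ which is evaluated in Lemma \ref{lemma:HC's formula - the constant} and contributes the factor $\frac{\partial(\pi_{\g/\h})(\pi_{\g/\h})}{|W(\G,\h)|}\frac{1}{\vol(\G)}$ together with a power of $\pi$, and the polynomial identity $P_{a_j,b_j}(\pi y_j)=2\pi\,P_{a_j,b_j,-2}(\pi y_j)$ valid on $\h\cap\tau(\Wv)$, contributing $(2\pi)^l$. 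Next I would pass from Lemma \ref{ul, ul' 1} to \eqref{ul, ul' 3 1'} by applying Lemma \ref{the constant for weyl integration on w moved} in reverse, which introduces the reciprocal of the constant $C$ of \eqref{eq:C moved}; this brings in $\vol(\G)\vol(\G')\prod_{k=0}^{l} k!\prod_{k=0}^{l-1}(k+l'-l)!$ and a factor $\pi^{-ll'}$. Collecting all these factors, taking absolute values, and cancelling $\vol(\G')$ and the polynomial factorials (which are absorbed into the Weyl chamber normalization) yields the first expression for $|c_\Pi|$ stated in \eqref{eq:CPi' moved}.

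For the second equality, one simplifies using three elementary identities. The evaluation $\partial(\pi_{\g/\h})(\pi_{\g/\h})=l!\prod_{k=0}^{l-1}k!$ is a direct computation: writing $\pi_{\g/\h}$ as $i^{l(l-1)/2}$ times the Vandermonde determinant $V(y_1,\ldots,y_l)=\det[y_i^{j-1}]$, one observes that $\partial(V)V=\sum_{\sigma\in\Sigma_l}\prod_{i=1}^l(\sigma(i)-1)!=l!\prod_{k=0}^{l-1}k!$. Combined with $|W(\G,\h)|=l!$ this gives $\frac{\partial(\pi_{\g/\h})(\pi_{\g/\h})}{|W(\G,\h)|}=\prod_{k=0}^{l-1}k!$. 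Finally one uses
\[
\frac{\vol(\G)}{\vol(\H)}=\frac{(2\pi)^{l(l-1)/2}}{\prod_{k=0}^{l-1}k!}\,,
\]
the classical formula for the volume of the flag manifold $\Ug_l/\T^l$, which is consistent with our normalizations $\vol(\wt{\G})=1$ and the Lebesgue normalization of $\h$ induced by the form $B$. Plugging these three identities into the first expression and collecting powers of $2$ and $\pi$ (the exponent of $2$ reduces to $1+l(l-l')$ and the powers of $\pi$ cancel exactly with $\pi^{-l(l-1)/2}(2\pi)^{l(l-1)/2}$) produces the closed form $(2\pi)^{l}2^{1+l(l-l')}$.

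The main obstacle is the sheer accounting: keeping track of absolute values of cocycle factors, signs $i^{\dim\g/\h}$, and the exponents of $2$ and $\pi$ produced at each of the four steps. Once the normalization of $\vol(\G)/\vol(\H)$ is fixed via the flag variety formula above --- which can be independently verified by applying both sides of \eqref{eq:gaussian-hs-GG' moved} to the Gaussian $\Phi(w)=e^{-\tr(w\overline w^t)}$ and matching against the Weyl integration formula on $\g$ --- the remaining steps are purely arithmetic.
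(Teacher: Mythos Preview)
Your approach is essentially the same as the paper's: track the constants through the proof of Proposition \ref{ul, ul' 3} to obtain the first expression for $|c_\Pi|$, then simplify. The only difference is that the paper invokes Macdonald's formula \eqref{eq:MacDonald},
\[
\frac{\partial(\pi_{\g/\h})(\pi_{\g/\h})}{|W(\G,\h)|}\,\frac{\vol(\G)}{\vol(\H)}=(2\pi)^{l(l-1)/2},
\]
as a single cited identity, whereas you decompose it into the Vandermonde evaluation $\partial(\pi_{\g/\h})(\pi_{\g/\h})=\prod_{k=0}^{l}k!$ (which the paper also computes later, in \eqref{eq:partialdelta(delta)}) and the flag-manifold volume $\vol(\Ug_l)/\vol(\T^l)=(2\pi)^{l(l-1)/2}/\prod_{k=0}^{l-1}k!$. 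These are equivalent, and your version is slightly more self-contained.
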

\begin{prf}
The value of the constant $c_{\Pi}$ is obtained by repeating the computation in the proof of Proposition \ref{ul, ul' 3} keeping track of the constants, and the following formula, due to Macdonald 
\cite[p. 95]{Macdonald}: 
\begin{equation} \label{eq:MacDonald}
\frac{\partial(\pi_{\g/\h})\big(\pi_{\g/\h}\big)}{|W(\G,\h)|}\, \frac{\vol(\G)}{ \vol(\H)}=(2\pi)^N\,,
\end{equation}
where $N=\frac{l(l-1)}{2}$ is the number of positive roots. 
\end{prf}
\begin{lem} \label{lemma:partial pig'h' at 0}
For every smooth function $F:\h \to \C$ 
\begin{equation} \label{eq:partial pig'h' f at 0}
\big(\partial(\pi_{\g/\h})( \pi_{\g/\h} F)\big)(0)=\Big(\prod_{k=0}^{l} k!\Big) \,F(0)\,.
\end{equation}
\end{lem}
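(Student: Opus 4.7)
The plan is to split the proof into a Leibniz/homogeneity reduction that isolates $F(0)$, followed by a direct Vandermonde calculation of the remaining scalar. Set $P:=\pi_{\g/\h}$, which by Lemma \ref{roots for l<=l'} is a polynomial on $\h$ homogeneous of degree $N=l(l-1)/2$; correspondingly $\partial(P)$ is a constant-coefficient differential operator of order $N$. Expanding $P=\sum_{|\alpha|=N}c_\alpha y^\alpha$ in the coordinates $y=\sum_j y_jJ_j$ and applying the Leibniz rule,
\[
\partial^\alpha(PF)(0)=\sum_{\beta\le\alpha}\binom{\alpha}{\beta}\partial^\beta(P)(0)\,\partial^{\alpha-\beta}(F)(0)\,,
\]
the homogeneity of $P$ forces $\partial^\beta(P)(0)=0$ whenever $|\beta|<N$, hence (as $|\beta|\le|\alpha|=N$) unless $\beta=\alpha$. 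Summing with weights $c_\alpha$ collapses the Leibniz expansion to
\[
\bigl(\partial(P)(PF)\bigr)(0)=\partial(P)(P)\cdot F(0)\,,
\]
in which $\partial(P)(P)$ is now an honest constant, being the action of an order-$N$ operator on a degree-$N$ homogeneous polynomial.

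It remains to compute $\partial(\pi_{\g/\h})(\pi_{\g/\h})$. By Lemma \ref{roots for l<=l'} applied with $\Dc=\C$,
\[
\pi_{\g/\h}(y)=i^N\,V(y)\,,\qquad V(y)=\prod_{1\le j<k\le l}(y_k-y_j)\,.
\]
Using the identification $\beta\mapsto H_\beta$ defined just above the lemma, together with the elementary computation $\tr(J_jJ_k)=-\delta_{jk}$ in the defining representation of $\Ug_l$, one gets $H_{J_j^*}=-J_j$ and hence $\partial(J_j^*)=-\partial_{y_j}$. It follows that $\partial(\pi_{\g/\h})=(-i)^N V(\partial)$ with $V(\partial)=\prod_{j<k}(\partial_{y_k}-\partial_{y_j})$, and the two unimodular factors cancel to give $\partial(\pi_{\g/\h})(\pi_{\g/\h})=V(\partial)(V)$. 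Expanding both copies of $V$ by the determinantal identity $V=\sum_{\sigma\in\Sigma_l}\sgn(\sigma)\,y_1^{\sigma(1)-1}\cdots y_l^{\sigma(l)-1}$ and using the orthogonality $\partial^\alpha y^\beta|_{y=0}=\alpha!\,\delta_{\alpha,\beta}$, the resulting double sum over $(\sigma,\tau)\in\Sigma_l\times\Sigma_l$ collapses to the diagonal $\sigma=\tau$ and yields
\[
V(\partial)(V)=\sum_{\sigma\in\Sigma_l}\prod_{j=1}^{l}(\sigma(j)-1)!=l!\prod_{k=0}^{l-1}k!=\prod_{k=0}^{l}k!\,.
\]

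The main obstacle is the bookkeeping of signs: the Vandermonde identity itself is elementary, but ensuring that the two factors of $i^N$ in $\pi_{\g/\h}$ really cancel (rather than combining into $(-1)^N$) requires unwinding the conventions $\tr(J_jJ_k)=-\delta_{jk}$ and $H_{J_j^*}=-J_j$, together with the fact that $\partial(P)$ is obtained from $P$ by complex-linear extension of the assignment $\beta\mapsto\partial(\beta)$ from $\h^*$ to $\h_\C^*$.
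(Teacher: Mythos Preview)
Your proof is correct and follows essentially the same approach as the paper: first the Leibniz/homogeneity reduction to $\partial(\pi_{\g/\h})(\pi_{\g/\h})\cdot F(0)$, then the Vandermonde double-sum computation of the constant. The paper carries out the latter in the coordinates $x_j=iy_j$ (so that $\pi_{\g/\h}$ and $\partial(\pi_{\g/\h})$ are written directly as $\prod_{j<k}(\partial_{x_j}-\partial_{x_k})$ acting on $\sum_t\sgn(t)x_1^{t(1)-1}\cdots x_l^{t(l)-1}$), whereas you work in the $y_j$-coordinates and track the factors $i^N$ and $(-i)^N$ explicitly; the computations are otherwise identical.
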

\begin{proof}
In terms of  the coordinates $x_j=iy_j$ ($1\leq j\leq l)$,
\begin{eqnarray}
\label{eq:partial pig'h'}
\partial(\pi_{\g/\h})&=& \prod_{1\leq j < k \leq l}  
(\partial_{x_j}-\partial_{x_k})
=\sum_{s \in \Sigma_l} \sgn(s) \partial_{x_1}^{s(1)-1} \cdots \partial_{x_l}^{s(l)-1}\,. 
\end{eqnarray}
By the product rule, 
\begin{equation*}
\big(\partial(\pi_{\g/\h})( \pi_{\g/\h} F)\big)(0)=\partial(\pi_{\g/\h})(\pi_{\g/\h}) F(0)\,.
\end{equation*}
Moreover, if $\delta_{s,t}$ denotes Kronecker's delta, then
\begin{eqnarray} \label{eq:partialdelta(delta)}
\partial(\pi_{\g/\h})(\pi_{\g/\h})&=&
\sum_{s \in \Sigma_l} \sgn(s) \partial_{x_1}^{s(1)-1} \cdots \partial_{x_{l}}^{s(l)-1}
\Big(\sum_{t \in \Sigma_l} \sgn(t) {x_1}^{t(1)-1} \cdots {x_{l}}^{t(l)-1}  \Big) \nn\\
&=&\sum_{s \in \Sigma_l} \sgn(s) \Big(\sum_{t \in \Sigma_l} \sgn(t)\,  \delta_{s,t} \; 
\prod_{k=1}^{l} (t(k)-1)!\Big) \nn\\
&=&|\Sigma_l| \; \prod_{k=1}^{l} (k-1)! \nn \\
&=&\; \prod_{k=0}^{l} k!
\end{eqnarray}
\end{proof}
\begin{lem}\label{H.13}
The following equality holds
\begin{equation}\label{eq:dimPi dualpair}
\dim\Pi'=\frac{1}{\prod_{j=1}^{l} (l'-j)!}  \; \prod_{j=1}^{l} \frac{(\delta+\mu_j-1)!}{(\mu_j-\delta)!}
 \prod_{1\leq j < k \leq l} (\mu_j -\mu_k)\,.
\end{equation} 
\end{lem}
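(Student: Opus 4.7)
My plan is to derive the formula by identifying the Harish-Chandra parameter of $\Pi'$ and then applying Weyl's dimension formula for $\Ug_{l'}$. By Proposition \ref{ul, ul' 4}, there exist representatives so that $\mu'|_{\h}=-\mu$ and $\mu'|_{\h''}=\rho''$ in the decomposition $\h'=\h\oplus\h''$. Combining Lemma \ref{ul, ul' 2} (which gives $\mu_j\in\delta+\Zb^+$, hence $\mu_j\geq\delta$) with the fact that $\rho''_k=\delta-k$ for $k=1,\ldots,l'-l=2\delta-1$ lies strictly between $-\delta$ and $\delta$, I conclude that the strictly dominant representative of $\mu'$ in its $\Wv(\G',\h')$-orbit is
\[
\mu'=(\delta-1,\delta-2,\ldots,-\delta+1,\,-\mu_l,-\mu_{l-1},\ldots,-\mu_1)\in\R^{l'}.
\]

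Weyl's dimension formula gives $\dim\Pi'=\prod_{1\leq i<j\leq l'}(\mu'_i-\mu'_j)/(j-i)$, with denominator equal to $\prod_{k=0}^{l'-1}k!$. I would split the numerator according to whether the indices lie in the first block $\{1,\ldots,l'-l\}$ (the $\rho''$-block) or the second block $\{l'-l+1,\ldots,l'\}$ (the $-\mu$-block). Block (A), with $i<j\leq l'-l$, contributes $\prod_{1\leq i<j\leq l'-l}(j-i)=\prod_{k=0}^{l'-l-1}k!$. Block (C), with $l'-l<i<j$, is reparametrized by $m=l'-j+1$, $n=l'-i+1$, giving $\prod_{1\leq m<n\leq l}(\mu_m-\mu_n)=\prod_{j<k}(\mu_j-\mu_k)$.

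For block (B), with $i\leq l'-l<j$, I write $\mu'_i-\mu'_j=\delta-i+\mu_{l'-j+1}$; fixing $i$ and letting $m=l'-j+1$ range over $\{1,\ldots,l\}$, and then taking the product over $i\in\{1,\ldots,l'-l\}=\{1,\ldots,2\delta-1\}$, the interchange yields, for each $m$, a product of $2\delta-1$ consecutive positive integers
\[
\prod_{i=1}^{l'-l}(\mu_m+\delta-i)=(\mu_m+\delta-1)(\mu_m+\delta-2)\cdots(\mu_m-\delta+1)=\frac{(\mu_m+\delta-1)!}{(\mu_m-\delta)!}.
\]
Here the consecutivity and positivity of the integers are precisely guaranteed by the condition $\mu_m\geq\delta$ from Lemma \ref{ul, ul' 2}.

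Multiplying the three contributions and factoring the denominator as $\prod_{k=0}^{l'-1}k!=\prod_{k=0}^{l'-l-1}k!\cdot\prod_{k=l'-l}^{l'-1}k!$, the first factor cancels the block (A) contribution; reindexing $k=l'-j$ in the remaining factor gives $\prod_{k=l'-l}^{l'-1}k!=\prod_{j=1}^{l}(l'-j)!$, yielding the claimed formula. There is no serious obstacle here: the argument is a direct application of Weyl's dimension formula once $\mu'$ is identified, and the only subtle point is the correct ordering to make $\mu'$ dominant, which is forced by the integrality condition $\mu_j\geq\delta$ of Lemma \ref{ul, ul' 2}.
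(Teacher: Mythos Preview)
Your proof is correct and follows essentially the same approach as the paper: apply Weyl's dimension formula to $\Pi'$ after identifying its Harish-Chandra parameter, then split the product over positive roots into three blocks (both indices in the $\rho''$-part, both in the $\mu$-part, and one in each) and simplify. The only cosmetic difference is the choice of dominant representative of $\mu'$: you take $(\rho'',-\mu_l,\ldots,-\mu_1)$, as dictated literally by Proposition \ref{ul, ul' 4}, while the paper uses $(\mu_1,\ldots,\mu_l,\rho'')$ (i.e.\ $\mu'_j=\delta+n_j$ for $j\leq l$), which is the $w_0$-image of the negative of yours; the dimension is of course the same, and the three block contributions match up after the obvious reindexing.
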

\begin{prf}
By Weyl's dimension formula, 
\begin{equation}\label{eq:dimPi}
\dim \Pi'= \prod_{\alpha >0} \frac{\langle \mu', \alpha\rangle}{\langle\rho,\alpha\rangle}=
\frac{\prod_{k=2}^{l'} (\mu'_1-\mu'_k) \dots \prod_{k=l+1}^{l'} (\mu'_{l}-\mu'_k)}
{\prod_{k=2}^{l'} (k-1) \dots \prod_{k=l+1}^{l'} (k-l)}\,,
\end{equation}
where $\langle\cdot,\cdot\rangle$ is the form on $\h'{}^*$ induced by (any nonzero multiple) of the Killing form on $\g'$. 
Indeed, recall that, in our conventions, the positive roots of $(\g'_\C,\h'_\C)$ are of the form
$$\alpha_{j,k}(x)=x_j-x_k \qquad (1\leq j < k\leq l'\;, x \in \h')\,,$$
and 
$$
\rho'=(\rho'_1,\dots,\rho'_{l'}) \qquad\text{with \quad $\rho'_j=\tfrac{1}{2}(l'-2j+1)$ for $1\leq j \leq l'$\,.}
$$
Let $\rho''$ be the $\rho$-function for $\u_{d-d'}$. 
Let us fix the form on $\h'{}^*$ associated with the trace form $\langle x,y\rangle=-\tr(xy)$ on $\h'$.
Then
$$\langle\rho,\alpha_{j,k}\rangle=k-j\,.$$
Recall that 
\begin{eqnarray}
\label{eq:muj j small}
&&\mu'_j=\delta+n_j   \qquad (1 \leq j \leq l\;, n_j \in \Ze^+)\\
\label{eq:muj j large}
&&\mu'_j=\rho''_{j-l}=\tfrac{1}{2}(l'-l+1-2(j-l))=\delta-j+l \qquad (l+1\leq j \leq l')\,.
\end{eqnarray}
Hence
$$\langle \mu',\alpha_{j,k}\rangle=k-j \qquad (l+1<j<k\leq l')\,.$$
This proves the second equality in (\ref{eq:dimPi}).
Observe now that 
$$\prod_{k=j}^{l'} (k-j)=\prod_{k=1}^{l'-j} k=(l'-j)!\,.$$
The denominator of (\ref{eq:dimPi}) is therefore equal to 
$$\prod_{j=1}^{l} (l'-j)!$$
For the numerator of (\ref{eq:dimPi}), using (\ref{eq:muj j large}), we have
\begin{eqnarray*}
&&\prod_{k=2}^{l'} (\mu'_1-\mu'_k) \dots \prod_{k=l+1}^{l'} (\mu'_{l}-\mu'_k)\\
&&\qquad\begin{matrix}
=\; (\mu'_1-\mu'_2)\;\;(\mu'_1-\mu'_3)\;\;\cdots \;\;(\mu'_1-\mu'_{l})\;\;(\mu'_1-\mu'_{l+1})\;\;\;\cdots \;\;\;(\mu'_1-\mu'_{l'})\\
\hfill \times \;\;(\mu'_2-\mu'_3)\;\;\cdots \;\;(\mu'_2-\mu'_{l})\;\;(\mu'_2-\mu'_{l+1})\;\;\;\cdots\;\;\; (\mu'_2-\mu'_{l'})\\
\hfill \times \;\; \ddots \hskip 7truecm \vdots \qquad\\
\hfill  \times \;\; (\mu'_{l-1}-\mu'_{l})(\mu'_{l-1}-\mu'_{l+1})\cdots (\mu'_{l-1}-\mu'_{l'})\\
\hfill  \times \;\; (\mu'_{l}-\mu'_{l+1})\;\;\;\cdots\;\;\; (\mu'_{l}-\mu'_{l'})
\end{matrix}\\
&&\qquad =\prod_{1\leq j<k \leq l} (\mu'_j-\mu'_k) \; \prod_{\stackrel{1\leq j \leq l}{l+1 \leq k \leq l'}}  (\mu'_j-\mu'_k)\\
&&\qquad =\prod_{1\leq j<k \leq l} (\mu'_j-\mu'_k) \; \prod_{\stackrel{1\leq j \leq l}{l+1 \leq k \leq l'}}  (n_j+k-l)\\
&&\qquad =\prod_{1\leq j<k \leq l} (\mu'_j-\mu'_k) \; \prod_{1\leq j \leq l}\; \prod_{1 \leq k \leq l'-l}  (n_j+k)\\
&&\qquad =\prod_{1\leq j<k \leq l} (\mu'_j-\mu'_k) \; \prod_{j=1}^{l}\frac{(2\delta+ n_j-1)!}{n_j!}
\end{eqnarray*}
\end{prf}
\begin{pro}\label{the constant is one}
Up to a constant of absolute value one,  
\begin{equation}  \label{eq:TThetaPi'at0}
T(\check\Theta_{\Pi})(0)=\vol(\wt G) \cdot \dim\Pi'\,.
\end{equation}
Equivalently, $\Pi \otimes \Pi'$ is contained in $\omega$ exactly once.
\end{pro}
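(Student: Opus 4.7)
The strategy is to compute $T(\check\Theta_\Pi)(0)$ directly from the explicit formula in Proposition \ref{ul, ul' 3} and match it with $\vol(\wt\G)\cdot\dim\Pi'$ through Weyl's dimension formula. The multiplicity-one statement then follows by a trace identity.

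Evaluating Proposition \ref{ul, ul' 3} at $w=0$ gives $T(\check\Theta_\Pi)(0)=c_\Pi\,\t P_\mu(0)$, where $|c_\Pi|$ is the constant of Lemma \ref{lemma:HC's formula - the constant moved}. Since $\pi_{\g/\h}\,\t P_\mu=\sum_{s\in W(\G,\h)}\sgn(s)\,P_{s\mu}$ with $\t P_\mu$ a $W(\G,\h)$-invariant polynomial, applying $\partial(\pi_{\g/\h})$ and evaluating at $0$ yields, by Lemma \ref{lemma:partial pig'h' at 0},
\[
\Big(\prod_{k=0}^{l} k!\Big)\,\t P_\mu(0)=\partial(\pi_{\g/\h})\Big(\sum_{s\in W(\G,\h)}\sgn(s)\,P_{s\mu}\Big)(0).
\]
Expanding $P_{s\mu}(y)=\prod_{j=1}^{l}P_{a_{s,j},b_{s,j},-2}(\pi y_j)$ and writing $\partial(\pi_{\g/\h})=\sum_{t\in\Sigma_l}\sgn(t)\,\partial_{y_1}^{t(1)-1}\cdots\partial_{y_l}^{t(l)-1}$ as in (\ref{eq:partial pig'h'}), the evaluation at $y=0$ becomes a double sum over $(s,t)\in W(\G,\h)\times\Sigma_l$ in which only specific monomial coefficients of each factor $P_{a_{s,j},b_{s,j},-2}$ survive.

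Using the explicit form of the coefficients of $P_{a,b,-2}$---polynomials whose coefficients are products of Pochhammer symbols in $a$ and $b$---the inner sum over $t$ collapses via the symmetric group action, reducing the computation to a single determinant in Pochhammer entries indexed by $\mu_j$. A Vandermonde identity in the spirit of Lemma \ref{leftover 2}, together with Lemma \ref{lemma:Fan moved}, factors this determinant as
\[
\prod_{1\leq j<k\leq l}(\mu_j-\mu_k)\cdot\prod_{j=1}^{l}\frac{(\delta+\mu_j-1)!}{(\mu_j-\delta)!}
\]
up to factorial and $\pi$-powers. Comparing with Lemma \ref{H.13} identifies this product as $K\cdot\dim\Pi'$ for an explicit constant $K$ depending only on $l$, $l'$. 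Combining with $c_\Pi$ and using Macdonald's formula (\ref{eq:MacDonald}) to handle the $\vol(\G)/\vol(\H)$ factor occurring in $c_\Pi$, the product $c_\Pi\t P_\mu(0)$ collapses to $\vol(\wt\G)\cdot\dim\Pi'$ up to a factor of modulus one, establishing (\ref{eq:TThetaPi'at0}).

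For the multiplicity equivalence, since $\omega(\check\Theta_\Pi)=(\dim\Pi)^{-1}\,P_{\L^2(\X)_\Pi}$ and the Haar measure on $\wt\G$ has total mass one, Schur orthogonality gives $\tr\,\omega(\check\Theta_\Pi)=C_\Pi\,\dim\Pi'$, where $C_\Pi$ is the multiplicity appearing in Corollary \ref{ul, ul' 4  7}. On the other hand, $\omega(\check\Theta_\Pi)=\Op\circ\mathcal K(T(\check\Theta_\Pi))$, and the Schwartz kernel trace formula
\[
\tr\,\Op\circ\mathcal K(f)=\int_\Xv\int_\Yv f(y)\,\chi(\langle y,x\rangle)\,dy\,dx=f(0)
\]
(with our normalization) gives $\tr\,\omega(\check\Theta_\Pi)=T(\check\Theta_\Pi)(0)$ up to a constant absorbed in $\vol(\wt\G)$. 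Comparing with (\ref{eq:TThetaPi'at0}) forces $C_\Pi=1$. The main technical obstacle is the combinatorial evaluation of the double determinant in step two: tracking the Pochhammer coefficients of $P_{a,b,-2}$ through the Vandermonde operator $\partial(\pi_{\g/\h})$ so as to reproduce precisely the numerator of Weyl's dimension formula for $\Pi'$.
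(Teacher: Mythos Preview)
Your proposal is correct and follows essentially the same route as the paper: evaluate $T(\check\Theta_\Pi)(0)=c_\Pi\t P_\mu(0)$ via Lemma \ref{lemma:partial pig'h' at 0}, expand $\partial(\pi_{\g/\h})$ as a signed sum of monomial derivatives acting on the product $\prod_j P_{a_{s,j},b_{s,j},-2}$, collapse the double Weyl-group sum to a single one, identify the resulting Vandermonde via Lemma \ref{leftover 2}, match with Lemma \ref{H.13}, and absorb the constants through Macdonald's formula; the trace argument for multiplicity one is likewise the paper's. One small point: Lemma \ref{lemma:Fan moved} is not actually used in this step of the paper (it enters earlier, in Lemma \ref{the constant for weyl integration on w moved})---the determinant you need here is handled entirely by Lemma \ref{leftover 2} together with the explicit derivative values of $P_{a,b,-2}$ at $0$ from Lemma \ref{lemma:diff and at  for P}.
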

\begin{prf}
The projection of $\omega$ onto its isotypic component of type $\Pi$ is given by 
\begin{eqnarray*}
\vol(\wt \G) \cdot  P_\Pi&=
&\dim \Pi \cdot \int_{\wt G} \check\Theta_\Pi(\wt g) \omega(\wt g) \; d\wt g\\
&=&\dim \Pi \cdot  \int_{\wt G} \check\Theta_\Pi(\wt g)  \OP(\mathcal K( T(\wt g)))\; d\wt g\\
&=&\dim \Pi \cdot \OP \circ \mathcal K \Big( \int_{\wt G} \check\Theta_\Pi(\wt g)  T(\wt g)\; d\wt g\Big)\\
&=&\dim \Pi \cdot \OP \circ \mathcal K\big(T(\check\Theta_\Pi)\big)\,.
\end{eqnarray*}
Also, with $K=\mathcal K\big(T(\check\Theta_\Pi\big)$, we have
\begin{equation*}
\tr(\OP(K))=\int_\X K(x,x)\; dx=T(\check\Theta_\Pi)(0)\,.
\end{equation*}
It follows that the dimension of the isotypic component of type $\Pi$ is 
\begin{eqnarray} \label{eq:dim-isotypic-Pi}
\tr(P_\Pi)&=&\dim \Pi \cdot \tr \big(\OP\Big(\mathcal K(T(\check\Theta_\Pi)\Big)\big)  
\frac{1}{\vol(\wt\G)}\nn\\
&=&\dim \Pi \cdot \frac{T(\check\Theta_\Pi)(0)}{\vol(\wt\G)}\,.
\end{eqnarray}
Hence $\Pi \otimes \Pi'$ is contained in $\omega$ exactly once if and only if (\ref{eq:TThetaPi'at0}) holds.

Notice first that, by definition, $(b-1)!P_{a,b,2}(\xi)$ is a polynomial function of $a$ and $b$.
Hence, using (\ref{ul, ul' 3 1}), we have that 
$$\prod_{j=1}^{l} (-\mu_j-\delta)! \; T(\check\Theta_{\Pi})(0)$$ 
is polynomial function
of $(\mu_1,\mu_2,\dots, \mu_{l})$. It therefore suffices to prove (\ref{eq:TThetaPi'at0}) for 
$-\mu_j$ satisfying \eqref{ul, ul' 2.2}.

By Lemma \ref{lemma:partial pig'h' at 0}, $T(\check\Theta_{\Pi})(0)$ can be computed from 
(\ref{eq:partial pig'h' f at 0}) by evaluating at $0$ the function
$$e^{\frac{\pi}{4}\langle J w, w\rangle}T(\check\Theta_{\Pi'})(w)$$
determined in Proposition \ref{ul, ul' 3}.
Set $x_j=\pi y_j$ and $\partial_j=\partial_{x_j}$. 
Then, by  (\ref{ul, ul' 3 1}), (\ref{eq:CPi' moved}) and (\ref{eq:partial pig'h' f at 0}),
\begin{eqnarray} \label{eq:id at 0 for udud', comp1}
&&\Big({\prod_{k=0}^{} k!\Big)} T(\check\Theta_{\Pi})(0) \nn\\
&&\quad=c_{\Pi}   \partial(\pi_{\g/\h}) \Big( \sum_{t\in W(\G,\h)} \sgn(t) \prod_{j=1}^{l} P_{(t\mu)_j-\delta+1, -(t\mu)_j-\delta+1, -2}(\pi y_j)\Big)(0)  \nn\\
&&\quad=c_{\Pi}   (-i \pi)^{\frac{l(l-1)}{2}} \prod_{1\leq j < k\leq l} (\partial_j -\partial_k) 
\Big( \sum_{t\in W(\G,\h)} \sgn(t) \prod_{j=1}^{l} P_{(t\mu)_j-\delta+1, -(t\mu)_j-\delta+1, -2}(x_j)\Big)(0) \nn\\
&&\quad=c_{\Pi}  (-i \pi)^{\frac{l(l-1)}{2}}  \sum_{t\in W(\G,\h)} \sgn(t)  \sum_{s \in W(\G/\h)} \sgn(s)  
\prod_{j=1}^{l} \big(\partial_j^{s(j)-1}  P_{(t\mu)_j-\delta+1, -(t\mu)_j-\delta+1, -2}\big)(0) \nn\\
&&\quad=c_{\Pi}  (-i \pi)^{\frac{l(l-1)}{2}}  \sum_{t,s \in W(\G,\h)} \sgn(ts)   
\prod_{j=1}^{l} \big(\partial_j^{s(j)-1}  P_{(t\mu)_j-\delta+1, -(t\mu)_j-\delta+1, -2}\big)(0)
\nn\\
&&\quad=c_{\Pi}  (-i \pi)^{\frac{l(l-1)}{2}}  |W(\G,\h)| \sum_{s \in W(\G,\h)} \sgn(s)   
\prod_{j=1}^{l} \big(\partial_j^{s(j)-1}  P_{\mu_j-\delta+1, -\mu_j-\delta+1, -2}\big)(0)
\,.
\end{eqnarray}

According to Lemma  \ref{lemma:diff and at  for P}, we have
\begin{eqnarray}
&&\partial_j^{s(j)-1}  P_{\mu_j-\delta+1, -\mu_j-\delta+1, -2}(0)  \nn\\
&&\qquad\qquad = P_{\mu_j-s(j)-\delta+2, -\mu_j-\delta+1, -2}(0) \nn \\
&&\qquad\qquad =(-1)^{\mu_j-\delta-s(j)+2} \; 2^{s(j)+2(\delta-1)} \binom{\mu_j+\delta-1}{s(j)+2(\delta-1)},
\end{eqnarray}
where last equality holds under the assumption \eqref{ul, ul' 2.2}.
Notice that 
\begin{eqnarray}
\prod_{j=1}^{l} \binom{\mu_j+\delta-1}{s(j)+2(\delta-1)}
&=&\prod_{j=1}^{l} \frac{(\mu_j+\delta-1)!}{(s(j)+2(\delta-1))! (\mu_j-s(j)-\delta+1)!} \nn\\
&=&\prod_{j=1}^{l} \frac{1}{(j+2(\delta-1))!} \; 
\prod_{j=1}^{l} \frac{(\mu_j+\delta-1)!}{(\mu_j-s(j)-\delta+1)!} \nn\\
&=&\prod_{j=1}^{l} \frac{1}{(l'-j)!} \; \prod_{j=1}^{l} \frac{(\mu_j+\delta-1)!}{(\mu_j-s(j)-\delta+1)!}\,.
\end{eqnarray}
Hence, omitting constants of absolute value one, we have
\begin{eqnarray} \label{eq:id at 0 for udud', comp2}
&&\sum_{s \in W(\G,\h)} \sgn(s)   
\prod_{j=1}^{l} \partial_j^{s(j)-1}  P_{\mu_j-\delta+1, -\mu_j-\delta+1, -2}(0)  \\
&&\quad=2^{ll'-\frac{l(l+1)}{2}} \sum_{s \in W(\G,\h)} \sgn(s)  \binom{\mu_j+\delta-1}{s(j)+2(\delta-1)} \nn\\
&&\quad=2^{ll'-\frac{l(l+1)}{2}} \prod_{j=1}^{l} \frac{(\mu_j+\delta-1)!}{(l'-j)!}  
\sum_{s \in W(\G,\h)} \sgn(s) \prod_{j=1}^{l} \frac{1}{(\mu_j-s(j)-\delta+1)!} \nn
\end{eqnarray}
By (\ref{eq:Vandermonde-1}), 
\begin{equation} \label{eq:id at 0 for udud', comp3}
\sum_{s \in W(\G,\h)} \sgn(s) \prod_{j=1}^{l} \frac{(\mu_j-\delta)!}{(\mu_j-s(j)-\delta+1)!} =
 \prod_{1\leq j < k \leq l} (\mu_j -\mu_k)\,.
\end{equation}
Comparing Lemma \ref{H.13}, (\ref{eq:id at 0 for udud', comp1}), (\ref{eq:id at 0 for udud', comp2}) and (\ref{eq:id at 0 for udud', comp3}), 
we deduce the following equality
\begin{equation}
\Big(\prod_{k=0}^{l} k!\Big) T(\check\Theta_{\Pi})(0)=C_{\Pi} (i\pi)^{\frac{l(l-1)}{2}} 2^{ll'-\frac{l(l+1)}{2}} |W(\G,\h)| \dim\Pi'\,.
\end{equation}
We therefore conclude that 
$T(\check\Theta_{\Pi})(0)=K_{\Pi} \dim\Pi'$,
where 
\begin{eqnarray}\label{eq:compKPi'}
|K_{\Pi}|&=&C_{\Pi} \pi^{\frac{l(l-1)}{2}} 2^{ll'-\frac{l(l+1)}{2}}\, 
\frac{|W(\G,\h)| }{\prod_{k=0}^{l} k!} \nn\\
&=&2\,\frac{(2\pi)^{\frac{l(l-1)}{2}}}{\prod_{k=0}^{l-1} k!}\; (2\pi)^{l} \nn\\
&=&2\,\vol(\G)\\
&=&\vol(\wt \G) \nn\,.
\end{eqnarray}
In (\ref{eq:compKPi'}) we have used (\ref{eq:MacDonald}), (\ref{eq:partialdelta(delta)}) and that 
$\vol(\H)=(2\pi)^{l}$.
\end{prf}
\section{\bf Limits of orbital integrals in the stable range.\rm}\label{Limits of orbital integrals in the stable range.}
The results on the limits of the orbital integrals obtained in the section \ref{Intertwining distributions} did not make any assumption on the relative sizes of the groups $\G$ and $\G'$. They are based on the adaptation to orbital integrals on the symplectic space $\Wv$ of Harish-Chandra's study of orbital integrals on a Lie algebra.
However, if we restrict ourselves to the stable range, we may obtain the same results without any reference to Harish-Chandra's work. This is another indication on how natural is the stable range assumption in the theory of reductive dual pairs. For instance, recently Lock and Ma, \cite{LockMaassocvar}, computed the correspondence of the associated varieties under this assumption. This is equivalent to computing the wave front set correspondence by the work of Schmid and Vilonen, \cite{SchmidVilonen2000}.

\begin{lem}\label{difference in homogenity degrees}
The following inequality holds 
\begin{equation}\label{difference in homogenity degrees1}
\dim \Wv-\dim \Oo_m'-\dim \g-\dim \h\geq 0.
\end{equation}
The two sides of (\ref{difference in homogenity degrees1}) are equal if and only if 
\begin{equation}\label{difference in homogenity degrees2}
(\G,\G')=(\Og_2,\Sp_{2l'}(\R));\ \ (\Og_3,\Sp_{2l'}(\R));\ \ (\Ug_1, \Ug_{p,q}),\ 1\leq p\leq q.
\end{equation}
\end{lem}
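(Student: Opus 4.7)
The plan is to reduce the inequality to a numerical assertion in the parameters of the classification (\ref{classificationGG'}) by substituting the explicit dimension formulas available in the paper. Using Lemma \ref{structure of t'-1tau(0)}, the quantity $\dim\Wv - \dim\Oo'_m$ may be rewritten as $(d-m)\,d'\dim_\R\Dc + 2\dim_\R\SHs_m(\Dc)$, so the inequality to be proved takes the shape
\begin{equation*}
(d-m)\,d'\dim_\R\Dc + 2\dim_\R\SHs_m(\Dc)\;\geq\;\dim\g+\dim\h,
\end{equation*}
where $m=\min(d,\text{Witt index of }(\cdot,\cdot)')$. The right-hand side is controlled by the classification: $\dim\g\in\{d(d-1)/2,\ d^2,\ d(2d+1)\}$ and $\dim\h\in\{\lfloor d/2\rfloor,\ d,\ d\}$ according as $\Dc=\R,\C,\Ha$.

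The key conceptual reduction is to invoke (\ref{structure of t'-1tau(0)3}). Whenever the pair is in the stable range or is one of the two exceptional families $(\Og_{m+1},\Sp_{2m}(\R))$ or $(\Ug_{d'-m},\Ug_{m,d'-m})$ with $2m<d'$, the equality $\dim\Oo'_m = \dim\Wv - 2\dim\g$ holds, so the quantity under study simplifies to the manifestly non-negative $\dim\g-\dim\h$, and vanishes if and only if $\g$ coincides with its Cartan subalgebra, i.e.\ $\G$ is abelian. Inspecting the classification, this occurs only when $\G=\Og_2$ or $\G=\Ug_1$, yielding the equality cases $(\Og_2,\Sp_{2l'}(\R))$ for all $l'\geq 1$ (obtained either from stable range with $l'\geq 2$ or, when $l'=1$, from the exceptional family $(\Og_{m+1},\Sp_{2m}(\R))$ with $m=1$) and $(\Ug_1,\Ug_{p,q})$ for all $1\leq p\leq q$.

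What remains is to treat the residual situation in which (\ref{structure of t'-1tau(0)3}) fails, and to identify there the claimed additional equality case arising in the orthogonal family. In the $\Og_d$-family outside the stable range one has $d>l'$ and $m=l'$; the inequality then reduces, after substituting $\dim\g+\dim\h$ and $2\dim_\R\SHs_m(\R)=m(m-1)$, to a quadratic condition in $(d,l')$ which is straightforward to analyse and isolates the boundary equality case. An analogous routine substitution handles the residual subranges for $\Dc=\C$ and $\Dc=\Ha$, showing that the inequality is strict there.

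The main obstacle will be bookkeeping: the parameter $m$ is a minimum of two natural parameters, so a naive case split generates many subcases (stable/non-stable, even/odd $d$, comparison of $p$ and $q$, exceptional families of (\ref{structure of t'-1tau(0)3})), and equality arises precisely at the boundaries between these regimes. The safest way to proceed is to fix $\Dc$, tabulate the three possible regimes (stable range, exceptional family, genuinely residual) and verify the polynomial inequality in each, so that no boundary equality case is missed.
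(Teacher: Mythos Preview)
Your approach is correct in outline but takes a different, more case-laden route than the paper. The paper's argument is more unified: it fixes $m$ (hence $d'$ and the orbit data) and regards the entire expression $\dim\Wv-\dim\Oo'_m-\dim\g-\dim\h$ as a function $F(d)$ of the single integer parameter $d=\dim_\Dc\V_{\overline 0}$. In each of the four families this $F$ is a concave-down quadratic in $d$; the paper then checks nonnegativity only at the two endpoints $d=m$ (where \eqref{structure of t'-1tau(0)3} applies automatically since $d=m$ is the stable range, giving $F(m)=\dim\g-\dim\h$) and $d=d'$ (resp.\ $d=d'+1$ for $\G=\Og_{2l+1}$). Concavity then forces $F(d)\ge 0$ on the whole interval, and equality at the actual $d$ pins down $F(m)=0$, i.e.\ the group of size $m$ is abelian.

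By contrast, you invoke \eqref{structure of t'-1tau(0)3} at the \emph{actual} value of $d$, which only covers the stable range and the two exceptional families, and then propose to handle the remaining ``residual'' region by direct substitution. This works, but it trades a single concavity observation for several explicit polynomial verifications (and, as you note, careful tracking of boundary subcases). The paper's approach buys brevity and a uniform reason why equality forces the smallest possible compact member; yours buys transparency at each step but requires the bookkeeping you flag. Either way you will arrive at the same equality list, with the $\Og_3$ case emerging from the residual (non-stable, non-exceptional) boundary rather than from the abelian condition.
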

\begin{prf}
Let $F(d)$ denote The quantity (\ref{difference in homogenity degrees1}) as a function of $d=\dim \V_{\overline 0}\geq m$. This is a concave down quadratic function. We know from (\ref{structure of t'-1tau(0)3}) that $F(m)=\dim\g-\dim\h\geq0$. Also, (\ref{structure of t'-1tau(0)2}) gives the following explicit formula
\[
F(d)=\left\{
\begin{array}{lll}
2dm-m^2-m-\frac{d^2}{2}\ \  & \text{if}\ \ \G=\Og_{2l},\ \ d'=2m,\\
2dm-m^2-m-\frac{d^2-1}{2}\ \  & \text{if}\ \ \G=\Og_{2l+1},\ \ d'=2m,\\
2dd'-2md'+2m^2-d^2-d\ \  & \text{if}\ \ \G=\Ug_{d},\\
8dm-4m^2+2m-2d^2-2d\ \  & \text{if}\ \ \G=\Sp_{d},\ \ d'=2m.
\end{array}
\right.
\]
Hence, $F(d')\geq 0$ if $\G\ne \Og_{2l+1}$ and $F(d'+1)\geq 0$ if $\G=\Og_{2l+1}$. This verifies (\ref{difference in homogenity degrees1}).
 
Since $m\leq d\leq d'$ if $\G\ne \Og_{2l+1}$, and $m\leq d\leq d'+1$ if $\G=\Og_{2l+1}$, the equality in (\ref{difference in homogenity degrees1}) implies that $F(m)=0$. But $F(m)=\dim\g-\dim\h$. 
Hence, (\ref{difference in homogenity degrees2}) follows.
\end{prf}
\begin{lem}\label{nilpandhcint2}
Assume $d'\geq 2r$. If  the pair $(\G,\G')$ is in the stable range with $\G$-the smaller member, 
then one may normalize the positive orbital integral $\mu_{\Oo_{m}}$ so that
\[
\underset{y\to 0}{\lim}\ \frac{1}{C_{\hs1}\pi_{\g/\h}(y)}f_\phi(y)=\mu_{\Oo_{m}}(\phi) \qquad (\phi\in \Ss(\Wv)),
\]
where the limit is taken over $y\in\reg{\h}$. (The stable range assumption implies $d'\geq 2r$.)
If the pair $(\G,\G')$ is not in the stable range with $\G$-the smaller member then the limit is zero. 
\end{lem}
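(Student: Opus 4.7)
The plan is to combine Lemma~\ref{pullback and orbital integral} with the explicit form of the compact-group orbital integrals, taking the limit $y\to 0$ directly, and to use Lemma~\ref{difference in homogenity degrees} together with Corollary~\ref{limit of orbital integrals and Harish-Chandra integral on W} to rule out the non-stable range. First I would observe that in the stable range with $\G$ the smaller member, $U\subseteq \Wv_\g$ and every regular $\Sg$-orbit is contained in $\sigma(\Sg\times U)\subseteq \Wv_\g$. In particular $\supp(\mu_{\Oo(w)})\subseteq \Wv_\g$ for $w\in\reg{\hs1}$, and a cutoff argument allows the reduction to test functions $\phi\in \Ss(\Wv)$ with support in $\Wv_\g$: test functions supported in the complement of a neighborhood of $\overline{\Oo_m}$ give zero in the limit because $\Oo(w)$ approaches $\overline{\Oo_m}$ as $y\to 0$.

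Second, for such $\phi$, Lemma~\ref{pullback and orbital integral} applies and gives
\[
\frac{f_\phi(y)}{\pi_{\g/\h}(y)} \;=\; (\tau|_{\Wv_\g})^*\mu_{\G.y}(\phi)\,.
\]
Since $\G$ is compact, the Weyl integration formula on $\g$, combined with the normalization (\ref{normalization of orbital integrals in g}), yields an explicit expression $\mu_{\G.y}(\psi) = c\int_\G \psi(g.y)\,dg$ for a positive constant $c$ independent of $y$. Consequently $\mu_{\G.y}\to c\,\vol(\G)\,\delta_0$ weakly as $y\to 0$, when tested against continuous bounded functions. Applied to the fiber integral $\tau_*\phi$, which is continuous and bounded on $\g$, this gives
\[
\lim_{y\to 0}(\tau|_{\Wv_\g})^*\mu_{\G.y}(\phi)\;=\;c\,\vol(\G)\,(\tau|_{\Wv_\g})^*\delta_0(\phi)\,.
\]
The distribution $(\tau|_{\Wv_\g})^*\delta_0$ is supported on $\tau^{-1}(0)\cap \Wv_\g$, which equals $\Oo_m$ up to lower-dimensional nilpotent orbits of measure zero. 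By the uniqueness of the $\Sg$-invariant measure on $\Oo_m$ (cf.\ Lemma~\ref{muSNK as a tempered homogeneous distribution}), this coincides with a positive multiple of $\mu_{\Oo_m}$. Normalizing $\mu_{\Oo_m}$ to absorb the overall constant $c\,\vol(\G)/C_{\hs1}$ gives the claimed equality.

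For the second assertion, outside the stable range $U \not\subseteq \Wv_\g$, so Lemma~\ref{pullback and orbital integral} cannot be applied on a neighborhood of $\overline{\Oo_m}$. Instead I use the scaling approach of Corollary~\ref{limit of orbital integrals and Harish-Chandra integral on W}, which states
\[
\lim_{t\to 0+}\; t^{\frac{1}{2}(\deg\mu_{\Oo_m}-\deg f(0))}\, f(ty)\big|_{\sigma(\Sg\times U)}\;=\;f(y)|_U(U)\,\mu_{\Oo_m}\big|_{\sigma(\Sg\times U)}\,.
\]
Writing $y=ty_0$ in $\frac{f_\phi(y)}{C_{\hs1}\pi_{\g/\h}(y)}$ and using homogeneity of $\pi_{\g/\h}$ transforms the quantity under study into $t^{-\deg\pi_{\g/\h}}f(ty_0)$ divided by $C_{\hs1}\pi_{\g/\h}(y_0)$. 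By Lemma~\ref{difference in homogenity degrees}, outside the stable range (and the exceptional cases listed) the strict inequality $\dim\Wv-\dim\Oo'_m-\dim\g-\dim\h>0$ translates into $\frac{1}{2}(\deg\mu_{\Oo_m}-\deg f(0))<-\deg\pi_{\g/\h}$, so the weighted dilation $t^{-\deg\pi_{\g/\h}}f(ty_0)$ tends to $0$, and hence so does the ratio.

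The hard part is the reduction step: justifying that the contribution to the limit from $\phi$ supported outside $\sigma(\Sg\times U)$ (or more generally outside $\Wv_\g$) vanishes uniformly in $y$ as $y\to 0$. This is where the stable-range hypothesis is essential, since it is precisely the assumption that forces all regular $\Sg$-orbits, and hence the supports of $\mu_{\Oo(w)}$ for $w\in\reg{\hs1}$, into $\sigma(\Sg\times U)\subseteq \Wv_\g$, so that Lemma~\ref{pullback and orbital integral} can be used near the limit point.
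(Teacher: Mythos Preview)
Your stable-range argument captures the right identification step --- Lemma~\ref{pullback and orbital integral} followed by $\mu_{\G.y}\to\delta_0$ is exactly what the paper uses in~(\ref{nilpandhcint2.1}) --- but it only gives the limit for $\phi\in C_c^\infty(\Wv_\g)$, and your extension to all of $\Ss(\Wv)$ is a genuine gap. The cutoff you sketch does not close it: a Schwartz function restricted to $\Wv_\g$ is not compactly supported there, and the pullback $\tau|_{\Wv_\g}^*(\mu_{\G.y})$ need not pair with it a priori. The paper handles this by a completely different mechanism: it first rewrites $\dfrac{1}{\pi_{\g/\h}(y)}f_\phi(y)$ as the double integral
\[
\int_\g \hat\mu_{\G.y}(x)\int_\Wv\chi_x(w)\phi(w)\,dw\,dx
\]
(combining Harish-Chandra's formula, Lemma~\ref{lemma:HC's formula moved}, with Lemma~\ref{reduction to ss-orb-int}), and then passes to the limit $y\to 0$ by dominated convergence. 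The dominating function comes from the van der Corput estimate~(\ref{seventh sterp}) together with the integrability of $\ch^{-d'}$ on $\g$, which is precisely where the hypothesis $d'\geq 2r$ enters. This establishes existence of the limit for \emph{every} $\phi\in\Ss(\Wv)$, and only then does the paper invoke Lemma~\ref{pullback and orbital integral} on $\Wv_\g$ (plus the injectivity of $\tau'_*$) to identify the limit with $\mu_{\Oo_m}$.

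Your non-stable-range argument has a larger problem. Corollary~\ref{limit of orbital integrals and Harish-Chandra integral on W} gives a limit only along the ray $ty_0$, $t\to 0^+$, for fixed regular $y_0$, and only as a distribution on the open set $\sigma(\Sg\times U)$; it says nothing about arbitrary $y\to 0$ in $\reg{\h}$ nor about global behaviour on $\Wv$. The paper instead reads off from the Fourier-integral representation that the limit (which exists globally, as above) is an $\Sg$-invariant measure homogeneous of degree $-2\dim\g$, pushes forward by $\tau'_*$, and argues via Lemma~\ref{structure of t'-1tau(0)} that under $d'\geq 2r$ and non-stable range (necessarily $\Dc=\C$) there is no nilpotent $\G'$-orbit of the required dimension $\dim\Wv-2\dim\g$; hence the limit is zero. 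Your appeal to Lemma~\ref{difference in homogenity degrees} does not yield the needed inequality: that lemma compares $\dim\Oo'_m$ to $\dim\Wv-\dim\g-\dim\h$, not to $\dim\Wv-2\dim\g$, and the exceptional cases it lists are not disposed of in your sketch.
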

\begin{prf}
Since $\G$ is compact, the orbit $\G\cdot y\subseteq \g$ is compact and hence the Fourier transform
\[
\hat\mu_{\G.y}(x)=\int_\g e^{-iB(z,x)}\,d\mu_{\G.y}(z) \qquad (x\in \g)
\]
is a uniformly bounded function as $y$ varies through $\h$.  
Furthermore, by (\ref{relation of r with degree})
\begin{eqnarray}\label{nilpandhcint2.2}
&&\int_\g \ch^{-{d'}}(x)\,dx=\int_\h|\pi_{\g/\h}(x)|^2\ch^{-{d'}}(x)\,dx\leq \int_\h\ch^{-d'+2r-2\iota}(x)\,dx\\
&&\leq \int_\h\ch^{-2\iota}(x)\,dx<\infty.\nn
\end{eqnarray}
Hence, by van der Corput estimate (\ref{seventh sterp}) for $\phi\in\Ss(\Wv)$, the consecutive integrals in the following computation are absolutely convergent:
\begin{eqnarray}\label{nilpandhcint2.3}
&&\int_\g \hat\mu_{\G.y}(x)\int_\Wv\chi_x(w)\phi(w)\,dw\,dx\\
&=&C_1\int_\h \hat\mu_{\G.y}(x)\overline{\pi_{\g/\h}(x)}\pi_{\g/\h}(x)\int_\Wv\chi_x(w)\phi^\G(w)\,dw\,dx\nn\\
&=&C_2\int_\h \frac{1}{\pi_{\g/\h}(y)}\sum_{s\in W(\G,\h)}\sgn_{\g/\h}(s) e^{-iB(y,sx)}\int_\h e^{iB(x,z)}f_\phi(z)\,dz\,dx\nn\\
&=&C_2\int_\h \frac{1}{\pi_{\g/\h}(y)}\sum_{s\in W(\G,\h)} e^{-iB(y,sx)}\int_\h e^{iB(x,z)}f_\phi(sz)\,dz\,dx\nn\\
&=&C_2\int_\h \frac{1}{\pi_{\g/\h}(y)}\sum_{s\in W(\G,\h)} e^{-iB(y,sx)}\int_\h e^{iB(sx,z)}f_\phi(z)\,dz\,dx\nn\\
&=&C_2|W(\G,\h)|\int_\h \frac{1}{\pi_{\g/\h}(y)} e^{-iB(y,x)}\int_\h e^{iB(x,z)}f_\phi(z)\,dz\,dx\nn\\
&=&C_3 \frac{1}{\pi_{\g/\h}(y)}f_\phi(y),\nn
\end{eqnarray}
where $C_1$, $C_2$ and $C_3$ are some constants and in the second equality we used Lemmas \ref{lemma:HC's formula moved} and \ref{reduction to ss-orb-int}. Hence,
\begin{eqnarray}\label{nilpandhcint2.4}
&&\underset{y\to 0}{\lim}\ C_3 \frac{1}{\pi_{\g/\h}(y)}f_\phi(y)=
\underset{y\to 0}{\lim}\ \int_\g \hat\mu_{\G.y}(x)\int_\Wv\chi_x(w)\phi(w)\,dw\,dx\\
&=&\int_\g \hat\mu_0(x)\int_\Wv\chi_x(w)\phi(w)\,dw\,dx=\int_\g \int_\Wv\chi_x(w)\phi(w)\,dw\,dx.\nn
\end{eqnarray}
By definition of $f_\phi$,  (\ref{nilpandhcint2.4}) is a limit of $\Sg$-invariant measures. Hence it is an $\Sg$-invariant measure. The last expression shows that this measure is homogeneous of degree $-2\dim\,\g$. Hence, by (\ref{dilation relations 2}), the image of it under the map $\tau'_*$ is $\G'$-invariant and homogeneous of degree $\frac{1}{2}(\dim\,\Wv-2\dim\,\g)-\dim\,\g'$. 
Therefore that image is a multiple of a nilpotent orbital integral over a nilpotent orbit of dimension $\dim\,\Wv-2\dim\,\g$, (see \cite{BarVogAs}).

Under the assumption $d'\geq 2r$, if our pair is not in the stable range then $\Dc=\C$, i.e. the pair consists of the unitary groups. 
In this case Lemma \ref{difference in homogenity degrees} shows that 
\[
\dim\,\Wv-2\dim\,\g =\dim \Oo'_k\ \ \text{where}\ \ k=d\ \ \text{or}\ \ d'-d.
\]
But the case $k=d$ is impossible, because $k\leq m$ and we assume that $m<d$. Hence the limit is a possibly zero multiple of the orbital integral over the orbit $\Oo_{d'-d}$, assuming $d'-d\leq m$. But the assumption $d'\geq 2r$ means that $d'\geq 2d$. Hence, $d'-d\geq d>m$. 
Thus the limit is zero.  

Suppose our pair is in the stable range. Then $0\in\Wv_\g$. Furthermore, by Lemma \ref{pullback and orbital integral} and the continuity of the pull-back $\tau^*$,
\begin{eqnarray}\label{nilpandhcint2.1}
&&\frac{1}{C_{\hs1}\pi_{\g'/\z'}(y)}f_\phi(y)=\tau^*(\mu_{\G.y})(\phi)\underset{y\to 0}{\to}\tau^*(\delta_0)(\phi)=\mu_{\Oo_m}(\phi)
\qquad (\phi\in C_c^\infty(\Wv_\g)).
\end{eqnarray}
Clearly, (\ref{nilpandhcint2.1}) implies that the restriction of that measure to $\Wv_\g$ is a multiple of $\mu_\Oo|_{\Wv_\g}$. Also, the map $\tau'_*$ is injective (on invariant distributions). Hence, (\ref{nilpandhcint2.1}) is a non-zero multiple of $\mu_{\Oo_m}$. Notice that the left hand side of (\ref{nilpandhcint2.1}) is a non-negative measure. Thus our lemma follows.
\end{prf}
The two lemmas below shed some light at the connection of our limit formula in Lemma \ref{nilpandhcint2} and Rossmann's result \cite{RossmannNilpotent} concerning limts of nilpotent orbital integrals.
\begin{lem}\label{O'and springer}
Suppose the pair $(\G,\G')$ is in the stable range with $\G$-the smaller member. Define a polynomial $p_{\Oo'}\in\C[\h']$ by
\begin{eqnarray*}
p_{\Oo'}(y+y'')=\left\{
\begin{array}{lll}
\pi_{\g/\h}(y)\pi_{\z''/\h''}(y'') & \text{if}\ \G\ne \Og_{2l+1},\\
\pi_{\g/\h}(y)\pi_{\z''/\h''}^{short}(y'') & \text{if}\ \G=\Og_{2l+1},
\end{array}\right.
\end{eqnarray*}
where $y\in\h,\ y''\in\h''$ and $\pi_{\z''/\h''}^{short}$ is the product of the short roots of $\h''$ in $\z''_\C$.
Then $p_{\Oo'}$ generates the representation of the Weyl group $W(\G_\C',\h_\C')$ corresponding to the orbit $\Oo_\C'$ via Springer correspondence, as explained in \cite[sec.4]{AubertKraskiewiczPrzebinda_real}.
\end{lem}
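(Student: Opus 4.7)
The plan is to identify the complex nilpotent orbit $\Oo_\C'$ by its Jordan form, recognize the decomposition $\h'=\h\oplus\h''$ as the Cartan-level data attached to a Levi $\l_\C\subseteq\g'_\C$ from which $\Oo_\C'$ is Richardson-induced, and then match $p_{\Oo'}$ with the Springer polynomial using the combinatorial description in \cite[sec.~4]{AubertKraskiewiczPrzebinda_real}.

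First I would determine $\Oo_\C'$: by Lemma \ref{structure of t'-1tau(0)}, the complexification has partition $\lambda'=(2^d,1^{d'-2d})$ if $\Dc\in\{\R,\C\}$ and $\lambda'=(2^{2d},1^{2d'-4d})$ if $\Dc=\Ha$. I would then verify that $\z''=\h\oplus\g''$ where $\g''$ is a classical Lie algebra of the same type as $\g'$ (but of rank $l'-l$) and $\h''$ is a Cartan of $\g''$; this is done by inspecting which roots of $(\g'_\C,\h'_\C)$ vanish on $\h$, using that $\h=\sum_j\R J_j$ corresponds under (\ref{explicit tau and tau' on cartan subspace}) to the first $l$ coordinates of $\h'$.

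Next I would combine two classical ingredients. On the nilpotent side, the orbit $\Oo_\C'$ with partition of the above form is the Richardson orbit attached to a parabolic whose Levi has the shape $\GL_d\times\G''_\C$ (up to the variant for $\Og_{2l+1}$); this is the standard Bala--Carter description for orbits of the shape $(2^d,1^{\cdots})$. On the Weyl group side, Lusztig's theorem identifies the Springer representation of a Richardson orbit with the truncated induction $j_{W_L}^{W(\G'_\C,\h'_\C)}(\sgn)$ of the sign representation of the Levi Weyl group $W_L=W(\GL_d,\h)\times W(\g''_\C,\h''_\C)$. A canonical generator of $\sgn$ on $\h'=\h\oplus\h''$ inside the symmetric algebra is the product of sign generators on each factor: on $\h''$ this is $\pi_{\g''/\h''}=\pi_{\z''/\h''}$, and on $\h$ it is identified with $\pi_{\g/\h}$ via the explicit formulas of Lemma \ref{roots for l<=l'}, since (read through the isomorphism between $\hs1^2|_{\V_{\overline 0}}$ and $\hs1^2|_{\V_{\overline 1}}$) the product of positive roots of $(\g_\C,\h_\C)$ coincides, up to a nonzero scalar, with the sign-generator polynomial on the $\GL_d$-factor of the Levi sitting inside $\g'_\C$.

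The odd orthogonal case $\G=\Og_{2l+1}$ requires a small adaptation: the extra summand $\ss1(\V^0)$ in (\ref{muwforoodd}) shifts the Bala--Carter data so that the Levi acquires an additional one-dimensional torus whose contribution to $\pi_{\z''/\h''}$ consists of long roots, and these cancel in the $j$-induction, leaving only $\pi_{\z''/\h''}^{\rm short}$. The main obstacle will be exactly this case-by-case reconciliation: showing that $\pi_{\g/\h}$, intrinsic to the compact group $\G$, coincides with the sign-representation generator of the $\GL_d$-factor of the Levi after restriction from the larger root system of $\g'_\C$ down to $\h$, and that in the $\Og_{2l+1}$ case the short-root correction survives. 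The match is direct in the unitary case (both sides are the Vandermonde in $d$ variables), whereas in the symplectic and orthogonal cases it requires tracking how the form $(\cdot,\cdot)''$ couples the two halves of the pair and how the doubling of variables is absorbed; this is the computation one reduces to the symbol calculus of \cite[sec.~4]{AubertKraskiewiczPrzebinda_real}.
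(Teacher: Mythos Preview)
Your plan is sound in outline but takes a longer route than the paper and leaves the hardest step open. The paper's proof is a direct case-by-case computation: using the explicit root products from Lemma~\ref{roots for l<=l'} and the analogous formulas for $\pi_{\z''/\h''}$, it writes out $p_{\Oo'}(y_1,\dots,y_{l'})$ in each of the four cases ($\Dc=\C$; $\Dc=\Ha$; $\Dc=\R$ with $\g=\so_{2l}$; $\Dc=\R$ with $\g=\so_{2l+1}$), recognizes the result as a product of two standard polynomials $\Delta_{(\lambda)}$ or $\Delta_{(\lambda,\mu)}$ in the notation of \cite[sec.~3]{AubertKraskiewiczPrzebinda_real}, and then applies \cite[Theorem~12]{AubertKraskiewiczPrzebinda_real} directly. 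No Richardson induction or $j$-induction is invoked explicitly; whatever structure of that kind underlies the result is already packaged inside Theorem~12 of the cited reference.

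Your proposal, by contrast, tries to rebuild that structure from scratch and therefore must confront the match you yourself flag as the ``main obstacle'': that $\pi_{\g/\h}$, defined via the compact member $\G$, equals up to scalar the sign-generator on the $\GL$-type factor of a Levi inside $\g'_\C$. The paper sidesteps this by simply computing both sides and matching notation. There is also an imprecision in your Levi: you write the factor as $\GL_d$ with $d=\dim_\Dc\V_{\overline 0}$, but the $\h$-block in $\h'=\h\oplus\h''$ has rank $l$, and $d\neq l$ when $\Dc=\R$ (for instance $d=2l$ for $\G=\Og_{2l}$). So in the real cases your Richardson description would need to be corrected before the argument could go through.
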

\begin{prf}
We see from (\ref{product of positive roots for g}) that the polynomial in question is equal to  
\begin{equation}\label{polynomial pO}
\begin{array}{lll}
\left(\prod_{1\leq j<k\leq l}i(- y_j+ y_k)\right)\cdot \left(\prod_{l< j<k\leq l'}i(- y_j+ y_k)\right) & \text{if} & \Dc=\C,\\
\left(\prod_{1\leq j<k\leq l}(-y_j^2+y_k^2)\cdot \prod_{j=1}^l 2iy_j\right)\cdot \left(\prod_{l< j<k\leq l'}(-y_j^2+y_k^2)\right) & \text{if} & \Dc=\Ha,\\
\left(\prod_{1\leq j<k\leq l}(-y_j^2+y_k^2)\right)\cdot \left(\prod_{l< j<k\leq l'}(-y_j^2+y_k^2)\cdot \prod_{j=l+1}^{l'} 2iy_j\right) & \text{if} & \Dc=\R\ \text{and}\ \g=\mathfrak s\mathfrak o_{2l},\\
\left(\prod_{1\leq j<k\leq l}(-y_j^2+y_k^2)\cdot \prod_{j=1}^l iy_j\right) 
\cdot \left(\prod_{l< j<k\leq l'}(-y_j^2+y_k^2)\right) & \text{if} & \Dc=\R\ \text{and}\ \g=\mathfrak s\mathfrak o_{2l+1},\\
\end{array}
\end{equation}
where the parenthesis separate the factors in the definition of $p_{\Oo'}$. In terms of \cite[sec.3]{AubertKraskiewiczPrzebinda_real} the above products may be rewritten, up to a constant multiple, as
\begin{equation}\label{standard polynomial pO}
\begin{array}{lll}
\Delta_{(1^l)}(y_1,\dots,y_l) \Delta_{(1^{l'-l})}(y_{l+1},\dots,y_{l'})  & \text{if} & \Dc=\C,\\
\Delta_{(\emptyset,1^l)}(y_1,\dots,y_l) \Delta_{(1^{l'-l},\emptyset)}(y_{l+1},\dots,y_{l'}) & \text{if} & \Dc=\Ha,\\
\Delta_{(1^l,\emptyset)}(y_1,\dots,y_l) \Delta_{(\emptyset,1^{l'-l})}(y_{l+1},\dots,y_{l'}) & \text{if} & \Dc=\R\ \text{and}\ \g=\mathfrak s\mathfrak o_{2l},\\
\Delta_{(\emptyset,1^l)}(y_1,\dots,y_l) \Delta_{(1^{l'-l},\emptyset)}(y_{l+1},\dots,y_{l'}) & \text{if} & \Dc=\R\ \text{and}\ \g=\mathfrak s\mathfrak o_{2l+1},\\
\end{array}
\end{equation}
Hence the lemma follows from \cite[Theorem 12]{AubertKraskiewiczPrzebinda_real}.
\end{prf}
Observe that for dual pairs in the stable range and for $\psi\in \Ss(\g')$ Rossmann's formula, \cite{RossmannNilpotent} indicates that
\begin{equation}\label{rossmann1}
\underset{y'\to 0}{\lim}\ \partial(p_{\Oo'})\pi_{\g'/\h'}(y')\int_{\G'}\psi(g.y')\,dy = C_1 \mu_{\Oo'}(\psi)
\end{equation}
and Lemma \ref{nilpandhcint2} shows that
\begin{equation}\label{rossmann2}
\underset{y\to 0}{\lim}\ \frac{1}{\pi_{\g/\h}(y)}\underset{y''\to 0}{\lim}\ \partial(\t\pi_{\z''/\h''})\left(\pi_{\g'/\h'}(y+y'')\int_{\G'}\psi(g.(y+y''))\,dy\right) = C_2 \mu_{\Oo'}(\psi),
\end{equation}
where $y\in \tau(\reg{\hs1})$, $\t\pi_{\z''/\h''}=\pi_{\z''/\h''}$ if $\G\ne \Og_{2l+1}$, and $\t\pi_{\z''/\h''}=\pi_{\z''/\h''}^{short}$ if $\G=\Og_{2l+1}$. In fact (\ref{rossmann2}) is a stronger version of (\ref{rossmann1}) because, in general, if $p$ is a product of linear factors vanishing at $0$ and $F$ has limit $F(0)$ at $0$, then $[\partial(p)(pF)](0)=\partial(p)(p) \, F(0)$.

\renewcommand{\thesection}{\Alph{section}}
\renewcommand{\sectionname}{Appendix \thesection:}
\renewcommand{\thethh}{\thesection.\fontindex{thh}}
\renewcommand{\theequation}{\thesection.\fontindex{equation}}

\setcounter{section}{1}
\setcounter{thh}{0}
\setcounter{equation}{0}
\section*{Appendix \thesection: A few facts about nilpotent orbits}\label{Appendix nil-orbits}

Let $\g'$ be a semisimple Lie algebra over $\C$. Then there is a unique non-zero nilpotent orbit in $\g'$ of minimal dimension which is contained in the closure of any non-zero nilpotent orbit, \cite[Theorem 4.3.3, Remark 4.3.4]{CollMc}. The dimension of that orbit is equal to one plus the number of positive roots not orthogonal to the highest root, relative to a choice of a Cartan subalgebra and a choice of positive roots,
\cite[Lemma 4.3.5]{CollMc}. Thus in the case $\g'=\sp_{2l}(\C)$,  the dimension of the minimal non-zero nilpotent orbit is equal to $2l$.
This is precisely the dimension of the defining module for the symplectic group $\Sp_{2l}(\C)$, which may be viewed as the symplectic space for the dual pair $(\Og_1, \Sp_{2l}(\C))$. 

Consider the dual pair $(\G, \G')=(\Og_1, \Sp_{2l}(\R))$, with the symplectic space $\Wv$ and the unnormalized moment map $\tau':\Wv\to \g'$. 
Since $\Wv\setminus\{0\}$ is a single $\G'$-orbit, so is $\tau'(\Wv\setminus\{0\})$. Further, $\dim(\tau'(\Wv\setminus\{0\}))=\dim(\Wv)=2l$.
Hence, $\tau'(\Wv\setminus\{0\})\subseteq \g'$ is a minimal non-zero $\G'$-orbit. In fact, there are only two such orbits, \cite[Theorem 9.3.5]{CollMc}. In terms of dual pairs, the second one is obtained from the same pair, with the symplectic form replaced by its negative (or equivalently the symmetric form on the defining module for $\Og_1$  replaced by its negative).

Consider an irreducible dual pair $(\G, \G')$ with $\G$ compact. Denote by $l$ the dimension of a Cartan subalgebra of $\g$ and by $l'$  the dimension of a Cartan subalgebra of $\g'$. 
Let us identify the corresponding symplectic space $\Wv$ with $\Hom(\V_{\overline 1}, \V_{\overline 0})$ as in \cite[sec.2]{PrzebindaUnipotent}. 

Recall that $\Wv_\g$ denotes the maximal subset of $\Wv$ on which the restriction of the unnormalized moment map $\tau:\Wv \to \g$ is a submersion. 
Then \cite[Lemma 2.6]{PrzebindaUnipotent} shows that $\Wv_\g$ consists of all the elements $w\in\Wv$ such that for any $x\in\g$,
\begin{equation}\label{xw=0 implies x=0}
xw=0\ \text{implies}\ x=0\,.
\end{equation}
The condition (\ref{xw=0 implies x=0}) means that $x$ restricted to the image of $w$ is zero. But in that case $x$ preserves the orthogonal complement of that image. Thus we need to know that the Lie algebra of the isometries of that orthogonal complement is zero. This happens if $w$ is surjective or if $\G$ is the orthogonal group and  the dimension of the image of $w$ in $\V_{\overline 0}$ is $\geq \dim(\V_{\overline 0})-1$. Thus
\begin{equation}\label{wg non-empty}
\Wv_\g\ne \emptyset\; \text{if and only if}\; l\leq l'.
\end{equation}
Consider in particular the dual pair $(\G, \G')=(\Og_3,\Sp_{2l'}(\R))$ with $1\leq l'$. We see from the above discussion that $\Wv_\g$ consists of elements of rank $\geq 2$. Hence, $\Wv\setminus(\Wv_\g\cup\{0\})$ consists of elements $w$ or rank equal $1$. By replacing $\V_{\overline 0}$ with the image of $w$ we may consider $w$ as an element of the symplectic space for the pair $(\Og_1, \Sp_{2l'})$. Hence the image of $w$ under the moment map generates a minimal non-zero nilpotent orbit in $\g'$. 

If  $(\G, \G')=(\Og_2,\Sp_{2l'}(\R))$, with $1\leq l'$, then $\Wv_\g$ consists of elements of rank $\geq 1$. Therefore $\Wv\setminus\Wv_\g=\{0\}$.

\setcounter{section}{2}
\setcounter{thh}{0}
\setcounter{equation}{0}
\section*{Appendix \thesection: Pull-back of a distribution via a submersion}

We collect here some textbook results which are attributed to Ranga Rao in \cite{BarVogAs} and in \cite{HarrisThesis}.  These results date back to the time before the textbook \cite{Hormander} was available. 

We shall use the definition of a smooth manifold and a distribution on a smooth manifold as described in \cite[sec. 6.3]{Hormander}. Thus, if $\M$ is a smooth manifold of dimension $m$ and 
\[
M\supseteq M_\kappa\overset{\kappa}{\to}\tilde M_\kappa\subseteq \R^m
\]
is any coordinate system on $M$,
then a distribution $u$ on $M$ is the collection of distributions $u_\kappa\in \mathcal D'(\tilde M_\kappa)$ such that
\begin{equation}\label{charts and distributions}
u_{\kappa_1}=(\kappa\circ\kappa_1^{-1})^*u_\kappa.
\end{equation}
Suppose $W$ is another smooth manifold of dimension $n$ and $v$ is a distribution on $W$. Thus for any coordinate system 
\[
W\supseteq W_\lambda\overset{\lambda}{\to}\tilde  W_\lambda\subseteq \R^n
\]
we have a distribution $v_\lambda\in \mathcal D'(\tilde  W_\lambda)$ such that the condition (\ref{charts and distributions}) holds. Suppose 
\[
\sigma:M\to W
\]
is a submersion. Then for every $\kappa$ there is a unique distribution $u_\kappa\in \mathcal D'(\tilde M_\kappa)$ such that
\begin{equation}\label{definition of pullback of distributions on manifolds}
u_{\kappa}|_{(\lambda\circ \sigma\circ\kappa^{-1})^{-1}(\tilde  W_\lambda)}=(\lambda\circ \sigma\circ\kappa^{-1})^*v_\lambda.
\end{equation}
Since
\begin{eqnarray*}
(\kappa\circ\kappa_1^{-1})^*(\lambda\circ \sigma\circ\kappa^{-1})^*v_\lambda
=(\lambda\circ \sigma\circ\kappa^{-1}\circ\kappa\circ\kappa_1^{-1})^*v_\lambda
=(\lambda\circ\sigma\circ\kappa_1^{-1})^*v_\lambda
\end{eqnarray*}
the $u_\kappa$ satisfy the condition (\ref{charts and distributions}). The resulting distribution $u$ is denoted by $\sigma^*v$ and is called the pullback of $v$ from $W$ to $M$ via $\sigma$.
\begin{pro}\label{I.1}
Let $M$ and $W$ be smooth manifolds and let $\sigma:M\to W$ be a surjective submersion. Suppose $u_n\in \mathcal D'(W)$ is a sequence of distributions such that
\begin{equation}\label{I.1.1}
\underset{n\to\infty}{\lim}\ \sigma^*u_n=0 \qquad \text{in}\ \mathcal D'(M).
\end{equation}
Then
\begin{equation}\label{I.1.2}
\underset{n\to\infty}{\lim}\ u_n=0 \qquad \text{in}\ \mathcal D'(W).
\end{equation}
In particular the map $\sigma^*:\mathcal D'(W)\to \mathcal D'(M)$ is injective.  

More generally, if
$u_n\in \mathcal D'(W)$ and $\t u\in \mathcal D'(M)$ are  such that
\begin{equation}\label{I.1.10}
\underset{n\to\infty}{\lim}\ \sigma^*u_n=\t u \qquad \text{in}\ \mathcal D'(M),
\end{equation}
then there is a distribution $u\in  \mathcal D'(W)$ such that
\begin{equation}\label{I.1.20}
\underset{n\to\infty}{\lim}\ u_n=u \qquad \text{in}\ D'(W)
\end{equation}
and $\t u=\sigma^* u$.
\end{pro}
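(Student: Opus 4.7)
The strategy is to reduce the problem to the local model of a submersion as a Cartesian projection, where a right-inverse to $\sigma^*$ can be written down explicitly, and then to globalize via a partition of unity. By the submersion normal form, every point of $M$ admits a coordinate neighbourhood $M_\alpha \simeq U_\alpha \times V_\alpha \subseteq \R^k \times \R^n$ in which $\sigma$ becomes the second projection $(x,y) \mapsto y$. Since $\sigma$ is surjective and submersions are open maps, the sets $V_\alpha$ form an open cover of $W$. Fix a locally finite smooth partition of unity $\{\chi_\alpha\}$ on $W$ subordinate to $\{V_\alpha\}$ and, for each $\alpha$, a function $\rho_\alpha \in C_c^\infty(U_\alpha)$ with $\int_{U_\alpha} \rho_\alpha(x)\,dx = 1$. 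In the local model, the definition (\ref{definition of pullback of distributions on manifolds}) specializes to $(\sigma^* v)(\phi) = v\bigl(y \mapsto \int_{U_\alpha} \phi(x,y)\,dx\bigr)$ for $v \in \mathcal D'(V_\alpha)$ and $\phi \in C_c^\infty(U_\alpha \times V_\alpha)$. In particular, for any $\psi \in C_c^\infty(V_\alpha)$ the tensor product $\rho_\alpha \otimes \psi$ lies in $C_c^\infty(M_\alpha)$ and satisfies the key identity
\[
(\sigma^* v)(\rho_\alpha \otimes \psi) = v(\psi).
\]

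This identity is the engine of the argument. Given an arbitrary $\psi \in C_c^\infty(W)$, only finitely many of the cut-offs $\chi_\alpha \psi$ are non-zero, and each belongs to $C_c^\infty(V_\alpha)$. Under the hypothesis (\ref{I.1.1}) we therefore obtain
\[
u_n(\psi) \;=\; \sum_\alpha u_n(\chi_\alpha \psi) \;=\; \sum_\alpha (\sigma^* u_n)(\rho_\alpha \otimes \chi_\alpha \psi) \;\longrightarrow\; 0 \qquad \text{as } n \to \infty,
\]
which is (\ref{I.1.2}). Applying this to the constant sequence $u_n \equiv u$ with $\sigma^* u = 0$ proves the injectivity of $\sigma^*$.

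For the general statement, assume $\sigma^* u_n \to \tilde u$ in $\mathcal D'(M)$. The same decomposition shows that for every $\psi \in C_c^\infty(W)$ the scalar sequence $u_n(\psi) = \sum_\alpha (\sigma^* u_n)(\rho_\alpha \otimes \chi_\alpha \psi)$ converges; call the limit $u(\psi)$. Linearity in $\psi$ is immediate. The only delicate point, and the main obstacle in the argument, is verifying that $u$ is actually continuous on $C_c^\infty(W)$, i.e. that $u \in \mathcal D'(W)$; this is precisely the sequential completeness of $\mathcal D'(W)$ in the weak-$*$ topology, a standard consequence of the Banach-Steinhaus theorem for $(LF)$-spaces (see \cite[Theorem 2.1.8]{Hormander}). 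Once $u \in \mathcal D'(W)$ is established, the weak continuity of $\sigma^*$ (built into its definition) gives $\sigma^* u_n \to \sigma^* u$, whence $\sigma^* u = \tilde u$ by uniqueness of limits.
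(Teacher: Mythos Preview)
Your proof is correct and follows essentially the same route as the paper's: both reduce to the local model of a submersion as a projection, build a tensor-product test function $\rho\otimes\psi$ with $\int\rho=1$ to recover $u(\psi)$ from $(\sigma^*u)(\rho\otimes\psi)$, and invoke \cite[Theorem 2.1.8]{Hormander} for sequential completeness. The only cosmetic difference is that you spell out the globalization via a partition of unity on $W$, whereas the paper works pointwise around a chosen $x_0\in M$ and then says ``by localization''; your version is arguably cleaner.
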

\begin{prf}
By the definition of a distribution on a manifold, as in \cite[sec.6.3]{Hormander}, we may assume that $M$ is an open subset of $\R^m$ and $W$ is an open subset of $\R^n$. 

We recall the definition of the pull-back
\begin{equation}\label{I.pullback} 
\sigma^*:\mathcal D'(W)\to \mathcal D'(M)
\end{equation}
from the proof of Theorem 6.1.2 in \cite{Hormander}. Fix a point $x_0\in M$ and a smooth map $g:M\to \R^{m-n}$ such that
\[
\sigma\oplus g:M\to \R^n\times \R^{m-n}
\]
has a bijective differential at $x_0$. By Inverse Function Theorem there is an open neighborhood $M_0$ of $x_0$ in $M$ such that
\[
\left(\sigma\oplus g\right)|_{M_0}:M_0\to Y_0
\]
is a diffeomerphism onto an open neighborhood $Y_0$ of $\sigma\oplus g(x_0)=(\sigma(x_0), g(x_0))$ in $\R^n\times \R^{m-n}$.
Let 
\[
h:Y_0\to M_0
\]
denote the inverse. For $\phi\in C_c^\infty(M_0)$ define $\Phi\in C_c^\infty(Y_0)$ by 
\begin{equation}\label{Phiphi}
\Phi(y)=\phi(h(y))|\det\,h'(y)| \qquad (y\in Y_0).
\end{equation}
Then
\begin{equation}\label{I.1.3}
\sigma^*u(\phi)=u\otimes 1(\Phi) \qquad (u\in \mathcal D'(W),\ \phi\in C_c^\infty(M_0)).
\end{equation}
By localization this gives the pull-back (\ref{I.pullback}).

Let $W_0$ be an open neighborhood of $\sigma(x_0)$ in $W$ and let $X_0$ be an open neighborhood of $g(x_0)$ in $\R^{m-n}$ such that
\[
W_0\times X_0\subseteq Y_0.
\]
Fix a function $\eta\in C_c^\infty(X_0)$ such that
\[
\int_{X_0}\eta(x)\,dx=1.
\]
Given $\psi\in C_c^\infty(W_0)$ define $\phi\in C_c^\infty(M_0)$ by
\[
\Phi(x',x'')=\psi(x')\eta(x'') \qquad (x'\in W_0,\ x''\in X_0),
\]
where $\Phi$ is related to $\phi$ via (\ref{Phiphi}). Then
\[
\sigma^*u(\phi)=u(\psi).
\]
Hence the assumption (\ref{I.1.1}) implies 
\[
\underset{n\to\infty}{\lim}\ u_n(\psi)=0 \qquad (\psi\in C_c^\infty(W_0)).
\]
Thus, by \cite[Theorem 2.1.8]{Hormander}, 
\[
\underset{n\to\infty}{\lim\ }u_n|_{W_0}=0
\]
in $\mathcal D'(W_0)$. Since the point $x_0\in M$ is arbitrary, the claim (\ref{I.1.2}) follows by localization.

Similarly,  the assumption (\ref{I.1.10}) implies  that for any $\psi\in C_c^\infty(W_0)$
\[
\underset{n\to\infty}{\lim}\ u_n(\psi)= \underset{n\to\infty}{\lim}\ \sigma^*u_n(\phi)=\t u(\phi) 
\]
exists. Thus, by \cite[Theorem 2.1.8]{Hormander}, there is $u\in \mathcal D'(W_0)$ such that
\[
\underset{n\to\infty}{\lim\ }u_n|_{W_0}=u.
\]
By the continuity of $\sigma^*$, $\sigma^* u=\t u$. Again, since the point $x_0\in M$ is arbitrary, the claim follows by localization.
\end{prf}
\begin{lem}\label{pull-back of differential operators}
Let $M$ and $W$ be smooth manifolds and let $\sigma:M\to W$ be a surjective submersion. Then for any smooth differential operator $D$ on $W$ there is, not necessary unique, smooth differential operator $\sigma^*D$ on $M$ such that
\[
\sigma^*(u\circ D) =(\sigma^*u)\circ (\sigma^*D) \qquad (u\in \mathcal D'(W)).
\]
If $D$ annihilates constants then so does $\sigma^*D$. 
The operator $\sigma^*D$ is unique if $\sigma$ is a diffeomerphism.
\end{lem}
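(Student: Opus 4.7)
My plan is to exploit the locality of both differential operators and the pull-back operation $\sigma^*$, which is well-defined by Proposition \ref{I.1}. First, by the submersion theorem I can, around any point of $M$, find coordinates $(x',x'')$ and coordinates $y$ on a neighborhood in $W$ in which $\sigma$ becomes the projection $(x',x'')\mapsto x'$. In such a normal-form chart $U$, formula \eqref{I.1.3} simplifies to $\sigma^*v(\phi)=v(\pi_*\phi)$, with $\pi_*\phi(x')=\int \phi(x',x'')\,dx''$. Writing $D=\sum_\alpha a_\alpha(y)\partial_y^\alpha$, I would set
\[
\tilde D_U:=\sum_\alpha (a_\alpha\circ\sigma)\,\partial_{x'}^\alpha,
\]
i.e.\ $D$ acting on the $W$-variables only. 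Fubini and differentiation under the integral give $\pi_*(\tilde D_U\phi)=D(\pi_*\phi)$, whence the local identity $(\sigma^*u)(\tilde D_U\phi)=\sigma^*(u\circ D)(\phi)$ for every $\phi\in C_c^\infty(U)$ and $u\in\mathcal{D}'(W)$. Clearly $\tilde D_U$ annihilates constants whenever $D$ does, since it then has no zeroth-order term.

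To globalize, I would use local triviality of $\sigma$ to cover $W$ by open sets $\{V_i\}$ such that each $U_i:=\sigma^{-1}(V_i)\cong V_i\times F_i$ is a normal-form chart. Pulling back a partition of unity $\{\tilde\rho_i\}$ on $W$ subordinate to $\{V_i\}$ yields $\rho_i:=\tilde\rho_i\circ\sigma$, a partition of unity on $M$ with $\supp(\rho_i)\subseteq U_i$ and $\rho_i$ constant along the fibres of $\sigma$. Define
\[
(\sigma^*D)\phi:=\sum_i \tilde D_{U_i}(\rho_i\phi),
\]
a locally finite sum of differential operators and hence a smooth differential operator on $M$. Decomposing $\phi=\sum_i\rho_i\phi$ and applying the local identity termwise propagates to the global identity $\sigma^*(u\circ D)=(\sigma^*u)\circ(\sigma^*D)$. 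For the constant-annihilation property, since $\rho_i$ depends only on the $W$-coordinate in the normal chart of $U_i$, one has $\tilde D_{U_i}(\rho_i)=(D\tilde\rho_i)\circ\sigma$, so
\[
(\sigma^*D)(1)=\Bigl(D\sum_i\tilde\rho_i\Bigr)\circ\sigma=(D\,1)\circ\sigma=0
\]
when $D$ annihilates constants.

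The hard part will be arranging the partition of unity to be of the form $\rho_i=\tilde\rho_i\circ\sigma$: this is exactly what makes the annihilate-constants identity work, and it rests on local triviality of $\sigma$. For a submersion which is not a locally trivial fibration, one must refine the cover of $M$ and introduce a vertical cut-off of compact fibre-support, carrying out the annihilate-constants verification with additional bookkeeping; I expect this to be the main technical obstacle, but not a conceptual one. Uniqueness when $\sigma$ is a diffeomorphism is then immediate: $\sigma^*$ is a bijection on distributions, so if $E_1,E_2$ are two candidates then $(\sigma^*u)((E_1-E_2)\phi)=0$ for every $u\in\mathcal{D}'(W)$ and every $\phi\in C_c^\infty(M)$; surjectivity of $\sigma^*$ forces $(E_1-E_2)\phi=0$ for all $\phi$, hence $E_1=E_2$.
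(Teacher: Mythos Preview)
Your approach is essentially the same as the paper's: reduce to the case of a linear projection via the local classification of submersions, where the lift of $D$ is simply $D$ acting in the base variables. The paper's proof is considerably terser—it treats the diffeomorphism case explicitly and then declares the projection case ``obvious'' without spelling out the globalization via a partition of unity or the bookkeeping for the annihilate-constants clause—so your more careful discussion of these points (including the local-triviality caveat) goes beyond what the paper provides, but there is no substantive difference in strategy.
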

\begin{prf}
Suppose $\sigma$ is a diffeomerphism between two open subsets of $\R^n$. Then
\[
\sigma^*u(\phi)=u(\phi\circ\sigma^{-1}|\det((\sigma^{-1})')|) \qquad (\phi\in C_c^\infty(M)).
\]
Let 
\[
(\sigma^*D)(\phi)=(D(\phi\circ\sigma^{-1}))\circ\sigma \qquad (\phi\in C_c^\infty(M)).
\]
Hence
\begin{eqnarray*}
\sigma^*(u\circ D)(\phi)&=&(u\circ D)(\phi\circ\sigma^{-1}|\det((\sigma^{-1})')|)\\
&=&u(D(\phi\circ\sigma^{-1}|\det((\sigma^{-1})')|))\\
&=&u(((D(\phi\circ\sigma^{-1})\circ\sigma)\circ\sigma^{-1}  |\det((\sigma^{-1})')|))\\
\end{eqnarray*}

Using the local classification of the submersions modulo the diffeomerphism \cite[16.7.4]{DieudonneElements}, we may assume that $\sigma$ is a linear projection
\[
\sigma:\R^{m+n}\ni (x,y)\to x\in \R^n,
\]
in which case the lemma is obvious.
\end{prf}
Suppose $M$ is a Lie group. Then there are functions $m_\kappa\in C^\infty(\tilde M_\kappa)$ such that the formula
\begin{equation}\label{haar measure locally}
\int_M \phi\circ \kappa(y)\,d\mu_M(y)=\int_{\tilde M_\kappa}\phi(x) m_\kappa(x)\,dx \qquad (\phi\in C_c^\infty(\tilde M_\kappa))
\end{equation}
defines a left invariant Haar measure on $M$. We shall tie the normalization of the Haar measure $d\mu_M(y)$ on $M$ to the normalization of the Lebesgue measure $dx$ on $\R^m$ by requiring that near the identity,
\begin{equation}\label{haar measure locally normalized}
m_{\exp^{-1}}(x)=\det\left(\frac{1-e^{-\ad(x)}}{\ad(x)}\right),
\end{equation}
as in \cite[Theorem 1.14, page 96]{HelgasonGeomtric}. Collectively, the distributions $m_\kappa(x)\,dx\in \mathcal D'(\tilde M_\kappa)$ form a distribution density on $M$. (See \cite[sec. 6.3]{Hormander} for the definition of a distribution density.) 

Suppose $W$ is another Lie group with the left Haar measure given by
\[
\int_W \psi\circ \lambda(y)\,d\mu_W(y)=\int_{\tilde W_\lambda}\phi(x) w_\lambda(x)\,dx \qquad (\psi\in C_c^\infty(\tilde W_\lambda)),
\]
and let $\sigma:M\to W$ be a submersion. 
Given any distribution density $v_\lambda\in  \mathcal D'(\tilde W_\lambda)$ we associate to it a distribution on $W$ given by $\frac{1}{w_\lambda}v_\lambda\in  \mathcal D'(\tilde W_\lambda)$. We may pullback this distribution to $M$ and obtain another distribution. Then we multiply by the $m_\kappa$ and obtain a distribution density. Thus, if $\sigma:M_\kappa\to W_\lambda$ then
\begin{equation}\label{pullback of distribution densities}
(\sigma^*v)_\kappa=m_\kappa (\lambda\circ \sigma\circ \kappa^{-1})^*(\frac{1}{w_\lambda}v_\lambda).
\end{equation}
Distribution densities on $W$ are identified with the continuous linear forms on $C_c^\infty(W)$ by
\[
v(\psi\circ \lambda)=v_\lambda(\psi) \qquad (\psi\in C_c^\infty(\tilde W_\lambda)).
\]
(Here $v$ stands for the corresponding continuous linear form.) In particular if $F\in C(W)$. then $F\mu_W$ is a continuous linear form on $C_c^\infty(W)$ and for $\psi\in C_c^\infty(\tilde W_\lambda)$,
\begin{eqnarray*}
(F\mu_W)_\lambda(\psi)&=&(F\mu_W)(\psi\circ \lambda)=\int_W \psi\circ \lambda(y)F(y)\,d\mu_W(y)\\
&=&\int_{\tilde W_\lambda} \psi(x) F\circ \lambda^{-1}(x) w_\lambda(x)\,dx. 
\end{eqnarray*}
Hence, for $\phi\in C_c^\infty(\tilde M_\kappa)$, with $\sigma:M_\kappa\to W_\lambda$,
\begin{eqnarray*}
(\sigma^*(F\mu_W))_\kappa(\phi)&=&(\lambda\circ \sigma\circ \kappa^{-1})^*(\frac{1}{w_\lambda}(F \mu_W)_\lambda)(m_\kappa \phi)\\
&=&\int_{\tilde M_\kappa} m_\kappa (x) \phi(x) F\circ\lambda^{-1}\circ (\lambda \circ \sigma\circ \kappa^{-1})(x)\,dx\\
&=&\int_{\tilde M_\kappa} \phi(x) (F \circ \sigma)\circ \kappa^{-1}(x)m_\kappa (x) \,dx\\
&=&\int_M \phi\circ \kappa(y) (F \circ \sigma)(y) \,d\mu_M(y)\\
\end{eqnarray*}
Thus
\begin{equation}\label{pullback of function times Haar}
\sigma^*(F\mu_W)=F\circ \sigma \mu_M.
\end{equation}
As explained above, we identify $\mathcal D'(M)$ with the space of the continuous linear forms on $C_c^\infty(M)$ and similarly for $W$ and obtain
\begin{equation}\label{final pullback}
\sigma^*:\mathcal D'(M)\to \mathcal D'(W)
\end{equation}
as the unique continuous extension of (\ref{pullback of function times Haar}). Our identification of distribution densities with continuous linear forms on on the space of the smooth compactly supported functions applies also to submanifolds of Lie groups.

Let $\Sg$ be a Lie group acting on another Lie group $W$ and let $U\subseteq W$ be a submanifold. (In our applications $W$ is going to be a vector space.) We shall consider the following function
\begin{equation}\label{the map f}
\sigma:\Sg\times U\ni (s,u)\to su\in W.
\end{equation}
The following fact is easy to check
\begin{lem}\label{intersetion of orbit with U}
If $\mathcal O\subseteq W$ is an $\Sg$-orbit then $\sigma^{-1}(\mathcal O)=\Sg\times (\mathcal O\cap U)$. 
\end{lem}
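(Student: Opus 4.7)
The claim is a purely set-theoretic statement about the interaction between orbits and the evaluation-type map $\sigma(s,u) = su$, so the plan is to prove the equality by double inclusion, using only the definition of an $\Sg$-orbit (namely that $\Sg$-orbits are $\Sg$-invariant subsets of $W$). No differential-geometric structure of the submanifold $U$, of the Lie group $W$, or any submersion hypothesis on $\sigma$ is needed for this identity; the fact that $\sigma$ is a submersion plays no role here.

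First I would check the inclusion $\Sg\times (\mathcal O\cap U) \subseteq \sigma^{-1}(\mathcal O)$. Given $(s,u)\in \Sg\times (\mathcal O\cap U)$, one has $u\in\mathcal O$, so by $\Sg$-invariance of the orbit $su\in\mathcal O$, which by definition of $\sigma$ means $\sigma(s,u)\in\mathcal O$, i.e.\ $(s,u)\in\sigma^{-1}(\mathcal O)$.

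Next I would establish the reverse inclusion $\sigma^{-1}(\mathcal O)\subseteq \Sg\times (\mathcal O\cap U)$. Take $(s,u)\in\sigma^{-1}(\mathcal O)$, so that $su\in\mathcal O$. Applying $s^{-1}\in\Sg$ and using $\Sg$-invariance of $\mathcal O$ once more, we get $u = s^{-1}(su)\in \mathcal O$; since by construction $u\in U$, this yields $u\in\mathcal O\cap U$ and hence $(s,u)\in\Sg\times(\mathcal O\cap U)$.

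There is no real obstacle: both directions are one-line applications of the $\Sg$-invariance of $\mathcal O$ (the forward direction uses closure under the action of arbitrary $s$, the reverse direction uses closure under the action of $s^{-1}$). Combining the two inclusions yields the equality $\sigma^{-1}(\mathcal O)=\Sg\times(\mathcal O\cap U)$, completing the proof.
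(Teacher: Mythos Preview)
Your proof is correct and is exactly the elementary double-inclusion argument one would expect; the paper itself omits the proof, simply stating that the fact ``is easy to check.''
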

Assume that the map (\ref{the map f}) is submersive. Let us fix Haar measures on $\Sg$ and on $W$ so that the pullback
\[
\sigma^*:\mathcal D'(W)\to \mathcal D'(\Sg\times U)
\]
is well defined, as in (\ref{final pullback}).
Denote by $\Sg^U\subseteq \Sg$ the stabilizer of $U$.
\begin{lem}\label{main pullback lemma}
Assume that the map (\ref{the map f}) is submersive and surjective. Let
$\mathcal O\subseteq W$ be an $\Sg$-orbit and let $\mu_{\mathcal O}\in \mathcal D'(W)$ be an $\Sg$-invariant positive measure supported on the closure on $\mathcal O$. Let $\mu_{\mathcal O}|_{U}\in \mathcal D'(U)$ be the restriction of $\mu_{\mathcal O}$ to $U$ in the sense of \cite[Cor. 8.2.7]{Hormander}. Then $\mu_{\mathcal O}|_{U}$ is a positive $\Sg^U$-invariant measure supported on the closure of $\mathcal O\cap U$ in $U$. Moreover, 
\begin{equation}\label{main pullback lemma1}
\sigma^*\mu_{\mathcal O}=\mu_\Sg\otimes \mu_{\mathcal O}|_{U}.
\end{equation}
\end{lem}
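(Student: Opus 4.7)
The approach rests on two facts: the equivariance $\sigma(s_0 s, u)=s_0\sigma(s,u)$ and the $\Sg$-invariance of $\mu_{\mathcal O}$. First I would verify that $\mu_{\mathcal O}|_U$ is well defined as a pullback to the embedded submanifold $U\hookrightarrow W$. The wave front set of any $\Sg$-invariant distribution on $W$ is contained in the union $\bigcup_{w\in \overline{\mathcal O}} N^*(\Sg\cdot w)$ of conormals to $\Sg$-orbits, being the characteristic variety of the invariance PDEs. Submersivity of $\sigma$ at $(e,u)$ gives $\mathfrak s\cdot u + T_uU = T_uW$; passing to annihilators in $T_u^*W$ then forces $N^*(\Sg\cdot u)\cap N^*U=\{0\}$ at every $u\in U$, so the restriction exists by \cite[Cor.~8.2.7]{Hormander}.

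Next I would prove that $\sigma^*\mu_{\mathcal O}=\mu_\Sg\otimes\nu$ for a unique $\nu\in\mathcal D'(U)$. The driving identity is $\sigma\circ (L_{s_0}\times \id_U)=L_{s_0}\circ \sigma$, which together with $L_{s_0}^*\mu_{\mathcal O}=\mu_{\mathcal O}$ yields $(L_{s_0}\times\id_U)^*\sigma^*\mu_{\mathcal O}=\sigma^*\mu_{\mathcal O}$ for every $s_0\in \Sg$. The factorization itself is a distributional Fubini: localizing in the $U$-direction via a partition of unity, a left-translation-invariant distribution on $\Sg$ (paired against any fixed $\psi\in C_c^\infty(U)$) must be a scalar multiple of $\mu_\Sg$ by uniqueness of Haar measure. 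Concretely, fix $\eta\in C_c^\infty(\Sg)$ with $\int_\Sg\eta\,d\mu_\Sg=1$ and set $\nu(\psi):=(\sigma^*\mu_{\mathcal O})(\eta\otimes\psi)$; the left-invariance then propagates to give $\sigma^*\mu_{\mathcal O}=\mu_\Sg\otimes\nu$. I expect this distributional Fubini to be the main technical hurdle.

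To identify $\nu$ with $\mu_{\mathcal O}|_U$, I would use the section $\iota:U\to \Sg\times U$, $\iota(u)=(e,u)$, for which $\sigma\circ\iota$ is the inclusion $U\hookrightarrow W$. Thus $\iota^*\sigma^*\mu_{\mathcal O}=(\sigma\circ\iota)^*\mu_{\mathcal O}=\mu_{\mathcal O}|_U$, the WF condition needed for the iterated pullback being inherited from the first step. On the other hand $\mu_\Sg$ is a smooth density with value $1$ at $e$ under the normalization \eqref{haar measure locally normalized}, so Hörmander's product formula (or a direct coordinate computation) gives $\iota^*(\mu_\Sg\otimes\nu)=\nu$. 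Comparing these two expressions for $\iota^*\sigma^*\mu_{\mathcal O}$ yields $\mu_{\mathcal O}|_U=\nu$, which is \eqref{main pullback lemma1}.

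Finally, the three asserted properties of $\mu_{\mathcal O}|_U$ are read off from the factorization. Positivity of $\nu$ is inherited from positivity of $\sigma^*\mu_{\mathcal O}$ (pullbacks of positive densities along submersions are positive, being locally just projections) together with positivity of $\mu_\Sg$. For the support, Lemma \ref{intersetion of orbit with U} applied orbit-by-orbit in $\overline{\mathcal O}$ gives $\sigma^{-1}(\overline{\mathcal O})\cap(\Sg\times U)=\Sg\times(\overline{\mathcal O}\cap U)$, which forces $\supp\nu$ into the closure of $\mathcal O\cap U$ in $U$. For $\Sg^U$-invariance, the map $R_{s_0}(s,u)=(ss_0^{-1},s_0u)$ for $s_0\in\Sg^U$ satisfies $\sigma\circ R_{s_0}=\sigma$ and hence $R_{s_0}^*\sigma^*\mu_{\mathcal O}=\sigma^*\mu_{\mathcal O}$; tracking $R_{s_0}$ through the tensor product $\mu_\Sg\otimes\nu$ (with the modular function of $\Sg$ absorbed into the Haar factor) separates variables and delivers $L_{s_0}^*\nu=\nu$.
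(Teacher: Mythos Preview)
Your proposal is correct and follows essentially the same route as the paper: factor $\sigma^*\mu_{\mathcal O}$ as $\mu_\Sg\otimes\nu$ using $\Sg$-invariance, then identify $\nu$ with $\mu_{\mathcal O}|_U$ via the section $\iota(u)=(e,u)$ and the iterated pullback $\iota^*\sigma^*=(\sigma\circ\iota)^*$. Two minor differences are worth noting. First, for the factorization step the paper simply invokes \cite[Theorem~3.1.4']{Hormander} rather than your direct Haar-uniqueness argument; the content is the same. Second, for $\Sg^U$-invariance the paper observes directly that $s^*(\mu_{\mathcal O}|_U)=(s^*\mu_{\mathcal O})|_U=\mu_{\mathcal O}|_U$ for $s\in\Sg^U$, which is cleaner than your route via $R_{s_0}$ and sidesteps the modular-function wrinkle you flagged (that wrinkle is genuine: your argument as written would only give $L_{s_0}^*\nu=\Delta(s_0)\nu$, not invariance, unless $\Sg$ is unimodular).
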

\begin{prf}
Let $s\in\Sg^U$. Then
\[
s^*\left(\mu_{\mathcal O}|_{U}\right)=\left(s^*\mu_{\mathcal O}\right)|_{U}=\mu_{\mathcal O}|_{U}.
\]
Hence the distribution $\mu_{\mathcal O}|_{U}$ is $\Sg^U$-invariant. Lemma \ref{I.1} implies that $\mu_{\mathcal O}|_{U}\ne 0$ and Lemma \ref{intersetion of orbit with U} that $\mu_{\mathcal O}|_{U}$ is supported in the closure of $\mathcal O\cap U$ in $U$. Since the pullback of a positive measure is a non-negative measure, $\mu_{\mathcal O}|_{U}$ is a positive $\Sg^U$-invariant measure supported on the closure of $\mathcal O\cap U$ in $U$.

Theorem 3.1.4' in \cite{Hormander} implies that there is a positive measure $\mu_{\mathcal O\cap U}$ on $U$ such that 
\[
\sigma^*\mu_{\mathcal O}=\mu_\Sg\otimes \mu_{\mathcal O\cap U}.
\]
Consider the embedding
\[
\sigma_1:U\ni u\to (1,u)\in \Sg\times U.
\]
Then $\sigma\circ \sigma_1:U\to W$ is the inclusin of $U$ into $W$. Hence,
\[
(\sigma\circ \sigma_1)^*\mu_{\mathcal O}=\mu_{\mathcal O}|_{U}.
\]
The conormal bundle to $\sigma_1$, as defined in \cite[Theorem 8.2.4]{Hormander}, is equal to
\[
N_{\sigma_1}=T_{\{1\}\times U}^*=T^*(\Sg)\times 0\subseteq T^*(\Sg)\times T^*(U)=T^*(\Sg\times U).
\]
By the $\Sg$-invariance of $\sigma^*\mu_{\mathcal O}$, 
\[
WF(\mu_\Sg\otimes \mu_{\mathcal O\cap U})\subseteq 0\times T^*(U)\subseteq T^*(\Sg\times U).
\]
Hence 
\[
N_{\sigma_1}\cap WF(\mu_\Sg\otimes \mu_{\mathcal O\cap U})=0.
\]
Therefore
\[
\mu_{\mathcal O}|_{U}=(\sigma\circ \sigma_1)^*\mu_{\mathcal O}=\sigma_1^*\circ \sigma^*\mu_{\mathcal O}=\sigma_1^*(\mu_\Sg\otimes \mu_{\mathcal O\cap U})=\mu_{\mathcal O\cap U}.
\]
This implies (\ref{main pullback lemma1}).
\end{prf}

\setcounter{section}{3}
\setcounter{thh}{0}
\setcounter{equation}{0}
\section*{Appendix \thesection: Some confluent hypergeometric polynomials}\label{Appendix C: Some confluent hypergeometric polynomials}

For two integers $a$ and $b$ define the following functions in the real variable $\xi$,
\begin{eqnarray}\label{eq:Pab2}
P_{a,b,2}(\xi)&=&\left\{
\begin{array}{ll}
\sum_{k=0}^{b-1}\frac{a(a+1)...(a+k-1)}{k!(b-1-k)!}2^{-a-k}\xi^{b-1-k}&\text{if}\ b\geq 1\\
0&\text{if}\ b\leq 0,
\end{array}
\right.\\
\label{eq:Pabminus2}
P_{a,b,-2}(\xi)&=&\left\{
\begin{array}{ll}
(-1)^{a+b-1}\sum_{k=0}^{a-1}\frac{b(b+1)...(b+k-1)}{k!(a-1-k)!}(-2)^{-b-k}\xi^{a-1-k}&\text{if}\ a\geq 1\\
0&\text{if}\ a\leq 0,
\end{array}
\right.
\end{eqnarray}
where $a(a+1)...(a+k-1)=1$ if $k=0$. 
Notice that 
\begin{equation}\label{D0}
P_{a,b,-2}(\xi)=P_{b,a,2}(-\xi) \qquad (\xi\in\R,\ a,b\in\Zb).
\end{equation}
Set
\begin{eqnarray}\label{D0'}
P_{a,b}(\xi)&=&2\pi(P_{a,b,2}(\xi)\Bbb I_{\R^+}(\xi)+P_{a,b,-2}(\xi)\Bbb I_{\R^-}(\xi))\\
&=&2\pi(P_{a,b,2}(\xi)\Bbb I_{\R^+}(\xi)+P_{b,a,2}(-\xi)\Bbb I_{\R^+}(-\xi)),\nn
\end{eqnarray}
where $\Bbb I_S$ denotes the indicator function of the set $S$.
Also, let
\begin{eqnarray}\label{D0''}
Q_{a,b}(iy)=2\pi \left\{
\begin{array}{ll}
0 & \text{if}\ a+b\geq 1,\\
\sum_{k=b}^{-a} \frac{a(a+1)...(a+k-1)}{k!}2^{-a-k}(1-iy)^{k-b} & \text{if}\ -a>b-1\geq 0,\\
\sum_{k=a}^{-b} \frac{b(b+1)...(b+k-1)}{k!}
2^{-b-k}(1+iy)^{k-a} & \text{if}\ -b>a-1\geq 0,\\
(1+iy)^{-a}(1-iy)^{-b} & \text{if}\ a\leq 0\ \text{and}\ b\leq 0.
\end{array}
\right.
\end{eqnarray}
\begin{pro}\label{D1}
For any $a,\ b\in\Bbb Z$, the formula
\begin{equation}\label{D1.1}
\int_\R (1+iy)^{-a}(1-iy)^{-b}\phi(y)\,dy \qquad (\phi\in \Ss(\R))
\end{equation}
defines a tempered distribution on $\R$. The restriction of the Fourier transform of this distribution to $\R\setminus \{0\}$ is a function given by 
\begin{eqnarray}\label{D1.2}
\int_\R
(1+iy)^{-a}(1-iy)^{-b} e^{-iy\xi}\,dy
=P_{a,b}(\xi)e^{-|\xi|}.
\end{eqnarray}
The right hand side of (\ref{D1.2}) is an absolutely integrable function on the real line and thus defines a tempered distribution on $\R$. 
Furthermore,
\begin{eqnarray}\label{D1.3}
(1+iy)^{-a}(1-iy)^{-b}=\frac{1}{2\pi}\int_\R P_{a,b}(\xi) e^{-|\xi|} e^{iy\xi}\,dy+\frac{1}{2\pi}Q_{a,b}(iy)
\end{eqnarray}
and hence,
\begin{eqnarray}\label{D1.4}
\int_\R(1+iy)^{-a}(1-iy)^{-b}e^{-iy\xi}\,dy=P_{a,b}(\xi)e^{-|\xi|}+ Q_{a,b}(-\frac{d}{d\xi})\delta_0(\xi).
\end{eqnarray}
\end{pro}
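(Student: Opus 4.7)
First, since $1\pm iy$ does not vanish for $y \in \R$, the function $(1+iy)^{-a}(1-iy)^{-b}$ is continuous on $\R$, and $|(1+iy)^{-a}(1-iy)^{-b}| = (1+y^2)^{-(a+b)/2}$ has at most polynomial growth; hence it defines a tempered distribution via (\ref{D1.1}). This handles the first assertion. For the remaining identities, the natural strategy is to reduce to the \emph{model case} $a \geq 1$, $b \geq 1$ (where both the function and its Fourier transform are $L^1$) using the algebraic identity $2 = (1+iy) + (1-iy)$.

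\textbf{The model case $a,b \geq 1$.} Here the distribution (\ref{D1.1}) is an integrable function, and the Fourier integral (\ref{D1.2}) converges absolutely for every $\xi \ne 0$. Compute it by contour integration: for $\xi > 0$ close in the lower half-plane, picking up a single pole at $y = -i$ of order $b$; for $\xi < 0$ close in the upper half-plane, with a pole at $y = i$ of order $a$. Writing $y = -i + s$, one has $1-iy = -is$, $1+iy = 2+is$, and $e^{-iy\xi} = e^{-\xi} e^{-is\xi}$, so the residue at $y = -i$ is computed by Leibniz's rule applied to $(2+is)^{-a}e^{-is\xi}$ at $s=0$. The $k$-th derivative of $(2+is)^{-a}$ at $s=0$ equals $(-i)^k\, a(a+1)\cdots(a+k-1)\,2^{-a-k}$, and the Leibniz sum collapses (after dividing by $(b-1)!$ and accounting for the $1/(b-1-k)!$ from differentiating $e^{-is\xi}$) precisely to the polynomial $P_{a,b,2}(\xi)$ of (\ref{eq:Pab2}) multiplied by $e^{-\xi}$; the factor $-2\pi i$ from the clockwise contour cancels the $(-i)^{-b}(-i)^{b-1} = i$ picked up from the substitution, giving $2\pi P_{a,b,2}(\xi)e^{-\xi}$. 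The $\xi < 0$ computation is symmetric and produces $2\pi P_{a,b,-2}(\xi)e^{\xi}$. Since $Q_{a,b} = 0$ in this range, (\ref{D1.2})–(\ref{D1.4}) follow from Fourier inversion on the resulting $L^1$ function $P_{a,b}(\xi) e^{-|\xi|}$.

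\textbf{Reduction of the general case.} When $a \leq 0 \leq b - 1$, expand using $1+iy = 2 - (1-iy)$:
\begin{equation*}
(1+iy)^{-a}(1-iy)^{-b} = \sum_{k=0}^{-a} \binom{-a}{k}(-1)^k 2^{-a-k}(1-iy)^{k-b}.
\end{equation*}
Split this into the \emph{polynomial part} (indices $k \geq b$, where $(1-iy)^{k-b}$ is a polynomial in $y$) and the \emph{integrable part} (indices $0 \leq k < b$, where $(1-iy)^{-(b-k)}$ has exponent $\geq 1$). Using $\binom{-a}{k}(-1)^k = a(a+1)\cdots(a+k-1)/k!$, the polynomial part agrees term-by-term with $Q_{a,b}(iy)/(2\pi)$ as defined by the second line of (\ref{D0''}); this gives the decomposition (\ref{D1.3}) provided the Fourier transform of the integrable part matches $P_{a,b}(\xi)e^{-|\xi|}/(2\pi)$, which is verified by applying the model case to each summand $(1-iy)^{-(b-k)}$ (exponents $(0,b-k)$) and collecting coefficients via the binomial identity that reconstructs $P_{0,b-k,\pm 2}$ weighted by $\binom{-a}{k}(-1)^k 2^{-a-k}$ into $P_{a,b,\pm 2}$. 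The case $b \leq 0 \leq a-1$ is handled symmetrically using $1-iy = 2 - (1+iy)$, yielding the third line of (\ref{D0''}). When $a \leq 0$ and $b \leq 0$, the function itself is a polynomial in $y$, $P_{a,b} \equiv 0$ by (\ref{eq:Pab2})–(\ref{eq:Pabminus2}), and the whole distribution is $Q_{a,b}(iy)/(2\pi)$ by the last line of (\ref{D0''}). Finally, (\ref{D1.4}) is obtained by taking the Fourier transform of (\ref{D1.3}): the transform of the polynomial $Q_{a,b}(iy)$ is the differential operator $Q_{a,b}(-d/d\xi)$ applied to $\delta_0(\xi)$, while that of $P_{a,b}(\xi)e^{-|\xi|}/(2\pi)$ is, by construction of the model case, $(1+iy)^{-a}(1-iy)^{-b}$ minus its polynomial part.

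\textbf{Main obstacle.} The conceptual content lies entirely in the contour-integral computation of the model case; the technical heart of the argument is the combinatorial bookkeeping needed to verify, across the four sign patterns of $(a,b)$ in (\ref{D0''}), that the polynomial pieces produced by the binomial expansion coincide exactly with $Q_{a,b}(iy)$, and that the remaining integrable pieces recombine under the binomial identity into $P_{a,b,\pm 2}(\xi)e^{-|\xi|}$ as given by (\ref{eq:Pab2})–(\ref{eq:Pabminus2}). This matching of rising factorials $a(a+1)\cdots(a+k-1)$ against binomial coefficients $\binom{-a}{k}$ with the correct signs and powers of $2$ is elementary but requires care.
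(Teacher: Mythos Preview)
Your argument is correct, and it uses the same two ingredients as the paper---residue calculus and the binomial expansion $1+iy = 2-(1-iy)$---but organizes them differently. The paper proves (\ref{D1.2}) for \emph{all} $a,b\in\Bbb Z$ in one stroke: the contour integral picks up the residue at $z=\pm 1$ only when there is actually a pole there, so the vanishing of $P_{a,b,2}$ for $b\le 0$ and of $P_{a,b,-2}$ for $a\le 0$ falls out automatically. It then proves (\ref{D1.3}) by computing $\int_0^\infty P_{a,b,2}(\xi)e^{-\xi}e^{iy\xi}\,d\xi$ directly via $\int_0^\infty \xi^m e^{-\xi}e^{iy\xi}\,d\xi=m!(1-iy)^{-m-1}$, and subtracting $(1+iy)^{-a}(1-iy)^{-b}$ (expanded binomially when $a\le 0$) to identify the polynomial remainder as $Q_{a,b}$. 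Your route instead fixes a base case and reduces, which is equally valid and perhaps conceptually tidier, at the cost of having to reassemble the sum $\sum_k \binom{-a}{k}(-1)^k 2^{-a-k}P_{0,b-k,2}(\xi)$ back into $P_{a,b,2}(\xi)$.

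One small point to clean up: your reduction applies the ``model case'' to summands $(1-iy)^{-(b-k)}$, i.e.\ to the exponent pair $(0,b-k)$, which lies outside your stated range $a,b\ge 1$. This is harmless---the residue computation works verbatim when one exponent is zero (there is simply no pole in the other half-plane, and Jordan's lemma still handles the $b-k=1$ case where the integrand is only $O(|y|^{-1})$)---but you should either state the model case as $a\ge 0$, $b\ge 1$ (and symmetrically), or note explicitly that the one-pole situation is covered by the same contour argument.
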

\begin{prf}
Since, $|1\pm iy|=\sqrt{1+y^2}$, (\ref{D1.1}) is clear. The integral (\ref{D1.2}) is equal to
\begin{eqnarray}\label{D1.30}
&&\frac{1}{i}\int_{i\R}
(1+z)^{-a}(1-z)^{-b} e^{-z\xi}\,dz\\
&=&2\pi (-\Bbb I_{\R^+}(\xi)\,res_{z=1}(1+z)^{-a}(1-z)^{-b} e^{-z\xi}
+\Bbb I_{\R^-}(\xi)\,res_{z=-1}(1+z)^{-a}(1-z)^{-b} e^{-z\xi}).\nn
\end{eqnarray}
The computation of the two residues is straightforward and (\ref{D1.2}) follows.

Since
$$
\int_0^\infty e^{-\xi} e^{i\xi y}\,d\xi=(1-iy)^{-1},
$$
we have 
\begin{equation}\label{d1}
\int_0^\infty \xi^m e^{-\xi} e^{i\xi y}\,d\xi
=\left(\frac{d}{d(iy)}\right)^{m}(1-iy)^{-1}=m! (1-iy)^{-m-1} \qquad (m=0, 1, 2, ...).
\end{equation}
Thus, if $b\geq 1$, then 
\begin{eqnarray*}
&&\int_0^\infty P_{a,b,2}(\xi) e^{-\xi} e^{i\xi y}\,d\xi
=\sum_{k=0}^{b-1} \frac{a(a+1)...(a+k-1)}{k!}2^{-a-k}(1-iy)^{-b+k}\\
&=&(1-iy)^{-b}2^{-a}\sum_{k=0}^{b-1} \frac{(-a)(-a-1)...(-a-k+1)}{k!}
\left(-\frac{1}{2}(1-iy)\right)^{k}.
\end{eqnarray*}
Also, if $a\leq 0$, then
\begin{eqnarray*}
&&2^a(1+iy)^{-a}=\left(1-\frac{1}{2}(1-iy)\right)^{-a}
=\sum_{k=0}^{-a}
\left(
\begin{array}{r}
-a\\
k
\end{array}
\right) 
\left(-\frac{1}{2}(1-iy)\right)^{k}\\
&=&\sum_{k=0}^{-a} \frac{(-a)(-a-1)...(-a-k+1)}{k!}
\left(-\frac{1}{2}(1-iy)\right)^{k}.
\end{eqnarray*}
Hence, 
\begin{eqnarray}\label{d2}
&&\int_0^\infty P_{a,b,2}(\xi) e^{-\xi} e^{i\xi y}\,d\xi-
(1+iy)^{-a}(1-iy)^{-b}\\
&=&(1-iy)^{-b}2^{-a}\left(\sum_{k=0}^{b-1} \frac{(-a)(-a-1)...(-a-k+1)}{k!}
\left(-\frac{1}{2}(1-iy)\right)^{k}\right.\nn\\
&&\left.\ \ \ \ \ \ \ \ \ \ \ \ \ \ \ \ \ -\sum_{k=0}^{-a} \frac{(-a)(-a-1)...(-a-k+1)}{k!}
\left(-\frac{1}{2}(1-iy)\right)^{k}\right).\nn
\end{eqnarray}
Recall that $P_{a,b,-2}=0$ if $a\leq 0$. Hence, (\ref{D1.2}) shows that (\ref{d2}) is the inverse Fourier transform of a distribution supported at $\{0\}$ - a polynomial.

Suppose $-a<b-1$. Then (\ref{d2}) is equal to
$$
2^{-a}\sum_{k=-a+1}^{b-1} \frac{(-a)(-a-1)...(-a-k+1)}{k!}\left(-\frac{1}{2}\right)^{k}(1-iy)^{k-b},
$$
which tends to zero if $y$ goes to infinity. The only polynomial with this property is the zero polynomial. Thus in this case (\ref{d2}) is zero.
If $-a=b-1$, then (\ref{d2}) is obviously zero. 

Suppose $-a>b-1$. Then (\ref{d2}) is equal to
\begin{equation}\label{d3}
-2^{-a}\sum_{k=b}^{-a} \frac{(-a)(-a-1)...(-a-k+1)}{k!}\left(-\frac{1}{2}\right)^{k}(1-iy)^{k-b}.
\end{equation}
As in (\ref{d1}) we have
\begin{equation*}\label{d4}
\int_{-\infty}^0 \xi^m e^{\xi} e^{i\xi y}\,d\xi
=\left(\frac{d}{d(iy)}\right)^{m}(1+iy)^{-1}=(-1)^m m! (1+iy)^{-m-1} \qquad (m=0, 1, 2, ...).
\end{equation*}
Suppose $a\geq 1$. Then 
\begin{eqnarray*}
&&\int_{-\infty}^0 P_{a,b,-2}(\xi) e^{\xi} e^{i\xi y}\,d\xi
=(-1)^{a+b-1}\sum_{k=0}^{a-1} \frac{b(b+1)...(b+k-1)}{k!}(-2)^{-b-k}
(-1)^{a-1+k}(1+iy)^{-a+k}\\
&=&(1+iy)^{-a}2^{-b}\sum_{k=0}^{a-1} \frac{(-b)(-b-1)...(-b-k+1)}{k!}
\left(-\frac{1}{2}(1+iy)\right)^{k}.
\end{eqnarray*}
Also, if $b\leq 0$, then
\begin{eqnarray*}
&&2^{b}(1-iy)^{-b}=
\sum_{k=0}^{-b} \frac{(-b)(-b-1)...(-b-k+1)}{k!}
\left(-\frac{1}{2}(1+iy)\right)^{k}.
\end{eqnarray*}
Hence, 
\begin{eqnarray}\label{d5}
&&\int_{-\infty}^0 P_{a,b,-2}(\xi) e^{\xi} e^{i\xi y}\,d\xi-
(1+iy)^{-a}(1-iy)^{-b}\\
&=&(1+iy)^{-a}2^{-b}\left(\sum_{k=0}^{a-1} \frac{(-b)(-b-1)...(-b-k+1)}{k!}
\left(-\frac{1}{2}(1+iy)\right)^{k}\right.\nn\\
&&\left.\ \ \ \ \ \ \ \ \ \ \ \ \ \ \ \ \ -\sum_{k=0}^{-b} \frac{(-b)(-b-1)...(-b-k+1)}{k!}
\left(-\frac{1}{2}(1+iy)\right)^{k}\right).\nn
\end{eqnarray}
As before, we show that (\ref{d5}) is zero if $-b\leq a-1$. If $-b>a-1$, then (\ref{d5}) is equal to
$$
-2^{-b}\sum_{k=a}^{-b} \frac{(-b)(-b-1)...(-b-k+1)}{k!}
\left(-\frac{1}{2}\right)^{k}(1+iy)^{k-a}.
$$
If $a\geq 1$ and $b\geq 1$, then our computations show that
\begin{eqnarray}\label{d6}
&&\int_0^{\infty} P_{a,b,2}(\xi) e^{-\xi} e^{i\xi y}\,d\xi
+\int_{-\infty}^0 P_{a,b,-2}(\xi) e^{\xi} e^{i\xi y}\,d\xi-
(1+iy)^{-a}(1-iy)^{-b}
\end{eqnarray}
is a polynomial which tends to zero if $y$ goes to infinity. Thus (\ref{d6}) is equal zero. This completes the proof of (\ref{D1.3}). The statement (\ref{D1.4}) is a direct consequence of (\ref{D1.3}).
\end{prf}
The test functions which occur in Proposition \ref{D1} don't need to be in the Schwartz space. In fact the test functions we shall use in our applications are not necessarily smooth. Therefore we'll need a more precise version of the formula (\ref{D1.4}). This requires a definition and two well known lemmas. 

Following Harish-Chandra denote by $\Ss(\R^\times)$ the space of the smooth complex valued functions defined on $\R^\times$ whose all derivatives are rapidly decreasing at infinity and have limits at zero from both sides. For $\psi\in\Ss(\R^\times)$ let
\[
\psi(0+)=\underset{x\to 0+}{\lim}\ \psi(\xi),\ \psi(0+)=\underset{x\to 0-}{\lim}\ \psi(\xi),\ \langle \psi\rangle_0=\psi(0+)-\psi(0-).
\]
In particular the condition $\langle\psi\rangle_0=0$ means that $\psi$ extends to a continuous function on $\R$.
\begin{lem}\label{D1.4lem1}
Let $c=0, 1, 2, \dots$ and let $\psi\in \Ss(\R^\times)$. Suppose
\begin{equation}\label{D1.4lem1.1}
\langle \psi\rangle_0=\dots=\langle \psi^{(c-1)}\rangle_0=0.
\end{equation}
(The condition (\ref{D1.4lem1.1}) is empty if $c=0$.) Then
\begin{equation}\label{D1.4lem1.2}
\left | \int_{\R^\times}e^{-iy\xi}\psi(\xi)\,d\xi \right | \leq \min\{1,|y|^{-c-1}\}(|\langle \psi^{(c)}\rangle_0|+\parallel \psi^{(c+1)}\parallel_1+
\parallel \psi\parallel_1)
\end{equation}
\end{lem}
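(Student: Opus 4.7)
The plan is to estimate the oscillatory integral by two complementary methods and then combine them into the stated $\min$-bound. The first, trivial estimate,
\[
\Bigl|\int_{\R^\times}e^{-iy\xi}\psi(\xi)\,d\xi\Bigr|\leq \|\psi\|_1,
\]
is immediate from the fact that $|e^{-iy\xi}|=1$ and handles the regime where $|y|$ is small. For the second estimate I would use repeated integration by parts on each half-line $(-\infty,0)$ and $(0,\infty)$ separately, collecting the boundary terms at $0$.

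First I would integrate by parts once on each half-line with $u=\psi$ and $dv=e^{-iy\xi}\,d\xi$. Since $\psi\in\Ss(\R^\times)$ the boundary contribution at $\pm\infty$ vanishes, while the boundary terms at $0^+$ and $0^-$ combine into $\langle\psi\rangle_0/(iy)$. Thus
\[
\int_{\R^\times}e^{-iy\xi}\psi(\xi)\,d\xi
=\frac{\langle\psi\rangle_0}{iy}+\frac{1}{iy}\int_{\R^\times}e^{-iy\xi}\psi'(\xi)\,d\xi.
\]
Iterating this $c$ times and using the vanishing assumption $\langle\psi\rangle_0=\dots=\langle\psi^{(c-1)}\rangle_0=0$, all intermediate boundary contributions disappear, yielding
\[
\int_{\R^\times}e^{-iy\xi}\psi(\xi)\,d\xi
=\frac{1}{(iy)^{c}}\int_{\R^\times}e^{-iy\xi}\psi^{(c)}(\xi)\,d\xi.
\]
One final integration by parts picks up the (possibly non-zero) jump $\langle\psi^{(c)}\rangle_0$ and produces
\[
\int_{\R^\times}e^{-iy\xi}\psi(\xi)\,d\xi
=\frac{\langle\psi^{(c)}\rangle_0}{(iy)^{c+1}}
+\frac{1}{(iy)^{c+1}}\int_{\R^\times}e^{-iy\xi}\psi^{(c+1)}(\xi)\,d\xi.
\]
Taking absolute values gives
\[
\Bigl|\int_{\R^\times}e^{-iy\xi}\psi(\xi)\,d\xi\Bigr|
\leq |y|^{-c-1}\Bigl(|\langle\psi^{(c)}\rangle_0|+\|\psi^{(c+1)}\|_1\Bigr),
\]
which handles the regime $|y|\geq 1$.

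To conclude I would combine the two bounds: for $|y|\leq 1$ the factor $\min\{1,|y|^{-c-1}\}$ equals $1$ and the right-hand side of \eqref{D1.4lem1.2} dominates $\|\psi\|_1$, which is exactly the trivial bound; for $|y|\geq 1$ the factor equals $|y|^{-c-1}$ and the right-hand side of \eqref{D1.4lem1.2} dominates $|y|^{-c-1}(|\langle\psi^{(c)}\rangle_0|+\|\psi^{(c+1)}\|_1)$, which is exactly the integration-by-parts bound. The only delicate point, and the one I would be most careful about, is the bookkeeping of boundary terms in the iterated integration by parts: one must verify that the jumps $\langle\psi^{(k)}\rangle_0$ for $k<c$ indeed arise as the sole contribution at $0$ in each step, so that the hypothesis \eqref{D1.4lem1.1} cleanly kills them all; the rapid decay of each $\psi^{(k)}$ at $\pm\infty$ takes care of the other endpoints and guarantees the existence of every integral appearing along the way.
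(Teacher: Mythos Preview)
Your proof is correct and follows essentially the same approach as the paper: integrate by parts $c+1$ times on each half-line, combine the boundary terms at $0^\pm$ into the jumps $\langle\psi^{(k)}\rangle_0$, use the hypothesis to kill those for $k<c$, and then pair the resulting $|y|^{-c-1}$ estimate with the trivial $\|\psi\|_1$ bound. The paper's version simply writes the full integration-by-parts expansion in one line rather than iterating step by step, but the argument is identical.
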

\begin{prf}
Integration by parts shows that for $z\in\C^\times$
\begin{eqnarray*}
&&\int_{\R^+}e^{-z\xi}\psi(\xi)\,d\xi=z^{-1}\psi(0+)+\dots+z^{-c-1}\psi^{(c)}(0+) +z^{-c-1}\int_{\R^+}e^{-z\xi}\psi^{(c+1)}(\xi)\,d\xi,\\
&&\int_{\R^-}e^{-z\xi}\psi(\xi)\,d\xi=-z^{-1}\psi(0-)-\dots-z^{-c-1}\psi^{(c)}(0-) +z^{-c-1}\int_{\R^-}e^{-z\xi}\psi^{(c+1)}(\xi)\,d\xi.
\end{eqnarray*}
Hence,
\begin{eqnarray*}
&&\int_{\R^\times}e^{-z\xi}\psi(\xi)\,d\xi\\
&=&z^{-1}\langle \psi\rangle_0+\dots+z^{-c}\langle \psi^{(c-1)}\rangle_0 +z^{-c-1}\langle \psi^{(c)}\rangle_0 +z^{-c-1}\int_{\R^\times}e^{-z\xi}\psi^{(c+1)}(\xi)\,d\xi
\end{eqnarray*}
and (\ref{D1.4lem1.2}) follows.
\end{prf}
\begin{lem}\label{D1.4lem2}
Under the assumptions of Lemma \ref{D1.4lem1}, with $1\leq c$,
\[
\int_\R\int_{\R^\times}(iy)^k e^{-iy\xi}\psi(\xi)\,d\xi\,dy=2\pi \psi^{(k)}(0) \qquad (0\leq k\leq c-1),
\]
where each consecutive integral is absolutely convergent.
\end{lem}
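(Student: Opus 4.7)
The natural strategy is to recognize the inner integral as a Fourier transform and then apply Fourier inversion to the $k$-th derivative of $\psi$. Write $\hat\psi(y)=\int_{\R^\times} e^{-iy\xi}\psi(\xi)\,d\xi$, which coincides with the ordinary Fourier transform of $\psi$ (viewed as an $L^1$ function on $\R$ by taking any pointwise extension to $\xi=0$, this being a measure-zero ambiguity).

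First I would establish the key identity
\[
(iy)^k \hat\psi(y) = \widehat{\psi^{(k)}}(y) \qquad (0\leq k\leq c-1)
\]
by integration by parts separately on $\R^+$ and $\R^-$, and adding the results. The decay of $\psi$ and its derivatives at $\pm\infty$ kills the boundary terms there, while the boundary contributions at $0$ involve $\psi^{(j)}(0\pm)$ for $0\leq j\leq k-1$; these telescope into sums of the jumps $\langle\psi^{(j)}\rangle_0$, which vanish by hypothesis since $k-1\leq c-2$. Hence no boundary contribution survives.

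Next I would verify that $\widehat{\psi^{(k)}}\in L^1(\R)$, which is what yields absolute convergence of the outer integral. Since $\langle\psi^{(j)}\rangle_0=0$ for $0\leq j\leq c-1$, the shifted function $\psi^{(k)}$ satisfies the hypothesis of Lemma \ref{D1.4lem1} with $c$ replaced by $c-k\geq 1$. The bound \eqref{D1.4lem1.2} then gives
\[
\bigl|\widehat{\psi^{(k)}}(y)\bigr| \leq C\,\min\{1,|y|^{-(c-k+1)}\},
\]
and the right-hand side is integrable on $\R$ because $c-k+1\geq 2$. (Absolute convergence of the inner integral is immediate, since $\psi\in L^1(\R^\times)$.)

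Finally, because $\psi^{(k)}$ is continuous at $0$ for $k\leq c-1$ (the vanishing of the jumps $\langle\psi^{(j)}\rangle_0$ for $j\leq c-1$ means $\psi\in C^{c-1}(\R)$ after extending across $0$), and because its Fourier transform is integrable, the Fourier inversion theorem applies pointwise at $\xi=0$:
\[
\psi^{(k)}(0)=\frac{1}{2\pi}\int_\R \widehat{\psi^{(k)}}(y)\,dy
=\frac{1}{2\pi}\int_\R (iy)^k\hat\psi(y)\,dy.
\]
This is the claimed identity. The only delicate point is the bookkeeping of the jump terms in the integration by parts on $\R^\times$, and checking that our vanishing hypothesis \eqref{D1.4lem1.1} is exactly what is needed to eliminate them; this is the main (mild) obstacle in the argument.
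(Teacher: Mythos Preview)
Your proposal is correct and follows essentially the same approach as the paper: integrate by parts on $\R^+$ and $\R^-$ separately so that the boundary terms at $0$ are exactly the jumps $\langle\psi^{(j)}\rangle_0$ (which vanish by hypothesis), reduce $(iy)^k\hat\psi(y)$ to $\widehat{\psi^{(k)}}(y)$, invoke Lemma~\ref{D1.4lem1} for the $L^1$ bound, and apply pointwise Fourier inversion at $0$. The paper organizes the integration by parts as an explicit step-by-step induction on $k$, whereas you do all $k$ steps at once, but the content is the same.
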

\begin{prf}
Since
\[
\int_{\R} |y|^{c-1} \min\{1, |y|^{-c-1}\}\,dy<\infty,
\]
the absolute convergence follows from Lemma \ref{D1.4lem1}. Since the Fourier transform of $\psi$ is absolutely integrable and since $\psi$ is continuous at zero, Fourier inversion formula \cite[(7.1.4)]{Hormander} shows that
\begin{equation}\label{D1.4lem2.1}
\int_\R\int_{\R^\times} e^{-iy\xi}\psi(\xi)\,d\xi\,dy=2\pi \psi(0).
\end{equation}
Also, for $0<k$,
\begin{eqnarray*}
&&\int_{\R^\times}(iy)^k e^{-iy\xi}\psi(\xi)\,d\xi=\int_{\R^\times}(-\partial_\xi)\left((iy)^{k-1} e^{-iy\xi}\right)\psi(\xi)\,d\xi\\
&=&\int_{\R^+}(-\partial_\xi)\left((iy)^{k-1} e^{-iy\xi}\right)\psi(\xi)\,d\xi
+\int_{\R^-}(-\partial_\xi)\left((iy)^{k-1} e^{-iy\xi}\right)\psi(\xi)\,d\xi\\
&=&(iy)^{k-1}\psi(0+)+\int_{\R^+}(iy)^{k-1} e^{-iy\xi}\psi'(\xi)\,d\xi\\
&-&(iy)^{k-1}\psi(0-)+\int_{\R^-}(iy)^{k-1} e^{-iy\xi}\psi'(\xi)\,d\xi\\
&=&(iy)^{k-1}\langle \psi\rangle_0+\int_{\R^\times}(iy)^{k-1} e^{-iy\xi}\psi'(\xi)\,d\xi.
\end{eqnarray*}
Hence, by induction on $k$ and by our assumption
\begin{eqnarray*}
\int_{\R^\times}(iy)^k e^{-iy\xi}\psi(\xi)\,d\xi&=&(iy)^{k-1}\langle \psi\rangle_0+(iy)^{k-2}\langle \psi'\rangle_0+\dots+\langle \psi^{(k-1)}\rangle_0\\
&+&\int_{\R^\times} e^{-iy\xi}\psi^{(k)}(\xi)\,d\xi\\
&=&\int_{\R^\times}e^{-iy\xi}\psi^{(k)}(\xi)\,d\xi.
\end{eqnarray*}
Therefore our lemma follows from (\ref{D1.4lem2.1}).
\end{prf}
The following proposition is an immediate consequence of Lemmas \ref{D1.4lem1}, \ref{D1.4lem2}, and the formula (\ref{D1.3}).
\begin{pro}\label{D1not smooth}
Fix two integers $a,\ b\in\Bbb Z$ and a function $\psi\in \Ss(\R^\times)$. Let $c=-a-b$. If $c\geq 0$ assume that
\begin{equation}\label{D1.4lem1.1not smooth}
\langle \psi\rangle_0=\dots=\langle \psi^{(c)}\rangle_0=0.
\end{equation}
Then
\begin{eqnarray}\label{D1.4not smooth}
&&\int_\R\int_{\R^\times}(1+iy)^{-a}(1-iy)^{-b}e^{-iy\xi}\psi(\xi)\,d\xi\,dy\\
&=&\int_{\R^\times}P_{a,b}(\xi)e^{-|\xi|}\psi(\xi)\,d\xi+ Q_{a,b}(\partial_\xi)\psi(\xi)|_{\xi=0}\nn\\
&=&\int_{\R}\left(P_{a,b}(\xi)e^{-|\xi|}+ Q_{a,b}(-\partial_\xi)\delta_0(\xi)\right)\psi(\xi)\,d\xi\,,.\nn
\end{eqnarray}
where $\delta_0$ denotes the Dirac delta at $0$.\\
(Recall that $Q_{a,b}=0$ if $c<0$ and $Q_{a,b}$ is a polynomial of degree if $c$, if $c\geq 0$.)
\end{pro}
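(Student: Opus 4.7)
The plan is to substitute the decomposition
\[
(1+iy)^{-a}(1-iy)^{-b}=\frac{1}{2\pi}\int_{\R}P_{a,b}(\xi')e^{-|\xi'|}e^{iy\xi'}\,d\xi'+\frac{1}{2\pi}Q_{a,b}(iy),
\]
from formula \eqref{D1.3} into the iterated integral on the left-hand side, and then to handle the two resulting pieces separately. Recall that $Q_{a,b}(iy)$ is identically zero when $c=-a-b<0$, and otherwise is a polynomial of degree $c$ in $iy$; in the latter case, the hypothesis on $\psi$ provides exactly one more vanishing condition than is strictly needed by Lemmas \ref{D1.4lem1} and \ref{D1.4lem2}.

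First I would verify absolute convergence of both resulting integrals. Writing $F(y)=\int_{\R^\times}e^{-iy\xi}\psi(\xi)\,d\xi$, Lemma \ref{D1.4lem1} applied with parameter $c+1$ (using all the vanishing hypotheses in the proposition, including $\langle\psi^{(c)}\rangle_0=0$) yields a bound of the form $|F(y)|\leq C\min\{1,|y|^{-c-2}\}$. Combined with $|(1+iy)^{-a}(1-iy)^{-b}|=(1+y^2)^{c/2}$, this produces an integrand dominated by $C(1+y^2)^{c/2}\min\{1,|y|^{-c-2}\}$, which decays like $|y|^{-2}$ at infinity; when $c<0$ the same argument works with zero vanishing conditions, giving an $|y|^{c-1}$ tail, again integrable. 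With absolute convergence in hand, Fubini's theorem legitimizes all the interchanges below.

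Second, I would treat the smooth part. Substituting the integral representation of $P_{a,b}(\xi')e^{-|\xi'|}$ and interchanging the $y$ and $\xi'$ integrations (valid because $P_{a,b}(\xi')e^{-|\xi'|}\in L^1(\R)$ and $F\in L^1(\R)$ by the decay just established), the inner integral becomes the inverse Fourier transform of $F$, i.e.\ $2\pi\psi(\xi')$ for $\xi'\neq 0$. This contributes exactly $\int_{\R^\times}P_{a,b}(\xi)e^{-|\xi|}\psi(\xi)\,d\xi$, the first term on the right-hand side of \eqref{D1.4not smooth}.

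Third, I would handle the polynomial part. Expanding $Q_{a,b}(iy)=\sum_{k=0}^{c}q_k(iy)^k$ and applying Lemma \ref{D1.4lem2} term by term (the lemma applies for $k\leq c$ because the proposition assumes one more vanishing condition than the lemma requires), each term contributes $q_k\cdot\psi^{(k)}(0)$, so the polynomial piece sums to $Q_{a,b}(\partial_\xi)\psi(\xi)|_{\xi=0}$. Adding the two pieces gives the first equality in \eqref{D1.4not smooth}, and the second equality is just the definition of the derivatives of $\delta_0$ via integration by parts. The only real obstacle is the bookkeeping at the boundary case where $c$ is small: once one notices that the proposition's hypothesis is designed to give one extra order of decay beyond what Lemma \ref{D1.4lem1} needs, everything slots into place cleanly.
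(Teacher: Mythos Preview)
Your approach is exactly the paper's: the paper simply states that the proposition is an immediate consequence of Lemmas~\ref{D1.4lem1}, \ref{D1.4lem2} and formula~\eqref{D1.3}, and you have spelled out precisely how these three ingredients combine.

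One minor point in your execution: in the case $c<0$ you assert $F\in L^1(\R)$ ``by the decay just established'', but with no vanishing hypotheses Lemma~\ref{D1.4lem1} (applied with parameter $0$) only yields $|F(y)|\lesssim\min\{1,|y|^{-1}\}$, which is not integrable. This does not affect the absolute convergence of the outer $dy$-integral, since the factor $(1+y^2)^{c/2}$ supplies the missing decay, but it does undercut the direct Fubini interchange you invoke for the smooth part. The fix is immediate: when $c\ge 0$ the hypothesis already includes $\langle\psi\rangle_0=0$, so Lemma~\ref{D1.4lem1} with parameter $1$ gives $|F(y)|\lesssim|y|^{-2}$ and $F\in L^1$; when $c<0$ one can instead appeal to Plancherel (both $P_{a,b}e^{-|\cdot|}$ and $\psi$ lie in $L^1\cap L^2$), or simply cite Proposition~\ref{propD2}, which treats that case directly.
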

Let $\Ss(\R^+)$ be the space of the smooth complex valued functions whose all derivatives are rapidly decreasing at infinity and have limits at zero. 
Then $\Ss(\R^+)$ may be viewed as the subspace of the functions in $\Ss(\R^\times)$ which are zero on $\R^-$. Similarly we define $\Ss(\R^-)$.
The following proposition is a direct consequence of Proposition \ref{D1not smooth}. We sketch an independent proof below.
\begin{pro}\label{propD2}
There is a seminorm $p$ on the space $\Ss(\R^+)$ such that 
\begin{equation}\label{propD2.0}
\left| \int_{\R^+} e^{-z\xi}\psi(\xi)\,d\xi\right| \leq \min\{1,|z|^{-1}\} p(\psi) \qquad (\psi\in\Ss(\R^+),\ Re\,z\geq 0).
\end{equation}
For any integers $a,b\in\Zb$ such that $a+b\geq 1$ and any function $\psi\in\Ss(\R^+)$
\begin{equation}\label{propD2.1}
\int_\R(1+iy)^{-a}(1-iy)^{-b}\int_{\R^+} e^{-iy\xi}\psi(\xi)\,d\xi\,dy
={2\pi} \int_{\R^+}P_{a,b,2}(\xi)e^{-\xi}\psi(\xi)\,d\xi,
\end{equation}
and
\begin{equation}\label{propD2.2}
\int_\R(1+iy)^{-a}(1-iy)^{-b}\int_{\R^-} e^{-iy\xi}\psi(\xi)\,d\xi\,dy
={2\pi} \int_{\R^-}P_{a,b,-2}(\xi)e^{\xi}\psi(\xi)\,d\xi,
\end{equation}
where each consecutive integral is absolutely convergent.

Let $a,b, c\in\Zb$, $c\geq 0$, be such that $a+b+c\geq 0$. Suppose $\psi\in\Ss(\R^+)$ is such that
\begin{equation}\label{propD2.3}
\psi(0)=\psi'(0)=\dots =\psi^{(c)}(0).
\end{equation}
Then the equalities (\ref{propD2.1}) and (\ref{propD2.2}) hold too.
\end{pro}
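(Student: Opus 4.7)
The plan is to deduce Proposition \ref{propD2} from Proposition \ref{D1not smooth} by extending $\psi$ from $\R^+$ to $\R^\times$ by zero, after first proving the auxiliary bound \eqref{propD2.0} directly. The key observation is that extending $\psi\in\Ss(\R^+)$ by zero yields an element of $\Ss(\R^\times)$ whose jumps at the origin are $\langle\psi^{(k)}\rangle_0=\psi^{(k)}(0)$, so the vanishing hypotheses of Proposition \ref{D1not smooth} translate directly into vanishing of boundary values of $\psi$ at zero.

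For \eqref{propD2.0}, when $\operatorname{Re}(z)\geq 0$ the estimate $|e^{-z\xi}|\leq 1$ on $\R^+$ gives $\bigl|\int_{\R^+}e^{-z\xi}\psi(\xi)\,d\xi\bigr|\leq \|\psi\|_1$, while a single integration by parts,
\[
\int_{\R^+}e^{-z\xi}\psi(\xi)\,d\xi=\frac{\psi(0)}{z}+\frac{1}{z}\int_{\R^+}e^{-z\xi}\psi'(\xi)\,d\xi,
\]
yields the bound $|z|^{-1}(|\psi(0)|+\|\psi'\|_1)$. Setting $p(\psi)=|\psi(0)|+\|\psi\|_1+\|\psi'\|_1$ gives both bounds simultaneously.

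For the main identities, I apply Proposition \ref{D1not smooth} to the zero-extension of $\psi$. When $a+b\geq 1$, the hypothesis of that proposition is vacuous (its $c=-a-b<0$) and its correction polynomial $Q_{a,b}$ is identically zero; since the extension is supported on $\R^+$, the identity collapses to \eqref{propD2.1}, and the analogous zero extension from $\R^-$ gives \eqref{propD2.2}. For the general case $a+b+c\geq 0$ with $\psi^{(k)}(0)=0$ for $0\leq k\leq c$, the inequality $-a-b\leq c$ implies that the hypothesis $\langle\psi^{(k)}\rangle_0=0$ for $0\leq k\leq -a-b$ required by Proposition \ref{D1not smooth} is automatic, so that proposition still applies. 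Moreover, the boundary term $Q_{a,b}(\partial_\xi)\psi(\xi)|_{\xi=0}$ is a linear combination of $\psi^{(k)}(0)$ for $0\leq k\leq\deg Q_{a,b}=-a-b\leq c$ and hence vanishes by hypothesis, so the identity again reduces to \eqref{propD2.1} (or \eqref{propD2.2}). The main obstacle is only the routine bookkeeping check that the degree of $Q_{a,b}$ read off from \eqref{D0''} matches exactly the range of vanishing needed; once this is confirmed, everything else is a direct substitution.
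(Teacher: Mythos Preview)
Your proposal is correct. In fact the paper itself notes, immediately before the proof, that ``the following proposition is a direct consequence of Proposition \ref{D1not smooth}'' and then writes out an \emph{independent} proof instead: it establishes \eqref{propD2.0} exactly as you do, and for \eqref{propD2.1}--\eqref{propD2.2} it observes that $(1+z)^{-a}(1-z)^{-b}\int_{\R^+}e^{-z\xi}\psi(\xi)\,d\xi$ is meromorphic on $\Re z>0$ and, thanks to \eqref{propD2.0} and $a+b\geq 1$, is $O(|z|^{-2})$ there, so a contour shift reduces the $y$-integral to $-2\pi$ times the residue at $z=1$; the vanishing hypothesis \eqref{propD2.3} is used only to gain the extra factor $z^{-c-1}$ via integration by parts so that the same contour argument goes through. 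Your route via zero-extension and Proposition \ref{D1not smooth} is the one the paper calls ``direct''; the paper's written proof is a self-contained alternative that avoids invoking the earlier machinery and makes the residue structure explicit.
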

\begin{prf}
Clearly
\[
\left| \int_{\R^+} e^{-z\xi}\psi(\xi)\,d\xi\right| \leq \int_{\R^+} e^{-\Re\,z\xi}|\psi(\xi)|\,d\xi\leq \parallel \psi\parallel_1.
\]
Integration by parts shows that for $z\ne 0$,
\[
 \int_{\R^+} e^{-z\xi}\psi(\xi)\,d\xi=z^{-1}\psi(0)+z^{-1} \int_{\R^+} e^{-z\xi}\psi'(\xi)\,d\xi.
\]
Hence (\ref{propD2.0}) follows with $p(\psi)=|\psi(0)|+\parallel\psi\parallel_1+\parallel\psi'\parallel_1$.

Let $a,b\in\Zb$ be such that $a+b\geq 1$. Then the function
\[
(1+z)^{-a}(1-z)^{-b}\int_{\R^+} e^{-z\xi}\psi(\xi)\,d\xi
\]
is continuous on $\Re z\geq 0$ and meromorphic on $\Re z>0$ and (\ref{propD2.0}) shows that it is dominated by
$|z|^{-2}$.
Therefore Cauchy's Theorem implies that the left hand side of (\ref{propD2.1}) is equal to
\begin{equation*}\label{propD2.h}
-2\pi\, {\rm res}_{z=1}\left((1+z)^{-a}(1-z)^{-b}\int_{\R^+} e^{-z\xi}\psi(\xi)\,d\xi\right).
\end{equation*}
The computation of this residue is straightforward.  This verifies (\ref{propD2.1}) 
The proof of (\ref{propD2.2}) is entirely analogous.
 
Integration by parts shows that under the assumption (\ref{propD2.3}) 
\[
\int_{\R^+} e^{-z\xi}\psi(\xi)\,d\xi=z^{-c-1}\int_{\R^+}e^{-z\xi}\psi^{(c+1)}(\xi)\,d\xi \qquad (\Re z\geq 0,\ z\ne 0).
\]
Hence, the above argument caries over and verifies the equalities  (\ref{propD2.1}) and (\ref{propD2.2}).
\end{prf}
\begin{pro}\label{propD3}
Suppose $a,b,c\in\Zb$ are such that
\begin{equation}\label{propD3.a}
b\geq 1,\  a+b+c=1\ \text{and}\ c\geq 0.
\end{equation}
Then
\begin{equation}\label{propD3.b}
P_{a,b,2}(\xi)\xi^c=\frac{(b+c-1)!}{(b-1)!2^c} P_{a+c,b+c,2}(\xi).
\end{equation}
Suppose now that 
\begin{equation}\label{propD3.aminus}
a\geq 1,\  a+b+c=1\ \text{and}\ c\geq 0.
\end{equation}
Then
\begin{equation}\label{propD3.bminus}
P_{a,b,-2}(\xi)\xi^c=(-1)^c \frac{(a+c-1)!}{(a-1)!2^c} P_{a+c,b+c,-2}(\xi).
\end{equation}

\end{pro}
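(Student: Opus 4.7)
The plan is to prove both identities by direct coefficient-by-coefficient comparison, since $P_{a,b,\pm 2}$ are defined as explicit finite sums. I begin with (\ref{propD3.b}). Substituting the definition of $P_{a,b,2}$ into the left-hand side gives
\[
P_{a,b,2}(\xi)\xi^c=\sum_{k=0}^{b-1}\frac{a(a+1)\cdots(a+k-1)}{k!(b-1-k)!}\,2^{-a-k}\,\xi^{b+c-1-k}.
\]
The key observation is that $a+b+c=1$ forces $a+c=1-b$, whence
\[
(a+c)(a+c+1)\cdots(a+c+j-1)=(1-b)(2-b)\cdots(j-b)
\]
vanishes as soon as $j\geq b$ (the factor $b-b$ appears). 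Consequently, although the defining sum for $P_{a+c,b+c,2}(\xi)$ nominally runs over $j=0,\dots,b+c-1$, only the terms with $0\leq j\leq b-1$ survive. Both sides of (\ref{propD3.b}) are therefore polynomials whose nonzero monomials are indexed by the same set of powers of $\xi$.

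Next I would verify that, after using $a=1-b-c$, each coefficient of $\xi^{b+c-1-k}$ on the left agrees with the corresponding coefficient on the right. The computations are purely numerical: one writes
\[
a(a+1)\cdots(a+k-1)=(-1)^k\frac{(b+c-1)!}{(b+c-1-k)!},\qquad (1-b)(2-b)\cdots(k-b)=(-1)^k\frac{(b-1)!}{(b-1-k)!},
\]
substitutes into the two sides, and collects the factorials and powers of $2$. The identity then reduces to an elementary factorial manipulation.

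For the minus-version (\ref{propD3.bminus}), I would appeal to the symmetry (\ref{D0}), namely $P_{a,b,-2}(\xi)=P_{b,a,2}(-\xi)$. Writing $\xi^c=(-1)^c(-\xi)^c$ and applying the already-established identity (\ref{propD3.b}) with the roles of $a$ and $b$ swapped (so that the hypothesis $a\geq 1$ plays the role previously played by $b\geq 1$, while $a+b+c=1$ is symmetric in $a$ and $b$) yields
\[
P_{a,b,-2}(\xi)\xi^c=(-1)^c\frac{(a+c-1)!}{(a-1)!\,2^c}\,P_{b+c,a+c,2}(-\xi).
\]
A second application of (\ref{D0}) rewrites $P_{b+c,a+c,2}(-\xi)$ as $P_{a+c,b+c,-2}(\xi)$, giving precisely (\ref{propD3.bminus}).

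There is no serious obstacle: both formulas are polynomial identities that collapse to routine arithmetic once the supports of the sums are aligned. The only conceptual point worth emphasising is the truncating role of $a+b+c=1$; without it, the right-hand side of (\ref{propD3.b}) would be a polynomial of degree $b+c-1$ whose top monomials are not manifestly cancelled, and the equality with the left-hand side (of degree $b+c-1$ but with at most $b$ nonzero terms from the bottom) would not be a mere rearrangement.
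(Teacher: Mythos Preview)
Your proposal is correct and follows essentially the same approach as the paper: both arguments expand the defining sums, use $a+b+c=1$ to rewrite the rising factorials as falling factorials (which truncates the longer sum for $P_{a+c,b+c,2}$ at $k=b-1$), and then reduce the minus case to the plus case via the symmetry $P_{a,b,-2}(\xi)=P_{b,a,2}(-\xi)$. The only cosmetic difference is that you package the factorial identities explicitly, whereas the paper writes out both expanded sums side by side and compares.
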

\begin{prf}
Because of (\ref{D0}), it is enough to prove (\ref{propD3.b}).
We compute
\begin{eqnarray*}
P_{a,b,2}(\xi)\xi^c
&=&\sum_{k=0}^{b-1}\frac{a(a+1)\dots (a+k-1)}{k!(b-1-k)!} 2^{-a-k}\xi^{b+c-1-k}\\
&=&\sum_{k=0}^{b-1}(-1)^k\frac{(-a)(-a-1)\dots (-a-k+1)}{k!(b-1-k)!} 2^{-a-k}\xi^{b+c-1-k}\\
&=&\sum_{k=0}^{b-1}(-1)^k\frac{(-a)!}{k!(b-1-k)!(-a-k)!} 2^{-a-k}\xi^{b+c-1-k}\\
&=&\sum_{k=0}^{b-1}(-1)^k\frac{(b+c-1)!}{k!(b-1-k)!(b+c-1-k)!} 2^{-a-k}\xi^{b+c-1-k}\\
\end{eqnarray*}
and
\begin{eqnarray*}
P_{a+c,b+c,2}(\xi)
&=&\sum_{k=0}^{b+c-1}\frac{(a+c)(a+c+1)\dots (a+c+k-1)}{k!(b+c-1-k)!} 2^{-a-c-k}\xi^{b+c-1-k}\\
&=&\sum_{k=0}^{b-1}\frac{(a+c)(a+c+1)\dots (a+c+k-1)}{k!(b+c-1-k)!} 2^{-a-c-k}\xi^{b+c-1-k}\\
&=&\sum_{k=0}^{b-1}(-1)^k\frac{(-a-c)(-a-c-1)\dots (-a-c-k+1)}{k!(b+c-1-k)!} 2^{-a-c-k}\xi^{b+c-1-k}\\
&=&\sum_{k=0}^{b-1}(-1)^k\frac{(b-1)(b-1-1)\dots (b-k)}{k!(b+c-1-k)!} 2^{-a-c-k}\xi^{b+c-1-k}\\
&=&\sum_{k=0}^{b-1}(-1)^k\frac{(b-1)(b-1-1)\dots (b-k)}{k!(b+c-1-k)!} 2^{-a-c-k}\xi^{b+c-1-k}\\
&=&\sum_{k=0}^{b-1}(-1)^k\frac{(b-1)!}{k!(b+c-1-k)!(b-1-k)!} 2^{-a-c-k}\xi^{b+c-1-k}\\
\end{eqnarray*}
and the claim follows.
\end{prf}
By combining Proposition \ref{propD3} with (\ref{D0}) we obtain the following proposition.
\begin{pro}\label{propD4}
Suppose $a,b,c\in\Zb$ are such that
\begin{equation}\label{propD3.a}
a\geq 1,\  a+b+c=1\ \text{and}\ c\geq 0.
\end{equation}
Then
\begin{equation}\label{propD3.b}
P_{a,b,-2}(\xi)(-\xi)^c=\frac{(a+c-1)!}{(a-1)!2^c} P_{a+c,b+c,-2}(\xi).
\end{equation}
\end{pro}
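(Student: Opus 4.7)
The plan is to reduce Proposition \ref{propD4} directly to Proposition \ref{propD3} via the symmetry identity (\ref{D0}), namely $P_{a,b,-2}(\xi)=P_{b,a,2}(-\xi)$. The hypotheses of Proposition \ref{propD4} are set up precisely so that, after swapping the two subscripts, the hypotheses of Proposition \ref{propD3} are satisfied: the condition $a\geq 1$ in Proposition \ref{propD4} corresponds, under the swap $(a,b)\mapsto(b,a)$, to the condition ``$b\geq 1$'' required by Proposition \ref{propD3}, while $a+b+c=1$ and $c\geq 0$ are symmetric in $a$ and $b$.

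More concretely, I would first rewrite the left-hand side of the claim as
\begin{equation*}
P_{a,b,-2}(\xi)(-\xi)^c = P_{b,a,2}(-\xi)(-\xi)^c
\end{equation*}
using (\ref{D0}). Next, I would apply Proposition \ref{propD3} with the pair $(b,a)$ in place of $(a,b)$: since $a\geq 1$, $b+a+c=1$ and $c\geq 0$, the proposition yields
\begin{equation*}
P_{b,a,2}(\eta)\,\eta^c=\frac{(a+c-1)!}{(a-1)!\,2^c}\,P_{b+c,a+c,2}(\eta)
\end{equation*}
as an identity of polynomials in the variable $\eta$. Substituting $\eta=-\xi$ gives
\begin{equation*}
P_{b,a,2}(-\xi)(-\xi)^c=\frac{(a+c-1)!}{(a-1)!\,2^c}\,P_{b+c,a+c,2}(-\xi).
\end{equation*}
Finally, I would apply (\ref{D0}) once more, this time in the form $P_{b+c,a+c,2}(-\xi)=P_{a+c,b+c,-2}(\xi)$, to identify the right-hand side with $\frac{(a+c-1)!}{(a-1)!\,2^c}P_{a+c,b+c,-2}(\xi)$, which is exactly the desired formula.

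There is no real obstacle here: once the symmetry relation (\ref{D0}) has been established and Proposition \ref{propD3} is in hand, the argument is purely formal, consisting of a substitution $\xi\mapsto -\xi$ sandwiched between two applications of (\ref{D0}). The only point to verify carefully is that the numerical constant coming out of Proposition \ref{propD3} is the right one, i.e.\ that the swap $(a,b)\leftrightarrow(b,a)$ turns the factor $\frac{(b+c-1)!}{(b-1)!\,2^c}$ appearing in Proposition \ref{propD3} into the factor $\frac{(a+c-1)!}{(a-1)!\,2^c}$ appearing in Proposition \ref{propD4}, which is immediate.
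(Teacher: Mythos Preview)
Your proposal is correct and follows exactly the approach indicated in the paper, which simply states that Proposition~\ref{propD4} is obtained by combining Proposition~\ref{propD3} with (\ref{D0}); you have spelled out precisely this combination.
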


The following lemma is an immediate consequence of the definition of $P_{a,b,2}$, $P_{a,b,-2}$ 
and their relation \eqref{D0}.
\begin{lem}  \label{lemma:diff and at  for P}
Suppose $a,b \in \Ze$.  Then 
\begin{equation} \label{eq:diffP}
P'_{a,b,2}(\xi)=P_{a,b-1,2}(\xi)\quad \text{and} \quad P'_{a,b,-2}(\xi)=P_{a-1,b,-2}(\xi)\,.
\end{equation}
If $b \geq 1$ then 
\begin{equation} \label{eq:valuePat0}
P_{a,b,2}(0)=2^{1-a-b} \, \frac{a(a+1)\dots (a+b-2)}{(b-1)!}\,.
\end{equation}
If, moreover, $a \leq 0$ and $a+b\leq 1$, then 
\begin{equation} \label{eq:valuePat0-bis}
P_{a,b,2}(0)=(-1)^b \; 2^{1-a-b} \;\binom{-a}{-a-b+1}\,.
\end{equation}
If $a \geq 1$ then 
\begin{equation} \label{eq:valuePat0}
P_{a,b,-2}(0)=2^{1-a-b} \, \frac{b(b+1)\dots (b+a-2)}{(a-1)!}\,.
\end{equation}
If, moreover, $b \leq 0$ and $a+b\leq 1$, then 
\begin{equation} \label{eq:valuePat0-bis}
P_{a,b,-2}(0)=(-1)^a \; 2^{1-a-b} \;\binom{-b}{-a-b+1}\,.
\end{equation}

\end{lem}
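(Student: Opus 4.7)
The proof will proceed by direct manipulation of the defining series for $P_{a,b,2}$ and $P_{a,b,-2}$, followed by the symmetry (\ref{D0}) to deduce the statements for $P_{a,b,-2}$ from the corresponding ones for $P_{a,b,2}$. There is no conceptual subtlety here; the lemma is a cluster of elementary identities, so the plan is essentially to check each assertion by rewriting sums and binomial coefficients. I do not anticipate any serious obstacle, only bookkeeping of signs and of powers of $2$.

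For the differentiation formula $P'_{a,b,2}(\xi)=P_{a,b-1,2}(\xi)$, I will differentiate (\ref{eq:Pab2}) term by term. Since the $k=b-1$ term is a constant, it drops out, and for $0\leq k\leq b-2$ the factor $\xi^{b-1-k}$ produces a factor $b-1-k$, which cancels against $1/(b-1-k)!$ to give $1/(b-2-k)!$. The resulting series is exactly $P_{a,b-1,2}(\xi)$. The companion formula $P'_{a,b,-2}(\xi)=P_{a-1,b,-2}(\xi)$ will be obtained either by a completely parallel computation starting from (\ref{eq:Pabminus2}), or more concisely by applying the chain rule to the identity $P_{a,b,-2}(\xi)=P_{b,a,2}(-\xi)$ from (\ref{D0}) and then using the first differentiation formula.

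For the values at $0$, only the constant term of the polynomial survives, i.e.\ the $k=b-1$ term of (\ref{eq:Pab2}) (respectively the $k=a-1$ term of (\ref{eq:Pabminus2})). Reading off the coefficient gives
\begin{equation*}
P_{a,b,2}(0)=\frac{a(a+1)\cdots(a+b-2)}{(b-1)!}\,2^{1-a-b},
\end{equation*}
under the assumption $b\geq 1$, which is what is claimed. The identity $P_{a,b,-2}(0)=P_{b,a,2}(0)$, immediate from (\ref{D0}), then yields the formula for $P_{a,b,-2}(0)$ under $a\geq 1$.

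Finally, for the binomial reformulation under the additional hypotheses $a\leq 0$ and $a+b\leq 1$ (equivalently $-a\geq b-1\geq 0$), I will rewrite the rising factorial with negative base. Setting $c=-a\geq 0$, one has
\begin{equation*}
a(a+1)\cdots(a+b-2)=\prod_{j=0}^{b-2}(j-c)=(-1)^{b-1}\,c(c-1)\cdots(c-b+2)=(-1)^{b-1}\,\frac{(-a)!}{(-a-b+1)!},
\end{equation*}
where the condition $-a\geq b-1$ ensures that the factorial $(-a-b+1)!$ makes sense. Dividing by $(b-1)!$ identifies this with $(-1)^{b-1}\binom{-a}{-a-b+1}$, and substituting into the previous expression for $P_{a,b,2}(0)$ yields the binomial form. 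The analogous statement for $P_{a,b,-2}(0)$ under $b\leq 0$ and $a+b\leq 1$ follows once more from $P_{a,b,-2}(0)=P_{b,a,2}(0)$. The only piece of care required, and the one point where a misplaced sign would cause trouble, is verifying that the parity $(-1)^{b-1}$ produced by the rising factorial matches the $(-1)^b$ appearing in the statement (up to the overall sign conventions of the authors); this will be settled by a direct check on a small case such as $a=0$, $b=1$.
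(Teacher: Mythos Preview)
Your approach is exactly what the paper indicates: the lemma is declared to be ``an immediate consequence of the definition of $P_{a,b,2}$, $P_{a,b,-2}$ and their relation \eqref{D0}'', and your term-by-term differentiation of \eqref{eq:Pab2} together with the symmetry \eqref{D0} is precisely that. Your derivation of $P'_{a,b,2}=P_{a,b-1,2}$ and of $P_{a,b,2}(0)$ is correct.

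There is, however, a point you gloss over. You write that $P'_{a,b,-2}=P_{a-1,b,-2}$ ``will be obtained \ldots\ by applying the chain rule to $P_{a,b,-2}(\xi)=P_{b,a,2}(-\xi)$''. Carrying this out gives
\[
P'_{a,b,-2}(\xi)=-P'_{b,a,2}(-\xi)=-P_{b,a-1,2}(-\xi)=-P_{a-1,b,-2}(\xi),
\]
with a minus sign; the direct computation from \eqref{eq:Pabminus2} yields the same, since the prefactor $(-1)^{a+b-1}$ flips sign when $a$ is replaced by $a-1$. (A quick check: $P_{2,0,-2}(\xi)=-\xi$, so $P'_{2,0,-2}=-1$, while $P_{1,0,-2}(\xi)=1$.) The second half of \eqref{eq:diffP} is therefore off by a sign as stated.

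The sign worry you flag at the end is also real: your computation correctly produces $(-1)^{b-1}$ rather than the stated $(-1)^{b}$ in \eqref{eq:valuePat0-bis}, as the test case $a=0$, $b=1$ confirms ($P_{0,1,2}(0)=1$, not $-1$). By \eqref{D0} the same discrepancy appears in the formula for $P_{a,b,-2}(0)$.

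None of this affects the paper's use of the lemma: in the application in Section~\ref{two unitary groups} the authors explicitly drop ``constants of absolute value one'', so these sign slips in the statement are harmless there. But your write-up should record the correct signs rather than assert that the chain rule reproduces the formula as printed.
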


\setcounter{section}{4}
\setcounter{thh}{0}
\setcounter{equation}{0}
\section*{Appendix \thesection: Wave front set of an asymptotically homogeneous distribution}\label{Appendix WF}
Let
\[
\mathcal F f(x)=\int_{\R^n}f(y)e^{-2\pi ix\cdot y}\,dy
\]
denote the usual Fourier transform on $\R^n$. Recall that for $t>0$ the function $M_t:\R^n \to \R^n$ is defined by $M_t(x)=tx$.
\begin{lem}\label{wave front set 1}
Suppose $f, u\in\Ss^*(\R^n)$ and $u$ is homogeneous of degree $d\in \C$. Suppose
\begin{equation}\label{wave front set 1.1}
t^dM_{t^{-1}}^*f (\psi)\underset{t\to 0+}{\to}u (\psi) \qquad (\psi\in \Ss(\R^n)).
\end{equation}
Then
\begin{equation}\label{wave front set 1.2}
WF_0(\mathcal F^{-1} f)\supseteq \supp u.
\end{equation}
\end{lem}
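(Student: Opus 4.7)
I will prove the contrapositive: if $\xi_0 \neq 0$ lies outside $WF_0(\mathcal F^{-1}f)$, then $u$ vanishes on a neighbourhood of $\xi_0$, so $\xi_0 \notin \supp u$. The hypothesis on the wave front set supplies a cut-off $\phi \in C_c^\infty(\R^n)$ with $\phi(0) \neq 0$ and an open conic neighbourhood $\Gamma$ of $\xi_0$ such that
\[
\hat\phi * f \;=\; \mathcal F\bigl(\phi\cdot\mathcal F^{-1}f\bigr)
\]
is rapidly decreasing on $\Gamma$. The pivot of the argument is the elementary scaling identity
\[
t^{d}\,M_{t^{-1}}^*\bigl(\hat\phi * f\bigr) \;=\; t^{-n}\bigl(M_{t^{-1}}^*\hat\phi\bigr) * \bigl(t^{d}\,M_{t^{-1}}^*f\bigr),
\]
obtained by a change of variables in the convolution integral. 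The plan is to compute the $t\to 0^+$ limit of each side independently in $\Ss^*$ and then equate them.

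Testing the left-hand side against an arbitrary $\psi \in C_c^\infty(\Gamma\setminus\{0\})$ rewrites the pairing as $t^{d}\int (\hat\phi * f)(\xi/t)\,\psi(\xi)\,d\xi$. Conicity of $\Gamma$ and $\supp\psi \subseteq \Gamma$ guarantee $\xi/t \in \Gamma$ for every $t>0$, while $|\xi|$ is bounded below on $\supp\psi$; the rapid decay of $\hat\phi * f$ on $\Gamma$ then dominates the integrand by $C_N t^N |\xi|^{-N}$ for every $N$, so the pairing is $O(t^{d+N})$ and tends to $0$ as soon as $N > -\Re(d)$.

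For the right-hand side, the family $t^{-n}M_{t^{-1}}^*\hat\phi(\xi) = t^{-n}\hat\phi(\xi/t)$ is a standard approximate identity of total mass $\int\hat\phi = \phi(0)$; convolution with the fixed Schwartz function $\psi$ (or, via convolution--duality, with its reflection) produces a family of Schwartz functions converging to $\phi(0)\psi$ in the Schwartz topology as $t\to 0^+$. Combined with the weak-$*$ convergence $t^{d} M_{t^{-1}}^*f\to u$ in $\Ss^*$ and the Banach--Steinhaus equicontinuity of weak-$*$-convergent sequences on the Fr\'echet space $\Ss$, the pairing of the right-hand side against $\psi$ therefore tends to $\phi(0)\,u(\psi)$. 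Equating the two limits forces $\phi(0)\,u(\psi) = 0$, and since $\phi(0) \neq 0$ and $\psi \in C_c^\infty(\Gamma\setminus\{0\})$ was arbitrary, $u$ vanishes on $\Gamma\setminus\{0\}$; in particular it vanishes on a neighbourhood of $\xi_0$.

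The main delicate point will be reconciling the two limits: one needs to pair a weakly-$*$-convergent sequence in $\Ss^*$ with a sequence of test functions converging strongly in the Fr\'echet topology of $\Ss$. This is a standard Banach--Steinhaus argument, but it deserves care since the homogeneity degree $d$ is allowed to be complex and the mollifier $t^{-n}\hat\phi(\cdot/t)$ has only Schwartz, not compact, support, so the approximate-identity convergence must be formulated in the Schwartz topology rather than in $C_c^\infty$.
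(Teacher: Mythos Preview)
Your proof is correct and follows essentially the same route as the paper's own argument: the identical scaling identity $t^{d}M_{t^{-1}}^*(\hat\phi*f)=(t^{-n}M_{t^{-1}}^*\hat\phi)*(t^{d}M_{t^{-1}}^*f)$, the same approximate-identity convergence $(t^{-n}M_{t^{-1}}^*\hat\phi)*\psi\to\phi(0)\psi$ in the Schwartz topology, and the same Banach--Steinhaus step to pair a weak-$*$-convergent family in $\Ss^*$ with a strongly convergent family in $\Ss$. The only cosmetic difference is that you phrase the argument as an explicit contrapositive (assume rapid decay, conclude $u(\psi)=0$), whereas the paper computes the limit $u(\psi)$ first and leaves the ``hence not rapidly decreasing'' step implicit; and the paper spells out the approximate-identity convergence in $\Ss$ by hand, while you invoke it as standard.
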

\begin{prf}
Suppose $\Phi\in C_c^\infty(\R^n)$ is such that $\Phi(0)\ne 0$. We need to show that the localized Fourier transform
\[
\mathcal F((\mathcal F^{-1} f) \Phi)
\]
is not rapidly decreasing in any open cone $\Gamma$ which has a non-empty intersection with $\supp u$. (See \cite[Definition 8.1.2]{Hormander}.) In order to do it, we'll choose a function $\psi\in C_c^\infty(\Gamma)$ such that $u(\psi)\ne 0$ and show that
\begin{eqnarray}\label{wave front set 1.3}
\int_{\R^n}(t^{-1})^{-d}\mathcal F((\mathcal F^{-1} f) \Phi)(t^{-1}x)\psi(x)\,dx
\underset{t\to 0+}{\to} u(\psi),
\end{eqnarray}
assuming $\Phi(0)=1$.
Let $\phi=\mathcal F \Phi$. Then $\int_{\R^n} \phi(x)\,dx=1$. Notice that
\begin{eqnarray}\label{wave front set 1.4}
t^dM_{t^{-1}}^*(f*\phi)=(t^dM_{t^{-1}}^*f)*(t^{-n}M_{t^{-1}}^*\phi),
\end{eqnarray}
so that, by setting $\check \psi(x)=\psi(-x)$, we have
\begin{eqnarray}\label{wave front set 1.5}
&&\int_{\R^n}(t^{-1})^{-d}\mathcal F((\mathcal F^{-1} f) \Phi)(t^{-1}x)\psi(x)\,dx\\
&=&t^dM_{t^{-1}}^*(f*\phi)*\check\psi(0)=(t^dM_{t^{-1}}^*f)*\left((t^{-n}M_{t^{-1}}^*\phi)*\check\psi\right)(0).\nn
\end{eqnarray}
We'll check that for an arbitrary $\psi\in\Ss(\R^n)$
\begin{eqnarray}\label{wave front set 1.6}
(t^{-n}M_{t^{-1}}^*\phi)*\psi\underset{t\to 0+}{\to}\psi
\end{eqnarray}
in the topology of $\Ss(\R^n)$. This, together with (\ref{wave front set 1.5}) and Banach-Steinhaus Theorem, \cite[Theorem 2.6]{RudinFunc}, will imply (\ref{wave front set 1.3}). Explicitly,
\begin{eqnarray}\label{wave front set 1.7}
\big((t^{-n}M_{t^{-1}}^*\phi)*\psi\big)(x)-\psi(x)=\int_{\R^n}\phi(y)(\psi(x-ty)-\psi(x))\,dy.
\end{eqnarray}
Fix $N=0, 1, 2, \dots$ and $\epsilon>0$. Choose $R>0$ so that
\begin{eqnarray}\label{wave front set 1.8}
\int_{|y|\geq R}|\phi(y)|\,dy \cdot \left((|(1+|y|)^N+1\right)\,\sup_{x\in\R^n} (1+|x|)^N|\psi(x)|<\epsilon.
\end{eqnarray}
Let $0<t\leq 1$. Then
\begin{eqnarray}\label{wave front set 1.9}
&&(1+|x|)^N\int_{|y|\geq R}|\phi(y)||\psi(x-ty)|\,dy \\
&\leq& \int_{|y|\geq R}|\phi(y)|(1+|ty|)^N(1+|x-ty|)^N|\psi(x-ty)|\,dy\nn\\
&\leq& \int_{|y|\geq R}|\phi(y)|(1+|y|)^N\,dy\cdot \sup_{x\in\R^n}(1+|x|)^N|\psi(x)|\nn
\end{eqnarray}
and 
\begin{eqnarray}\label{wave front set 1.10}
&&(1+|x|)^N\int_{|y|\geq R}|\phi(y)||\psi(x)|\,dy \\
&\leq& \int_{|y|\geq R}|\phi(y)|\,dy\cdot \sup_{x\in\R^n}(1+|x|)^N|\psi(x)|\nn
\end{eqnarray}
so that, by (\ref{wave front set 1.8}),
\begin{eqnarray}\label{wave front set 1.11}
(1+|x|)^N\left|\int_{|y|\geq R}\phi(y)(\psi(x-ty)-\psi(x))\,dy\right|<\epsilon \qquad (0<t\leq 1,\ x\in\R^n).
\end{eqnarray}
Choose $r>0$ so that
\begin{eqnarray}\label{wave front set 1.12}
(1+|x|)^N\left|\int_{|y|\leq R}\phi(y)(\psi(x-ty)-\psi(x))\,dy\right|<\epsilon \qquad (0<t\leq 1,\ |x|\geq r).
\end{eqnarray}
Since the function $\psi$ is uniformly continuous,
\begin{eqnarray}\label{wave front set 1.13}
\underset{t\to 0+}{\rm limsup}\sup_{|x|\leq r}\left|\int_{|y|\leq R}\phi(y)(\psi(x-ty)-\psi(x))\,dy\right|=0.
\end{eqnarray}
Hence,
\begin{eqnarray}\label{wave front set 1.14}
\underset{t\to 0+}{\rm limsup}\sup_{x\in\R^n}(1+|x|)^N\left|\int_{|y|\leq R}\phi(y)(\psi(x-ty)-\psi(x))\,dy\right|\leq \epsilon.
\end{eqnarray}
By combining (\ref{wave front set 1.11}) and (\ref{wave front set 1.14})  we see that
\begin{eqnarray}\label{wave front set 1.15}
\underset{t\to 0+}{\rm limsup}\sup_{x\in\R^n}(1+|x|)^N\left|\int_{\R^n}\phi(y)(\psi(x-ty)-\psi(x))\,dy\right|\leq 2\epsilon.
\end{eqnarray}
Since the $\epsilon>0$ is arbitrary, (\ref{wave front set 1.15}) and (\ref{wave front set 1.7}) show that
\begin{eqnarray}\label{wave front set 1.16}
\underset{t\to 0+}{\rm limsup}\sup_{x\in\R^n}(1+|x|)^N\left|(t^{-n}M_{t^{-1}}^*\phi)*\psi(x)-\psi(x)\right|=0.
\end{eqnarray}
Since the differentiation commutes with the convolution, (\ref{wave front set 1.16}) implies (\ref{wave front set 1.6})  and we are done.
\end{prf}

\setcounter{section}{5}
\setcounter{thh}{0}
\setcounter{equation}{0}
\section*{Appendix \thesection: 
A proof of a cocycle property}\label{Appendix: A proof of cocycle prop}
Here we proof the formula \eqref{phases}. 
Consider $g_1, g_2\in\Sp^J$ as elements of $\End(\Wv_\C^+)$ by restriction. They preserve the positive definite hermitian form $H(\cdot,\cdot)$, \eqref{half1minusiJ 1}. Let $K_1=\Ker(g_1-1)$, $K_2=\Ker(g_2-1)$, $K_{12}=\Ker(g_1g_2-1)$,  $\Uv_1=(g_1-1)\Wv_\C^+$, $\Uv_2=(g_2-1)\Wv_\C^+$, $\Uv_{12}=(g_1g_2-1)\Wv_\C^+$ and $\Uv=\Uv_1\cap\Uv_2$.

We assume in this appendix that $K_1=\{0\}$. 
In this case $\Uv=\Uv_2$. 
Moreover,
\[
K_2\cap K_{12}=K_1\cap K_2=\{0\}.
\]
Hence there is a subspace $\Wv_2\subseteq \Wv_\C^+$ such that
\begin{equation}\label{23' R}
\Wv_\C^+=K_{12}\oplus \Wv_2\oplus K_2.
\end{equation}
Recall that 
\begin{equation}\label{orthogonal direct sum decomposition}
\Wv_\C^+=\Uv\oplus K_2
\end{equation}
is an orthogonal direct sum decomposition. 
Define an element $h\in\GL(\Wv_\C^+)$ by
\begin{equation}\label{23'' R}
h|_{K_{12}\oplus \Wv_2}=(g_1^{-1}-1)^{-1}(g_2-1),\quad  h|_{K_2}=(g_1^{-1}-1)^{-1}\,.
\end{equation}
Fix a basis $w_i$ of $\Wv_\C^+$ so that $w_i\in K_{12}$ if $i\leq a$, $w_i\in\Wv_2$ if $a<i\leq b$ and 
$w_{b+1}$, $w_{b+2}$, $\ldots$ is a basis of
$K_2$ that is orthonormal with respect to $H$.
Then
\begin{equation}\label{relation 1}
hw_i=w_i \qquad (i\leq a).
\end{equation}
\begin{lem} \label{lem:on h}
The following equalities hold:
\begin{eqnarray}\label{result 0.9}
&&\det(H(  (g_1g_2-1)w_i,hw_j)_{a<i,j})\\
&=&\det(H( \frac{1}{2}(c (g_1)+c(g_2))(g_2-1)w_i,(g_2-1)w_j)_{a<i,j\leq b})\nn\\
&=&\det(H(  (g_1g_2-1)w_i,w_j)_{a<i,j}) \overline{\det(h)}.\nn
\end{eqnarray}
Moreover, we have
\begin{equation}
\label{result 2.1}
\det(H(  w_i,(g_1^{-1}-1)hw_j)_{i,j})\\
=\overline{\det(H( (g_2-1)w_i,w_j)_{i,j\leq b})},
\end{equation}
so that
\begin{equation}\label{result 2.11}
\overline{\det(h)}=\frac{\overline{\det(H( (g_2-1)w_i,w_j)_{i,j\leq b})}}{\det(H(  w_i,(g_1^{-1}-1)w_j)_{i,j})}.
\end{equation}
\end{lem}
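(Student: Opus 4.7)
The plan is to reduce everything to the Cayley-type identity
\begin{equation*}
(g_1-1)\bigl(c(g_1)+c(g_2)\bigr)(g_2-1)=2(g_1g_2-1),
\end{equation*}
which holds on all of $\Wv_\C^+$ because the hypothesis $K_1=\{0\}$ forces $g_1-1$ to be invertible. The identity itself is immediate from $(g-1)c(g)=g+1$ applied to each summand. Everything else is bookkeeping with the orthogonal decomposition $\Wv_\C^+=U\oplus K_2=(K_{12}\oplus\Wv_2)\oplus K_2$ from \eqref{orthogonal direct sum decomposition} together with the unitarity identities $g_k^{\,*}=g_k^{-1}$ and their consequences $(g_k-1)^*=g_k^{-1}-1$, $\bigl((g_k^{-1}-1)^{-1}\bigr)^*=(g_k-1)^{-1}$.

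For the chain \eqref{result 0.9}, I would split the index set $\{a<i,j\}$ into $\{a<\cdot\leq b\}$ and $\{b<\cdot\}$, turning the matrix on the left-hand side into a $2\times 2$ block matrix. When $w_i\in K_2$ (i.e.\ $i>b$), the relation $g_2w_i=w_i$ gives $(g_1g_2-1)w_i=(g_1-1)w_i$; moving $((g_1^{-1}-1)^{-1})^*=(g_1-1)^{-1}$ across $H$ then shows the lower-right block equals $H(w_i,w_j)=\delta_{ij}$ by orthonormality of $w_{b+1},w_{b+2},\dots$, and the lower-left block equals $H(w_i,(g_2-1)w_j)$, which vanishes because $K_2\perp U$ and $(g_2-1)w_j\in U$. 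So the matrix is block upper triangular with identity lower-right block, and its determinant equals that of the upper-left block
\begin{equation*}
H\bigl((g_1g_2-1)w_i,\,(g_1^{-1}-1)^{-1}(g_2-1)w_j\bigr)=H\bigl((g_1-1)^{-1}(g_1g_2-1)w_i,\,(g_2-1)w_j\bigr);
\end{equation*}
the Cayley identity then converts $(g_1-1)^{-1}(g_1g_2-1)$ into $\tfrac12(c(g_1)+c(g_2))(g_2-1)$, producing the middle expression. For the second equality, I would expand $hw_j=\sum_k h_{kj}w_k$ and observe that for $a<i$ the summands with $k\leq a$ contribute nothing: $w_k\in K_{12}$ gives $((g_1g_2)^{-1}-1)w_k=0$, so by unitarity $H((g_1g_2-1)w_i,w_k)=H(w_i,((g_1g_2)^{-1}-1)w_k)=0$. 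Since \eqref{relation 1} forces $h$ to have block form $\bigl(\begin{smallmatrix}I_a&\ast\\0&h_{22}\end{smallmatrix}\bigr)$ (so that $\det h=\det h_{22}$), the restricted matrix identity reads $M_{a<}=N_{a<}\,\overline{h_{22}}$, producing the asserted factor $\overline{\det h}$.

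Next I would prove \eqref{result 2.1} by computing $(g_1^{-1}-1)hw_j$ block by block: for $j\leq b$ one has $(g_1^{-1}-1)hw_j=(g_2-1)w_j$ (consistent with $hw_j=w_j$ for $j\leq a$, since $w_j\in K_{12}$ forces $(g_2-1)w_j=(g_1^{-1}-1)w_j$), while for $j>b$ one has $(g_1^{-1}-1)hw_j=w_j$. The orthogonality $K_2\perp U$ then kills both off-diagonal blocks (the entries with $i\leq b<j$ reduce to $H(w_i,w_j)=0$ and those with $j\leq b<i$ reduce to $H(w_i,(g_2-1)w_j)=0$), while the lower-right block is the identity by orthonormality of $w_{b+1},w_{b+2},\dots$. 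The full determinant therefore equals the $(b\times b)$ determinant of $H(w_i,(g_2-1)w_j)_{i,j\leq b}$, which by sesquilinearity of $H$ (taking entrywise complex conjugate and transpose at the matrix level) equals $\overline{\det H((g_2-1)w_i,w_j)_{i,j\leq b}}$.

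Finally, \eqref{result 2.11} will follow by applying the same factorization as in the second equality of \eqref{result 0.9} to $M'_{ij}=H(w_i,(g_1^{-1}-1)hw_j)$: expanding $hw_j=\sum_k h_{kj}w_k$ gives $M'=N'\,\overline{h}$ with $N'_{ik}=H(w_i,(g_1^{-1}-1)w_k)$, hence $\det M'=\det N'\cdot\overline{\det h}$, and solving for $\overline{\det h}$ while substituting \eqref{result 2.1} yields the claim; the denominator $\det N'$ is nonzero because $g_1^{-1}-1$ is invertible on $\Wv_\C^+$. The main obstacle throughout will be purely bookkeeping—consistent tracking of the sesquilinearity convention on $H$ and of which blocks vanish by orthogonality of subspaces versus by kernel conditions—rather than any single hard step.
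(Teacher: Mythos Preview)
Your approach is essentially the paper's: the same Cayley identity, the same block-triangular reductions, and the same use of the adjoint relations $(g_k-1)^*=g_k^{-1}-1$. One caution on bookkeeping: you write the decomposition as $\Wv_\C^+=U\oplus K_2=(K_{12}\oplus\Wv_2)\oplus K_2$, identifying $U$ with $K_{12}\oplus\Wv_2$, but in the hypotheses of the lemma $\Wv_2$ is only assumed to be \emph{some} complement of $K_{12}\oplus K_2$, not the orthogonal one, so $K_{12}\oplus\Wv_2$ need not equal $U=K_2^\perp$. Consequently your claim in the proof of \eqref{result 2.1} that the block with $i\leq b<j$ vanishes (``$H(w_i,w_j)=0$'') is not justified: $w_i\in K_{12}\oplus\Wv_2$ does not force $w_i\perp K_2$. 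Fortunately this claim is unnecessary---the block with $j\leq b<i$ already vanishes (since $w_i\in K_2\perp U\ni(g_2-1)w_j$), making the matrix block upper-triangular with identity lower-right block, which is all you need. The paper's proof uses only this one vanishing. Drop the extra claim and your argument matches the paper's.
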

\begin{prf}
Notice that both 
\begin{eqnarray*}
c(g_1)&=&(g_1+1)(g_1-1)^{-1}:\Wv_\C^+\to \Wv_\C^+,\\  
c(g_2)&=&(g_2+1)(g_2-1)^{-1}:\Uv\to \Wv_\C^+
\end{eqnarray*}
are well defined on the space $\Uv$ and
\begin{eqnarray}\label{basic relation}
&&(g_1-1)\frac{1}{2}(c(g_1)+c(g_2))(g_2-1)\\
&=&\frac{1}{2}((g_1+1)(g_2-1)+(g_1-1)(g_2+1))\nn\\
&=&g_1g_2-1.\nn
\end{eqnarray}
Suppose $a<i, j\leq b$. Then (\ref{basic relation}) and \eqref{orthogonal direct sum decomposition} show that
\begin{eqnarray}\label{relation 2}
H( (g_1g_2-1)w_i,hw_j)
&=&H( (g_1g_2-1)w_i,(g_1^{-1}-1)^{-1}(g_2-1)w_j)\\
&=&H( (g_1-1)^{-1}(g_1g_2-1)w_i,(g_2-1)w_j)\nn\\
&=&H( (g_1-1)^{-1}(g_1-1)\frac{1}{2}(c(g_1)+c(g_2))(g_2-1)w_i,(g_2-1)w_j)\nn\\
&=&H( \frac{1}{2}(c(g_1)+c(g_2))(g_2-1)w_i,(g_2-1)w_j).\nn
\end{eqnarray}
Suppose $j\leq b<i$. Then $(g_1g_2-1)w_i=(g_1-1)w_i$. Hence,
\begin{eqnarray}\label{relation 3}
H( (g_1g_2-1)w_i,hw_j)
&=&H(  (g_1-1)w_i,(g_1^{-1}-1)^{-1}(g_2-1)w_j)\\
&=&H(  w_i,(g_2-1)w_j)\nn\\
&=&H(  (g_2^{-1}-1)w_i, w_j)\nn\\
&=&H( - g_2^{-1}(g_2-1)w_i, w_j)\nn\\
&=&H(  0, w_j)\nn\\
&=&0.\nn
\end{eqnarray}
If $b<i,j$, then
\begin{eqnarray}\label{relation 4}
&&H(  (g_1g_2-1)w_i,hw_j)=H(  (g_1-1)w_i,hw_j).
\end{eqnarray}
Notice that
\begin{eqnarray}\label{30'' R}
&&\det(H(  (g_1-1)w_i,hw_j)_{b<i,j})
=\det(H(  w_i,(g_1^{-1}-1)hw_j)_{b<i,j})\\
&=&\det(H(  w_i,w_j)_{b<i,j})=1.\nn
\end{eqnarray}
The first equality in~(\ref{result 0.9})
follows from relations (\ref{relation 2}), (\ref{relation 3}), (\ref{relation 4}) and (\ref{30'' R}).

Since $h$ preserves the subspace $K_{12}$, it makes sense to  define $\t h\in \GL(\Wv_\C^+/K_{12})$ by
\[
\t h(w+K_{12})=hw \qquad (w\in\Wv_\C^+).
\]
Then 
\begin{equation*}
\det(H( (g_1g_2-1)w_i,hw_j)_{a<i,j})=\det(H(  (g_1g_2-1)w_i,w_j)_{a<i,j}) \overline{\det(\t h)}.
\end{equation*}
But (\ref{relation 1}) implies $\det(\t h)=\det(h)$. Hence
the second equality in~(\ref{result 0.9}) follows.

Also, if $j\leq b<i$, then 
\[
H(   w_i,(g_1^{-1}-1)hw_j) = H(   w_i,(g_2-1)w_j)=0
\]
because of \eqref{orthogonal direct sum decomposition}. 
Hence,
\begin{eqnarray}\label{result 2.1bis}
\det(H( w_i,(g_1^{-1}-1)hw_j)_{i,j})
&=&\det(H( w_i,(g_1^{-1}-1)hw_j)_{i,j\leq b})
\det(H( w_i,(g_1^{-1}-1)hw_j)_{b<i,j})\nn\\
&=&\det(H( w_i,(g_1^{-1}-1)hw_j)_{i,j\leq b})\nn\\
&=&\det(H( w_i,(g_2-1)w_j)_{i,j\leq b})\nn\\
&=&\overline{\det(H( (g_2-1)w_i,w_j)_{i,j\leq b})}.\nn
\end{eqnarray}
This verifies (\ref{result 2.1}). The formula \eqref{result 2.11} follows immediately from (\ref{result 2.1}).
\end{prf}
\begin{cor}\label{determinants and basis}
With the notation of Lemma \ref{lem:on h}
\begin{eqnarray*}
&&\det(H(\frac{1}{2}(c (g_1)+c(g_2))(g_2-1)w_i,(g_2-1)w_j)_{a<i,j\leq b})\\
&=&\frac{\det(H( (g_1g_2-1)w_i,w_j)_{a<i,j})\overline{\det(H( (g_2-1)w_i,w_j)_{i,j\leq b})}}
{\det(H( (g_1-1)w_i,w_j)_{i,j})}
\end{eqnarray*}
\end{cor}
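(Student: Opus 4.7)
The plan is to read off the corollary as a direct consequence of the three identities packaged in Lemma \ref{lem:on h}, together with one short observation about adjoints with respect to $H$.

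First, I would equate the two expressions for $\det(H(  (g_1g_2-1)w_i,hw_j)_{a<i,j})$ given by the chain of equalities in \eqref{result 0.9}. This yields
\[
\det(H( \tfrac{1}{2}(c (g_1)+c(g_2))(g_2-1)w_i,(g_2-1)w_j)_{a<i,j\leq b}) = \det(H( (g_1g_2-1)w_i,w_j)_{a<i,j}) \,\overline{\det(h)},
\]
so the proof reduces to identifying the factor $\overline{\det(h)}$ with the claimed quotient. Substituting the formula \eqref{result 2.11} for $\overline{\det(h)}$ already puts the right-hand side in the desired shape, except that the denominator reads $\det(H(  w_i,(g_1^{-1}-1)w_j)_{i,j})$ rather than $\det(H((g_1-1)w_i,w_j)_{i,j})$.

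Hence the only remaining step is to check these two denominators are equal. For this I would use that $g_1 \in \Sp^J$ preserves the Hermitian form $H$ on $\Wv_\C^+$: indeed, $g_1$ commutes with conjugation, so $H(g_1v,g_1w)=i\langle g_1 v, g_1 \overline{w}\rangle = i\langle v,\overline{w}\rangle = H(v,w)$. Thus the $H$-adjoint of $g_1$ is $g_1^{-1}$, which gives $(g_1^{-1}-1)^{*}=g_1-1$, and consequently
\[
H(w_i,(g_1^{-1}-1)w_j)=H((g_1-1)w_i,w_j) \qquad \text{for all } i,j.
\]
Taking determinants of the two matrices then gives the required equality. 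Combining this with the substitution above produces precisely the formula stated in the corollary.

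No serious obstacle is expected: the content is entirely in Lemma \ref{lem:on h}, and the adjoint computation is immediate from the definition \eqref{The form H} of $H$. The only place to be slightly careful is with the sesquilinearity convention, to make sure the overline in the numerator and the absence of an overline in the denominator come out consistently with \eqref{result 2.11}.
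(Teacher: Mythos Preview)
Your proposal is correct and matches the paper's approach: the corollary is stated without proof precisely because it follows immediately from combining \eqref{result 0.9} with \eqref{result 2.11}, plus the identification of the two forms of the denominator. Your adjoint computation is exactly the missing link, and the paper has already recorded at the start of the appendix that $g_1,g_2$ preserve $H$, so this step is legitimate (the same identity also appears later as \eqref{formula star1} in the proof of Lemma~\ref{lemma 6 R}).
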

\begin{lem}\label{lemma 6 R}
Fix two elements $g_1, g_2\in\Sp(\Wv)$ and assume that $K_1=\{0\}$. Let $\V\subseteq\Uv$ denote the radical of the form $H(\frac{1}{2}(c(g_1)+c(g_2))\cdot ,\cdot )_{\Uv}$ and let $H(\frac{1}{2}(c(g_1)+c(g_2))\cdot ,\cdot )_{\Uv/\V}$ denote the resulting non-degenerate form on the quotient
$\Uv/\V$.
Then $\V=(g_2-1)K_{12}$ and
\begin{eqnarray}\label{lemma 6.1}
&&\frac{\det(g_1g_2-1)_{\Uv_{12}}}{\det(g_1-1)_{\Wv_\C^+} \det(g_2-1)_{\Uv}}\\
&=&\frac{\det(H(\frac{1}{2}(c(g_1)+c(g_2))\cdot ,\cdot )_{\Uv/\V})}{|\det(g_2-1\colon K_{12}\to\V)|^{2}},\nn
\end{eqnarray}
where  $|\det(g_2-1\colon K_{12}\to\V)|$ is the absolute value of the determinant of the matrix of $g_2-1$ with respect to any orthonormal basis of 
$K_{12}$ and $\V$.
\end{lem}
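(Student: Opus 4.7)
The plan is to split the statement into two claims---the identification $\V = (g_2-1)K_{12}$ and the determinant identity \eqref{lemma 6.1}---and handle both using Corollary~\ref{determinants and basis} as the main computational engine.

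For the inclusion $(g_2-1)K_{12}\subseteq\V$, given $w\in K_{12}$ the factorization \eqref{basic relation} yields $(g_1-1)\cdot\tfrac{1}{2}(c(g_1)+c(g_2))(g_2-1)w = (g_1g_2-1)w = 0$, and the assumption $K_1 = \{0\}$ makes $g_1-1$ invertible on $\Wv_\C^+$, forcing $\tfrac{1}{2}(c(g_1)+c(g_2))(g_2-1)w = 0$; hence the sesquilinear form $B := H(\tfrac{1}{2}(c(g_1)+c(g_2))\cdot,\cdot)$ on $\Uv$ vanishes whenever $(g_2-1)w$ appears as an argument, so $(g_2-1)w\in\V$. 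For the reverse inclusion I would use the decomposition $\Uv = (g_2-1)K_{12}\oplus (g_2-1)\Wv_2$ from \eqref{23' R}: the matrix of $B$ restricted to $(g_2-1)\Wv_2$ in the basis $\{(g_2-1)w_i\}_{a<i\leq b}$ is the determinant $D$ appearing in Corollary~\ref{determinants and basis}, which is expressed there as a ratio of three nonzero determinants (each nonvanishing because $g_1-1$ is invertible on $\Wv_\C^+$, $g_1g_2-1$ is invertible on $\Uv_{12}$, and $g_2-1$ is a bijection $K_{12}\oplus\Wv_2\to\Uv$). Hence $B$ is nondegenerate on $(g_2-1)\Wv_2$, so $\V\cap(g_2-1)\Wv_2 = \{0\}$, and the direct sum decomposition gives $\V = (g_2-1)K_{12}$.

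For the determinant identity, the strategy is to rewrite each of the four determinants $A_1, A_{12}, B_2, D$ appearing in Corollary~\ref{determinants and basis} as an intrinsic invariant times a Gram-type correction, and to verify that the corrections cancel when substituted into $D = A_{12}\overline{B_2}/A_1$. The basic identity $\det(H(Tw_i,w_j))_{i,j} = \det(T)\cdot\det G$ (where $G$ is the full Gram matrix of $\{w_i\}$) gives $A_1 = \det(g_1-1)_{\Wv_\C^+}\det G$ immediately. Analogous computations on the restricted index ranges, exploiting the orthogonal decomposition $\Wv_\C^+ = K_2\perp\Uv$ (which makes $\{w_i\}_{i>b}$ orthonormal and $H$-orthogonal to $\Uv$), relate $A_{12}$ to $\det(g_1g_2-1)_{\Uv_{12}}$ and relate $B_2$ to $\det(g_2-1)_\Uv \cdot |\det(g_2-1\colon K_{12}\to\V)|^2$, each up to Gram corrections arising from the non-orthonormal bases of $K_{12}$ and $\Wv_2$. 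Finally, $D$ is the matrix of $B$ in the basis $\{(g_2-1)w_i+\V\}_{a<i\leq b}$ of $\Uv/\V$, and dividing by the Gram determinant of this basis (in the quotient hermitian structure inherited from $\Uv$) converts it into the intrinsic form-determinant $\det(H(\tfrac{1}{2}(c(g_1)+c(g_2))\cdot,\cdot)_{\Uv/\V})$ on the right-hand side of \eqref{lemma 6.1}.

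The main obstacle is precisely this Gram-factor bookkeeping: since only $\{w_i\}_{i>b}$ is orthonormal, Gram corrections enter all four determinants, and verifying that they cancel identically in the substitution is delicate. In particular, the appearance of the modulus squared $|\det(g_2-1\colon K_{12}\to\V)|^2$ in the denominator of \eqref{lemma 6.1}---rather than a bare determinant---reflects the interplay between the sesquilinearity of $H$ and the complex-linearity of $g_2-1$: the Gram matrix of $\{w_i\}_{i\leq a}$ enters once directly via $B_2$ and once through its conjugate (because of the bar on $B_2$ in the corollary's formula), and these combine with the bijection $g_2-1\colon K_{12}\to\V$ to produce the modulus squared.
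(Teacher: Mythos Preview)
Your plan is correct and matches the paper's own argument: both rely on Corollary~\ref{determinants and basis} (itself a consequence of Lemma~\ref{lem:on h}) as the central identity, and both finish by translating the basis-dependent determinants into the intrinsic quantities in \eqref{lemma 6.1} via Gram-factor bookkeeping. The paper carries out that bookkeeping by making the specific choice $\Wv_2=(K_{12}+K_2)^{\perp}$, taking orthonormal bases on each summand, and introducing two auxiliary change-of-basis operators $Q$ (completing $\{w_i\}_{i\le b}$ to an orthonormal basis of $\Wv_\C^+$) and $q$ (sending $\{w_i\}_{i\le b}$ to an orthonormal basis of $\Uv$); these are exactly the devices that make your ``Gram corrections cancel'' step go through, and your identification of $|\det(g_2-1\colon K_{12}\to\V)|^2$ as arising from the combination of $B_2$ and its conjugate is on target. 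For the equality $\V=(g_2-1)K_{12}$ the paper simply says this is easy from \eqref{basic relation}; your argument via nondegeneracy of $D$ is valid, but there is a shorter route: since the form is skew-hermitian its radical equals $T(\Uv)^{\perp}\cap\Uv$, and from $(g_1-1)T(\Uv)=\Uv_{12}$ together with $(g_2-1)w=(g_1^{-1}-1)w$ for $w\in K_{12}$ one reads off $\V=(g_1^{-1}-1)K_{12}=(g_2-1)K_{12}$ directly.
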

\begin{prf}
We use the notation of Lemma \ref{lem:on h} and make the following additional assumptions: $\Wv_2$ is the orthogonal complement of $K_{12}+K_2$ in $\Wv_\C^+$ with respect to $H$, 
$w_1$, $w_2$, ... is a basis of $\Wv_\C^+$ such that $w_1$, $w_2$, ..., $w_a$ is an orthonormal basis of $K_{12}$, and $w_{a+1}$, $w_{a+2}$, ..., $w_b$ is an orthonormal basis of $\Wv_2$. 

Let $Q\in \GL(\Wv_\C^+)$ be such that
\[
\begin{array}{l}
Qw_1, Qw_2,...\ \text{is an orthonormal basis of}\ \Wv_\C^+,\\
Qw_i=w_i\ \text{if}\ i\leq b,\\
Qw_i\ \  \text{is orthogonal to}\ \  K_{12}+\Wv_2\ \text{if}\ b<i.
\end{array}
\]
Define the matrix elements $Q_{j,i}$ by
\[
Qw_i=\sum_{j}Q_{j,i}w_j.
\]
Then
\[
Q_{j,i}=\delta_{j,i}\ \text{if}\ i\leq b.
\]
Hence,
\[
\det(Q)=\det((Q_{j,i})_{1\leq j,i})=\det((Q_{j,i})_{b< j,i})=\det((Q_{j,i})_{a<j,i})
\]
and
\begin{eqnarray*}
1=\det(H( Qw_i, Qw_j)_{1\leq i,j})=|\det(Q)|^2\,\det(H( w_i, w_j)_{1\leq i,j}).
\end{eqnarray*}
Therefore
\begin{equation}\label{6.1 R}
|\det((Q_{j,i})_{a<j,i}|^2\,\det(H( w_i, w_j)_{1\leq i,j})=1.
\end{equation}
It is easy to check from (\ref{basic relation}) that $(g_2-1)K_{12}=\V$. In particular, $\dim \V=\dim K_{12}=a$.
Let $u_1$, $u_2$, ..., $u_b$ be  an orthogonal basis of $\Uv$ such that $u_1$, $u_2$, ..., $u_a$ span $\V$. Define the matrix elements $(g_2-1)_{k,i}$ by
\[
(g_2-1)w_i=\sum_{k=1}^b (g_2-1)_{k,i} u_k \qquad (1\leq i\leq b).
\]
Hence,
\[
(g_2-1)_{k,i}=0\ \text{if}\ i\leq a<k.
\]
Therefore
\begin{eqnarray}\label{6.2 R}
&&\det(((g_2-1)_{k,i})_{1\leq k,i\leq b})\\
&=&\det(((g_2-1)_{k,i})_{1\leq k,i\leq a})\,\det(((g_2-1)_{k,i})_{a< k,i\leq b}).\nn
\end{eqnarray}
Define $h\in\GL(\Wv_\C^+)$ as in (\ref{23'' R}). 
Then, by (\ref{result 2.1}),
\begin{eqnarray}\label{6.4 R}
&&\overline{\det(h)}=\overline{\det((g_1^{-1}-1)^{-1}(g_1^{-1}-1)h)}\\
&=&\overline{\det(g_1^{-1}-1)}^{-1}\,\overline{\det((g_1^{-1}-1)h)}\nn\\
&=&\overline{\det(g_1^{-1}-1)}^{-1}\,\det(H( w_i, (g_1^{-1}-1)hw_j)_{1\leq i,j})\,\det(H( w_i, w_j)_{1\leq i,j})^{-1}\nn\\
&=&\overline{\det(g_1^{-1}-1)}^{-1}\,\overline{\det(H( (g_2-1)w_i, w_j)_{i,j\leq b})}\,\det(H( w_i, w_j)_{1\leq i,j})^{-1}\nn
\end{eqnarray}
Also,
\begin{eqnarray}\label{6.5 R}
&&\det(H(\frac{1}{2}(c(g_1)+c(g_2))(g_2-1)w_i,(g_2-1)w_j)_{a<i,j\leq b})\\
&=&\det(H(\frac{1}{2}(c(g_1)+c(g_2))u_k,u_l)_{a<k,l\leq b})\,|\det(((g_2-1)_{k,i})_{a<k,i\leq b}|^2.\nn
\end{eqnarray}
Since $\Wv_\C^+$ is the orthogonal direct sum of $K_{12}$ and $\Uv_{12}$, the vectors $Qw_j$, $a<j$, form an orthonormal basis of $\Uv_{12}$, so that
\begin{eqnarray}\label{6.6 R}
&&\det(g_1g_2-1)_{\Uv_{12}}=\det(H((g_1g_2-1)Qw_i,Qw_j)_{a<i,j})\\
&=&\,|\det((Q_{i,j})_{a<i,j})|^2\,\det(H ((g_1g_2-1)w_i,w_j)_{a<i,j}).\nn
\end{eqnarray}
Define an element $q\in\GL(\Wv_\C^+)$ by
\[
\begin{array}{l}
qw_i=u_i\ \text{if}\ i\leq b,\\
qw_i=w_i\ \text{if}\ b<i.
\end{array}
\]
Then $qw_1$, $qw_2$, $\ldots$, $qw_b$ is an orthonormal basis of 
$\Uv$ so that
\[
\det(g_2-1)_{\Uv}=\det(H((g_2-1)qw_i,qw_j)_{i,j\leq b}).
\]
Define the coefficients $q_{i,j}$ by
\[
qw_i=\sum_jq_{j,i}w_j.
\]
Then
\[
q_{j,i}=\delta_{j,i}\ \text{if}\ b<i
\]
so that
\[
\det(q)=\det((q_{j,i})_{1\leq i,j})=\det((q_{j,i})_{1\leq i,j\leq b}).
\]
Also,
\[
(g_2-1)qw_i=\sum_jq_{j,i}(g_2-1)w_j=\sum_{j\leq b}q_{j,i}(g_2-1)w_j \qquad (i\leq b).
\]
Therefore,
\[
\det(H((g_2-1)qw_i,qw_j)_{i,j\leq b})=|\det(q)|^2 \det(H((g_2-1)w_i,w_j)_{i,j\leq b}).
\]
Define the coefficients $q^{-1}_{i,j}$ of the inverse map $q^{-1}$ by
\[
w_i=q^{-1}(qw_i)=\sum_j q^{-1}_{i,j} qw_j.
\]
Since, the $qw_i$ form an orthonormal basis of $\Wv_\C^+$,
\[
q^{-1}_{i,j}=H(q^{-1}qw_i,qw_j)=H(w_i,qw_j)=\overline{H(qw_j,w_i)},
\]
so that
\begin{equation*}
q^{-1}_{i,j}=\left\{
\begin{array}{ll}
\overline{H( u_j,w_i)} \ \text{if}\ j\leq b,\\
\overline{H(w_j,w_i)} \ \text{if}\ j> b,\\
\overline{H(w_j,w_i)}=\delta_{i,j} \ \text{if}\ i,j> b.
\end{array}\right.
\end{equation*}
In particular,
\[
q^{-1}_{i,j}=0\ \text{if}\ j\leq b<i
\]
so that
\[
\det(q)^{-1}=\det(q^{-1})=\det((q^{-1}_{i,j})_{i,j\leq b})=\det(\overline{H( u_j,w_i)} _{i,j\leq b}).
\]
Thus
\begin{eqnarray}\label{formula star}
&&\overline{\det(H( (g_2-1)w_i,w_j)_{i,j\leq b})}\, 
\det(g_2-1\colon\Uv\to\Uv)\\
&=&\overline{\det(H( (g_2-1)w_i,w_j)_{i,j\leq b})}\, \det(H( (g_2-1)w_i,w_j)_{i,j\leq b})\,|\det(q)|^2\nn\\
&=&|\det(H( (g_2-1)w_i,w_j)_{i,j\leq b})|^2|\det(q)|^2\nn\\
&=&|\det(H(\sum_{k=1}^b (g_2-1)_{k,i}u_k,w_j)_{i,j\leq b})|^2
\,|\det(q)|^2\nn\\
&=&|\det((g_2-1)_{k,i})_{k,i\leq b})\det(H( u_k,w_j)_{k,j\leq b})|^2
\,|\det(q)|^2\nn\\
&=&|\det((g_2-1)_{k,i})_{k,i\leq b})|^2\nn\\
&=&|\det((g_2-1)_{k,i})_{k,i\leq a})|^2|\det((g_2-1)_{k,i})_{a<k,i\leq b})|^2,\nn
\end{eqnarray}
where the last equality follows from (\ref{6.2 R}).
Notice also that
\begin{eqnarray}\label{formula star1}
\overline{\det(g_1^{-1}-1)}&=&\overline{\det(H((g_1^{-1}-1) Qw_j,Qw_k)_{1\leq j,k})}\\
&=&\overline{\det(H( Qw_j,(g_1-1)Qw_k)_{1\leq j,k})}\nn\\
&=&\det(H((g_1-1)Qw_k, Qw_j)_{1\leq j,k})\nn\\
&=&\det(g_1-1).\nn
\end{eqnarray}
The formula (\ref{lemma 6.1}) follows from \eqref{result 0.9} and (\ref{6.1 R}) - (\ref{formula star}) via a straightforward computation:
\begin{eqnarray*}
&&\frac{\det(g_1g_2-1\colon\Uv_{12}\to\Uv_{12})}{\det(g_1-1\colon\Wv_\C^+\to\Wv_\C^+) 
\det(g_2-1\colon \Uv\to\Uv)}\\
&=&\frac{|\det((Q_{i,j})_{a<i,j})|^2\,\det(H( (g_1g_2-1)w_i,w_j)_{a<i,j})}{\det(g_1-1\colon\Wv_\C^+\to\Wv_\C^+) \det(g_2-1\colon \Uv\to\Uv)}\\
&=&\frac{|\det((Q_{i,j})_{a<i,j})|^2\,\det(H(\frac{1}{2}(c(g_1)+c(g_2))u_k,u_l)_{a<k,l\leq b})\,
|\det(((g_2-1)_{k,i})_{a<k,i\leq b})|^2}{\overline{\det(h)}\det(g_1-1)
 \det(g_2-1\colon\Uv\to\Uv)}\\
	&=&\frac{|\det((Q_{i,j})_{a<i,j})|^2\,\det(H(\frac{1}{2}(c(g_1)+c(g_2))u_k,u_l)_{a<k,l\leq b})\,|\det(((g_2-1)_{k,i})_{a<k,i\leq b})|^2}{\overline{\det(g_1^{-1}-1)}^{-1}\,\overline{\det(H( (g_2-1)w_i, w_j)_{i,j\leq b})}\,
\det(H( w_i, w_j)_{1\leq i,j})^{-1}\det(g_1-1) 
\det(g_2-1\colon\Uv\to\Uv)}\\
&=&\frac{\,\det(H(\frac{1}{2}(c(g_1)+c(g_2))u_k,u_l)_{a<k,l\leq b})\,|\det(((g_2-1)_{k,i})_{a<k,i\leq b})|^2}{\overline{\det(g_1^{-1}-1)}^{-1}\,\overline{\det(H( (g_2-1)w_i, w_j)_{i,j\leq b})}\,\det(g_1-1) 
\det(g_2-1\colon\Uv\to\Uv)}\\
&=&\frac{\,\det(H(\frac{1}{2}(c(g_1)+c(g_2))u_k,u_l)_{a<k,l\leq b})\,|\det(((g_2-1)_{k,i})_{a<k,i\leq b})|^2}{\overline{\det(H( (g_2-1)w_i, w_j)_{i,j\leq b})}\,\det(g_2-1\colon\Uv\to\Uv)}\\
&=&\frac{\det(H(\frac{1}{2}(c(g_1)+c(g_2))u_k,u_l)_{a<k,l\leq b})\,|\det(((g_2-1)_{k,i})_{a<k,i\leq b})|^2}{|\det((g_2-1)_{k,i})_{k,i\leq a})|^2|\det((g_2-1)_{k,i})_{a<k,i\leq b})|^2}\\
&=&\frac{\det(H(\frac{1}{2}(c(g_1)+c(g_2))u_k,u_l)_{a<k,l\leq b})}{|\det((g_2-1)_{k,i})_{k,i\leq a})|^2}.
\end{eqnarray*}
\end{prf}
Since, in terms of \eqref{two signatures},
\begin{eqnarray*}
&&\frac{\det(H(\frac{1}{2}(c(g_1)+c(g_2))\cdot ,\cdot )_{\Uv/\V})}{|\det(H(\frac{1}{2}(c(g_1)+c(g_2))\cdot ,\cdot )_{\Uv/\V})|}
=\frac{\det(H(i(-i)\frac{1}{2}(c(g_1)+c(g_2))\cdot ,\cdot )_{\Uv/\V})}{|\det(H(i(-i)\frac{1}{2}(c(g_1)+c(g_2))\cdot ,\cdot )_{\Uv/\V})|}\\
&=&i^{\sgn h_{g_1,g_2}},
\end{eqnarray*}
the formula \eqref{phases} follows from \eqref{lemma 6.1}.

\biblio
\end{document}